\documentclass[12pt,oneside]{amsart}
\usepackage{amsmath,amsopn,amsfonts,amsthm,amssymb}
\usepackage{setspace}
%\onehalfspacing
\usepackage{fullpage}
\usepackage{mathdots}
\usepackage{mathtools}%Contains \coloneqq
\usepackage{manfnt}%Contains dangerous bend symbol \dbendÍ
\usepackage{marginnote}

\reversemarginpar%Puts margin notes on left

\usepackage{tikz}
\usetikzlibrary{arrows,decorations.pathmorphing,backgrounds,positioning,fit,matrix,calc}

\newtheorem{thm}{Theorem}[section]

\newtheorem{lemma}[thm]{Lemma}
\newtheorem{conj}[thm]{Conjecture}
\newtheorem{coro}[thm]{Corollary}
\newtheorem{prop}[thm]{Proposition}

\newenvironment{prethm}[1]%A theorem style with custom numbering
  {\innerprethm}
  {\endinnerprethm}

\theoremstyle{definition}
\newtheorem{defn}[thm]{Definition}

\theoremstyle{remark}

\newenvironment{ex}{\refstepcounter{thm}\begin{proof}[Example \emph{\thethm}]}{\end{proof}}
\newenvironment{rem}{\refstepcounter{thm}\begin{proof}[Remark \emph{\thethm}]}{\end{proof}}
\newenvironment{conv}{\refstepcounter{thm}\begin{proof}[Convention \emph{\thethm}]}{\end{proof}}
\newenvironment{warn}{\refstepcounter{thm}\begin{proof}[Warning \emph{\thethm}]}{\end{proof}}
\newenvironment{nota}{\refstepcounter{thm}\begin{proof}[Notation \emph{\thethm} ]}{\end{proof}}
\numberwithin{equation}{section}

\setcounter{tocdepth}{3} 

\pagestyle{headings}

\def\ZZ{\mathbb{Z}}

\def\NN{\mathbb{N}}

%%%%% Double arrow

\usepackage{stackengine}
\stackMath
\usepackage{graphicx}
%%%%%

\newcommand\reversal[1]{\overline{#1}}

\colorlet{gpurple}{red!35!blue}
\colorlet{ggreen}{green!50!black}
\colorlet{dark purple}{red!35!blue}
\colorlet{dark green}{green!70!black}
\colorlet{dark red}{red!80!black}
\colorlet{dark blue}{blue!80!black!80!cyan}

\tikzstyle{mutable}=[inner sep=0.5mm,circle,draw,minimum size=2mm]
\tikzstyle{frozen}=[inner sep=.9mm,rectangle,draw]
\tikzstyle{dot} = [fill=black!25,inner sep=0.5mm,circle,draw,minimum size=1mm]
\tikzstyle{marked}=[inner sep=0.5mm,circle,draw,blue!75!black,fill=blue!50]

\title{Linear recurrences indexed by $\ZZ$}

\author{Greg Muller}

%\thanks{This is a private note.  Please do not distribute.}

\def\C{\mathsf{C}}
\def\D{\mathsf{D}}

\def\k{\mathsf{k}}
\def\ker{\mathrm{ker}}
\def\adj{\mathsf{Adj}}
\def\sol{\mathsf{Sol}}
\def\spl{\mathsf{Spl}}
\def\P{\mathsf{P}}
\def\dddots{\rotatebox{-45}{$\cdots$}}
\def\vvdots{\rotatebox{90}{$\cdots$}}

\def\rt{\vec{\tau}}
\def\lt{\cev{\tau}}

\newcommand{\bad}[1]{{\color{dark red} \it #1}}

\begin{document}

\pagestyle{plain}

\begin{abstract}
This note considers linear recurrences (also called linear difference equations) in unknowns indexed by the integers. We characterize a unique \emph{reduced} linear recurrence with the same solutions as a given linear recurrence, and construct a \emph{solution matrix} which parametrizes the space of solutions. Several properties of solution matrices are shown, including a combinatorial characterization of bases and dimension of the space of solutions.% in terms of \emph{juggling patterns}.
\end{abstract}

\makeatletter
\@namedef{subjclassname@2020}{%
  \textup{2020} Mathematics Subject Classification}
\makeatother

\keywords{
Linear recurrences, linear difference equations, determinants, friezes, positroids
}
\subjclass[2020]{
Primary 39A, %Difference Equations (general)
Secondary 11B37, %Recurrences (number theory)
15A06, %Linear equations (Linear Algebra)
15A15, %Determinants, permanents, other special matrix functions (Linear Algebra)
%15A15, 15F10. 
}

\maketitle

%\begin{keywords}
%Linear recurrences, determinants, positroids
%\end{keywords}
%\begin{AMS}
%???%15A15, 15F10. 
%\end{AMS}

\vspace{1cm}

{ \centering
% A pretty figure goes here.
\begin{tikzpicture}[baseline=(current bounding box.center),
	ampersand replacement=\&,
	]
	\clip[use as bounding box] (-9.5,-2.7) rectangle (6.3,0.3);
	\matrix[matrix of math nodes,
		matrix anchor = M-2-24.center,
		nodes in empty cells,
		inner sep=0pt,
		nodes={anchor=center,node font=\scriptsize,rotate=45},
		column sep={0.4cm,between origins},
		row sep={0.4cm,between origins},
	] (M) at (0,0) {
 \&  \&  \&  \&  \&  \&  \&  \&  \&  \&  \&  \&  \&  \&  \&  \&  \&  \&  \&  \&  \&  \&  \&  \&  \&  \&  \&  \&  \&  \&  \&  \&  \&  \&  \&  \&  \&  \&  \\
 \& 1 \&  \& 1 \&  \& 1 \&  \& 1 \&  \& 1 \&  \& 1 \&  \& 1 \&  \& 1 \&  \& 1 \&  \& 1 \&  \& 1 \&  \& 1 \&  \& 1 \&  \& 1 \&  \& 1 \&  \& 1 \&  \& 1 \&  \& 1 \&  \& 1 \&  \\
\cdots \&  \& 5 \&  \& 1 \&  \& 1 \&  \& 3 \&  \& 2 \&  \& 1 \&  \& 5 \&  \& 1 \&  \& 5 \&  \& 1 \&  \& 1 \&  \& 3 \&  \& 2 \&  \& 1 \&  \& 5 \&  \& 1 \&  \& 5 \&  \& 1 \&  \& \cdots \\
 \& 3 \&  \& 2 \&  \&  \&  \&  \&  \& 5 \&  \& 1 \&  \& 3 \&  \& 2 \&  \& 3 \&  \& 2 \&  \&  \&  \&  \&  \& 5 \&  \& 1 \&  \& 3 \&  \& 2 \&  \& 3 \&  \& 2 \&  \&  \&  \\
\cdots \&  \& 1 \&  \&  \&  \& -2 \&  \&  \&  \& 2 \&  \& 1 \&  \& 1 \&  \& 1 \&  \& 1 \&  \&  \&  \& -2 \&  \&  \&  \& 2 \&  \& 1 \&  \& 1 \&  \& 1 \&  \& 1 \&  \&  \&  \& \cdots \\
 \&  \&  \&  \&  \& -1 \&  \& -3 \&  \&  \&  \& 1 \&  \&  \&  \&  \&  \&  \&  \&  \&  \& -1 \&  \& -3 \&  \&  \&  \& 1 \&  \&  \&  \&  \&  \&  \&  \&  \&  \& -1 \&  \\
 \&  \&  \&  \&  \&  \& -1 \&  \& -1 \&  \&  \&  \&  \&  \& -1 \&  \&  \&  \&  \&  \&  \&  \& -1 \&  \& -1 \&  \&  \&  \&  \&  \& -1 \&  \&  \&  \&  \&  \&  \&  \& \cdots \\
 \&  \&  \& 1 \&  \&  \&  \&  \&  \&  \&  \&  \&  \&  \&  \&  \&  \&  \&  \& 1 \&  \&  \&  \&  \&  \&  \&  \&  \&  \&  \&  \&  \&  \&  \&  \& 1 \&  \&  \&  \\
 \&  \&  \&  \&  \&  \&  \&  \&  \&  \&  \&  \&  \&  \&  \&  \&  \&  \&  \&  \&  \&  \&  \&  \&  \&  \&  \&  \&  \&  \&  \&  \&  \&  \&  \&  \&  \&  \& \\
	};

\end{tikzpicture}\\
 }

\vspace{1cm}

\section{The problem}

%\subsection{Linear recurrences}

We consider the following version of a \textbf{linear recurrence}\footnote{Sometimes called a `system of linear recurrence relations' or a `{system of linear difference equations}'.}: a system of linear equations %\footnote{Our equations for different $x_a$ may be distinct, unlike some definitions of `linear recurrence'.} 
in the sequence of variables $...,x_{-1},x_0,x_1,x_2,...$ which equate each variable to a linear combination of the preceding variables. 
%
%\[\begin{array}{ccc}
%\forall i\in \mathbb{Z} \hspace{1cm} x_i = x_{i-1} + x_{i-2} &
%\forall i\in \mathbb{Z} \hspace{1cm} x_i = x_{i-1} + ix_{i-2} &
%\forall i\in \mathbb{Z} \hspace{1cm} x_i = x_{i-1} + ix_{i-2}
%\end{array}\]
%
This includes the (bi-infinite) Fibonacci recurrence:
\begin{equation}
\label{eq: fib}
x_i = x_{i-1}+x_{i-2},\;\;\; \forall i\in \mathbb{Z}
\end{equation}
but it also includes linear recurrences where the equations (and their length) may vary:
\begin{equation}
\label{eq: linrec}
\begin{array}{ll}
x_i = x_{i-1} - x_{i-2}  &  \forall i\in 2 \mathbb{Z} \\
x_i = 2x_{i-1} - x_{i-2} + x_{i-4} &  \forall i\in 2 \mathbb{Z}+1
\end{array}
\end{equation}
The coefficients of the equations are taken from an arbitrary field $\k$, which we fix throughout.

%We highlight two ways our context may differ from other notions of `linear recurrence relations'.
%\begin{itemize}
%	\item Our systems and sequences are bi-infinite; that is, indexed by the integers $\mathbb{Z}$.
%	\item The equations for different variables may vary.
%\end{itemize}
%Several examples follow.\marginpar{Shape?}

%\begin{figure}[h!t]
%\begin{align*}
%x_i &:= x_{i-1}+x_{i-2}
%\end{align*}
%\end{figure}

A \textbf{solution} of a linear recurrence is a sequence of numbers in $\k$ satisfying the equations. 
%For example, the (bi-infinite) Fibonacci recurrence is solved by the (bi-infinite) Fibonacci sequence:
For example, the linear recurrences \eqref{eq: fib} and \eqref{eq: linrec} are respectively solved by the sequences:
\begin{align*}
 ...,-21,13,-8,5,-3, 2, -1, 1, &0, 1, 1, 2,3,5,8,13,21,... \\
%\[ c, 1, 0, -1, 0, \underline{1}, c, ac-1, c(ac-1+d) ,a^2c^2-ac+acd -ac+1\]
%If $ac=2$ and $d=-1$,
%\[ c, 1, 0, -1, 0, \underline{1}, c, 1, 0 ,a^2c^2-ac+acd -ac+1\]
..., 1, 0, -1, 0, 1, 2, 1, 0, -1, 0, &1, 2, 1, 0, -1, 0, 1, 2,... 
\end{align*}
The motivating problem of this note is to construct all solutions to a given linear recurrence. 

\begin{rem}
We also consider \textbf{affine recurrences}, in which the equations may have a constant term. The solutions to these systems are constructed in Section \ref{section: affine}.
\end{rem}

\newpage

{\centering
\it Sections 2\--4 summarize the results, and Sections 5\--8 contain the proofs and details.
\\}

\section{Simplifying the problem}

We first reformulate the problem using ideas from finite dimensional linear algebra.

\subsection{Recurrence matrices}

Like finite systems of linear equations, linear recurrences can be reformulated as matrix equations. 
By moving the variables to the left-hand side of each equation and collecting the coefficients into a $\mathbb{Z}\times \mathbb{Z}$-matrix $\C$, we may rewrite the system of linear equations as
\[ \C\mathsf{x} =\mathsf{0} \]
where $\mathsf{x}$ %=(x_a\mid a\in \mathbb{Z})$
is a $\mathbb{Z}$-vector of variables. 
%This identifies the space of solutions to the linear recurrence with the kernel $\mathrm{ker}(\C)$ of $\C$.

For example, the Fibonacci recurrence may be rewritten as:
\[ 
\begin{tikzpicture}[baseline=(current bounding box.center),
	ampersand replacement=\&,
	]
	\matrix[matrix of math nodes,
%		matrix anchor = M-4-4.west,
		nodes in empty cells,
		inner sep=0pt,
		nodes={anchor=center},
		column sep={.65cm,between origins},
		row sep={.65cm,between origins},
		left delimiter={[},
		right delimiter={]},
	] (M) at (0,0) {
		\dddots \& \& \&  \& \& \&  \\
		 \& 1 \& 0 \& 0 \& 0 \& 0 \& \\
		 \& -1 \& 1 \& 0 \& 0 \& 0 \& \\
		 \& -1 \& -1 \& 1 \& 0 \& 0 \& \\
		 \& 0 \& -1 \& -1 \& 1 \& 0 \& \\
		 \& 0 \& 0 \& -1 \& -1 \& 1 \& \\
		 \& \& \& \& \& \& \dddots \\
	};
\end{tikzpicture}
\begin{tikzpicture}[baseline=(current bounding box.center),
	ampersand replacement=\&,
	]
	\matrix[matrix of math nodes,
%		matrix anchor = M-4-4.west,
		nodes in empty cells,
		inner sep=0pt,
		nodes={anchor=center},
		column sep={.65cm,between origins},
		row sep={.65cm,between origins},
		left delimiter={[},
		right delimiter={]},
	] (M) at (0,0) {
		\vvdots \\
		x_{-2} \\
		x_{-1} \\
		x_0 \\
		x_1 \\
		x_2 \\
		\vvdots \\
	};
\end{tikzpicture}
=\begin{tikzpicture}[baseline=(current bounding box.center),
	ampersand replacement=\&,
	]
	\matrix[matrix of math nodes,
%		matrix anchor = M-4-4.west,
		nodes in empty cells,
		inner sep=0pt,
		nodes={anchor=center},
		column sep={.65cm,between origins},
		row sep={.65cm,between origins},
		left delimiter={[},
		right delimiter={]},
	] (M) at (0,0) {
		\vvdots \\
		0 \\
		0 \\
		0 \\
		0 \\
		0 \\
		\vvdots \\
	};
\end{tikzpicture}
\]

The $\ZZ\times \ZZ$-matrices $\C$ which arise this way are precisely those which are:
%The equation $\C\mathsf{x}=\mathsf{0}$ is a linear recurrence precisely when $\C$ is:
\begin{itemize}
	\item \textbf{lower unitriangular}; that is, $\C_{a,b}=0$ if $a<b$ and $\C_{a,a}=1$ for all $a$, and%\footnote{Upper unitriangular $\mathbb{Z}\times\mathbb{Z}$-matrices are defined by requiring their transpose is upper unitriangular.}
	\item \textbf{horizontally bounded}; that is, $\C_{a,b}=0$ if $b\ll a$ or $b \gg a$ for all $a$.\footnote{The bounds need not be uniform; i.e. they may depend on $a$. These are also called \emph{row finite matrices}.}
\end{itemize}
We define a \textbf{recurrence matrix} to be a $\ZZ\times \ZZ$-matrix with these two properties. Hence, the original problem is equivalent to describing the kernel of a given recurrence matrix.

%Hence, describing the solutions to a linear recurrence reduces to describing the kernel

%Let us call a $\ZZ\times\ZZ$ matrix with these properties a \textbf{recurrence matrix}.

%\begin{rem}
%The reader is cautioned that one cannot always define the product of a $\mathbb{Z}\times \mathbb{Z}$-matrix with a $\mathbb{Z}$-vector or another $\mathbb{Z}\times\mathbb{Z}$-matrix, as the defining sums may contain infinitely many non-zero terms. However, a horizontally bounded matrix can always be multiplied by a vector, and two lower unitriangular matrices can always be multiplied together.
%\end{rem}

\begin{conv}
As can be seen above, recurrence matrices make inefficient use of space: their non-zero coefficients are concentrated below and sufficiently near the main diagonal. To remedy this, we adopt the atypical convention of \textbf{rotating $\mathbb{Z}\times\mathbb{Z}$-matrices $45^\circ$ counterclockwise}.\footnote{This alignment is also chosen to match the existing literature on \emph{friezes}; see Section \ref{section: frieze}.} 
So, a lower unitriangular matrix $\C$ would be formatted as follows:
\[ 
\begin{tikzpicture}[baseline=(current bounding box.center),
	ampersand replacement=\&,
	]
	\path[use as bounding box] (-5.25-.5,.5) rectangle (6+.5,-4);
	\begin{scope}
	\clip (-5.25-.5,.5) rectangle (6+.5,-3-.5);
	\matrix[matrix of math nodes,
		matrix anchor = M-4-8.center,
%		throw/.style={dark green,draw,circle,inner sep=0.25mm,minimum size=2mm},
		throw/.style={},
		origin/.style={dark green,draw,circle,inner sep=0.25mm,minimum size=2mm},
		pivot/.style={draw,circle,inner sep=0.25mm,minimum size=2mm},		
		nodes in empty cells,
		inner sep=0pt,
		nodes={anchor=center,rotate=45},
		column sep={.75cm,between origins},
		row sep={.75cm,between origins},
	] (M) at (0,0) {
	 \& \&  \& \&  \& \&  \& \&  \& \&  \& \&  \& \&  \&  \\
	 \& \&  \& \&  \& \&  \& \&  \& \&  \& \&  \& \&  \&  \\
	 \& \&  \& \&  \& \&  \& \&  \& \&  \& \&  \& \&  \&  \\
	 \& |[throw]| 1 \& \& |[throw]| 1 \& \& |[throw]| 1 \& \& |[throw]| 1 \& \&|[throw]| 1 \& \& |[throw]| 1 \& \& |[throw]| 1 \& \& \cdots \\
	\cdots \& \& \C_{1,0} \& \& \C_{2,1} \& \& \C_{3,2} \& \&\C_{4,3} \& \&\C_{5,4} \& \&\C_{6,5} \& \&\C_{7,6} \&  \\
	 \&\C_{1,-1} \& \&\C_{2,0} \& \&\C_{3,1} \& \&\C_{4,2} \& \&\C_{5,3} \& \&\C_{6,4} \& \&\C_{7,5} \& \& \cdots \\
	\cdots \& \&\C_{2,-1} \& \&\C_{3,0} \& \&\C_{4,1} \& \&\C_{5,2} \& \&\C_{6,3} \& \&\C_{7,4} \& \&\C_{8,5} \&  \\
	 \& \vvdots \& \& \vvdots \& \& \vvdots \& \& \vvdots \& \& \vvdots \& \& \vvdots \& \& \vvdots \& \&  \\
	 \& \&  \& \&  \& \&  \& \&  \& \&  \& \&  \& \&  \&  \\
	};
	
	\draw[dark red, fill= dark red!50,opacity=.25,rounded corners] (M-1-2.center) -- (M-9-10.center) -- (M-9-12.center) -- (M-1-4.center) -- cycle;
	\draw[dark blue, fill= dark blue!50,opacity=.25,rounded corners] (M-1-14.center) -- (M-9-6.center) -- (M-9-4.center) -- (M-1-12.center) -- cycle;
	\draw[dark green, fill= dark green!50,opacity=.25,rounded corners] (-8,.35) rectangle (8.75,-.35);
	\end{scope}
	\node[dark red] at (M-9-11) {\scriptsize $2$nd column};
	\node[dark blue] at (M-9-5) {\scriptsize $4$th row};
	\node[dark green,right] at (6.5,0) {\scriptsize Main diagonal};

\end{tikzpicture}
\]
Any omitted entries (including those above the main diagonal) are interpreted as $0$.
\end{conv}

\begin{ex}\label{ex: fib}
The Fibonacci recurrence \eqref{eq: fib} corresponds to the recurrence matrix:
\[ 
\begin{tikzpicture}[baseline=(current bounding box.center),
	ampersand replacement=\&,
	]
	\matrix[matrix of math nodes,
		matrix anchor = M-4-8.center,
		throw/.style={},
		origin/.style={},
		nodes in empty cells,
		inner sep=0pt,
		nodes={anchor=center,rotate=45},
		column sep={.5cm,between origins},
		row sep={.5cm,between origins},
	] (M) at (0,0) {
	 \& |[throw]| 1 \& \& |[throw]| 1 \& \& |[throw]| 1 \& \& |[origin]| 1 \& \&|[throw]| 1 \& \& |[throw]| 1 \& \& |[throw]| 1 \& \& \cdots \\
	\cdots \& \& -1 \& \& -1 \& \& -1 \& \& -1 \& \& -1 \& \& -1 \& \& -1 \&  \\
	 \& -1 \& \& -1 \& \& -1 \& \& -1 \& \& -1 \& \& -1 \& \& -1 \& \& \cdots \\
	};
\end{tikzpicture}
\]
The linear recurrence \eqref{eq: linrec} corresponds to the recurrence matrix:
\[ 
\begin{tikzpicture}[baseline=(current bounding box.center),
	ampersand replacement=\&,
	]
	\matrix[matrix of math nodes,
		matrix anchor = M-4-8.center,
		throw/.style={},
		origin/.style={},
		faded/.style={black!25},
		nodes in empty cells,
		inner sep=0pt,
		nodes={anchor=center,rotate=45},
		column sep={.5cm,between origins},
		row sep={.5cm,between origins},
	] (M) at (0,0) {
	 \& |[throw]| 1 \& \& |[throw]| 1 \& \& |[throw]| 1 \& \& |[origin]| 1 \& \&|[throw]| 1 \& \& |[throw]| 1 \& \& |[throw]| 1 \& \& \cdots \\
	\cdots \& \& -1 \& \& -2 \& \& -1 \& \& -2 \& \& -1 \& \& -2 \& \& -1 \&  \\
	 \& 1 \& \& 1 \& \& 1 \& \& 1 \& \& 1 \& \& 1 \& \& 1 \& \& \cdots \\
	\cdots \& \& |[faded]| 0 \& \& |[faded]| 0 \& \& |[faded]| 0 \& \& |[faded]| 0 \& \& |[faded]| 0 \& \& |[faded]| 0 \& \& |[faded]| 0 \&  \\
	 \& -1 \& \& |[faded]| 0 \& \& -1 \& \& |[faded]| 0 \& \& -1 \& \& |[faded]| 0 \& \& -1 \& \& \cdots \\
	};
\end{tikzpicture}
\]
The zeroes in grey could have been suppressed.
\end{ex}

%\begin{ex}\label{ex: nonred}
%The following linear recurrence
%\[ x_i = \left\{ \begin{array}{cc}
%x_{i-1}-x_{i-2}+x_{i-3} & \text{if } i\equiv 0\text{ (mod 3)} \\
%x_{i-3} & \text{if } i\equiv 1\text{ (mod 3)} \\
%x_{i-1} & \text{if } i\equiv 2\text{ (mod 3)} \\
%\end{array}\right\} \]
%corresponds to the following matrix.
%\[ 
%\begin{tikzpicture}[
%%	scale=.7,
%	nodes={anchor=center},
%	ascending/.style={blue,opacity=.5},
%	descending/.style={red,opacity=.5},
%	]
%%	\clip[use as bounding box] (-6.30,-1.8) rectangle (5.80,.35);
%	\matrix[matrix of math nodes,
%		throw/.style={ggreen,draw,circle,inner sep=0mm,minimum size=5mm},
%		matrix anchor = M-1-9.west,
%		nodes in empty cells,
%		nodes={anchor=center},
%		column sep={.55cm,between origins},
%		row sep={.55cm,between origins},
%	] (M) at (0,0) {
%	\cdots & |[throw]| 1 & & |[throw]| 1 & & |[throw]| 1 & & |[throw,double]| 1 & &|[throw]| 1 & & |[throw]| 1 & & |[throw]| 1 & & \cdots \\
%	\cdots & & 0 & & -1 & & -1 & & 0 & & -1 & & -1 & & 0 & \cdots \\
%	\cdots & 0 & &  & & 1 & & 0 & &  & & 1 & & 0 & & \cdots \\
%	\cdots & &  & & -1 & & -1 & &  & & -1 & & -1 & &  & \cdots \\
%%	\cdots & & c_{2,-1} & & c_{3,0} & & c_{4,1} & & c_{5,2} & & c_{6,3} & & c_{7,4} & & c_{8,5} & \cdots \\
%	};
%\end{tikzpicture}\qedhere
%\]
%\end{ex}

\begin{warn}{\marginnote{\dbend}[.05cm]}
The reader should keep the following two pathologies in mind when working with infinite matrices and vectors.
\begin{itemize}
	\item The product of an arbitrary $\mathbb{Z}\times \mathbb{Z}$-matrix with a $\mathbb{Z}$-vector or another $\mathbb{Z}\times\mathbb{Z}$-matrix may involve summing an infinite number of non-zero terms in $\k$. When this occurs, we simply say \bad{the product does not exist}.\footnote{While one might allow infinite sums that absolutely converge in some topology on $\k$, we won't consider this.}
	\item Even when all constituent products exist, \bad{multiplication may not be associative}. The corresponding entries of $(\mathsf{AB})\mathsf{C}$ and $\mathsf{A}(\mathsf{BC})$ are defined by double infinite sums which coincide except for the order of summation, which cannot be exchanged in general. %See ??? for an example which fails to associate.
\end{itemize}

%\marginpar{test}

Mercifully, the product $\mathsf{AC}$ exists and the equality $(\mathsf{AB})\mathsf{C}=\mathsf{A}(\mathsf{BC})$ holds whenever (a) $\mathsf{A}$, $\mathsf{B}$, and $\mathsf{C}$ are lower unitriangular, or (b) $\mathsf{A}$ and $\mathsf{B}$ are horizontally bounded and $\mathsf{C}$ is any matrix or vector. We will make extensive use of both of these special cases.
% 
%Mercifully, products always exist and associate when the $\ZZ\times \ZZ$-matrices involved are all either (a) lower unitriangular, or (b) horizontally bounded.
%Additionally, a horizontally bounded $\ZZ\times \ZZ$-matrix may be multiplied by any $\ZZ\times\ZZ$-matrix or $\ZZ$-vector.
%
%However, a horizontally bounded matrix can always be multiplied by a vector, and two lower unitriangular matrices can always be multiplied together.
\end{warn}

\subsection{Reduced recurrence matrices}

As we are primarily interested in the kernel of a recurrence matrix, let us say...
\begin{itemize}
	\item ...a recurrence matrix is \textbf{trivial} if its kernel is trivial (i.e.~only the zero vector), and...
	\item ...two recurrence matrices are \textbf{equivalent} if their kernels are equal.
\end{itemize}
For example, a recurrence matrix is trivial if and only if it is equivalent to the $\ZZ\times \ZZ$ identity matrix, whose kernel is clearly $\{0\}$. This example can be extended to the following theorem.\footnote{All theorems stated in Sections 2\--4 are proven in a later section; e.g. Theorem \ref{thm: triv-equiv} is proven in Section 5.}

\begin{prethm}{\ref{thm: triv-equiv}}
Two recurrence matrices $\C$ and $\C'$ are equivalent if and only $\C'=\D\C$ for some trivial recurrence matrix $\D$.
\end{prethm}

We would like to characterize a representative of each equivalence class which is `minimal' in some sense. In finite linear algebra, this is accomplished by {row reducing} a matrix, but translating row reduction into our current setting runs into several issues.
\begin{itemize}
	\item Only one type of elementary row operation can send recurrence matrices to recurrence matrices; specifically, adding a multiple of one row to a lower row.
	\item There is no `upper left corner' in which to start an elimination algorithm.
	\item Sequences of row operations need not terminate, and two infinite sequences of row operations may have different limits (in an appropriate topology, see Section \ref{section: limits}).
\end{itemize}
%At first, this seems simpler in our setting. The only row operation which preserves the lower unitriangular condition is adding a multiple of one row to a row below it.
%
%%FIGURE
%
%In contrast with the finite case, there is no natural place to start a row reduction algorithm, nor would such an algorithm terminate. It is even possible to find two sequences of row operations on the same matrix which converge termwise to different limits.

Nevertheless, we may characterize those recurrence matrices which cannot be row reduced any further. A recurrence matrix is \textbf{reduced} if the first non-zero entry in each row (called the \textbf{pivot} entry in that row) is the last non-zero entry in its column. %Since row reduction kills non-zero entries below pivot entries, a reduced recurrence matrix cannot be row reduced any further.

\begin{ex}\label{ex: pivots}
The following recurrence matrix is reduced.
\[
\begin{tikzpicture}[
	nodes={anchor=center},
	ascending/.style={blue,opacity=.5},
	descending/.style={red,opacity=.5},
	]
	\clip[use as bounding box] (-6.30,-1.8) rectangle (5.80,.35);
	\matrix[matrix of math nodes,
%		throw/.style={ggreen,draw,circle,inner sep=0mm,minimum size=5mm},
		throw/.style={},
		pivot/.style={blue,draw,circle,inner sep=0mm,minimum size=5mm},
		matrix anchor = M-1-9.west,
		nodes in empty cells,
		nodes={anchor=center,rotate=45},
		column sep={.5cm,between origins},
		row sep={.5cm,between origins},
	] (M) at (0,0) {
	 & |[throw]| 1 & & |[throw]| 1 & & |[throw]| 1 & & |[pivot]| 1 & &|[throw]| 1 & & |[throw]| 1 & & |[throw]| 1 & & \cdots \\
	\cdots & & 1 & & 1 & &  & &  & & |[pivot]| 1 & & |[pivot]| 1 & & |[pivot]| 1 &  \\
	 & |[pivot]| 1 & & |[pivot]| 1 & &  & & |[pivot]| 1 & &  & &  & &  & & \cdots \\
	\cdots & &  & &  & &  & &  & &  & &  & &  &  \\
%	\cdots & & c_{2,-1} & & c_{3,0} & & c_{4,1} & & c_{5,2} & & c_{6,3} & & c_{7,4} & & c_{8,5} & \cdots \\
	};
	\draw[blue,dashed,->] (M-3-2) to (M-4-3.center);
	\draw[blue,dashed,->] (M-3-4) to (M-4-5.center);
	\draw[blue,dashed,->] (M-3-8) to (M-4-9.center);
	\draw[blue,dashed,->] (M-1-8) to (M-4-11.center);
	\draw[blue,dashed,->] (M-2-11) to (M-4-13.center);
	\draw[blue,dashed,->] (M-2-13) to (M-4-15.center);
%	\draw[blue,dashed,->] (M-2-15) to (M-4-17.center);
\end{tikzpicture}
\]
Below each pivot (in \textcolor{blue}{blue circles}) there are only zeroes (along the dashed arrows).
%Reducedness can be quickly checked by verifying there are no entries below (i.e. in the southeast direction as drawn) each pivot (denoted by \textcolor{blue}{blue circles}). 
\end{ex}

%Out first result is the following.

\begin{prethm}{\ref{thm: reduced1}}
Every recurrence matrix is equivalent to a unique reduced recurrence matrix.
\end{prethm}

We provide two independent proofs of this fact. The proof in Section \ref{section: GZ} uses Zorn's Lemma to show that the the reduced matrix the limit of all `generalized row reductions'. The proof in Section \ref{section: rank} (Theorem \ref{thm: reduced2}) constructs the reduced matrix directly from the space of solutions.
%
%\begin{rem}
%As a corollary of the theorem, the shape of the equivalent reduced recurrence matrix is a well-defined invariant of a linear recurrence. Section \ref{section: juggling} shows how this shape may be regarded as a `juggling pattern' which encodes the geometry of the solution space.
%\end{rem}

%\begin{rem}
%The theorem guarantees that, regardless of the order of operations, row reduction can terminate in at most one way. The reduced representative can often be found by an infinite sequence of reductions, provided they are not chosen too poorly.
%\end{rem}

%\begin{rem}
%This may be regarded as a weak form of Gaussian elimination, every recurrence matrix is equivalent to a unique recurrence matrix which cannot be row reduced.
%\end{rem}

\section{Constructing the solutions}

In this section, we construct all elements of the kernel of a reduced recurrence matrix, and thus all solutions to a reduced linear recurrence.

\begin{nota}
For $a\leq b\in \ZZ$, let $[a,b]\coloneqq\{a,a+1,...,b\}\subset \ZZ$. For a $\ZZ\times\ZZ$-matrix $\C$ and two sets $I,J\subset \ZZ$, let $\C_{I,J}$ denote the submatrix on row set $I$ and column set $J$.
\end{nota}

\subsection{Adjugates}
%The key ingredient in our construction is a generalization of the \emph{adjugate} of a finite square matrix to lower unitriangular $\ZZ\times\ZZ$-matrices.
Given a lower unitriangular $\ZZ \times \ZZ$-matrix $\C$, define the \textbf{adjugate} $\adj(\C)$ of $\C$ to be 
%The \textbf{adjugate} $\adj(\mathsf{C})$ of a lower unitriangular $\mathbb{Z}\times\mathbb{Z}$-matrix $\mathsf{C}$ is 
the lower unitriangular $\mathbb{Z}\times\mathbb{Z}$-matrix whose subdiagonal entries are defined by
\[ \adj(\mathsf{C})_{a,b} \coloneqq (-1)^{a+b}\mathrm{det}(\mathsf{C}_{[b+1,a],[b,a-1]}) \]
Three examples of adjugates are given in Figure \ref{fig: adjsol}.
%= \mathrm{det}(\overline{\C}_{[a+1,b],[a,b-1]}) \]
%Here, $\overline{\C}$ is the matrix with entries $\overline{c}_{a,b} := (-1)^{b-a}c_{a,b}$.\footnote{A generally useful lemma is that $\det(\overline{\C}_{I,J}) =(-1)^{\sum I+\sum J}\det(\C_{I,J})$.}
\begin{rem}
The determinant of the finite matrix $\mathsf{C}_{[b+1,a],[b,a-1]}$ coincides with the determinant\footnote{While general $\ZZ\times \ZZ$-matrices may not have well-defined determinants, the cofactor matrices of lower unitriangular matrices are lower unitriangular outside of a finite square, and thus have a well-defined determinant.} of the infinite cofactor matrix $\C_{\ZZ\smallsetminus \{b\},\ZZ\smallsetminus \{a\}}$, justifying the name `adjugate'. %While general infinite matrices may not have well-defined determinants, the cofactor matrices of lower unitriangular matrices are lower unitriangular outside a finite interval, and thus have a determinant.
%If $\C4 
\end{rem}

Like its finite matrix counterpart, the adjugate matrix has many useful properties.
\begin{prop}\label{prop: adjprops}
Let $\C$ and $\mathsf{D}$ be lower unitriangular $\mathbb{Z}\times \mathbb{Z}$-matrices.
\begin{enumerate}
	\item $\adj(\C)\C =\C\adj(\C) = \mathrm{Id}$.%\footnote{We refrain from calling $\mathrm{adj}(\C)$ the `inverse' of $\C$, as we will often consider these matrices in situations where not all products are well-defined, in which case the inverse notation can be misleading.}
	\item $\adj(\adj(\C))=\C$.
	\item $\adj(\C\mathsf{D}) = \adj(\mathsf{D})\adj(\C)$.
	\item For $I,J\subset [a,b]$ with $|I|=|J|$,  $\det(\adj(\C)_{I,J}) = (-1)^{\sum I+\sum J}\det({\C}_{[a,b]\smallsetminus J,[a,b]\smallsetminus I}) $.
%	\[ \mathrm{det}(\mathrm{adj}(\C)_{I,J}) = \mathrm{det}(\overline{\C}_{[a,b]\smallsetminus J,[a,b]\smallsetminus I})
%	= (-1)^{\sum I+\sum J} \mathrm{det}({\C}_{[a,b]\smallsetminus J,[a,b]\smallsetminus I}) \]
\end{enumerate}
\end{prop}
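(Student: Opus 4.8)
The plan is to reduce every assertion to a finite-dimensional statement by exploiting that, although $\C$ and $\adj(\C)$ are $\ZZ\times\ZZ$-matrices, all the sums, products and minors occurring here are secretly finite. The key preliminary observation is a \emph{localization principle}: for $a\geq b$ the entry $\adj(\C)_{a,b}$ is built from a minor of $\C$ supported on rows and columns lying in $[b,a]$, so it depends only on the finite window $\C_{[p,q],[p,q]}$ for any interval $[p,q]\supseteq[b,a]$, and coincides with the corresponding entry of the classical adjugate of that finite window. I would justify window-independence directly: the inverse $\mathsf{X}$ of a finite lower unitriangular matrix is forced by the triangular recursion $\mathsf{X}_{b,c}=\delta_{b,c}-\sum_{c\leq e<b}\C_{b,e}\mathsf{X}_{e,c}$, which only references indices in $[c,b]$, so enlarging the window changes nothing. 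Since every such window has determinant $1$, the finite adjugate and the finite inverse agree on it.

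For part (1), fix $a\geq c$ (the cases $a<c$ and $a=c$ being immediate from lower unitriangularity). The entry $(\adj(\C)\C)_{a,c}=\sum_b \adj(\C)_{a,b}\C_{b,c}$ runs only over $c\leq b\leq a$, so by localization it equals the matching entry of $\adj(M)M$ for the finite matrix $M\coloneqq\C_{[c,a],[c,a]}$; the classical identity $\adj(M)M=\det(M)\,\mathrm{Id}=\mathrm{Id}$ then yields $\delta_{a,c}$. Running the same argument with $M\adj(M)$ gives $\C\adj(\C)=\mathrm{Id}$. All of these products exist and the entrywise manipulations are legitimate by the existence/associativity guarantees for lower unitriangular matrices recorded in the Warning.

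With (1) in hand, parts (2) and (3) become pure group theory. The lower unitriangular $\ZZ\times\ZZ$-matrices are closed under multiplication (a finite, triangular, unipotent computation), multiplication is associative on them (Warning, case (a)), and (1) supplies two-sided inverses with $\adj(\C)=\C^{-1}$; hence they form a group. Therefore $\adj(\adj(\C))=(\C^{-1})^{-1}=\C$, which is (2), and $\adj(\C\D)=(\C\D)^{-1}=\D^{-1}\C^{-1}=\adj(\D)\adj(\C)$, which is (3), using only that inversion is an involutive anti-automorphism and that inverses are unique in a group.

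Part (4) is the subtlest, and is where I expect the genuine work: it is the infinite-matrix form of \emph{Jacobi's theorem} on complementary minors of the inverse. I would again localize, taking $M\coloneqq\C_{[a,b],[a,b]}$ so that $\adj(\C)_{I,J}=(M^{-1})_{I,J}$ and $\det M=1$, and then apply the finite Jacobi identity, which states $\det\big((M^{-1})_{I,J}\big)=(-1)^{\sum I+\sum J}\det\big(M_{[a,b]\smallsetminus J,\,[a,b]\smallsetminus I}\big)/\det(M)$, landing exactly on the claimed formula. The two obstacles I anticipate are (i) pinning the sign $(-1)^{\sum I+\sum J}$ down against the $\ZZ$-indexing convention used here, where the complements are taken inside $[a,b]$ rather than in $\{1,\dots,n\}$, and (ii) ensuring the localization is valid for the entire block $\adj(\C)_{I,J}$ simultaneously, not merely for diagonal entries — but this again reduces to the window-independence of inverse entries established at the outset. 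If a self-contained proof is preferred to citing Jacobi, I would derive it from (1) and (3) by the standard block-triangularization/Schur-complement manipulation, or by expanding $\det(\adj(\C)_{I,J})$ via Cauchy–Binet against the relation $\adj(\C)\C=\mathrm{Id}$.
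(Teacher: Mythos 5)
Your proposal is correct and is essentially the paper's argument: the paper's entire proof is the observation that each entry of $\adj(\C)$ depends only on a finite submatrix of $\C$, so all four claims follow from their finite-matrix counterparts (with (4) obtained by passing to $\C_{[a,b],[a,b]}$, i.e.\ the finite Jacobi identity on complementary minors). You have simply spelled out the localization principle, the group-theoretic deduction of (2) and (3) from (1), and the sign bookkeeping that the paper leaves implicit; the index-shift worry in (4) is harmless since $|I|=|J|$ makes the sign invariant under translating $[a,b]$.
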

%\noindent Consequently, the set of lower unitriangular $\mathbb{Z}\times\mathbb{Z}$-matrices forms a group.
\begin{proof}
Since each entry of $\adj(\C)$ only depends on a finite submatrix of $\C$, these follow from their finite matrix counterparts. E.g.~(4) follows by considering the submatrix $\C_{[a,b],[a,b]}$.
\end{proof}

The first three parts of the prior proposition can be rephrased as follows.

\begin{prop}
The lower unitriangular $\ZZ \times \ZZ$-matrices form a group with inverse $\adj$.
\end{prop}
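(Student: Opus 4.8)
The plan is to verify the four group axioms for the set of lower unitriangular $\ZZ\times\ZZ$-matrices under matrix multiplication, drawing the nontrivial inputs from Proposition \ref{prop: adjprops} and the Warning. The only genuine subtlety is that multiplication of infinite matrices may fail to exist or to be associative in general, so at each step I must confirm that I am in a regime where the Warning guarantees good behavior, namely case (a) in which all factors are lower unitriangular.

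First I would establish closure. Given lower unitriangular $\C$ and $\D$, the $(a,b)$ entry of the product is $\sum_c \C_{a,c}\D_{c,b}$; since $\C_{a,c}=0$ for $c>a$ and $\D_{c,b}=0$ for $c<b$, the sum ranges only over $b\le c\le a$ and is therefore finite, so the product exists. When $a<b$ this index range is empty, giving $(\C\D)_{a,b}=0$, and when $a=b$ the only surviving term is $\C_{a,a}\D_{a,a}=1$; hence $\C\D$ is again lower unitriangular. Associativity within this set is precisely case (a) of the Warning, and the $\ZZ\times\ZZ$ identity matrix $\mathrm{Id}$ is lower unitriangular and serves as a two-sided identity.

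For inverses, observe that $\adj(\C)$ is lower unitriangular by construction, and Proposition \ref{prop: adjprops}(1) gives $\adj(\C)\C=\C\adj(\C)=\mathrm{Id}$, so every element has a two-sided inverse lying in the set. This completes the verification that we have a group. Since inverses in a group are unique, the inverse map is forced to be $\adj$, which is the content of the phrase ``with inverse $\adj$''. Parts (2) and (3) of Proposition \ref{prop: adjprops} are then automatically consistent with this structure: (2) records that the inverse map is an involution, $(\C^{-1})^{-1}=\C$, while (3) records the standard anti-homomorphism identity $(\C\D)^{-1}=\D^{-1}\C^{-1}$.

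I expect no serious obstacle, as the proposition is essentially a repackaging of Proposition \ref{prop: adjprops}. The one point requiring care is the bookkeeping about existence and associativity of products of infinite matrices, which is handled uniformly by restricting attention to the lower unitriangular case covered by the Warning.
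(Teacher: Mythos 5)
Your proof is correct and follows essentially the same route as the paper, which presents this proposition as a direct rephrasing of Proposition \ref{prop: adjprops}(1)--(3) together with case (a) of the Warning for associativity. The only difference is that you spell out the closure computation explicitly, which the paper leaves implicit; that detail is accurate and harmless.
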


\begin{warn}
{\marginnote{\dbend}[.05cm]}
Since multiplication is not always associative, \bad{inverses may not be unique} in the larger set of $\ZZ\times \ZZ$-matrices (see Remark \ref{rem: threeinverses}), and so we write $\adj(\C)$ instead of $\C^{-1}$.
%e avoid writing $\C^{-1}$.
%$\adj(\C)$ as `the inverse of $\C$'.
\end{warn}

The recurrence matrices do not form a subgroup of the lower unitriangular matrices, as they are not closed under $\adj$. In fact, this only holds in uninteresting cases: the only recurrence matrices whose adjugate is also a recurrence matrix are the trivial ones.
%Our first application of adjugates is a characterization of trivial recurrence matrices.
\begin{prethm}{\ref{thm: triv-inv}}
A recurrence matrix $\C$ is trivial if and only if $\adj(\C)$ is a recurrence matrix.
\end{prethm}

\noindent Since $\adj(\C)$ is lower unitriangular by construction, $\C$ is trivial if and only if $\adj(\C)$ is horizontally bounded.

\begin{rem}
The recurrence matrices form a semigroup whose subgroup of invertible elements is the set of trivial recurrence matrices (by Theorem \ref{thm: triv-inv}). The left orbits of this subgroup on the set of recurrence matrices are the equivalence classes (by Theorem \ref{thm: triv-equiv}), and the reduced recurrence matrices are a transverse of these orbits (by Theorem \ref{thm: reduced1}).
%
%Letting $\mathcal{RM}\supset \mathcal{TRM}$ denote the set of recurrence matrices and the subset of trivial ones, $\mathcal{TRM}$ is a subgroup of the semigroup $\mathcal{RM}$ and the equivalence classes in $\mathcal{RM}$ are precisely the left $\mathcal{TRM}$-orbits.
\end{rem}

\begin{figure}[h!!]
\begin{tabular}{|c||c||c|}
\hline 
\multicolumn{3}{|c|}{Reduced recurrence matrices} \\
\hline
\begin{tikzpicture}[baseline=(current bounding box.center),
	ampersand replacement=\&,
	]
	\clip[use as bounding box] (-2.25,.3) rectangle (2.55,-.9);
	\matrix[matrix of math nodes,
		matrix anchor = M-4-8.center,
%		throw/.style={dark green,draw,circle,inner sep=0.25mm,minimum size=2mm},
%		origin/.style={dark green,draw,circle,inner sep=0.25mm,minimum size=2mm},
		origin/.style={},
		throw/.style={},
		nodes in empty cells,
		inner sep=0pt,
		nodes={anchor=center,node font=\scriptsize,rotate=45},
		column sep={.3cm,between origins},
		row sep={.3cm,between origins},
	] (M) at (0,0) {
	 \& \&  \& \&  \& \&  \& \&  \& \&  \& \&  \& \&  \&  \\
	 \& \&  \& \&  \& \&  \& \&  \& \&  \& \&  \& \&  \&  \\
	 \& \&  \& \&  \& \&  \& \&  \& \&  \& \&  \& \&  \&  \\
	 \& |[throw]| 1 \& \& |[throw]| 1 \& \& |[throw]| 1 \& \& |[origin]| 1 \& \&|[throw]| 1 \& \& |[throw]| 1 \& \& |[throw]| 1 \& \& \cdots \\
	\cdots \& \& -1 \& \& -1 \& \& -1 \& \& -1 \& \& -1 \& \& -1 \& \& -1 \&  \\
	 \& -1 \& \& -1 \& \& -1 \& \& -1 \& \& -1 \& \& -1 \& \& -1 \& \& \cdots \\
	 \& \&  \& \&  \& \&  \& \&  \& \&  \& \&  \& \&  \&  \\
	 \& \&  \& \&  \& \&  \& \&  \& \&  \& \&  \& \&  \&  \\
	 \& \&  \& \&  \& \&  \& \&  \& \&  \& \&  \& \&  \&  \\
	};
	
%	\draw[dark red, fill= dark red!50,opacity=.25,rounded corners] (M-5-2.center) -- (M-4-3.center) -- (M-5-4.center) -- (M-6-3.center) -- cycle;
%	\draw[dark blue, fill= dark blue!50,opacity=.25,rounded corners] (M-5-4.center) -- (M-2-7.center) -- (M-5-10.center) -- (M-8-7.center) -- cycle;
%	\draw[dark purple, fill= dark purple!50,opacity=.25,rounded corners] (M-5-8.center) -- (M-1-12.center) -- (M-5-16.center) -- (M-9-12.center) -- cycle;
\end{tikzpicture}
&
\begin{tikzpicture}[baseline=(current bounding box.center),
	ampersand replacement=\&,
	]
	\clip[use as bounding box] (-2.25,.3) rectangle (2.55,-.9);
%	\clip[use as bounding box] (-2.25,.2) rectangle (2.55,-1.2);
	\matrix[matrix of math nodes,
		matrix anchor = M-4-8.center,
%		throw/.style={dark green,draw,circle,inner sep=0.25mm,minimum size=2mm},
%		origin/.style={dark green,draw,circle,inner sep=0.25mm,minimum size=2mm},
		origin/.style={},
		throw/.style={},
		nodes in empty cells,
		inner sep=0pt,
		nodes={anchor=center,node font=\scriptsize,rotate=45},
		column sep={.3cm,between origins},
		row sep={.3cm,between origins},
	] (M) at (0,0) {
	 \& \&  \& \&  \& \&  \& \&  \& \&  \& \&  \& \&  \&  \\
	 \& \&  \& \&  \& \&  \& \&  \& \&  \& \&  \& \&  \&  \\
	 \& \&  \& \&  \& \&  \& \&  \& \&  \& \&  \& \&  \&  \\
	 \& |[throw]| 1 \& \& |[throw]| 1 \& \& |[throw]| 1 \& \& |[origin]| 1 \& \&|[throw]| 1 \& \& |[throw]| 1 \& \& |[throw]| 1 \& \& \cdots \\
	\cdots \& \& -2 \& \& -1 \& \& -2 \& \& -1 \& \& -2 \& \& -1 \& \& -2 \&  \\
	 \& 1 \& \& 1 \& \& 1 \& \& 1 \& \& 1 \& \& 1 \& \& 1 \& \& \cdots \\
	 \& \&  \& \&  \& \&  \& \&  \& \&  \& \&  \& \&  \&  \\
	 \& \&  \& \&  \& \&  \& \&  \& \&  \& \&  \& \&  \&  \\
	 \& \&  \& \&  \& \&  \& \&  \& \&  \& \&  \& \&  \&  \\
	};
\end{tikzpicture}
&
\begin{tikzpicture}[baseline=(current bounding box.center),
	ampersand replacement=\&,
	]
	\clip[use as bounding box] (-2.25,.3) rectangle (2.55,-.9);
	\matrix[matrix of math nodes,
		matrix anchor = M-4-8.center,
%		throw/.style={dark green,draw,circle,inner sep=0.25mm,minimum size=2mm},
%		origin/.style={dark green,draw,circle,inner sep=0.25mm,minimum size=2mm},
		origin/.style={},
		throw/.style={},
		pivot/.style={},
		nodes in empty cells,
		inner sep=0pt,
		nodes={anchor=center,node font=\scriptsize,rotate=45},
		column sep={.3cm,between origins},
		row sep={.3cm,between origins},
	] (M) at (0,0) {
	 \& \&  \& \&  \& \&  \& \&  \& \&  \& \&  \& \&  \&  \\
	 \& \&  \& \&  \& \&  \& \&  \& \&  \& \&  \& \&  \&  \\
	 \& \&  \& \&  \& \&  \& \&  \& \&  \& \&  \& \&  \&  \\
	 \& |[throw]| 1 \& \& |[throw]| 1 \& \& |[throw]| 1 \& \& |[origin]| 1 \& \&|[pivot]| 1 \& \& |[throw]| 1 \& \& |[throw]| 1 \& \& \cdots \\
	\cdots \& \& -1 \& \& -1 \& \& -1 \& \&  \& \&  \& \& |[pivot]|  -1 \& \& |[pivot]| -1 \& \\
	 \& |[pivot]| -1 \&  \& |[pivot]| -1 \& \& |[pivot]| -1 \& \&  \& \& |[pivot]| -1 \& \& \& \&  \& \& \cdots \\
	 \& \&  \& \&  \& \&  \& \& \&  \&  \& \&  \& \&  \&  \\
	 \& \&  \& \&  \& \&  \& \&  \& \&  \& \&  \& \&  \&  \\
	 \& \&  \& \&  \& \&  \& \&  \& \&  \& \&  \& \&  \&  \\
	};
\end{tikzpicture}
\\
\hline
\multicolumn{3}{|c|}{Adjugate matrices} \\
%\end{tabular}
%
%The adjugates of these matrices are unbounded, and depicted below.
%
%\begin{tabular}{|c||c||c|}
\hline
\begin{tikzpicture}[baseline=(current bounding box.center),
	ampersand replacement=\&,
	]
	\clip[use as bounding box] (-2.25,.3) rectangle (2.55,-2.4);
	\matrix[matrix of math nodes,
		matrix anchor = M-1-8.center,
%		throw/.style={dark green,draw,circle,inner sep=0.25mm,minimum size=2mm},
%		origin/.style={dark green,draw,circle,inner sep=0.25mm,minimum size=2mm},
		origin/.style={},
		throw/.style={},
		pivot/.style={draw,circle,inner sep=0.25mm,minimum size=2mm},		
		nodes in empty cells,
		inner sep=0pt,
		nodes={anchor=center,node font=\scriptsize,rotate=45},
		column sep={.3cm,between origins},
		row sep={.3cm,between origins},
	] (M) at (0,0) {
	 \& |[throw]| 1 \& \& |[throw]| 1 \& \& |[throw]| 1 \& \& |[origin]| 1 \& \&|[throw]| 1 \& \& |[throw]| 1 \& \& |[throw]| 1 \& \& \cdots \\
	\cdots \& \& 1 \& \& 1 \& \& 1 \& \& 1 \& \& 1 \& \& 1 \& \& 1 \&  \\
	 \& 2 \& \& 2 \& \& 2 \& \& 2 \& \& 2 \& \& 2 \& \& 2 \& \& \cdots \\
	\cdots \& \& 3 \& \& 3 \& \& 3 \& \& 3 \& \& 3 \& \& 3 \& \& 3 \&  \\
	 \& 5 \& \& 5 \& \& 5 \& \& 5 \& \& 5 \& \& 5 \& \& 5 \& \& \cdots \\
	\cdots \& \& 8 \& \& 8 \& \& 8 \& \& 8 \& \& 8 \& \& 8 \& \& 8 \&  \\
	 \& 13 \& \& 13 \& \& 13 \& \& 13 \& \& 13 \& \& 13 \& \& 13 \& \& \cdots \\
	\cdots \& \& \cdots \& \& \cdots \& \& \cdots \& \& \cdots \& \& \cdots \& \& \cdots \& \& \cdots \&  \\
	};
\end{tikzpicture}
&
\begin{tikzpicture}[baseline=(current bounding box.center),
	ampersand replacement=\&,
	]
	\clip[use as bounding box] (-2.25,.3) rectangle (2.55,-2.4);
	\matrix[matrix of math nodes,
		matrix anchor = M-1-8.center,
%		throw/.style={dark green,draw,circle,inner sep=0.25mm,minimum size=2mm},
%		origin/.style={dark green,draw,circle,inner sep=0.25mm,minimum size=2mm},
		origin/.style={},
		throw/.style={},
		pivot/.style={draw,circle,inner sep=0.25mm,minimum size=2mm},		
		nodes in empty cells,
		inner sep=0pt,
		nodes={anchor=center,node font=\scriptsize,rotate=45},
		column sep={.3cm,between origins},
		row sep={.3cm,between origins},
	] (M) at (0,0) {
	 \& |[throw]| 1 \& \& |[throw]| 1 \& \& |[throw]| 1 \& \& |[origin]| 1 \& \& |[throw]| 1 \& \&|[throw]| 1 \& \& |[throw]| 1 \& \& \cdots \\
	\cdots \& \& 2 \& \& 1 \& \& 2 \& \& 1 \& \& 2 \& \& 1 \& \& 2 \&  \\
	 \& 1 \& \& 1 \& \& 1 \& \& 1 \& \& 1 \& \& 1 \& \& 1 \& \& \cdots \\
	\cdots \& \& 0 \& \& 0 \& \&0 \& \& 0 \& \& 0 \& \& 0 \& \& 0 \&  \\
	 \& -1 \& \& -1 \& \& -1 \& \& -1 \& \& -1 \& \& -1 \& \& -1 \& \& \cdots \\
	\cdots \& \& -2 \& \& -1 \& \& -2 \& \& -1 \& \& -2 \& \& -1 \& \& -2 \&  \\
	 \& -1 \& \& -1 \& \& -1 \& \& -1 \& \& -1 \& \& -1 \& \& -1 \& \& \cdots \\
	\cdots \& \& \cdots \& \& \cdots \& \& \cdots \& \& \cdots \& \& \cdots \& \& \cdots \& \& \cdots \& \\
	};
\end{tikzpicture}
&
\begin{tikzpicture}[baseline=(current bounding box.center),
	ampersand replacement=\&,
	]
	\clip[use as bounding box] (-2.25,.3) rectangle (2.55,-2.4);
	\matrix[matrix of math nodes,
		matrix anchor = M-1-8.center,
%		throw/.style={dark green,draw,circle,inner sep=0.25mm,minimum size=2mm},
%		origin/.style={dark green,draw,circle,inner sep=0.25mm,minimum size=2mm},
		origin/.style={},
		throw/.style={},
		pivot/.style={draw,circle,inner sep=0.25mm,minimum size=2mm},		
		nodes in empty cells,
		inner sep=0pt,
		nodes={anchor=center,node font=\scriptsize,rotate=45},
		column sep={.3cm,between origins},
		row sep={.3cm,between origins},
	] (M) at (0,0) {
	 \& |[throw]| 1 \& \& |[throw]| 1 \& \& |[throw]| 1 \& \& |[origin]| 1 \& \& |[throw,double]| 1 \& \&|[throw]| 1 \& \& |[throw]| 1 \& \& \cdots \\
	\cdots \& \& 1 \& \& 1 \& \& 1 \& \& 0 \& \& 0 \& \& 1 \& \& 1 \& \\
	\& 2 \& \& 2 \& \& 2 \& \& 0 \& \& 1 \& \& 0 \& \& 1 \& \& \cdots \\
	\cdots \& \& 3 \& \& 3 \& \& 0 \& \& 1 \& \& 1 \& \& 0 \& \& 1 \&   \\
	  \& 5 \& \& 5 \& \& 0 \& \& 2 \& \& 1 \& \& 1 \& \& 0 \& \&  \cdots \\
	\cdots \& \& 8 \& \& 0 \& \& 3 \& \& 2 \& \& 1 \& \& 1 \& \& 0 \& \\
	  \& 13 \& \& 0 \& \& 5 \& \& 3 \& \& 2 \& \& 1 \& \& 1 \& \&  \cdots \\
	\cdots \& \& \cdots \& \& \cdots \& \& \cdots \& \& \cdots \& \& \cdots \& \& \cdots \& \& \cdots \& \\
	};
\end{tikzpicture}
\\
\hline
\multicolumn{3}{|c|}{Solution matrices} \\
\hline
\begin{tikzpicture}[baseline=(current bounding box.center),
	ampersand replacement=\&,
	]
	\clip[use as bounding box] (-2.25,2.4) rectangle (2.55,-2.4);
	\matrix[matrix of math nodes,
		matrix anchor = M-8-8.center,
%		throw/.style={dark green,draw,circle,inner sep=0.25mm,minimum size=2mm},
%		origin/.style={dark green,draw,circle,inner sep=0.25mm,minimum size=2mm},
		origin/.style={},
		throw/.style={},
		pivot/.style={dark blue,draw,rounded rectangle,inner sep=0.25mm,minimum size=2mm},
		nodes in empty cells,
		inner sep=0pt,
		nodes={anchor=center,node font=\scriptsize,rotate=45},
		column sep={.3cm,between origins},
		row sep={.3cm,between origins},
	] (M) at (0,0) {
	 \& \& \cdots \& \& \cdots \& \& \cdots \& \& \cdots \& \& \cdots \& \& \cdots \& \& \cdots \&  \\
	 \& 5 \& \& 5 \& \& 5 \& \& 5 \& \& 5 \& \& 5 \& \& 5 \& \& \cdots \\
	\cdots \& \& -3 \& \& -3 \& \& -3 \& \& -3 \& \& -3 \& \& -3 \& \& -3 \&  \\
	 \& 2 \& \& 2 \& \& 2 \& \& 2 \& \& 2 \& \& 2 \& \& 2 \& \& \cdots \\
	\cdots \& \& -1 \& \& -1 \& \& -1 \& \& -1 \& \& -1 \& \& -1 \& \& -1 \&  \\
	 \& 1 \& \& 1 \& \& 1 \& \& 1 \& \& 1 \& \& 1 \& \& 1 \& \& \cdots \\
	\cdots \& \& 0 \& \& 0 \& \& 0 \& \& 0 \& \& 0 \& \& 0 \& \& 0 \&  \\
	 \& |[throw]| 1 \& \& |[throw]| 1 \& \& |[throw]| 1 \& \& |[origin]| 1 \& \&|[throw]| 1 \& \& |[throw]| 1 \& \& |[throw]| 1 \& \& \cdots \\
	\cdots \& \& 1 \& \& 1 \& \& 1 \& \& 1 \& \& 1 \& \& 1 \& \& 1 \&  \\
	 \& 2 \& \& 2 \& \& 2 \& \& 2 \& \& 2 \& \& 2 \& \& 2 \& \& \cdots \\
	\cdots \& \& 3 \& \& 3 \& \& 3 \& \& 3 \& \& 3 \& \& 3 \& \& 3 \&  \\
	 \& 5 \& \& 5 \& \& 5 \& \& 5 \& \& 5 \& \& 5 \& \& 5 \& \& \cdots \\
	\cdots \& \& 8 \& \& 8 \& \& 8 \& \& 8 \& \& 8 \& \& 8 \& \& 8 \&  \\
	 \& 13 \& \& 13 \& \& 13 \& \& 13 \& \& 13 \& \& 13 \& \& 13 \& \& \cdots \\
	\cdots \& \& \cdots \& \& \cdots \& \& \cdots \& \& \cdots \& \& \cdots \& \& \cdots \& \& \cdots \&  \\
	};

\end{tikzpicture}
&
\begin{tikzpicture}[baseline=(current bounding box.center),
	ampersand replacement=\&,
	]
	\clip[use as bounding box] (-2.25,2.4) rectangle (2.55,-2.4);
	\matrix[matrix of math nodes,
		matrix anchor = M-8-8.center,
%		throw/.style={dark green,draw,circle,inner sep=0.25mm,minimum size=2mm},
%		origin/.style={dark green,draw,circle,inner sep=0.25mm,minimum size=2mm},
		origin/.style={},
		throw/.style={},
		pivot/.style={dark blue,draw,rounded rectangle,inner sep=0.25mm,minimum size=2mm},
		nodes in empty cells,
		inner sep=0pt,
		nodes={anchor=center,node font=\scriptsize,rotate=45},
		column sep={.3cm,between origins},
		row sep={.3cm,between origins},
	] (M) at (0,0) {
	 \& \& \cdots \& \& \cdots \& \& \cdots \& \& \cdots \& \& \cdots \& \& \cdots \& \& \cdots \&  \\
	 \& 1 \& \& 1 \& \& 1 \& \& 1 \& \& 1 \& \& 1 \& \& 1 \& \& \cdots \\
	\cdots \& \& 0 \& \& 0 \& \& 0 \& \& 0 \& \& 0 \& \& 0 \& \& 0 \&  \\
	 \& -1 \& \& -1 \& \& -1 \& \& -1 \& \& -1 \& \& -1 \& \& -1 \& \& \cdots \\
	\cdots \& \& -2 \& \& -1 \& \& -2 \& \& -1 \& \& -2 \& \& -1 \& \& -2 \&  \\
	 \& -1 \& \& -1 \& \& -1 \& \& -1 \& \& -1 \& \& -1 \& \& -1 \& \& \cdots \\
	\cdots \& \& 0 \& \& 0 \& \& 0 \& \& 0 \& \& 0 \& \& 0 \& \& 0 \&  \\
	 \& |[throw]| 1 \& \& |[throw]| 1 \& \& |[throw]| 1 \& \& |[origin]| 1 \& \&|[throw]| 1 \& \& |[throw]| 1 \& \& |[throw]| 1 \& \& \cdots \\
	\cdots \& \& 2 \& \& 1 \& \& 2 \& \& 1 \& \& 2 \& \& 1 \& \& 2 \&  \\
	 \& 1 \& \& 1 \& \& 1 \& \& 1 \& \& 1 \& \& 1 \& \& 1 \& \& \cdots \\
	\cdots \& \& 0 \& \& 0 \& \&0 \& \& 0 \& \& 0 \& \& 0 \& \& 0 \&  \\
	 \& -1 \& \& -1 \& \& -1 \& \& -1 \& \& -1 \& \& -1 \& \& -1 \& \& \cdots \\
	\cdots \& \& -2 \& \& -1 \& \& -2 \& \& -1 \& \& -2 \& \& -1 \& \& -2 \&  \\
	 \& -1 \& \& -1 \& \& -1 \& \& -1 \& \& -1 \& \& -1 \& \& -1 \& \& \cdots \\
	\cdots \& \& \cdots \& \& \cdots \& \& \cdots \& \& \cdots \& \& \cdots \& \& \cdots \& \& \cdots \&  \\
	};

\end{tikzpicture}
&
\begin{tikzpicture}[baseline=(current bounding box.center),
	ampersand replacement=\&,
	]
	\clip[use as bounding box] (-2.25,2.4) rectangle (2.55,-2.4);
	\matrix[matrix of math nodes,
		matrix anchor = M-8-8.center,
%		throw/.style={dark green,draw,circle,inner sep=0.25mm,minimum size=2mm},
%		origin/.style={dark green,draw,circle,inner sep=0.25mm,minimum size=2mm},
		origin/.style={},
		throw/.style={},
		pivot/.style={dark blue,draw,rounded rectangle,inner sep=0.25mm,minimum size=2mm},
		nodes in empty cells,
		inner sep=0pt,
		nodes={anchor=center,node font=\scriptsize,rotate=45},
		column sep={.3cm,between origins},
		row sep={.3cm,between origins},
	] (M) at (0,0) {
	 \& \& \cdots \& \& \cdots \& \& \cdots \& \& \cdots \& \& \cdots \& \& \cdots \& \& \cdots \&  \\
	 \& 5 \&  \& 0 \& \& 2 \& \& -1 \& \& 1 \& \& 0 \& \& 1 \& \& \cdots \\
	\cdots \& \& -3 \& \& 0 \& \& -1 \& \& 1 \& \& 0 \& \& 1 \& \& 0 \&  \\
	 \& 2 \&  \& 2 \& \& 0 \& \& 1 \& \& 0 \& \& 1 \& \& 0 \& \& \cdots \\
	\cdots \& \& -1 \& \& -1 \& \& 0 \& \& 0 \& \& 1 \& \& 0 \& \& 1 \&  \\
	 \& 1 \&  \& 1 \& \& 1 \& \& 0 \& \& 1 \& \& 0 \& \& 1 \& \& \cdots \\
	\cdots \& \& 0 \& \& 0 \& \& 0 \& \& 0 \& \& 0 \& \& 1 \& \& 1 \&  \\
	 \& |[throw]| 1 \& \& |[throw]| 1 \& \& |[throw]| 1 \& \& |[origin]| 1 \& \& |[throw,double]| 0 \& \&|[throw]| 1 \& \& |[throw]| 1 \& \& \cdots \\
	\cdots \& \& 1 \& \& 1 \& \& 1 \& \& 0 \& \& 0 \& \& 1 \& \& 1 \& \\
	\& 2 \& \& 2 \& \& 2 \& \& 0 \& \& 1 \& \& 0 \& \& 1 \& \& \cdots \\
	\cdots \& \& 3 \& \& 3 \& \& 0 \& \& 1 \& \& 1 \& \& 0 \& \& 1 \&   \\
	  \& 5 \& \& 5 \& \& 0 \& \& 2 \& \& 1 \& \& 1 \& \& 0 \& \&  \cdots \\
	\cdots \& \& 8 \& \& 0 \& \& 3 \& \& 2 \& \& 1 \& \& 1 \& \& 0 \& \\
	  \& 13 \& \& 0 \& \& 5 \& \& 3 \& \& 2 \& \& 1 \& \& 1 \& \&  \cdots \\
	\cdots \& \& \cdots \& \& \cdots \& \& \cdots \& \& \cdots \& \& \cdots \& \& \cdots \& \& \cdots \& \\
	};

\end{tikzpicture} \\
\hline
\end{tabular}
\caption{Examples of adjugate and solution matrices}
\label{fig: adjsol}
%\end{ex}
\end{figure}

%\newpage

\subsection{The solution matrix}\label{section: sol}

In this section, we construct a matrix $\sol(\C)$ whose image is the kernel of a given reduced recurrence matrix $\C$.

The \textbf{shape} of a recurrence matrix $\C$ is the non-increasing function $S:\ZZ\rightarrow\ZZ$ defined by
\[ S(a) := \min\{ b \in \ZZ \mid \C_{a,b}\neq 0\} \]
%This is the non-increasing function which keeps track of the earliest variable ???
The pivot entry in the $a$th row is then $\C_{a,S(a)}$, and so $\C$ is reduced if and only if $\C_{b,S(a)}=0$ for all $b>a$. In particular, the shape of a reduced recurrence matrix must be injective.

\begin{ex}
The shape of the Fibonacci recurrence matrix is $S(a)=a-2$.
%\end{ex}
%
%\begin{ex}
The shape of the recurrence matrix depicted in Example \ref{ex: pivots} is 
\[ S(a) = \left\{ \begin{array}{cc}
a-2 & \text{if $a<0$} \\
a & \text{if $a=0$} \\
a-2 & \text{if $a=1$} \\
a-1 & \text{if $a>1$}
\end{array}\right\}\qedhere\] 
\end{ex}

%
%\begin{prop}
%A recurrence matrix $\C$ of shape $S$ is reduced iff $\C_{a,S(b)}=0$ for all $a>b$.
%\end{prop}

%Using the adjugate construction, we may now construct a $\mathbb{Z}\times\mathbb{Z}$-matrix whose columns essentially span the space of solutions to a reduced linear recurrence.
%
Given a recurrence matrix $\C$ with shape $S$, define a $\mathbb{Z}\times \mathbb{Z}$-matrix $\mathsf{P}$ by
\[ \P_{a,b} \coloneqq \left\{ \begin{array}{cc}
(\C_{b,S(b)})^{-1} & \text{if }a=S(b) \\
0 & \text{otherwise}
\end{array}\right\}\]
This is the generalized permutation matrix such that $\mathsf{CP}$ moves the pivots to the main diagonal and rescales them to $1$.
Thus, $\C$ is reduced if and only if $\C\mathsf{P}$ is upper unitriangular.\footnote{A $\mathbb{Z}\times \mathbb{Z}$-matrix $\C$ is \textbf{upper unitriangular} if its transpose $\C^\top$ is lower unitriangular. Such matrices also have an adjugate, which may be defined as $\adj(\C^\top)^\top$, for which the analog of Proposition \ref{prop: adjprops} holds.}

Given a reduced linear recurrence $\C$, define the \textbf{solution matrix} $\sol(\C)$ of $\C$ by %is the $\mathbb{Z}\times \mathbb{Z}$-matrix
\[ \sol(\C) \coloneqq \adj(\C) - \mathsf{P}\adj(\C\mathsf{P}) \]
%We let $\sol_i(\C)$ denote the $i$th column of $\sol(\C)$.
Examples are given in Figure \ref{fig: adjsol}.
The solution matrix is named for the following property.
\begin{prop}\label{prop: sol}
Each column of $\sol(\C)$ is a solution to $\C\mathsf{x}=\mathsf{0}$.
\end{prop}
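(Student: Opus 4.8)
The plan is to establish the single matrix identity $\C\,\sol(\C)=\mathsf{0}$, from which the proposition follows immediately: since $\C$ is horizontally bounded, the product $\C\mathsf{x}$ exists for every $\ZZ$-vector $\mathsf{x}$, and the $c$th column of $\C\,\sol(\C)$ is precisely $\C$ applied to the $c$th column of $\sol(\C)$. Thus $\C\,\sol(\C)=\mathsf{0}$ says exactly that every column of $\sol(\C)$ lies in $\ker(\C)$.

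Using the definition $\sol(\C)=\adj(\C)-\mathsf{P}\adj(\C\mathsf{P})$ and distributing (both products exist since $\C$ is horizontally bounded), I would write
\[ \C\,\sol(\C) = \C\adj(\C) - \C\bigl(\mathsf{P}\adj(\C\mathsf{P})\bigr). \]
The first term is $\mathrm{Id}$ by Proposition \ref{prop: adjprops}(1). For the second term, I would first observe that $\mathsf{P}$ is horizontally bounded: because $\C$ is reduced its shape $S$ is injective, so $\P_{a,b}\neq 0$ (which forces $a=S(b)$) can hold for at most one $b$ in each row $a$; a matrix with at most one nonzero entry per row is trivially row-finite. Since $\C$ and $\mathsf{P}$ are both horizontally bounded, case (b) of the associativity Warning permits the regrouping
\[ \C\bigl(\mathsf{P}\adj(\C\mathsf{P})\bigr) = (\C\mathsf{P})\adj(\C\mathsf{P}). \]
As $\C$ is reduced, $\C\mathsf{P}$ is upper unitriangular, so by the upper-triangular analog of Proposition \ref{prop: adjprops}(1) (the footnote defining $\adj$ on upper unitriangular matrices) this last product equals $\mathrm{Id}$. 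Combining, $\C\,\sol(\C)=\mathrm{Id}-\mathrm{Id}=\mathsf{0}$.

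The main obstacle is entirely the pathology of infinite-matrix arithmetic flagged in the Warning: neither distributivity nor the regrouping $\C(\mathsf{P}M)=(\C\mathsf{P})M$ is automatic, so the substance of the argument is verifying that horizontal boundedness of $\C$ and $\mathsf{P}$ places every step inside the safe associative case. Accordingly I would take care to justify two points explicitly, since they are the only places the argument could fail: that $\mathsf{P}$ is horizontally bounded (via injectivity of $S$), and that $\C\mathsf{P}$ being upper unitriangular yields $(\C\mathsf{P})\adj(\C\mathsf{P})=\mathrm{Id}$ through the upper-triangular version of the adjugate inverse law.
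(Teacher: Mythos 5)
Your proof is correct and follows essentially the same route as the paper's one-line computation $\C\sol(\C)=\C\adj(\C)-(\C\mathsf{P})\adj(\C\mathsf{P})=\mathrm{Id}-\mathrm{Id}=\mathsf{0}$; you have simply made explicit the associativity and horizontal-boundedness justifications (including that $\P$ is row-finite and that $\C\P$ is upper unitriangular) that the paper leaves implicit.
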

\begin{proof}
$\displaystyle \C \sol(\C) = \C[\adj(\C) - \mathsf{P}\adj(\C\mathsf{P})]
= \C\adj(\C) - (\C\mathsf{P})\adj(\C\mathsf{P}) = \mathrm{Id} - \mathrm{Id} = \mathsf{0}$.
%
%Consider the product $\C\sol(\C)$.
%\[ \C \sol(\C) = \C[\mathrm{adj}(\C) - \mathsf{P}\mathrm{adj}(\C\mathsf{P})]
%= \C\mathrm{adj}(\C) - (\C\mathsf{P})\mathrm{adj}(\C\mathsf{P}) = \mathrm{Id} - \mathrm{Id} = \mathsf{0} \]
%Therefore, each column in $\sol(\C)$ is a solution to $\C$.
\end{proof}

\begin{ex}
If $\C$ is the Fibonacci recurrence matrix, the $a$th column of $\sol(\C)$ is given by
\begin{equation}\label{eq: fibsol}
 x_b = \left\{ \begin{array}{cc}
F_{b-a+1} & \text{if $b-a+1\geq 0$} \\
(-1)^{a-b}F_{-b+a-1} & \text{if $b-a+1<0$} 
\end{array}\right\} 
\end{equation}
where $F_i$ is the $i$th Fibonacci number (indexed so that $F_0=0$ and $F_1=1$).
\end{ex}

This proposition can be extended to a complete characterization of solutions to $\C\mathsf{x}=\mathsf{0}$, and thus an answer to our original problem. Given a $\ZZ\times \ZZ$ matrix $\mathsf{A}$, define the \textbf{image} of $\mathsf{A}$ to be the set of all sequences $\mathsf{v}\in \k^\ZZ$ equal to $\mathsf{Aw}$ for some $\mathsf{w\in\k^\ZZ}$.\footnote{This is the usual definition of `image', with the explicit caveat that $\mathsf{Aw}$ may not exist for all $\mathsf{w}$.}

\begin{prethm}{\ref{thm: sol}}
If $\C$ is reduced, the kernel of $\C$ equals the image of $\sol(\C)$.
\end{prethm}

\begin{ex}
Any solution to the Fibonacci linear recurrence can be written as a linear combination of the solutions of the form \eqref{eq: fibsol}.\footnote{In this case, it suffices to only use two adjacent columns of the solution matrix; see Example \ref{ex: fibbasis}.}
\end{ex}

%A sharper version of this theorem is given in Theorem \ref{thm: solbasis} (see Section \ref{section: balls}).

\begin{rem}
If $\C$ is not reduced, the adjugate $\adj(\C \mathsf{P})$ may not be defined, and so $\sol(\C)$ may not be constructed.
\end{rem}

%\begin{prethm}{\ref{thm: sol}}
%If $\C$ is reduced, the kernel of $\C$ equals the set of sequences $\sol(\C)\mathsf{v}$ for all sequences $\mathsf{v}$ such that the product $\sol(\C)\mathsf{v}$ exists.
%
%%
%%If $\C$ is reduced, the set of solutions to $\C\mathsf{x}=\mathsf{0}$ is the set of sequences of the form $\sol(\C)\mathsf{v}$, running over all $\mathsf{v}\in \k^\ZZ$ for which the product $\sol(\C)\v$ exists.
%%
%%The set of solutions to $\C\mathsf{x}=\mathsf{0}$ equals the set of well-defined products $\sol(\C)\mathsf{v}$ for some $\mathsf{v}\in \k^\ZZ$.
%\end{prethm}
%\noindent While not every product of the form $\sol(\C)\mathsf{v}$ exists, whenever it does, the result is in $\ker(\C)$, and every element of $\ker(\C)$ can written this way.

%We would like to strengthen Proposition \ref{prop: sol} by saying that the columns of $\sol(\C)$ are a basis of the space of solutions; however, this isn't quite true. The space of solutions may be infinite dimensional, 

\subsection{Balls and juggling}\label{section: balls}

We now connect the shape of a reduced recurrence matrix to the geometry of its kernel. Much of the juggling language and intuition is taken from \cite{KLS13}.

Fix a reduced recurrence matrix $\C$ of shape $S$ for the remainder of the section.
An \textbf{$S$-ball} is a non-singleton equivalence class of integers under the equivalence relation $a\sim S(a)$; that is, an equivalence class which contains more than one integer.
% that does not contain a single element.
%The significance of this notion is the following.

\begin{prethm}{\ref{thm: balls}}
The dimension of $\mathrm{ker}(\C)$ is equal to the number of $S$-balls.
\end{prethm}

\begin{ex}
For the Fibonacci recurrence matrix, the shape $S$ is given by $S(a)=a-2$. There are two $S$-balls (the set of even numbers and the set of odd numbers) and the space of solutions to the Fibonacci recurrence is two dimensional (see Example \ref{ex: fibbasis}).
\end{ex}

\begin{ex}
The recurrence matrix in Example \ref{ex: pivots} has two $S$-balls:
\[ \{ ...,-6,-4,-2\} \text{ and } \{ ...,-5,-3,-1,1,2,3,...\} \]
Since $S(0)=0$, the singleton set $\{0\}$ is also an equivalence class, but it is \emph{not} an $S$-ball.
\end{ex}

\begin{rem}
As a non-increasing injection from $\ZZ\rightarrow \ZZ$, the shape $S$ can be thought of as a \emph{juggling pattern}: instructions for how a juggler catches and throws balls over time. At each moment $a$, the juggler catches the ball they threw at moment $S(a)$ and immediately throws it again...unless $S(a)=a$, in which case they neither catch nor throw a ball.\footnote{At any moment $a$ which is not in the image of $S$, the juggler throws the ball so high it never returns. The ability to throw at escape velocity is a small stretch of the imagination for a juggler who is also immortal.}

Each $S$-ball lists the complete history of when a fixed ball is caught and thrown, and so the number of $S$-balls equals the number of balls the juggler needs for the pattern.
\end{rem}

\begin{rem}\label{rem: balljugs}
The $S$-balls of $\C$ can be visualized as follows. Circle each pivot entry and each entry on the main diagonal, and connect pairs of circles in the same column or in the same row. Excluding any unconnected circles, the connected components of the resulting graph are the $S$-balls. There are four $S$-balls below, each drawn in a different color.
\[\begin{tikzpicture}[baseline=(current bounding box.center),
	ampersand replacement=\&,
	]
	\clip[use as bounding box] (-9.5,-2.7) rectangle (6.3,0.3);
	\matrix[matrix of math nodes,
		matrix anchor = M-2-24.center,
		nodes in empty cells,
		inner sep=0pt,
		gthrow/.style={dark green,draw,circle,inner sep=0mm,minimum size=5mm},
		rthrow/.style={dark red,draw,circle,inner sep=0mm,minimum size=5mm},
		bthrow/.style={dark blue,draw,circle,inner sep=0mm,minimum size=5mm},
		pthrow/.style={dark purple,draw,circle,inner sep=0mm,minimum size=5mm},
		nodes={anchor=center,node font=\scriptsize,rotate=45},
		column sep={0.4cm,between origins},
		row sep={0.4cm,between origins},
	] (M) at (0,0) {
 \&  \&  \&  \&  \&  \&  \&  \&  \&  \&  \&  \&  \&  \&  \&  \&  \&  \&  \&  \&  \&  \&  \&  \&  \&  \&  \&  \&  \&  \&  \&  \&  \&  \&  \&  \&  \&  \&  \\
 \& |[gthrow]| 1 \&  \& |[rthrow]| 1 \&  \& |[bthrow]| 1 \&  \& |[bthrow]| 1 \&  \& |[pthrow]| 1 \&  \& |[gthrow]| 1 \&  \& |[rthrow]| 1 \&  \& |[bthrow]| 1 \&  \& |[gthrow]| 1 \&  \& |[pthrow]| 1 \&  \& |[bthrow]| 1 \&  \& |[bthrow]| 1 \&  \& |[rthrow]| 1 \&  \& |[gthrow]| 1 \&  \& |[pthrow]| 1 \&  \& |[bthrow]| 1 \&  \& |[gthrow]| 1 \&  \& |[rthrow]| 1 \&  \& |[bthrow]| 1 \&  \\
\cdots \&  \& 5 \&  \& 1 \&  \& |[bthrow]| 1 \&  \& 3 \&  \& 2 \&  \& 1 \&  \& 5 \&  \& 1 \&  \& 5 \&  \& 1 \&  \& |[bthrow]| 1 \&  \& 3 \&  \& 2 \&  \& 1 \&  \& 5 \&  \& 1 \&  \& 5 \&  \& 1 \&  \& \cdots \\
 \& 3 \&  \& 2 \&  \&  \&  \&  \&  \& 5 \&  \& 1 \&  \& 3 \&  \& 2 \&  \& 3 \&  \& 2 \&  \&  \&  \&  \&  \& 5 \&  \& 1 \&  \& 3 \&  \& 2 \&  \& 3 \&  \& 2 \&  \&  \&  \\
\cdots \&  \& |[bthrow]| 1 \&  \&  \&  \& -2 \&  \&  \&  \& 2 \&  \& 1 \&  \& |[gthrow]| 1 \&  \& 1 \&  \& |[bthrow]| 1 \&  \&  \&  \& -2 \&  \&  \&  \& 2 \&  \& 1 \&  \& |[gthrow]| 1 \&  \& 1 \&  \& |[bthrow]| 1 \&  \&  \&  \& \cdots \\
 \&  \&  \&  \&  \& -1 \&  \& -3 \&  \&  \&  \& |[bthrow]| 1 \&  \&  \&  \&  \&  \&  \&  \&  \&  \& -1 \&  \& -3 \&  \&  \&  \& |[bthrow]| 1 \&  \&  \&  \&  \&  \&  \&  \&  \&  \& -1 \&  \\
 \&  \&  \&  \&  \&  \& |[gthrow]| -1 \&  \& |[rthrow]| -1 \&  \&  \&  \&  \&  \& |[pthrow]| -1 \&  \&  \&  \&  \&  \&  \&  \& |[gthrow]| -1 \&  \& |[pthrow]| -1 \&  \&  \&  \&  \&  \& |[rthrow]| -1 \&  \&  \&  \&  \&  \&  \&  \& \cdots \\
 \&  \&  \& |[pthrow]| 1 \&  \&  \&  \&  \&  \&  \&  \&  \&  \&  \&  \&  \&  \&  \&  \& |[rthrow]| 1 \&  \&  \&  \&  \&  \&  \&  \&  \&  \&  \&  \&  \&  \&  \&  \& |[pthrow]| 1 \&  \&  \&  \\
 \&  \&  \&  \&  \&  \&  \&  \&  \&  \&  \&  \&  \&  \&  \&  \&  \&  \&  \&  \&  \&  \&  \&  \&  \&  \&  \&  \&  \&  \&  \&  \&  \&  \&  \&  \&  \&  \& \\
	};

	\draw[dark green] (M-3-1) to (M-2-2) to (M-7-7) to (M-2-12) to (M-5-15) to (M-2-18) to (M-7-23) to (M-2-28) to (M-5-31) to (M-2-34) to (M-7-39);
	\draw[dark red] (M-5-1) to (M-2-4) to (M-7-9) to (M-2-14) to (M-8-20) to (M-2-26) to (M-7-31) to (M-2-36) to (M-5-39);
	\draw[dark blue] (M-3-1) to (M-5-3) to (M-2-6) to (M-3-7) to (M-2-8) to (M-6-12) to (M-2-16) to (M-5-19) to (M-2-22) to (M-3-23) to (M-2-24) to (M-6-28) to (M-2-32) to (M-5-35) to (M-2-38) to (M-3-39);
	\draw[dark purple] (M-5-1) to (M-8-4) to (M-2-10) to (M-7-15) to (M-2-20) to (M-7-25) to (M-2-30) to (M-8-36) to (M-5-39);
\end{tikzpicture}\]
Note that reducedness implies that non-zero entries of $\C$ only occur in circles and where two lines cross (See Remark \ref{rem: soljugs} for a contrasting property of the solution matrix $\sol(\C)$).

The connection to juggling is justified by considering the transpose of the above picture.
\[\begin{tikzpicture}[baseline=(current bounding box.center),
	ampersand replacement=\&,
	]
	\clip[use as bounding box] (-9.5,2.7) rectangle (6.3,-0.3);
	\matrix[matrix of math nodes,
		matrix anchor = M-2-24.center,
		nodes in empty cells,
		inner sep=0pt,
		gthrow/.style={dark green,draw,circle,inner sep=0mm,minimum size=5mm},
		rthrow/.style={dark red,draw,circle,inner sep=0mm,minimum size=5mm},
		bthrow/.style={dark blue,draw,circle,inner sep=0mm,minimum size=5mm},
		pthrow/.style={dark purple,draw,circle,inner sep=0mm,minimum size=5mm},
		nodes={anchor=center,node font=\scriptsize,rotate=45},
		column sep={0.4cm,between origins},
		row sep={-0.4cm,between origins},
	] (M) at (0,0) {
 \&  \&  \&  \&  \&  \&  \&  \&  \&  \&  \&  \&  \&  \&  \&  \&  \&  \&  \&  \&  \&  \&  \&  \&  \&  \&  \&  \&  \&  \&  \&  \&  \&  \&  \&  \&  \&  \&  \\
 \& |[gthrow]|  \&  \& |[rthrow]|  \&  \& |[bthrow]|  \&  \& |[bthrow]|  \&  \& |[pthrow]|  \&  \& |[gthrow]|  \&  \& |[rthrow]|  \&  \& |[bthrow]|  \&  \& |[gthrow]|  \&  \& |[pthrow]|  \&  \& |[bthrow]|  \&  \& |[bthrow]|  \&  \& |[rthrow]|  \&  \& |[gthrow]|  \&  \& |[pthrow]|  \&  \& |[bthrow]|  \&  \& |[gthrow]|  \&  \& |[rthrow]|  \&  \& |[bthrow]|  \&  \\
\cdots \&  \&  \&  \&  \&  \& |[bthrow]|  \&  \&  \&  \&  \&  \&  \&  \&  \&  \&  \&  \&  \&  \&  \&  \& |[bthrow]|  \&  \&  \&  \&  \&  \&  \&  \&  \&  \&  \&  \&  \&  \&  \&  \& \cdots \\
 \&  \&  \&  \&  \&  \&  \&  \&  \&  \&  \&  \&  \&  \&  \&  \&  \&  \&  \&  \&  \&  \&  \&  \&  \&  \&  \&  \&  \&  \&  \&  \&  \&  \&  \&  \&  \&  \&  \\
\cdots \&  \& |[bthrow]|  \&  \&  \&  \&  \&  \&  \&  \&  \&  \&  \&  \& |[gthrow]|  \&  \&  \&  \& |[bthrow]|  \&  \&  \&  \&  \&  \&  \&  \&  \&  \&  \&  \& |[gthrow]|  \&  \&  \&  \& |[bthrow]|  \&  \&  \&  \& \cdots \\
 \&  \&  \&  \&  \&  \&  \&  \&  \&  \&  \& |[bthrow]|  \&  \&  \&  \&  \&  \&  \&  \&  \&  \&  \&  \&  \&  \&  \&  \& |[bthrow]|  \&  \&  \&  \&  \&  \&  \&  \&  \&  \&  \&  \\
 \&  \&  \&  \&  \&  \& |[gthrow]|  \&  \& |[rthrow]|  \&  \&  \&  \&  \&  \& |[pthrow]|  \&  \&  \&  \&  \&  \&  \&  \& |[gthrow]|  \&  \& |[pthrow]|  \&  \&  \&  \&  \&  \& |[rthrow]|  \&  \&  \&  \&  \&  \&  \&  \& \cdots \\
 \&  \&  \& |[pthrow]|  \&  \&  \&  \&  \&  \&  \&  \&  \&  \&  \&  \&  \&  \&  \&  \& |[rthrow]|  \&  \&  \&  \&  \&  \&  \&  \&  \&  \&  \&  \&  \&  \&  \&  \& |[pthrow]|  \&  \&  \&  \\
 \&  \&  \&  \&  \&  \&  \&  \&  \&  \&  \&  \&  \&  \&  \&  \&  \&  \&  \&  \&  \&  \&  \&  \&  \&  \&  \&  \&  \&  \&  \&  \&  \&  \&  \&  \&  \&  \& \\
	};

	\draw[dark green] (M-3-1) to (M-2-2) to (M-7-7) to (M-2-12) to (M-5-15) to (M-2-18) to (M-7-23) to (M-2-28) to (M-5-31) to (M-2-34) to (M-7-39);
	\draw[dark red] (M-5-1) to (M-2-4) to (M-7-9) to (M-2-14) to (M-8-20) to (M-2-26) to (M-7-31) to (M-2-36) to (M-5-39);
	\draw[dark blue] (M-3-1) to (M-5-3) to (M-2-6) to (M-3-7) to (M-2-8) to (M-6-12) to (M-2-16) to (M-5-19) to (M-2-22) to (M-3-23) to (M-2-24) to (M-6-28) to (M-2-32) to (M-5-35) to (M-2-38) to (M-3-39);
	\draw[dark purple] (M-5-1) to (M-8-4) to (M-2-10) to (M-7-15) to (M-2-20) to (M-7-25) to (M-2-30) to (M-8-36) to (M-5-39);
\end{tikzpicture}\]
This can be viewed as an idealized plot of the height of the `balls' over time.
\end{rem}

%%%%

Balls and juggling patterns can also be used to parametrize the space of solutions to a linear recurrence.
Given a shape $S$ and an integer $b$, the \textbf{throwing history} at $b$ is the set $T_b$ containing the largest element in each $S$-ball less than or equal to $b$; that is,
\[ T_b := \{a \text{ such that $a\leq b$ but there is no $c\leq b$ with $S(c)=a$} \} \]
\begin{rem}
In juggling terms, the throwing history $T_b$ considers the balls in the air just after moment $b$ and records when each ball was thrown.
\end{rem}

%We can parametrize the solutions of a linear recurrence as follows.

\begin{prethm}{\ref{thm: solextend}}
Restricting to the entries indexed by $T_b$ gives an isomorphism $\mathrm{ker}(\C)\rightarrow \k^{T_b}$.
%
%The restriction map $\k^\ZZ\rightarrow \k^{T_b}$ restricts to an isomorphism $\mathrm{ker}(\C)\rightarrow \k^{T_b}$.
\end{prethm}

\noindent That is, any choice of values for $\{x_b \mid b\in T_b\}$ uniquely extends to a solution to $\C\mathsf{x=0}$. 
%Notably, this implies that $|T_a|$ is the dimension of the solution space.

We may also use $T_b$ to parametrize the space of solutions.
Let $\sol(\C)_{\ZZ,T_b}$ denote the submatrix consisting of the columns of $\sol(\C)$ indexed by $T_b$.

\begin{prethm}{\ref{thm: solbasis}}
Multiplication by $\sol(\C)_{\ZZ, T_b}$ gives an isomorphism $\k^{T_b}\rightarrow \mathrm{ker}(\C)$.
%
%
%Let $\C$ be a reduced recurrence matrix with shape $S$. For any $b$, multiplication by the matrix $\sol(\C)_{\ZZ,T_b}$ defines an isomorphism $\k^{T_b}\rightarrow \mathrm{ker}(\C)$.
\end{prethm}

\noindent Equivalently, every solution to $\C\mathsf{x=0}$ can be written uniquely as a (possibly infinite) linear combination of the columns of $\sol(\C)$ indexed by $T_b$, and all such linear combinations exist (i.e. no infinite non-zero sums).\footnote{In the literature on linear recurrences (e.g. \cite{Bat27}), this is called a \emph{a fundamental system of solutions}.} When $T_b$ is finite, this is merely the definition of a basis.

\begin{coro}
If $\dim(\ker(\C))\!<\!\infty$, the columns of $\sol(\C)$ indexed by $T_b$ are a basis for $\ker(\C)$.
\end{coro}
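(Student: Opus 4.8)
The plan is to derive the corollary directly from Theorems \ref{thm: solbasis} and \ref{thm: solextend}; the only real work is to convert the hypothesis $\dim(\ker(\C))<\infty$ into the finiteness of the index set $T_b$. Once $T_b$ is known to be finite, the statement of Theorem \ref{thm: solbasis} is precisely the assertion that the indicated columns form a basis, exactly as the sentence preceding the corollary anticipates.

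First I would invoke Theorem \ref{thm: solextend}, which gives an isomorphism $\ker(\C)\to\k^{T_b}$ by restriction to the coordinates in $T_b$; in particular $\dim_\k\ker(\C)=\dim_\k\k^{T_b}$. To bound the right-hand side, note that $\k^{T_b}$ (the space of all functions $T_b\to\k$) contains the subspace of finitely supported functions, whose indicator functions $\{e_a\}_{a\in T_b}$ form a basis, so $\dim_\k\k^{T_b}\geq|T_b|$. Combining the two facts, $|T_b|\leq\dim_\k\ker(\C)<\infty$, so $T_b$ is finite.

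Finally, with $T_b$ finite the product $\k^{T_b}$ coincides with its finitely supported subspace and carries the standard basis $\{e_a\}_{a\in T_b}$. By Theorem \ref{thm: solbasis}, multiplication by $\sol(\C)_{\ZZ,T_b}$ is an isomorphism $\k^{T_b}\to\ker(\C)$ sending each $e_a$ to the $a$th column of $\sol(\C)$. An isomorphism carries a basis to a basis, so these columns are a basis of $\ker(\C)$, as claimed.

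The only genuinely delicate point, and the step I would flag as the main obstacle, is the dimension count for $\k^{T_b}$: one must resist writing $\dim_\k\k^{T_b}=|T_b|$, which is false for infinite $T_b$ since the full product is strictly larger than its direct-sum subspace. The one-sided bound $\dim_\k\k^{T_b}\geq|T_b|$ coming from the finitely supported subspace is all that is needed and sidesteps this pitfall. As an alternative to Theorem \ref{thm: solextend} one could route through Theorem \ref{thm: balls}, since $T_b$ meets each $S$-ball in at most one point and the number of $S$-balls equals $\dim(\ker(\C))$; but that argument requires re-examining the definition of $T_b$ in terms of the equivalence relation, whereas the route above is immediate.
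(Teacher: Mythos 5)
Your proof is correct and follows essentially the route the paper intends: the corollary is exactly Theorem \ref{thm: solbasis} once $T_b$ is known to be finite, as the sentence preceding the corollary already signals. Your derivation of finiteness via Theorem \ref{thm: solextend} and the bound $\dim_\k \k^{T_b}\geq |T_b|$ is sound and is only a cosmetic variant of the paper's implicit route through Theorem \ref{thm: balls} (where $|T_b|$ equals the number of $S$-balls, hence $\dim(\ker(\C))$).
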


\begin{ex}\label{ex: fibbasis}
Let us consider the Fibonacci recurrence matrix $\C$ once more. The two $S$-balls are the sets of even and odd numbers, and so $T_a=\{a-1,a\}$ for all $a$. Theorems \ref{thm: solextend} and \ref{thm: solbasis} state that
\begin{enumerate}
	\item any pair of adjacent columns of $\sol(\C)$ form a basis of $\ker(\C)$, and
	\item any pair of values $x_{a-1},x_a\in \k$ may be extended uniquely to an element of $\ker(\C)$.
\end{enumerate}
\end{ex}

\begin{rem}
Definition \ref{defn: schedule} generalizes $T_b$ to a broader class of sets, called \emph{schedules}, for which analogs of Theorems \ref{thm: solextend} and \ref{thm: solbasis} hold.
\end{rem}

\begin{warn}
{\marginnote{\dbend}[.05cm]}
The isomorphisms in Theorems \ref{thm: solextend} and \ref{thm: solbasis} are \bad{not mutually inverse}.
\end{warn}

\section{Generalizations and connections}

We consider a few variations of this problem and applications of these ideas.

\subsection{Affine recurrences} \label{section: affine}

Let us briefly consider the affine case. %, in which our recurrence relations may have constant terms.
An \textbf{affine recurrence} is a system of equations %\footnote{Our equations for different $x_a$ may be distinct, unlike some definitions of `linear recurrence'.} 
in the sequence of variables $...,x_{-1},x_0,x_1,x_2,...$ which equates each variable to an affine combination of the previous variables (i.e. a degree $1$ polynomial). For example, we could add a constant terms $b_i\in \k$ to each equation in the Fibonacci recurrence:
\begin{equation}
\label{eq: afffib}
x_i = x_{i-1}+x_{i-2}+b_i,\;\;\; \forall i\in \mathbb{Z}
\end{equation}
As before, we may move the variables to the left and factor the coefficients into a matrix $\C$:
\[ \C\mathsf{x} = \mathsf{b}\]
Here, $\C$ is a recurrence matrix, and $\mathsf{b}$ collects the constant terms from each equation.

%We can use the tools of the previous section to construct a solution to $\C\mathsf{x}=\mathsf{b}$.

If $\C$ is reduced, define $\P$ as in Section \ref{section: sol}, and define the \textbf{splitting matrix} $\spl(\C)$ of $\C$ by
\[ \spl(\C)_{a,b}:= \left\{ \begin{array}{cc}
\adj(\C)_{a,b} & \text{if }b\geq 0\\
(\P\adj(\C\P))_{a,b} & \text{if }b<0 
\end{array}\right\} \]
The right half of this matrix is lower unitriangular, and the left half of this matrix is upper triangular, resulting in non-zero entries concentrated into two antipodal wedges.
%
%\[ \spl(\C) \coloneqq \adj(\C)_{\ZZ,[0,\infty)} - \left( \P \adj(\C\P)\right)_{\ZZ,(-\infty ,-1]} \]
\begin{ex}
The splitting matrix for the Fibonacci recurrence matrix is below.
\[\begin{tikzpicture}[baseline=(current bounding box.center),
	ampersand replacement=\&,
	]
	\node[right, dark blue] at (3,-1.5) {\scriptsize Entries from $\adj(\C)$};
	\node[left, dark red] at (-3,1.2) {\scriptsize Entries from $\P\adj(\C\P)$};
	\clip[use as bounding box] (-2.925,-2.925) rectangle (2.925,2.325);
	\matrix[matrix of math nodes,
		matrix anchor = M-10-10.center,
		nodes in empty cells,
		inner sep=0pt,
		nodes={anchor=center,node font=\scriptsize,rotate=45},
		column sep={0.3cm,between origins},
		row sep={0.3cm,between origins},
	] (M) at (0,0) {
 \&  \&  \&  \&  \&  \&  \&  \&  \&  \&  \&  \&  \&  \&  \&  \&  \&  \&  \\
 \&  \&  \&  \&  \&  \&  \&  \&  \&  \&  \&  \&  \&  \&  \&  \&  \&  \&  \\
\cdots \&  \&  \&  \&  \&  \&  \&  \&  \&  \&  \&  \&  \&  \&  \&  \&  \&  \&  \\
 \& -5 \&  \&  \&  \&  \&  \&  \&  \&  \&  \&  \&  \&  \&  \&  \&  \&  \&  \\
\cdots \&  \& 3 \&  \&  \&  \&  \&  \&  \&  \&  \&  \&  \&  \&  \&  \&  \&  \&  \\
 \& -2 \&  \& -2 \&  \&  \&  \&  \&  \&  \&  \&  \&  \&  \&  \&  \&  \&  \&  \\
\cdots \&  \& 1 \&  \& 1 \&  \&  \&  \&  \&  \&  \&  \&  \&  \&  \&  \&  \&  \&  \\
 \& -1 \&  \& -1 \&  \& -1 \&  \&  \&  \&  \&  \&  \&  \&  \&  \&  \&  \&  \&  \\
 \&  \&  \&  \&  \&  \&  \&  \&  \&  \&  \&  \&  \&  \&  \&  \&  \&  \&  \\
 \&  \&  \&  \&  \&  \&  \&  \&  \& 1 \&  \& 1 \&  \& 1 \&  \& 1 \&  \& 1 \&  \\
 \&  \&  \&  \&  \&  \&  \&  \&  \&  \& 1 \&  \& 1 \&  \& 1 \&  \& 1 \&  \& \cdots \\
 \&  \&  \&  \&  \&  \&  \&  \&  \&  \&  \& 2 \&  \& 2 \&  \& 2 \&  \& 2 \&  \\
 \&  \&  \&  \&  \&  \&  \&  \&  \&  \&  \&  \& 3 \&  \& 3 \&  \& 3 \&  \& \cdots \\
 \&  \&  \&  \&  \&  \&  \&  \&  \&  \&  \&  \&  \& 5 \&  \& 5 \&  \& 5 \&  \\
 \&  \&  \&  \&  \&  \&  \&  \&  \&  \&  \&  \&  \&  \& 8 \&  \& 8 \&  \& \cdots \\
 \&  \&  \&  \&  \&  \&  \&  \&  \&  \&  \&  \&  \&  \&  \& 13 \&  \& 13 \&  \\
 \&  \&  \&  \&  \&  \&  \&  \&  \&  \&  \&  \&  \&  \&  \&  \& 21 \&  \& \cdots \\
 \&  \&  \&  \&  \&  \&  \&  \&  \&  \&  \&  \&  \&  \&  \&  \&  \& 34 \&  \\
 \&  \&  \&  \&  \&  \&  \&  \&  \&  \&  \&  \&  \&  \&  \&  \&  \&  \& \cdots \\
	};
	
	\draw[dark blue,rounded corners,fill=dark blue, opacity=.1] (M-19-19)+(.2,-.5) to ($(M-10-10)+(-.3,0)$) to ($(M-10-10)+(-.2,.2)$) to ($(M-10-19)+(.2,.2)$);
	\draw[dark red,rounded corners,fill=dark red, opacity=.1] (M-3-1)+(-.2,.5) to ($(M-10-8)+(.3,0)$) to ($(M-10-8)+(.2,-.2)$) to ($(M-10-1)+(-.2,-.2)$);

\end{tikzpicture}\]
The lower/upper triangular conditions imply the non-zero entries coming from $\adj(\C)$ and $\P\adj(\C\P)$ must be contained in the blue and red cones, respectively.
\end{ex}

%The key properties of the splitting matrix $\spl(\C)$ are as follows.

%Since each row of $\adj(\C)$ is bounded on the right, $\adj(\C)_{\ZZ,[0\infty)}$ is horizontally bounded. By a dual argument, $\left( \P \adj(\C\P)\right)_{\ZZ,(-\infty ,-1]}$ is horizontally bounded, and so $\spl(\C)$ is horizontally bounded.

\begin{prop}\label{prop: split}
Let $\C$ be a reduced recurrence matrix. 
\begin{enumerate}
	\item $\spl(\C)$ is horizontally bounded.
	\item $\C\spl(\C)=\mathsf{Id}$; that is, $\spl(\C)$ is a right inverse to $\C$.
	\item For all $\mathsf{b}\in \k^\ZZ$,  the product $\mathsf{x}=\spl(\C)\mathsf{b}$ exists and is a solution to $\C\mathsf{x}=\mathsf{b}$.
\end{enumerate}
\end{prop}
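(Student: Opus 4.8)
The plan is to prove the three parts in order, since part (1) supplies the horizontal boundedness that makes the products in parts (2) and (3) exist, and part (2) supplies the identity $\C\spl(\C)=\mathrm{Id}$ needed for part (3). For part (1), I would treat the two halves of $\spl(\C)$ separately. The right half (columns $b\geq 0$) is copied from $\adj(\C)$, which is lower unitriangular, so its entry in position $(a,b)$ vanishes unless $b\leq a$; hence within row $a$ the right half is supported on the finite set $\{b : 0\leq b\leq a\}$. The left half (columns $b<0$) is copied from $\P\adj(\C\P)$, and the key claim is that this matrix is upper triangular. Indeed, $\adj(\C\P)$ is upper unitriangular (as $\C\P$ is upper unitriangular because $\C$ is reduced), so its $b$th row is supported on columns $c\geq b$; and $\P$ relabels that row as row $S(b)$, up to the scalar $(\C_{b,S(b)})^{-1}$. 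Since $S(b)\leq b$ for every $b$ (because $\C_{b,b}=1$ forces $S(b)\leq b$), the relabelled row $S(b)$ is supported on columns $c\geq b\geq S(b)$, which is exactly upper triangularity. Restricting to $b<0$ then confines row $a$ of the left half to the finite set $\{b : a\leq b<0\}$. Combining the two halves shows each row of $\spl(\C)$ is finitely supported, i.e.\ $\spl(\C)$ is horizontally bounded; this also literally realizes the ``two antipodal wedges'' picture.

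For part (2), I would compute $\C\spl(\C)$ column by column, which is legitimate because $\C$ is row finite and so the product exists. For $b\geq 0$, the $b$th column of $\spl(\C)$ is the $b$th column of $\adj(\C)$, so the $b$th column of $\C\spl(\C)$ is the $b$th column of $\C\adj(\C)=\mathrm{Id}$, namely $\mathsf{e}_b$. For $b<0$, the $b$th column of $\spl(\C)$ is the $b$th column of $\P\adj(\C\P)$, and I need $\C\bigl(\P\adj(\C\P)\bigr)=\mathrm{Id}$. From the proof of Proposition \ref{prop: sol} we already have $(\C\P)\adj(\C\P)=\mathrm{Id}$; since $\C$ and $\P$ are both horizontally bounded, case (b) of the associativity warning lets me regroup $(\C\P)\adj(\C\P)=\C\bigl(\P\adj(\C\P)\bigr)$, giving the $b$th column $\mathsf{e}_b$ as well. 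Hence every column of $\C\spl(\C)$ agrees with the corresponding column of $\mathrm{Id}$.

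For part (3), existence of $\mathsf{x}=\spl(\C)\mathsf{b}$ is immediate from part (1): $\spl(\C)$ is row finite, so each entry $(\spl(\C)\mathsf{b})_a$ is a finite sum. To see $\C\mathsf{x}=\mathsf{b}$, I apply associativity once more: $\C$ and $\spl(\C)$ are horizontally bounded and $\mathsf{b}$ is a vector, so case (b) gives $\C(\spl(\C)\mathsf{b})=(\C\spl(\C))\mathsf{b}$, which equals $\mathrm{Id}\cdot\mathsf{b}=\mathsf{b}$ by part (2).

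The step I expect to be the main obstacle is establishing the upper triangularity of $\P\adj(\C\P)$ in part (1), since it requires tracking how the generalized permutation $\P$ relabels the rows of $\adj(\C\P)$ and using the reducedness inequality $S(b)\leq b$. Everything else is bookkeeping, but the associativity pathologies mean I must be careful to invoke case (b) of the warning each time I regroup a product involving $\C$ or $\P$; that is the place where an otherwise routine argument could silently fail.
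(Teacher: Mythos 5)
Your proof is correct and follows essentially the same route as the paper's: the paper likewise bounds each row of $\spl(\C)$ to $[0,a]$ or $[a,0]$ for part (1), compares $\C\spl(\C)$ to $\C\adj(\C)$ and $\C(\P\adj(\C\P))$ column by column for part (2), and invokes horizontal boundedness to regroup $\C(\spl(\C)\mathsf{b})$ for part (3). You simply supply more detail than the paper does, in particular the verification that $\P\adj(\C\P)$ is upper triangular via $S(b)\leq b$ and the explicit appeal to associativity case (b) when regrouping $(\C\P)\adj(\C\P)$.
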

\noindent In particular, $\C\mathsf{x}=\mathsf{b}$ has a solution for all $\mathsf{b}$.

\begin{proof}
(1) If $a\geq 0$, then the $a$th row of $\spl(\C)$ is zero outside the interval $[0,a]$. If $a<0$, then the $a$th row of $\spl(\C)$ is zero outside $[a,0]$.\footnote{This bound can be sharpened, though we won't need a sharp bound. When $a<0$, the $a$th row of $\spl(\C)$ is zero outside the interval $[b,0]$ when $S(b)=a$, and the row is entirely zero if there is no such $b$.} Thus, $\spl(\C)$ is horizontally bounded.

\noindent (2) If $a\geq 0$, then the $a$th column of $\C\spl(\C)$ equals the $a$th column of $\C\adj(\C)=\mathsf{Id}$. If $a<0$, then the $a$th column of $\C\spl(\C)$ equals the $a$th column of $\C(\P(\adj(\C\P)))=\mathsf{Id}$. Therefore, $\C\spl(\C)=\mathsf{Id}$.

\noindent (3) Since $\C$ and $\spl(\C)$ are horizontally bounded, $\C (\spl(\C)\mathsf{b}) = (\C\spl(\C) ) \mathsf{b} = \mathsf{Idb}= \mathsf{b} $.
\end{proof}

%\begin{rem}
%The existence of solutions to every affine recurrence is equivalent saying that,
%for any recurrence matrix $\C$, the associated multiplication map $\k^\ZZ\rightarrow \k^\ZZ$ is surjective.
%%
%%The existence of solutions for all equations $\C\mathsf{x}=\mathsf{b}$ is equivalent to the surjectivity of the function $\k^\ZZ\rightarrow \k^\ZZ$ given by multiplication by $\C$.
%\end{rem}

Given a solution to $\C\mathsf{x}=\mathsf{b}$, all other solutions are obtained by adding solutions to $\mathsf{C}\mathsf{x}=\mathsf{0}$.

\begin{prop}
If $\C$ is reduced, the solutions to $\C\mathsf{x} = \mathsf{b}$ consist of sequences of the form
\[ \spl(\C) \mathsf{b} + \sol(\C) \mathsf{v}, \]
running over all $\mathsf{v}\in \k^\ZZ$ such that the product $\sol(\C) \mathsf{v}$ exists.
\end{prop}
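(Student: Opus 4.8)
The plan is to recognize this as the standard statement that the solution set of an inhomogeneous linear system is a coset of the solution set of the associated homogeneous system, transported into the setting of $\ZZ\times\ZZ$-matrices. The two ingredients are a single particular solution together with a description of all homogeneous solutions, and both are already in hand: by Proposition \ref{prop: split}(3) the sequence $\spl(\C)\mathsf{b}$ exists and solves $\C\mathsf{x}=\mathsf{b}$, while by Theorem \ref{thm: sol} the homogeneous kernel $\ker(\C)$ is exactly the image of $\sol(\C)$.

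First I would verify that every sequence of the asserted form is a solution. Fix $\mathsf{v}\in\k^\ZZ$ for which $\mathsf{w}\coloneqq\sol(\C)\mathsf{v}$ exists, and set $\mathsf{x}=\spl(\C)\mathsf{b}+\mathsf{w}$. Because $\C$ is a recurrence matrix, hence horizontally bounded, each coordinate of $\C\mathsf{u}$ is a finite sum for any $\mathsf{u}\in\k^\ZZ$; consequently multiplication by $\C$ is a genuine additive map on $\k^\ZZ$ and distributes over the sum, giving $\C\mathsf{x}=\C(\spl(\C)\mathsf{b})+\C\mathsf{w}$. The first term equals $\mathsf{b}$ by Proposition \ref{prop: split}(3). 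For the second, $\mathsf{w}$ lies in the image of $\sol(\C)$, which is $\ker(\C)$ by Theorem \ref{thm: sol}, so $\C\mathsf{w}=\mathsf{0}$; thus $\C\mathsf{x}=\mathsf{b}$. Note that this step uses only that $\mathsf{w}$, once it exists, is a kernel vector, so no appeal to associativity of triple products is required.

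Conversely, I would show every solution arises this way. Let $\mathsf{x}$ satisfy $\C\mathsf{x}=\mathsf{b}$ and put $\mathsf{y}\coloneqq\mathsf{x}-\spl(\C)\mathsf{b}$. Using additivity of $\C$-multiplication again, $\C\mathsf{y}=\C\mathsf{x}-\C(\spl(\C)\mathsf{b})=\mathsf{b}-\mathsf{b}=\mathsf{0}$, so $\mathsf{y}\in\ker(\C)$. By Theorem \ref{thm: sol} there is some $\mathsf{v}\in\k^\ZZ$ with $\mathsf{y}=\sol(\C)\mathsf{v}$ (so in particular this product exists), whence $\mathsf{x}=\spl(\C)\mathsf{b}+\sol(\C)\mathsf{v}$ has the desired form.

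The main obstacle is not the algebra but keeping the infinite-matrix pathologies of the earlier warning at bay. The only facts about infinite products I must justify are that multiplication by the horizontally bounded matrix $\C$ is additive on all of $\k^\ZZ$ (which holds because each output coordinate is a finite sum) and that the existence of $\sol(\C)\mathsf{v}$ already certifies membership in $\ker(\C)$ via Theorem \ref{thm: sol}. With those in place the argument is purely formal, the existence clause ``running over all $\mathsf{v}$ such that $\sol(\C)\mathsf{v}$ exists'' being precisely the definition of the image of $\sol(\C)$.
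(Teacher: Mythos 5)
Your proof is correct and follows exactly the route the paper takes: the paper's own proof is the one-line observation that the claim ``follows immediately from Proposition \ref{prop: split} and Theorem \ref{thm: sol},'' and you have simply (and correctly) spelled out the standard particular-plus-homogeneous coset argument those two results enable, including the right justification that multiplication by the horizontally bounded matrix $\C$ is additive on all of $\k^\ZZ$.
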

\begin{proof}
This follows immediately from Proposition \ref{prop: split} and Theorem \ref{thm: sol}.
\end{proof}

%\begin{rem}
%We can extend these results to non-reduced $\C$ as follows. Theorems \ref{thm: triv-equiv} and \ref{thm: reduced1} guarantee we may factor $\C=\D\overline{\C}$, with $\D$ trivial and $\overline{\C}$ reduced. Setting 
%\[ \spl(\C) \coloneqq \spl(\overline{\C}) \adj(\D), \]
%we obtain a matrix for which Proposition \ref{prop: split} holds, and all solutions to $\C\mathsf{x}=\mathsf{b}$ are of the form
%$\spl(\C) \mathsf{b} + \sol(\overline{\C}) \mathsf{v} $.
%\end{rem}

\begin{rem}\label{rem: threeinverses}
We have now given three right inverses to a reduced recurrence matrix $\C$, each with an additional property: $\adj(\C)$ is lower unitriangular and a left inverse, $\P\adj(\C\P)$ is upper unitriangular, and $\spl(\C)$ is horizontally bounded. If $\C\neq \mathsf{Id}$, these are all distinct.
\end{rem}

\subsection{Linear recurrences indexed by $\mathbb{N}$}

Variations of linear recurrences have been studied for centuries. Most often, one considers a system with variables indexed by $\mathbb{N}$ (rather than $\ZZ$) and relations defining each variable except at finitely many \emph{initial variables}.  For example, the \text{one-sided Fibonacci recurrence} has initial variables $x_0$ and $x_1$ and equations
	\begin{equation*}
	x_i = x_{i-1}+x_{i-2},\;\;\; \forall i\geq 2
	\end{equation*}
	
The study of $\mathbb{N}$-indexed linear recurrences differs fundamentally from $\ZZ$-indexed linear recurrences.
Solutions to an $\mathbb{N}$-indexed system are determined by the values of the initial variables, which trivializes the kinds of questions we have considered (e.g. existence and parametrization of solutions).
%Solutions are uniquely determined by a few `initial values', and so the existence and parametrization of solutions is trivial.
Rather, most work in the $\mathbb{N}$-indexed context has focused on finding simple formulas for the terms in a solution. 
We review a few of these approaches.
\begin{itemize}
	\item When the equations in a linear recurrence are the same up to a shift of indices (a \emph{constant} linear recurrence), shifting the indices of a solution
	\[ x_0,x_1,x_2,... \longmapsto x_1,x_2,x_3,... \]
	defines a linear transformation from the space of solutions to itself. Standard tools from linear algebra (e.g. the characteristic polynomial) can then construct a basis of eigenvectors or generalized eigenvectors for the space of solutions. %See \cite{???}. 
	Since the eigenvectors are geometric sequences, an eigenbasis expresses any solution as a linear combination of geometric sequences. 
	This is covered in textbooks like \cite{GK94}.
%	
%	and correspond to roots of the characteristic polynomial.\footnote{When the roots are not distinct, the shifting map may not have an eigenbasis, but it is possible to find a basis of `generalized eigenvalues' in a variety of ways familiar to linear algebra.}
	\item A sequence $x_0, x_1, x_2,...$ can be converted into a formal series in several ways, such as
%	\[ x_0, x_1, x_2,... \longmapsto x_0 + x_1 t + x_2 t^2 + x_3t^3 + \cdots =:F_\mathsf{x}(t) \]
	\[ F_\mathsf{x}(t) := x_0 + x_1 t + x_2 t^2 + x_3t^3 + \cdots  \]
	Some linear recurrences (such as constant ones) translate into functional equations involving these \emph{generating functions}. 
%	E.g. the one-sided Fibonacci recurrence becomes 
%	\[ (t^2-t-1) F_\mathsf{x}(t)=at+b\]% \text{, and so } F_\mathsf{x}(t) = \frac{at+b}{t^2-t-1} \]  
%	where $a$ and $b$ depend on the initial values.
	Clever manipulation of these equations can then yield simple formulas for solutions. This is covered in textbooks like \cite{Wil06}.
	\item The asymptotics of solutions, that is, the behavior of $x_i$ for sufficiently large $i$, can be studied analytically. Poincare \cite{Poi85} and others\footnote{A curiosity: \cite{Car11} is the dissertation of Robert Carmichael, of \emph{Carmichael numbers} in number theory.} \cite{Car11,Bir11} construct integrals which coincide with the generating function $F_\mathsf{x}(t)$ in an `infinitesmal neighborhood of infinity'. See \cite[Section I.6]{Bat27} for further details.
\end{itemize}

The techniques of the current work can be adapted to this setting. % (though with limitations on their utility). 
%First, we take the atypical step of fixing the initial values by adding equations like $x_i=c$ to the system.
%
We first add equations fixing the initial values and rewrite the system as a matrix equation $\C\mathsf{x}=\mathsf{b}$. For example, the (one-sided) Fibonacci recurrence with initial values $x_0=a$ and $x_1=b$ is rewritten as% a matrix equation
\[ \begin{bmatrix}
		1 & 0 & 0 & 0  & \\
		 0 & 1 & 0 & 0 & \\
		-1 & -1 & 1 & 0 & \\
		0 & -1 & -1 & 1 & \\
		 & & & & \dddots \\
\end{bmatrix}
\begin{bmatrix}
x_0 \\ x_1 \\ x_2 \\ x_3 \\ \vdots 
\end{bmatrix}
=\begin{bmatrix}
a \\ b \\ 0 \\ 0 \\ \vdots 
\end{bmatrix}\]
%Note that, by fixing the initial values, the system is affine (that is, inhomogeneous).
%
%
The recurrence matrix $\mathsf{C}$ is $\mathbb{N}\times\mathbb{N}$, lower unitriangular, and horizontally bounded. The adjugate $\adj(\C)$ is defined as before, and the identity $\adj(\C)\C=\C\adj(\C)=\mathsf{Id}$ still holds.

However, there is a crucial difference. In the $\mathbb{N}\times\mathbb{N}$ case, the adjugate matrix $\adj(\C)$ is horizontally bounded, and so $\adj(\C) (\C\mathsf{x}) = (\adj(\C) \C) \mathsf{x}$ for all $\ZZ$-vectors $\mathsf{x}$. If $\C\mathsf{x=b}$, then
%
%Since lower unitriangular $\mathbb{N}\times\mathbb{N}$-matrices are always horizontally bounded, and so we may associate:
\[ \adj(\C) \mathsf{b} = \adj(\C) (\C\mathsf{x})= (\adj(\C) \C) \mathsf{x} =\mathsf{x}\]
Consequently, the unique solution to $\C\mathsf{x=b}$ can be computed as a linear combination of the columns of $\adj(\C)$ indexed by the initial variables. We can restate this as follows.

%The association in Equation $(*)$ fails in the general $\mathbb{Z}$-indexed case.

\begin{prop}\label{prop: Nsol}
If $x_i$ is an initial variable in an $\mathbb{N}$-indexed linear recurrence with recurrence matrix $\C$, then the $i$th column of $\adj(\C)$ is the solution for which $x_i=1$ and all other initial variables are $0$. Columns of this form are a basis for the space of all solutions.
\end{prop}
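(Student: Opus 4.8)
The plan is to reduce both assertions to the identity $\mathsf{x}=\adj(\C)\mathsf{b}$ already derived for $\C\mathsf{x}=\mathsf{b}$ in the $\NN$-indexed setting, exploiting that $\adj(\C)$ is horizontally bounded so that the relevant products exist and associate. First I would make the initial variables explicit: let $I\subseteq\NN$ be the set of indices $i$ whose equation merely fixes a value, i.e.\ those for which the $i$th row of $\C$ is $\mathsf{e}_i^\top$ (equivalently $\C_{i,j}=0$ for all $j<i$). For such a system a right-hand side $\mathsf{b}$ is admissible precisely when it is supported on $I$: the entry $b_i$ records the prescribed value of the initial variable $x_i$, while off $I$ the equations are the homogeneous recurrence relations and force $b_j=0$. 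Denote by $\mathcal{S}=\{\mathsf{x}\mid (\C\mathsf{x})_j=0 \text{ for all } j\notin I\}$ the space of all solutions.

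For the first claim, the solution with $x_i=1$ and every other initial variable equal to $0$ is exactly the solution of $\C\mathsf{x}=\mathsf{b}$ with $\mathsf{b}=\mathsf{e}_i$, since then $b_j=\delta_{ij}$ on $I$ and $b_j=0$ off $I$. The matrix $\C$ has at most one such solution, as $\adj(\C)\C=\mathsf{Id}$ makes $\adj(\C)$ a left inverse; and by the computation preceding the statement that solution is $\mathsf{x}=\adj(\C)\mathsf{e}_i$, namely the $i$th column of $\adj(\C)$. This product exists because $\adj(\C)$ is lower unitriangular, and the verification $\C(\adj(\C)\mathsf{e}_i)=(\C\adj(\C))\mathsf{e}_i=\mathsf{e}_i$ is legitimate because $\C$ and $\adj(\C)$ are horizontally bounded.

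For the second claim I would package this into the linear map $\Phi\colon \k^{I}\to\mathcal{S}$ sending $\mathsf{b}\mapsto\adj(\C)\mathsf{b}$, where $\mathsf{b}$ ranges over $\NN$-vectors supported on $I$, and show it is a bijection. It is well defined and the product always exists because, in the $\NN\times\NN$ case, $\adj(\C)$ is horizontally bounded; its image lands in $\mathcal{S}$ since $\C\adj(\C)\mathsf{b}=\mathsf{b}$ is supported on $I$, so the recurrence equations off $I$ hold. Injectivity follows from $\C\Phi(\mathsf{b})=\mathsf{b}$, and surjectivity from the observation that an arbitrary $\mathsf{x}\in\mathcal{S}$ has $\mathsf{b}\coloneqq\C\mathsf{x}$ supported on $I$ and satisfies $\mathsf{x}=\adj(\C)(\C\mathsf{x})=(\adj(\C)\C)\mathsf{x}=\mathsf{x}$. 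As the columns in question are precisely $\Phi(\mathsf{e}_i)$ for $i\in I$, they are the image of the standard basis of $\k^{I}$ under an isomorphism, hence a basis of $\mathcal{S}$.

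The main obstacle is not any deep computation but keeping the existence and associativity conditions honest: the entire force of the $\NN$-indexed case is that $\adj(\C)$ is horizontally bounded, which is exactly what makes $\adj(\C)(\C\mathsf{x})=(\adj(\C)\C)\mathsf{x}$ and $\C(\adj(\C)\mathsf{b})=(\C\adj(\C))\mathsf{b}$ hold unconditionally --- the very failure flagged earlier for general $\ZZ\times\ZZ$ matrices. A secondary point to record is the meaning of ``basis'' when $I$ is infinite: there $\Phi$ remains an isomorphism, but spanning must be read as allowing the (always existing) possibly-infinite combinations $\adj(\C)\mathsf{b}$, exactly in the sense of Theorem \ref{thm: solbasis}; when $I$ is finite, as in the usual one-sided recurrences, this is an ordinary finite basis.
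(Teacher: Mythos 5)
Your proposal is correct and follows essentially the same route as the paper, which treats the computation $\adj(\C)\mathsf{b}=\adj(\C)(\C\mathsf{x})=(\adj(\C)\C)\mathsf{x}=\mathsf{x}$ --- valid precisely because $\adj(\C)$ is automatically horizontally bounded in the $\NN\times\NN$ case --- as the entire proof of Proposition \ref{prop: Nsol}. Your write-up merely makes explicit what the paper leaves implicit (the identification of admissible right-hand sides with vectors supported on the initial indices, and the packaging of the basis claim as the bijectivity of $\mathsf{b}\mapsto\adj(\C)\mathsf{b}$), so no further comparison is needed.
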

%
%\begin{rem}
%There is no $\mathbb{N}\times\mathbb{N}$ analog of the solution matrix $\sol(\C)$; if anything, the propositions suggests $\adj(\C)$ should be the analog of the solution matrix. However, columns of $\adj(\C)$ indexed by non-initial variables are \emph{not} solutions to the linear recurrence.
%\end{rem}

\begin{rem}
While Proposition \ref{prop: Nsol} gives a basis of solutions, it is unclear how useful this is in general. Computationally, 
the entries of $\adj(\C)$ are determinant of submatrices of $\C$, which are (naively) no simpler than recursively computing $x_0,x_1,....,x_j$ directly.
%
%entries of the adjugate matrix $\adj(\C)$ are determinants of (arbitrarily large) submatrices of $\C$, and naively computing these is no simpler than recursively computing terms in a solution.
\end{rem}

\subsection{Friezes}\label{section: frieze}

The author's original motivation for the work in this paper is a connection and forthcoming application to the following curious objects.
%
%was to contextualize and generalize a series of results on \textbf{$SL(k)$-friezes}. 
%This will be covered in a forthcoming paper \cite{?}, but we can sketch the connection.
%
A \textbf{tame $\mathrm{SL}(k)$-frieze} consists of finitely many rows of integers (offset in a diamond pattern) such that:
\begin{itemize}
	\item the top and bottom rows consist entirely of $1$s,
	\item every $k\times k$ diamond has determinant $1$, and 
	\item (\emph{Tameness}) every $(k+1)\times (k+1)$ diamond has determinant $0$ .
\end{itemize}

\begin{ex}
An example of an $\mathrm{SL}(2)$-frieze
%(also called a \emph{Conway-Coxeter frieze}) 
is given below (every $\mathrm{SL}(2)$-frieze is tame).
%
%The first friezes were introduced in \cite{?}, as what are now called \emph{Conway-Coxeter friezes}. This is a grid
%
%%\begin{tabular}{lc}
%%\begin{center}
%%Conway-Coxeter friezes
%%\end{center}
%%\end{tabular}
%
%an array of integers arranged in a diamond pattern, so that 
%\begin{itemize}
%	\item the top and bottom rows consist entirely of $1$s, and
%	\item every $2\times 2$ diamond has determinant $1$.
%\end{itemize}
\[
\begin{tikzpicture}[baseline=(current bounding box.south),
	ampersand replacement=\&,
	]
%	\clip[use as bounding box] (-2.25,.3) rectangle (2.55,-2.4);
	\matrix[matrix of math nodes,
		matrix anchor = M-1-8.center,
%		throw/.style={dark green,draw,circle,inner sep=0.25mm,minimum size=2mm},
%		origin/.style={dark green,draw,circle,inner sep=0.25mm,minimum size=2mm},
		origin/.style={},
		throw/.style={},
		pivot/.style={draw,circle,inner sep=0.25mm,minimum size=2mm},		
		nodes in empty cells,
		inner sep=0pt,
		nodes={anchor=center,node font=\scriptsize},
		column sep={.35cm,between origins},
		row sep={.35cm,between origins},
	] (M) at (0,0) {
	 \& 1 \& \& 1 \& \& 1 \& \& 1 \& \& 1 \& \& 1 \& \& 1 \& \& 1 \&  \& 1 \& \& 1 \& \& 1 \& \& 1 \& \& 1 \& \& 1 \& \& 1 \& \& 1 \& \& 1 \& \& \cdots \\
	\cdots \& \& 3 \& \& 2 \& \& 2 \& \& 1 \& \& 4 \& \& 3 \& \& 1 \& \& 2 \& \& 3 \& \& 2 \& \& 2 \& \& 1 \& \& 4 \& \& 3 \& \& 1 \& \& 2 \& \& 3 \&  \\
	 \& 5 \& \& 5 \& \& 3 \& \& 1 \& \& 3 \& \& 11 \& \& 2 \& \& 1 \& \& 5 \& \& 5 \& \& 3 \& \& 1 \& \& 3 \& \& 11 \& \& 2 \& \& 1 \& \& 5 \& \& \cdots \\
	\cdots \& \& 8 \& \& 7 \& \& 1 \& \& 2 \& \& 8 \& \& 7 \& \& 1 \& \& 2 \& \& 8 \& \& 7 \& \& 1 \& \& 2 \& \& 8 \& \& 7 \& \& 1 \& \& 2 \& \& 8 \&  \\
	 \& 3 \& \& 11 \& \& 2 \& \& 1 \& \& 5 \& \& 5 \& \& 3\& \& 1 \& \& 3 \& \& 11 \& \& 2 \& \& 1 \& \& 5 \& \& 5 \& \& 3\& \& 1 \& \& 3 \& \& \cdots \\
	\cdots \& \& 4 \& \& 3 \& \& 1 \& \& 2 \& \& 3 \& \& 2 \& \& 2 \& \& 1 \& \& 4 \& \& 3 \& \& 1 \& \& 2 \& \& 3 \& \& 2 \& \& 2 \& \& 1 \& \& 4 \&  \\
	 \& 1 \& \& 1 \& \& 1 \& \& 1 \& \& 1 \& \& 1 \& \& 1 \& \& 1 \& \& 1 \& \& 1 \& \& 1 \& \& 1 \& \& 1 \& \& 1 \& \& 1 \& \& 1 \& \& 1 \& \& \cdots \\
	};
\end{tikzpicture}\qedhere \]
\end{ex}

The study of friezes was initiated in \cite{Cox71,CC73a,CC73b} for $k=2$, and generalized to arbitrary $k$ in \cite{CR72,BR10}.
Friezes enjoy many remarkable properties; for example, the rows of a tame $SL(k)$-frieze must be periodic. An excellent overview is given in \cite{MG15}.

A frieze may be converted into a recurrence matrix, by rotating $45^\circ$ clockwise and using the top row as the main diagonal.\footnote{\cite{MGOST14} and others also use an alternating sign when translating a frieze into a linear recurrence.}
Remarkably, the solutions have a periodicity condition.
%
%the frieze conditions imply the solutions to the associated linear recurrence have a generalized periodicity.
\begin{thm}\label{thm: frieze}\cite{MGOST14}
If $\C$ is the recurrence matrix associated to a tame $SL(k)$-frieze, then every solution to $\C\mathsf{x=0}$ is \emph{superperiodic}: $x_{i+n}=(-1)^sx_i$ for some $n$ and $s$ and all $i$.
\end{thm}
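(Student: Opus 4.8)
The plan is to recast superperiodicity as a statement about a single linear operator and then to extract the scalar $(-1)^s$ from the symmetries of the frieze. Let $T_n\colon\k^\ZZ\to\k^\ZZ$ be the shift $(T_n\mathsf{x})_i=x_{i+n}$. The assertion ``$x_{i+n}=(-1)^sx_i$ for all $i$ and all solutions'' is precisely the assertion that $T_n$ restricts to the scalar $(-1)^s\,\mathrm{Id}$ on $\ker(\C)$, so it suffices to produce an $n$ and an $s$ with $T_n|_{\ker(\C)}=(-1)^s\,\mathrm{Id}$.

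First I would establish the two ingredients that make $T_n$ tractable. Since tame $SL(k)$-friezes have periodic rows, the rotated recurrence matrix satisfies $\C_{a+n,b+n}=\C_{a,b}$ for $n$ a common period of the rows; a direct index computation then gives $\C T_n=T_n\C$ (legitimate since both matrices are horizontally bounded), so $T_n$ preserves $\ker(\C)$. Moreover the frieze has finitely many rows, so $\C$ is supported between its main diagonal of $1$'s and a bottom diagonal of $1$'s; its shape is $S(a)=a-w$, where $w+1$ is the number of rows, and this shape has exactly $w$ $S$-balls. Hence by Theorem \ref{thm: balls} the space $\ker(\C)$ is finite dimensional, and $T_n$ is a linear automorphism of a finite-dimensional space — the \emph{monodromy} of the recurrence.

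By linearity it then suffices to verify $T_n\mathsf{x}=(-1)^s\mathsf{x}$ on a spanning set, for which I would take the columns of $\sol(\C)$, which form a basis of $\ker(\C)$ by Theorem \ref{thm: solbasis}. The bridge to the frieze is the adjugate: by Proposition \ref{prop: adjprops} the entries of $\adj(\C)$ are the signed minors of $\C$, and for a frieze these minors are exactly its continuant entries. Thus below the main diagonal the columns of $\sol(\C)$ reproduce, up to sign, the diagonals of the frieze pattern, and applying $T_n$ to such a solution corresponds to translating the frieze horizontally by one period. In this way superperiodicity is converted into a statement purely about the frieze.

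The main obstacle is the final step: showing the monodromy is the \emph{scalar} $(-1)^s\,\mathrm{Id}$ rather than merely some automorphism. Periodicity of $\C$ alone yields only quasi-periodic solutions — one gets $T_n\mathsf{x}=\lambda\mathsf{x}$ for eigenvector solutions, with $\lambda$ ranging over the spectrum of the monodromy — so scalarity must come from the full frieze structure. The two boundary rows of $1$'s together with the unimodular $SL(k)$ condition force the frieze to ``close up'': continuing $\adj(\C)$ past the bottom diagonal of $1$'s produces a row of $0$'s followed by a signed mirror image of the frieze, and this glide symmetry is what pins every diagonal to $x_{i+n}=(-1)^sx_i$ with a single global sign $(-1)^s$ (determined by $k$). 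Establishing this closing-up rigorously — equivalently, that the monodromy of the $SL(k)$ difference equation equals $\pm\mathrm{Id}$ — is the crux and is the substance of the cited result \cite{MGOST14}; I would prove it by tracking how the $SL(k)$ and boundary conditions propagate through the minors of $\C$, or by appealing to the Gale-duality description of $\sol(\C)$.
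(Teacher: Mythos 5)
The paper does not prove this theorem: it is imported verbatim from \cite{MGOST14}, and the surrounding discussion indicates the intended mechanism is that paper's Gale-dual frieze construction (to be developed further in \cite{DM19}). So there is no in-paper argument to compare yours against; the question is whether your proposal stands on its own, and it does not. Your first two paragraphs are sound setup: periodicity of the frieze rows gives $\C_{a+n,b+n}=\C_{a,b}$, hence $T_n\C=\C T_n$ and $T_n$ preserves $\ker(\C)$; the bounded number of rows gives shape $S(a)=a-w$, hence $\dim\ker(\C)=w<\infty$ by Theorem \ref{thm: balls}; and superperiodicity is equivalent to the monodromy $T_n|_{\ker(\C)}$ being the scalar $(-1)^s\,\mathrm{Id}$. (A small imprecision: Theorem \ref{thm: solbasis} makes the columns indexed by $T_b$ a basis; the full set of columns of $\sol(\C)$ is only a spanning set, via Theorem \ref{thm: sol}.) But the equivalence with scalarity of the monodromy is a reformulation of the statement, not progress toward it, and the entire mathematical content is concentrated in the one step you defer.

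The gap is genuine because periodicity alone is far from sufficient, as your own remark about quasi-periodicity concedes: the Fibonacci matrix is $1$-periodic with two-dimensional kernel, and its monodromy $T_1$ has eigenvalues $\varphi$ and $-\varphi^{-1}$, so no power of it is scalar. Everything therefore hinges on the two defining features of a tame frieze that your argument never actually engages in a checkable way: the bounding rows of $1$'s and, crucially, the tameness condition that every $(k+1)\times(k+1)$ diamond has determinant $0$. That vanishing condition is what forces each diagonal to satisfy a recurrence in both directions and makes the pattern ``close up'' with a glide symmetry; asserting that the $SL(k)$ and boundary conditions ``force the frieze to close up'' and then citing \cite{MGOST14} for the rigorous version is citing the theorem to prove the theorem. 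A self-contained argument would have to propagate the $(k+1)$-diamond vanishing through the minors defining $\adj(\C)$ to show that, $w+1$ rows below the main diagonal, one recovers a row of $0$'s followed by a signed translate of the top row, and none of that is carried out here.
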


\noindent In fact, \cite{MGOST14} proves a stronger result. 
%Certain distinguished solutions to the linear recurrence $\C\mathsf{x=0}$ can be assembled into 
For each frieze $\C$, they construct a \emph{dual} frieze whose diagonals encode distinguished solutions to $\C\mathsf{x=0}$. 
%In our language, the Gale dual $\C^\dagger$ is a truncation of the solution matrix $\sol(\C)$. 

In a sequel \cite{DM19} to the current work, we will extend Theorem \ref{thm: frieze} to an equivalence. Specifically, if $\C$ is a reduced recurrence matrix of shape $S$, then the following are equivalent.
\begin{itemize}
	\item $\C$ satisfies a family of determinantal identities generalizing the tame frieze conditions.
	\item Every solution to $\C\mathsf{x=0}$ is \emph{$n$-quasiperiodic}; that is, $x_{i+n}=\lambda x_i$ for some $\lambda$ and all $i$.
	\item A \emph{dual} $\C^\dagger$ has shape $S^\dagger$, where $S^\dagger(i):=S^{-1}(i)+n$.
\end{itemize}
\begin{rem}
The space of such linear recurrences (of shape $S$) is  the \emph{cluster $\mathcal{X}$-variety} dual to the \emph{positroid variety} corresponding to $S$; this will be explained in \cite{DM19}.
\end{rem}

%The space of such linear recurrences is an interesting object in its own right. 

%In the sequel \cite{?}, we will prove a generalized converse of this result. Specifically, we will show that every solution to a linear recurrence is periodic\footnote{Or more generally, \emph{quasi-periodic}; that is, $x_{i+n}=\lambda x_i$ for some $\lambda$.} if and only it satisfies certain determinental identities, or equivalently, that its truncated solution matrix has an appropriate shape.
%
%
%we provide several characterizations of those linear recur
%
%we show that every solution to a linear recurrences is periodic if and only if it has (an appropriate generalization of) a Gale dual linear recurrence. 

%\subsubsection{Iwahori subgroups}
%
%Iwahori subgroup?

\newpage

%\part{Proofs}

%\section*{Proofs}

{\centering\it
Sections 5\--8 prove the promised results. 
\\}

%\section{Divisibility and kernel containment}
\section{Kernel containment and factorization}

%\subsection{A lemma on containment}

In this section, we prove a useful equivalence between containments of kernels and factorizations in the semigroup of recurrence matrices. 

\begin{nota}
Let $\mathsf{k}^\ZZ_b\subset \mathsf{k}^\ZZ$ denote the subspace of \textbf{bounded sequences} (i.e. non-zero in finitely many terms). If $\mathsf{v}\in \mathsf{k}^\ZZ_b$ and $\mathsf{w}\in \mathsf{k}^\ZZ$, then the dot product $\mathsf{v}\cdot \mathsf{w}$ is well-defined.
\end{nota}

%This subspace has a standard basis consisting of the sequences $\{\mathsf{e}_a\}_{a\in \ZZ}$ which are zero, except at the $a$th term, where they are $1$. 

%The key lemma is the following.

\begin{lemma}\label{lemma: lincomb}
Let $\C$ be a recurrence matrix and let $\mathsf{v}\in \mathsf{k}^\ZZ_b$. If $\mathsf{v}\cdot \mathsf{w}=0$ for all $\mathsf{w}\in \mathrm{ker}(\C)$, then $\mathsf{v}$ is in the span of the rows of $\C$.
\end{lemma}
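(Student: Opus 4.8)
The plan is to recognize the statement as an instance of biduality for the perfect pairing between bounded sequences and arbitrary sequences, so that almost no computation with $\C$ is required. The key structural fact is that the dot product $\k^\ZZ_b\times\k^\ZZ\to\k$ identifies $\k^\ZZ$ with the \emph{full} linear dual of $\k^\ZZ_b$: indeed $\k^\ZZ_b=\bigoplus_{a\in\ZZ}\k\,\mathsf{e}_a$, and the dual of a direct sum is the corresponding product $\prod_{a\in\ZZ}\k=\k^\ZZ$, with the pairing being evaluation. I will use this identification throughout.

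First I would let $U\subseteq\k^\ZZ_b$ be the (finite) linear span of the rows of $\C$; horizontal boundedness guarantees each row lies in $\k^\ZZ_b$, so $U\subseteq\k^\ZZ_b$ and the pairing restricts to $U$. Expanding the equation $\C\mathsf{w}=\mathsf{0}$ coordinate by coordinate shows that $\ker(\C)$ is exactly the annihilator of $U$: the $a$th coordinate of $\C\mathsf{w}$ is the dot product of the $a$th row of $\C$ with $\mathsf{w}$ (a finite sum, by row finiteness), so $\C\mathsf{w}=\mathsf{0}$ iff $\mathsf{w}$ is orthogonal to every row, iff $\mathsf{w}\in U^\perp$, where $U^\perp:=\{\mathsf{w}\in\k^\ZZ\mid \mathsf{u}\cdot\mathsf{w}=0\text{ for all }\mathsf{u}\in U\}$.

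With this in hand, the hypothesis that $\mathsf{v}\in\k^\ZZ_b$ kills $\ker(\C)=U^\perp$ says precisely that $\mathsf{v}$ lies in the double annihilator $(U^\perp)^\perp\cap\k^\ZZ_b$ (computed with the same pairing), and the conclusion is that this double annihilator is just $U$. The inclusion $U\subseteq(U^\perp)^\perp$ is automatic. For the reverse I would argue contrapositively: given $\mathsf{v}\in\k^\ZZ_b\smallsetminus U$, choose a basis of $U$, adjoin $\mathsf{v}$ (independent from $U$ since $\mathsf{v}\notin U$), and extend to a basis of $\k^\ZZ_b$; the coordinate functional dual to $\mathsf{v}$ is then an element $\varphi\in(\k^\ZZ_b)^{\ast}=\k^\ZZ$ with $\varphi|_U=0$ but $\varphi(\mathsf{v})\neq0$. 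Then $\varphi\in U^\perp=\ker(\C)$ while $\mathsf{v}\cdot\varphi\neq0$, contradicting the hypothesis; hence $\mathsf{v}\in U$, i.e. $\mathsf{v}$ is a finite linear combination of rows of $\C$.

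The one genuinely non-formal ingredient, and the place to be careful, is the identification $\k^\ZZ=(\k^\ZZ_b)^{\ast}$ together with the separating functional in the last step, which relies on choosing bases (as elsewhere in the paper, via Zorn's Lemma) and on the dual of a direct sum being a product; this is exactly what fails for the full dual of $\k^\ZZ$ itself, and is why the hypothesis $\mathsf{v}\in\k^\ZZ_b$ cannot be dropped. I expect the rest to be routine bookkeeping, with no appeal to the adjugate or to reducedness, so that the lemma holds verbatim for any horizontally bounded $\ZZ\times\ZZ$-matrix.
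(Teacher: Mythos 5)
Your proposal is correct and is essentially the paper's own argument: the paper likewise takes a functional $f$ on $\k^\ZZ_b$ vanishing on the row span but not on $\mathsf{v}$ (existence via choice of a basis, i.e.\ the Axiom of Choice), represents it as a sequence $\mathsf{w}=(f(\mathsf{e}_a))_{a}$ lying in $\ker(\C)$, and derives the contradiction $\mathsf{v}\cdot\mathsf{w}\neq 0$. Your framing via the identification $(\k^\ZZ_b)^{\ast}\cong\k^\ZZ$ is just a more explicit packaging of the same separating-functional step, and your observation that only horizontal boundedness (not unitriangularity) is used is also accurate.
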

\begin{proof}
Let $V\subset \mathsf{k}^\ZZ_b$ denote the span of the rows of $\C$, and assume for contradiction that $\mathsf{v}\not\in V$. We may therefore choose a linear map $f:\mathsf{k}^\ZZ_b\rightarrow \mathsf{k}$ such that $f(V)=0$ and $f(\mathsf{v})=1$.\footnote{The existence of such a map may depend on the Axiom of Choice, which we therefore assume.}

Let $\mathsf{e}_a\in \mathsf{k}^\ZZ_b$ denote the standard basis vector which is $1$ in the $a$th term and $0$ everywhere else, and set 
$\mathsf{w}:= ( f(\mathsf{e}_a)) _{a\in \ZZ}\in \mathsf{k}^\ZZ$. By linearity, $f(\mathsf{u})=\mathsf{u}\cdot \mathsf{w}$ for all $\mathsf{u}\in\mathsf{k}^\ZZ_b$. Since $f$ kills each row of $\C$, $\C\mathsf{w}=0$ and so $\mathsf{w}\in \mathrm{ker}(\C)$. However, $\mathsf{v}\cdot \mathsf{w}=1$, contradicting the hypothesis.
\end{proof}

\begin{lemma}\label{lemma: containment}
Let $\C$ and $\C'$ be recurrence matrices.
Then the following are equivalent.
\begin{enumerate}
	\item $\ker(\C)\subseteq \ker(\C')$.
	\item $\C'=\mathsf{D}\C$ for some horizontally bounded matrix $\mathsf{D}$.
	\item $\C'=\mathsf{D}\C$ for some recurrence matrix $\mathsf{D}$.
	\item $\C'\adj(\C)$ is a recurrence matrix.
\end{enumerate}
Furthermore, the matrix $\D$ in (2) and (3) must equal $\C'\adj(\C)$ and is therefore unique.
%Then $\mathrm{ker}(\C)\subseteq \mathrm{ker}(\C')$ if and only if $\C'=\mathsf{D}\C$ for some recurrence matrix $\mathsf{D}$.
\end{lemma}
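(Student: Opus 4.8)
The plan is to prove all four equivalences through a small set of implications anchored on the single candidate matrix $\mathsf{D} := \C'\adj(\C)$, and to extract the uniqueness claim as a byproduct. First I would record two unconditional preliminaries that get reused throughout. Since $\C'$ is horizontally bounded, each entry of $\C'\adj(\C)$ is a finite sum, so the product $\mathsf{D}$ exists; and since $\C'$ and $\adj(\C)$ are both lower unitriangular, so is $\mathsf{D}$. Hence $\mathsf{D}$ is a recurrence matrix exactly when it is additionally horizontally bounded, which is precisely what statement (4) asserts. The second preliminary is the ``backbone'' identity: because $\C'$, $\adj(\C)$, and $\C$ are all lower unitriangular, the associativity available for lower unitriangular triples gives $\mathsf{D}\C = (\C'\adj(\C))\C = \C'(\adj(\C)\C) = \C'$, using $\adj(\C)\C = \mathrm{Id}$ from Proposition \ref{prop: adjprops}.

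With these in hand I would run the implications $(1)\Rightarrow(2)\Rightarrow(4)\Rightarrow(3)\Rightarrow(2)\Rightarrow(1)$, which connect all four statements. The substantive step is $(1)\Rightarrow(2)$, and it is where Lemma \ref{lemma: lincomb} enters: each row $\mathsf{v}_a$ of $\C'$ lies in $\k^\ZZ_b$ and, because $\ker(\C)\subseteq\ker(\C')$ forces $\mathsf{v}_a\cdot\mathsf{w}=0$ for every $\mathsf{w}\in\ker(\C)$, the lemma expresses $\mathsf{v}_a$ as a \emph{finite} linear combination of rows of $\C$; assembling these combinations into a matrix $\mathsf{D}$ yields $\C'=\mathsf{D}\C$ with each row of $\mathsf{D}$ finitely supported, i.e.\ $\mathsf{D}$ horizontally bounded. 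For $(2)\Rightarrow(4)$ I would take a factorization $\C'=\mathsf{D}\C$ with $\mathsf{D}$ horizontally bounded and right-multiply by $\adj(\C)$; since $\mathsf{D}$ and $\C$ are horizontally bounded and $\adj(\C)$ is an arbitrary matrix, associativity applies and gives $\C'\adj(\C) = \mathsf{D}(\C\adj(\C)) = \mathsf{D}$, which shows $\C'\adj(\C)=\mathsf{D}$ is horizontally bounded (hence a recurrence matrix) and simultaneously proves the uniqueness claim that any such $\mathsf{D}$ equals $\C'\adj(\C)$. The step $(4)\Rightarrow(3)$ is then immediate from the backbone identity $\mathsf{D}\C=\C'$ once $\mathsf{D}=\C'\adj(\C)$ is known to be a recurrence matrix; $(3)\Rightarrow(2)$ is trivial because recurrence matrices are horizontally bounded; and $(2)\Rightarrow(1)$ follows by applying a factorization to $\mathsf{w}\in\ker(\C)$ and using associativity of horizontally bounded matrices against a vector to get $\C'\mathsf{w}=(\mathsf{D}\C)\mathsf{w}=\mathsf{D}(\C\mathsf{w})=0$.

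The point requiring the most care — the real obstacle — is that multiplication here is neither always defined nor always associative, so every rearrangement of a triple product must be certified as falling under the lower unitriangular case or the horizontally bounded case of the associativity dictionary. The bookkeeping amounts to tracking, at each step, which of $\C$, $\C'$, $\adj(\C)$, $\mathsf{D}$ is lower unitriangular and which is horizontally bounded; notably the backbone identity uses the former case while $(2)\Rightarrow(4)$ and $(2)\Rightarrow(1)$ use the latter. The only genuinely nontrivial mathematical input is isolated in $(1)\Rightarrow(2)$, which rests on Lemma \ref{lemma: lincomb} (and thus on the Axiom of Choice); all remaining implications are formal manipulations validated by the associativity rules.
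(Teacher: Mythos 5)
Your proposal is correct and follows essentially the same route as the paper: the substantive step $(1)\Rightarrow(2)$ via Lemma \ref{lemma: lincomb}, the computation $(\mathsf{D}\C)\adj(\C)=\mathsf{D}(\C\adj(\C))=\mathsf{D}$ for $(2)\Rightarrow(4)$ and uniqueness, the identity $(\C'\adj(\C))\C=\C'(\adj(\C)\C)=\C'$ for $(4)\Rightarrow(3)$, and the kernel computation closing the cycle all match the paper's argument, with the same careful tracking of which associativity case applies at each step. The only difference is cosmetic: you arrange the implications in a single cycle where the paper proves $(2)\Rightarrow 3+4+{}$uniqueness in one block.
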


%\noindent 
%
%Equivalently, $\ker(\C)\subseteq \ker(\C')$ iff $\C'\adj(\C)$ is horizontally bounded.

\begin{proof}
($1\Rightarrow 2$)
If $\mathrm{ker}(\C)\subseteq \mathrm{ker}(\C')$, then each row of $\C'$ kills $\mathrm{ker}(\C)$. By Lemma \ref{lemma: lincomb}, the $a$th row of $\C'$ is equal to $\mathsf{D}_a\C$ for some bounded sequence $\mathsf{D}_a\in \mathsf{k}^\ZZ_b$. The vectors $\mathsf{D}_a$ may be combined into the rows of a matrix $\mathsf{D}$ which is horizontally bounded and satisfies $\mathsf{D}\C=\C'$.

($2\Rightarrow 3+4+$Uniqueness)
Assume that  $\C'=\mathsf{D}\C$ for a horizontally bounded $\mathsf{D}$. Then
\begin{equation*}
%\label{eq: D}
\C'\adj(\C) = (\D\C)\adj(\C) \stackrel{*}{=} \D(\C\adj(\C)) = \D 
\end{equation*}
Equality ($*$) holds because $\D$ and $\C$ are horizontally bounded. Since $\C'\adj(\C)$ is lower unitriangular and $\D$ is horizontally bounded, they are the same recurrence matrix.% Since it is the product of lower unitriangular matrices, it is lower unitriangular, and so it is a 

%Since $\C'\adj(\C)$ is lower unitriangular, so is $\D$.

($3\Rightarrow 1$)
Assume $\C'=\D\C$ for some recurrence matrix $\D$. If $\mathsf{v}\in \ker(\C)$, then
\[ \C'\mathsf{v} = (\D\C)\mathsf{v} \stackrel{*}{=} \D(\C\mathsf{v}) = \D\mathsf{0} = \mathsf{0} \]
Equality ($*$) holds because $\D$ and $\C$ are horizontally bounded. Therefore, $\ker(\C)\subseteq \ker(\C')$.

($4\Rightarrow 3$)
%Assume $\C'\adj(\C)$ is a recurrence matrix, and set $\D\coloneqq \C'\adj(\C)$. Then
%Assume $\D:=\C'\adj(\C)$ is a recurrence matrix. Then,
Setting $\D\coloneqq \C'\adj(\C)$, we check that
\[ \D\C = (\C'\adj(\C) ) \C \stackrel{*}{=} \C'(\adj(\C)\C) = \C'\]
Equality ($*$) holds because $\C'$, $\adj(\C)$, and $\C$ are lower unitriangular.
%
%Setting $\D:=\C'\adj(\C)$ solves $\C'=\D\C$, as the matrices are lower unitriangular.
\end{proof}

%\begin{proof}
%Assume $\C'=\D\C$ for some recurrence matrix $\D$. If $\mathsf{v}\in \ker(\C)$, then
%\[ \C'\mathsf{v} = (\D\C)\mathsf{v} \stackrel{*}{=} \D(\C\mathsf{v}) = \D\mathsf{0} = \mathsf{0} \]
%Equality (*) holds because $\C$ and $\D$ are horizontally bounded. Therefore, $\ker(\C)\subseteq \ker(\C')$.
%
%If $\mathrm{ker}(\C)\subseteq \mathrm{ker}(\C')$, then each row of $\C'$ kills $\mathrm{ker}(\C)$. By Lemma \ref{lemma: lincomb}, the $a$th row of $\C'$ is equal to $\mathsf{D}_a\C$ for some bounded sequence $\mathsf{D}_a\in \mathsf{k}^\ZZ_b$. The vectors $\mathsf{D}_a$ may be combined into the rows of a matrix $\mathsf{D}$ which is horizontally bounded and satisfies $\mathsf{D}\C=\C'$.
%
%Multiplying on the right by $\adj(\C)$, we see that $\mathsf{D}=\C'\adj(\C)$, establishing uniqueness.\footnote{DANGER! I am associating before knowing it is lower unitriangular.} Since $\C'$ and $\adj(\C)$ are lower unitriangular, the matrix $\mathsf{D}$ is also.
%\end{proof}

%\begin{rem}
%Such a $\D$ must be equal to $\C'\adj(\C)$, and so it is unique. While $\C'\adj(\C)$ always exists and is lower unitriangular, it is only horizontally bounded when $\ker(\C)\subseteq \ker(\C')$.
%\end{rem}

%Let us say a linear recurrence $\mathsf{C}$ is \textbf{trivial} if $\mathrm{ker}(\C)=0$. 
%
%The following proposition shows that the trivial recurrence matrices are precisely the invertible ones; that is, those recurrence matrices whose inverse lies in the set of linear recurrences.

%\subsection{Consequences}

\begin{thm}\label{thm: triv-inv}
A recurrence matrix $\C$ is trivial if and only if $\adj(\C)$ is a recurrence matrix.
\end{thm}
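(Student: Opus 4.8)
The plan is to obtain this theorem as the special case $\C' = \mathrm{Id}$ of the containment lemma (Lemma \ref{lemma: containment}). The crucial observation is that triviality of $\C$ is \emph{exactly} the statement $\ker(\C)\subseteq\ker(\mathrm{Id})$: since $\mathrm{Id}$ has trivial kernel, the containment $\ker(\C)\subseteq\ker(\mathrm{Id})$ holds precisely when $\ker(\C)=\{0\}$, which is the definition of trivial. So the entire theorem should fall out of a substitution.

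First I would check that $\mathrm{Id}$ is itself a recurrence matrix, so that Lemma \ref{lemma: containment} legitimately applies with $\C'=\mathrm{Id}$; this is immediate, as the identity is lower unitriangular and has a single nonzero entry in each row (hence is horizontally bounded). With this choice, condition (1) of the lemma reads $\ker(\C)\subseteq\ker(\mathrm{Id})$, i.e.\ the triviality of $\C$, while condition (4) reads that $\mathrm{Id}\,\adj(\C)=\adj(\C)$ is a recurrence matrix. The equivalence $(1)\Leftrightarrow(4)$ then yields the theorem directly, with no further computation beyond noting that $\mathrm{Id}$ acts as an identity on this product.

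I do not anticipate any genuine obstacle: the statement is essentially a corollary of the containment lemma, and the only verification beyond the substitution is the trivial identity $\mathrm{Id}\,\adj(\C)=\adj(\C)$. The one point worth recording, as the remark following the statement does, is that $\adj(\C)$ is lower unitriangular by its very construction; consequently the substantive content of ``$\adj(\C)$ is a recurrence matrix'' is solely that $\adj(\C)$ is horizontally bounded, and it is exactly this boundedness that the triviality of $\C$ controls.
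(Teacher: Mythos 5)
Your proposal is exactly the paper's proof: it applies Lemma \ref{lemma: containment} with $\C'=\mathrm{Id}$, identifies triviality with $\ker(\C)\subseteq\ker(\mathrm{Id})$, and reads off condition (4) as $\adj(\C)$ being a recurrence matrix. The argument is correct and no gap remains.
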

\begin{proof}
The recurrence matrix $\C$ is trivial when $\ker(\C)\subseteq \ker(\mathsf{Id})$. Applying Lemma \ref{lemma: containment} with $\C'=\mathsf{Id}$, this holds if and only if $\adj(\C)$ is a recurrence matrix.
%
%If $\adj(\C)$ is a recurrence matrix and $\mathsf{v}\in \mathrm{ker}(\C)$, then\footnote{Note that the starred ($*$) equality is only valid because $\adj(\C)$ is horizontally bounded.}
%\[ \mathsf{v} = (\adj(\C) \C) \mathsf{v} \stackrel{*}{=} \adj(\C) (\C\mathsf{v}) = \adj(\C) \mathsf{0} =\mathsf{0} \]
%Therefore, $\mathrm{ker}(\C)=0$ and so $\C$ is trivial. 
%
%If $\C$ is trivial, then $\mathrm{ker}(\C)=\mathrm{ker}(\mathsf{Id})$. By Lemma \ref{lemma: containment}, $\mathsf{Id}=\mathsf{D}\C$ for a linear recurrence $\mathsf{D}$. Multiplying on the right by $\adj(\C)$, we see that $\adj(\C)=\mathsf{D}$, so $\adj(\C)$ is a linear recurrence.
\end{proof}

\begin{thm}\label{thm: triv-equiv}
Two recurrence matrices $\C$ and $\C'$ are equivalent if and only if $\C'=\mathsf{D}\C$ for a trivial recurrence matrix $\mathsf{D}$.
\end{thm}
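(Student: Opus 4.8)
The plan is to reduce everything to Lemma \ref{lemma: containment} and Theorem \ref{thm: triv-inv}, treating equivalence $\ker(\C)=\ker(\C')$ as the conjunction of the two containments $\ker(\C)\subseteq\ker(\C')$ and $\ker(\C')\subseteq\ker(\C)$. The one containment $\ker(\C)\subseteq\ker(\C')$ already produces (by Lemma \ref{lemma: containment}, parts (1)$\Leftrightarrow$(3) and the uniqueness clause) the unique recurrence matrix $\D=\C'\adj(\C)$ with $\C'=\D\C$. So the only real content is to show that this $\D$ is \emph{trivial} precisely when the reverse containment $\ker(\C')\subseteq\ker(\C)$ also holds. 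Theorem \ref{thm: triv-inv} lets me restate triviality of $\D$ as the condition that $\adj(\D)$ is a recurrence matrix, which is exactly the kind of statement Lemma \ref{lemma: containment}(4) controls.

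For the forward direction, I would assume $\C$ and $\C'$ are equivalent. The containment $\ker(\C)\subseteq\ker(\C')$ gives $\D=\C'\adj(\C)$ as above. The reverse containment $\ker(\C')\subseteq\ker(\C)$, fed into Lemma \ref{lemma: containment}(1)$\Rightarrow$(4) with the roles of $\C$ and $\C'$ exchanged, tells me that $\C\adj(\C')$ is a recurrence matrix. Now I compute $\adj(\D)$ using Proposition \ref{prop: adjprops}: by part (3), $\adj(\D)=\adj(\C'\adj(\C))=\adj(\adj(\C))\,\adj(\C')$, and by part (2) this equals $\C\adj(\C')$. Hence $\adj(\D)=\C\adj(\C')$ is a recurrence matrix, so $\D$ is trivial by Theorem \ref{thm: triv-inv}.

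For the backward direction, I would assume $\C'=\D\C$ with $\D$ a trivial recurrence matrix. Lemma \ref{lemma: containment}(3)$\Rightarrow$(1) immediately yields $\ker(\C)\subseteq\ker(\C')$. For the reverse, triviality of $\D$ gives (Theorem \ref{thm: triv-inv}) that $\adj(\D)$ is a recurrence matrix, and since $\adj(\D)$, $\D$, and $\C$ are all lower unitriangular, multiplication among them is associative, so $\adj(\D)\C'=\adj(\D)(\D\C)=(\adj(\D)\D)\C=\C$. Thus $\C=\adj(\D)\C'$ expresses $\C$ as a recurrence matrix times $\C'$, and Lemma \ref{lemma: containment}(3)$\Rightarrow$(1) (again with roles swapped) gives $\ker(\C')\subseteq\ker(\C)$. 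Combining the two containments yields $\ker(\C)=\ker(\C')$, i.e. $\C$ and $\C'$ are equivalent.

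The steps are all short once the right tools are lined up; the main thing to get right is the bookkeeping in the forward direction — recognizing that the reverse kernel containment is exactly what makes $\adj(\D)$ a recurrence matrix, and then identifying $\adj(\D)$ with $\C\adj(\C')$ via Proposition \ref{prop: adjprops}(2)--(3) rather than trying to argue triviality of $\D$ by hand. I would also be careful to invoke associativity only for lower unitriangular (or horizontally bounded) factors, as flagged in the warning preceding this section, since $\adj(\D)$, $\D$, $\C$, $\C'$ are all lower unitriangular and the products in question therefore behave.
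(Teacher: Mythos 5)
Your proof is correct and follows essentially the same route as the paper: both reduce the statement to the two kernel containments via Lemma \ref{lemma: containment}, identify $\adj(\D)=\C\adj(\C')$ using Proposition \ref{prop: adjprops}(2)--(3), and conclude with Theorem \ref{thm: triv-inv}. The paper's version simply compresses your two directions into a single chain of ``if and only if''s, leaving the $\adj(\D)$ computation and the associativity checks implicit.
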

\begin{proof}
Two recurrence matrices $\C$ and $\C'$ are equivalent if and only if $\ker(\C)=\ker(\C')$. By Lemma \ref{lemma: containment}, this holds if and only if $\D=\C'\adj(\C)$ and $\adj(\D)=\C\adj(\C')$ are recurrence matrices. By Theorem \ref{thm: triv-inv}, this is equivalent to $\D$ being a trivial linear recurrence.
%
%Assume $\C$ and $\C'$ are equivalent. By Lemma \ref{lemma: containment}, there are linear recurrences $\D$ and $\D'$ such that $\C'=\D\C$ and $\C=\D'\C'$.
\end{proof}

%\subsection{Reduced means minimal shape}

%Another application of the containment lemma is that reduced recurrences matrices have `minimal shape', in the following sense.

Lemma \ref{lemma: containment} also allows us to make a connection between kernel containment and shapes.

\begin{lemma}\label{lemma: containshape}
Let $\C$ and $\C'$ be recurrence matrices with shape $S$ and $S'$, respectively. 
If $\ker(\C)\subseteq  \ker(\C')$ and $S$ is injective (e.g. if $\C$ is reduced), then $S(a)\geq S'(a)$ for all $a$.
%
%Let $\C$ and $\C'$ be equivalent matrices of shape $S$ and $S'$, respectively. If $\C$ is reduced, then $S(a)\geq S'(a)$ for all $a$.
\end{lemma}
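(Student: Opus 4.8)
The plan is to reduce the statement to the factorization results established just above, and then to track the leftmost nonzero entry (the pivot) of each row. First I would invoke Lemma \ref{lemma: containment}: since $\ker(\C)\subseteq\ker(\C')$, part (3) produces a recurrence matrix $\D$ with $\C'=\D\C$. Being a recurrence matrix, $\D$ is lower unitriangular (so $\D_{a,a}=1$ and $\D_{a,c}=0$ for $c>a$) and horizontally bounded (so $\D_{a,c}=0$ for $c\ll a$). Fixing a row index $a$, the entrywise identity $\C'_{a,b}=\sum_c \D_{a,c}\C_{c,b}$ exhibits row $a$ of $\C'$ as a finite linear combination of the rows $c$ of $\C$, indexed by the finite nonempty set $I:=\{c:\D_{a,c}\neq 0\}$, which consists of integers $\leq a$ and contains $a$ itself.

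Next I would locate the leftmost column that can possibly appear in this combination. Each row $c$ of $\C$ has its leftmost nonzero entry in column $S(c)$, so I set $m:=\min_{c\in I}S(c)$. Since $a\in I$, automatically $m\leq S(a)$, and the entire lemma reduces to the single claim $S'(a)=m$.

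The hard part --- and the only place where injectivity of $S$ is used --- is ruling out cancellation in column $m$. For $b<m$, every $c\in I$ satisfies $b<S(c)$, hence $\C_{c,b}=0$ and so $\C'_{a,b}=0$; this already forces $S'(a)\geq m$. For the reverse inequality, note that in column $m$ only the rows $c\in I$ with $S(c)=m$ contribute, and injectivity of $S$ guarantees there is exactly one such index, say $c^\ast$ (existence from the definition of $m$, uniqueness from injectivity). Then $\C'_{a,m}=\D_{a,c^\ast}\C_{c^\ast,S(c^\ast)}$ is a product of two nonzero scalars, hence nonzero, giving $S'(a)=m\leq S(a)$ as desired.

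I expect the cancellation issue to be the crux of the argument: without injectivity, several rows $c$ could share the minimal pivot column $m$, and their contributions to column $m$ could sum to zero, pushing $S'(a)$ strictly to the right of $m$ and breaking the inequality. Injectivity of $S$ eliminates this by leaving a single surviving term. The remaining bookkeeping --- finiteness of $I$ from horizontal boundedness and $a\in I$ from lower-unitriangularity, together with the existence and well-definedness of the product $\D\C$ --- is routine once $\D$ is known to be a recurrence matrix.
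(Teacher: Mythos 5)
Your proposal is correct and follows essentially the same route as the paper: factor $\C'=\D\C$ via Lemma \ref{lemma: containment}, take the index in $\{c:\D_{a,c}\neq 0\}$ minimizing $S$ (unique by injectivity), and observe that the single surviving term $\D_{a,c^\ast}\C_{c^\ast,S(c^\ast)}$ is nonzero, forcing $S'(a)\leq S(c^\ast)\leq S(a)$. The only difference is that you also prove the reverse inequality $S'(a)\geq m$, which the paper relegates to a footnote as unneeded.
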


\begin{proof}
By Lemma \ref{lemma: containment}, $\C'=\D\C$ for some recurrence matrix $\D$. 
%Let $T$ be the shape of $\D$, so that $\D_{a,b}=0$ whenever $b<T(a)$.
%%$\C_{b,T(S(a))}$ whenever $b>S(a)$.
%%
%Since $\C$ is reduced, $\C_{b,S(T(a))}=0$ whenever $b>T(a)$. 
%%Since $\D$ is lower unitriangular, $\D_{a,b}=0$ whenever $b>a$. 
%%Thus,
%\[ \C'_{a,S(T(a))} = \sum_{b\in \ZZ} \D_{a,b}\C_{b,S(T(a))} = \D_{a,T(a)} \C_{T(a),S(T(a))} \neq 0 \]
%Therefore, $S'(a)\leq S(T(a))$.
%
%
Fix $a\in \ZZ$, and consider $B:=\{ b \in \ZZ \mid \D_{a,b} \neq 0\}$. This set is bounded and contains $a$. Let $b_{0}$ be the element of $B$ on which $S$ is minimal; this is unique because $S$ is injective.
\[ (\D\C)_{a,S(b_{0})} = \sum_{b\in \ZZ} \D_{a,b} \C_{b,S(b_{0})} = \D_{a,b_{0}}\C_{b_{0},S(b_{0})} \neq0 \]
Therefore, $S'(a)\leq S(b_0)$.\footnote{In fact, this is equality, but we won't need this stronger statement.} Since $a\in B$, $S(b_0)\leq S(a)$, and so $S'(a)\leq S(a)$.
\end{proof}

%Equality iff $S(a)<S(b)$ for all $b$ such that $\D_{a,b}\neq0$.
%
%\begin{prop}
%Let $\C$ have injective shape $S$. Then the shape of $\D\C$ is given by
%\[ S'(a) = \min\{S(b) \mid \D_{a,b} \neq0 \} \]
%%\[ S'(a) = S(b_0)\]
%%where $b_0$ is the element in $\{b\in \ZZ\mid \D_{a,b}\neq0\}$ on which $S$ is minimal.
%\end{prop}
%In particular, $S'(a)\leq S(a)$ and $S'(a)\leq S(T(a))$, where $T$ is the shape of $\D$.
%
%If $S=S'$, then $S(a)\leq S(b)$ whenever $\D_{a,b}\neq0$.

\begin{prop}\label{prop: reducedequal}
Reduced recurrence matrices that are equivalent must be equal.
\end{prop}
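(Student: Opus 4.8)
The plan is to use Lemmas~\ref{lemma: containment} and~\ref{lemma: containshape} to reduce the statement to a triangularity argument. Suppose $\C$ and $\C'$ are equivalent reduced recurrence matrices, so $\ker(\C)=\ker(\C')$. By Lemma~\ref{lemma: containment} there is a unique recurrence matrix $\D=\C'\adj(\C)$ with $\C'=\D\C$, and by Theorem~\ref{thm: triv-equiv} this $\D$ is trivial; in particular $\D$ is lower unitriangular and horizontally bounded. Since $\C'=\C$ is equivalent to $\D=\mathrm{Id}$, and $\D$ is already lower unitriangular, it suffices to prove that $\D$ is also upper triangular.

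First I would pin down the shape. Both $\C$ and $\C'$ are reduced, hence have injective shapes $S$ and $S'$. Applying Lemma~\ref{lemma: containshape} to the containment $\ker(\C)\subseteq\ker(\C')$ gives $S(a)\ge S'(a)$ for all $a$, and applying it to the reverse containment $\ker(\C')\subseteq\ker(\C)$ gives $S'(a)\ge S(a)$; hence $S=S'$.

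To get upper triangularity I would pass to the pivot-normalizing matrix $\P$ of $\C$ from Section~\ref{section: sol}, for which $U:=\C\P$ is upper unitriangular precisely because $\C$ is reduced. The matrix $U$ is also horizontally bounded: its nonzero entries in row $a$ occur only in columns $c$ with $S(c)$ in the finite interval $[S(a),a]$, and injectivity of $S$ makes this a finite set. Now $\C'=\D\C$ gives $\C'\P=\D U$ (the product reassociates since $\D$ and $\C$ are horizontally bounded), and because $S'=S$ and $\C'$ is reduced, the same computation that makes $\C\P$ upper triangular shows $W:=\C'\P$ is upper triangular (its subdiagonal entries are values $\C'_{a,S(c)}$ with $a>c$, which vanish by reducedness of $\C'$). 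Multiplying $W=\D U$ on the right by $\adj(U)$ and reassociating (again valid since $\D$ and $U$ are horizontally bounded, and using $U\adj(U)=\mathrm{Id}$) yields $\D=W\adj(U)$. As $\adj(U)$ is upper unitriangular, $\D$ is a product of upper triangular matrices, hence upper triangular. Combined with $\D$ being lower unitriangular, this forces $\D=\mathrm{Id}$ and therefore $\C'=\C$.

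The conceptual core is the last move — a recurrence matrix that is simultaneously upper triangular and lower unitriangular must be the identity — so the main obstacle is not that step but the bookkeeping needed to reach it legitimately: since infinite matrix multiplication is neither always defined nor associative, I must check at each product (forming $\C'\P=\D U$, and then $\D=W\adj(U)$) that the relevant factors are horizontally bounded so that the associativity exemption from the Warning applies, and verify that $U=\C\P$ is horizontally bounded using injectivity of $S$. Establishing $S=S'$ is the other essential input; everything else is formal.
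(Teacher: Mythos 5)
Your proof is correct, and it shares its skeleton with the paper's: both first use Lemma \ref{lemma: containshape} in both directions to get $S=S'$, both write $\C'=\D\C$ via Lemma \ref{lemma: containment}, and both finish by forcing $\D=\mathrm{Id}$. Where you diverge is in that last step. The paper stays entirely at the level of entries: letting $T$ be the shape of $\D$, it computes $\C'_{a,S(T(a))}=\D_{a,T(a)}\C_{T(a),S(T(a))}\neq 0$ (the sum collapses because $\D$ vanishes left of $T(a)$ and reducedness of $\C$ kills column $S(T(a))$ below row $T(a)$), and then reducedness of $\C'$ forces $a=T(a)$, so $\D$ is supported on the diagonal. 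You instead conjugate by the pivot matrix $\P$, turning reducedness into upper triangularity of $U=\C\P$ and $W=\C'\P$, and deduce $\D=W\adj(U)$ is upper triangular as well as lower unitriangular. Your route is conceptually clean ("a recurrence matrix lying in both triangular subgroups is the identity"), and you correctly identify and discharge the real obligations: injectivity of $S$ gives horizontal boundedness of $U$, and the two reassociations fall under case (b) of the Warning since $\D$, $\C$, and $U$ are all row-finite. The cost is that you invoke the adjugate of an upper unitriangular matrix and the identity $U\adj(U)=\mathrm{Id}$, which the paper only asserts in a footnote, whereas the paper's computation is a three-line argument needing nothing beyond the definition of shape and reducedness. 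Both are valid; the paper's is shorter and more self-contained, yours makes the group-theoretic content (triviality of the intersection of the two triangular "Borels") explicit.
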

\begin{proof}
Let $\C$ and $\C'$ be reduced and equivalent. Lemma \ref{lemma: containshape} implies that $\C$ and $\C'$ have the same shape; call it $S$. By Lemma \ref{lemma: containment}, $\C'=\D\C$ for some recurrence matrix $\D$.

Let $T$ denote the shape of $\D$, so that $\D_{a,b}=0$ whenever $b<T(a)$.
Since $\C$ is reduced of shape $S$, $\C_{b,S(T(a))}=0$ whenever $b>T(a)$. 
%Since $\D$ is lower unitriangular, $\D_{a,b}=0$ whenever $b>a$. 
Therefore,
\[ \C'_{a,S(T(a))} = \sum_b \D_{a,b} \C_{b,S(T(a))} = \D_{a,T(a)}\C_{T(a),S(T(a))} \neq 0 \]
Since $\C'$ is also reduced of shape $S$, this is only possible if $a= T(a)$. 
Since this holds for all $a$, the only non-zero entries of $\D$ are on the main diagonal. Thus, $\D=\mathsf{Id}$ and $\C'=\C$.
\end{proof}

%\newpage

\section{Gauss-Zordan Elimination}
\label{section: GZ}

%Attempting to row reduce $\ZZ\times \ZZ$-matrices

Because we are working with $\ZZ\times\ZZ$-matrices, we must consider infinite sequences of row reductions that may be chosen in an arbitrary order. We furthermore consider \emph{generalized row reductions}: limits of such row reductions (in an appropriate topology).

\subsection{Row reduction}

Given a recurrence matrix $\C$ of shape $S$, a \textbf{row reduction} of $\C$ is a matrix $\C'$ obtained by adding $\C_{a,S(b)} / \C_{b,S(b)}$ times the $b$th row to the $a$th row, for some $b> a$ with $\C_{a,S(b)}\neq0$. By design, the resulting matrix $\C'$ has a zero in the $(a,S(b))$ entry. %Observe that a recurrence matrix always admits at least one row reduction unless it is reduced. 
%\[ \mathsf{E}_{a,b}\left( \C_{b,S(a)} / \C_{a,S(a)}\right) \C\]

\begin{prop}
A recurrence matrix is reduced if and only if it has no row reductions.
\end{prop}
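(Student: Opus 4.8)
The statement to prove is: \emph{a recurrence matrix is reduced if and only if it has no row reductions.}

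The plan is to unwind both definitions and match them up directly. Recall that a recurrence matrix $\C$ of shape $S$ is \textbf{reduced} when the pivot entry $\C_{a,S(a)}$ in each row is the last non-zero entry in its column; equivalently, $\C_{b,S(a)}=0$ for all $b>a$. On the other side, a \textbf{row reduction} exists precisely when there is a pair $a<b$ with $\C_{a,S(b)}\neq 0$, since that is exactly the data needed to add a multiple of the $b$th row to the $a$th row and annihilate the $(a,S(b))$ entry. So the whole proof reduces to showing that these two conditions are negations of one another.

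First I would prove the contrapositive in one direction: if $\C$ is \emph{not} reduced, I produce a row reduction. Unreducedness means there is some column $S(b)$ in which the pivot $\C_{b,S(b)}$ is not the last non-zero entry, i.e.\ there exists $a>b$ with $\C_{a,S(b)}\neq 0$. Relabeling so the lower-indexed row is on top, this is exactly a pair (upper row index) $<$ (lower row index, which carries the pivot) with a non-zero entry in the pivot column; this is precisely the configuration that licenses a row reduction. Conversely, if $\C$ \emph{has} a row reduction, then by definition there is a pair $b>a$ with $\C_{a,S(b)}\neq 0$, so the pivot $\C_{b,S(b)}$ has a non-zero entry $\C_{a,S(b)}$ strictly above it (at row $a<b$) in its own column; hence that pivot is not the last non-zero entry in its column, so $\C$ is not reduced.

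Since each direction is just the contrapositive of the other once the definitions are aligned, the two implications are genuinely the same equivalence, and the proof is essentially a one-line dictionary translation. The only point demanding a little care is keeping the indexing straight: the definition of \textbf{reduced} quantifies over entries \emph{below} a pivot ($b>a$ in ``$\C_{b,S(a)}=0$''), while the definition of \textbf{row reduction} is phrased with the pivot-bearing row \emph{below} the row being modified ($b>a$ in ``add the $b$th row to the $a$th''). I would make sure to present both conditions in terms of the same ordered pair—``there exist row indices $i<j$ with $\C_{i,S(j)}\neq 0$, where $S(j)$ is the $j$th pivot column''—so that reducedness is visibly the statement that no such pair exists and the existence of a row reduction is visibly the statement that one does. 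The main (and only real) obstacle is this bookkeeping of which index is larger; once it is fixed, no computation is needed.
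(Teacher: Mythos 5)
There is a genuine gap, and it sits exactly at the ``bookkeeping'' step you flag as the only real obstacle. Unreducedness is the existence of a pair $b<a$ with $\C_{a,S(b)}\neq 0$: a non-zero entry \emph{below} the pivot $(b,S(b))$, i.e.\ in a row with \emph{larger} index than the row owning the pivot. Your final dictionary instead declares that unreducedness is the existence of a pair $i<j$ with $\C_{i,S(j)}\neq 0$: a non-zero entry \emph{above} the pivot $(j,S(j))$. These two conditions are not related by relabeling, because $S$ is evaluated at the index of the row that owns the pivot: swapping the names $a\leftrightarrow b$ turns ``$\exists\, a>b$ with $\C_{a,S(b)}\neq 0$'' into ``$\exists\, b>a$ with $\C_{b,S(a)}\neq 0$'', not into ``$\exists\, a<b$ with $\C_{a,S(b)}\neq 0$''. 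The Fibonacci matrix shows they genuinely differ: it is reduced, yet $\C_{a,S(a+1)}=\C_{a,a-1}=-1\neq 0$ with $a<a+1$, so your criterion would falsely certify it as non-reduced. The same confusion invalidates your converse direction: an entry \emph{above} a pivot does not prevent that pivot from being the \emph{last} non-zero entry in its column (the paper's own restatement is that $\C$ is reduced iff $\C_{b,S(a)}=0$ for all $b>a$ --- nothing \emph{below} each pivot).

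The root cause is that you took the printed inequality ``$b>a$'' in the definition of a row reduction at face value. That inequality must be a typo: adding a multiple of row $b$ to row $a$ with $b>a$ would place the non-zero entry $\C_{b,b}=1$ times the coefficient at position $(a,b)$ above the diagonal and destroy lower unitriangularity, contradicting the paper's earlier remark that the only admissible operation adds a multiple of a row to a \emph{lower} row; and the reformulation of a row reduction as a factorization $\C=\D\C'$ with $\D$ differing from the identity in a single entry $\D_{a,b}$ forces $b<a$, since $\D$ must be lower unitriangular. With the corrected reading --- a row reduction exists iff there are indices $b<a$ with $\C_{a,S(b)}\neq 0$, and it kills that entry below the pivot of row $b$ --- the proposition really is the one-line translation you were aiming for: ``no row reductions'' is literally ``$\C_{a,S(b)}=0$ whenever $a>b$'', which is the paper's restatement of ``reduced''. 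Your argument needs to be rebuilt on that corrected definition rather than the printed one.
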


A row reduction of $\C$ can be reformulated as a factorization $\C=\D\C'$ such that $\D$ differs from the identity matrix in a single entry $\D_{a,b}$, and such that $\C_{a,S(b)}\neq 0$ and $\C'_{a,S(b)}=0$. This perspective leads to the following generalization.

A \textbf{generalized row reduction of $\C$} is a recurrence matrix $\C'$ such that $\C=\D\C'$ for a trivial recurrence matrix $\D$ with the property that, for each $a$ such that $\{b <a \mid \D_{a,b}\neq 0\}$ is non-empty, we have
\[ \C_{a,b_a} \neq 0\text{ and } \C'_{a,b_a} = 0 \]
where $b_a:= \min\{S(b) \mid b< a \text{ s.t. }\D_{a,b}\neq 0\}$. 
We write $\C\succeq \C'$ to denote that $\C'$ is a generalized row reduction of $\C$.

\begin{rem}\label{rem: leftmost}
The index $b_a$ may be defined as the leftmost entry of the $a$th row that multiplication by $\D$ is `big enough' to change, and so $(\D\C)_{a,b}=\C_{a,b}$ whenever $b<b_a$. Thus, if $\C\succeq \C'$, then $\C'$ must vanish in the leftmost entry in which the $a$th rows of $\C$ and $\C'$ differ.
%
%If $b<b_a$, then $\C'_{a,b}=(\D\C')_{a,b}$. 
%
%Thus, the generalized row reduction condition implies that $\C'$ vanishes in the leftmost entry in which the $a$th rows of $\C$ and $\C'$ differ.
%The index $b_a$ is the leftmost entry in which the $a$th rows of $\C$ and $\D\C$ can differ (based on the shape of $\C$ and the support of $\D$).
\end{rem}

%This somewhat opaque condition says that the action of $\D$ kills the leftmost entry in $\C$ that it has the power to change.

%We write $\C\succeq \C'$ if $\C'$ is a generalized row reduction of $\C$.
\begin{prop}
The relation $\succeq$ defines a partial order on the set of recurrence matrices.
%\begin{enumerate}
%	\item A row reduction is a generalized row reduction.
%	\item If $\C\succeq \C'$, then, in each row, $\C'$ must vanish in the leftmost entry where the two matrices differ (if it exists).
%	\item If $\C\succeq \C'$ and $\C\preceq \C'$, then $\C=\C'$
%	\item If $\C'=\D\C$ is a GRR, then $S'(a)\leq S(b)$ for all $a,b$ with $\D_{a,b}\neq0$.
%	\item If $\C\succeq  \C'$ and $\C'\succeq \C''$, then $\C\succeq \C''$.
%\end{enumerate}
\end{prop}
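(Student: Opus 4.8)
The plan is to check the three axioms of a partial order---reflexivity, antisymmetry, and transitivity---leaning throughout on the reformulation recorded in Remark \ref{rem: leftmost}: whenever $\C\succeq\C'$, the matrix $\C'$ vanishes at the leftmost column in which the $a$th rows of $\C$ and $\C'$ differ, for every $a$. Since recurrence matrices are horizontally bounded, ``the leftmost differing column'' is well-defined whenever two rows differ. Reflexivity is then immediate: taking $\D=\mathrm{Id}$ gives $\C=\mathrm{Id}\cdot\C$, and the set $\{b<a\mid \mathrm{Id}_{a,b}\neq0\}$ is empty for every $a$, so the reduction condition holds vacuously; hence $\C\succeq\C$.

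For antisymmetry, assume $\C\succeq\C'$ and $\C'\succeq\C$ and fix a row $a$. If the $a$th rows differ, let $c$ be the leftmost column at which they do (the set of differing columns is symmetric in $\C$ and $\C'$). Remark \ref{rem: leftmost} applied to $\C\succeq\C'$ gives $\C'_{a,c}=0$, and applied to $\C'\succeq\C$ gives $\C_{a,c}=0$, whence $\C_{a,c}=\C'_{a,c}$, contradicting the choice of $c$. Thus every row of $\C$ and $\C'$ agrees, i.e. $\C=\C'$.

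For transitivity, let $\C=\D\C'$ and $\C'=\D'\C''$ witness $\C\succeq\C'$ and $\C'\succeq\C''$. Composing yields $\C=(\D\D')\C''$, where associativity is valid because all factors are lower unitriangular. The matrix $\D\D'$ is again a trivial recurrence matrix: it is lower unitriangular and horizontally bounded (a product of horizontally bounded matrices is horizontally bounded), and its adjugate $\adj(\D')\adj(\D)$ is a recurrence matrix by Proposition \ref{prop: adjprops}(3) and Theorem \ref{thm: triv-inv}, so $\D\D'$ is trivial by Theorem \ref{thm: triv-inv}. It remains to verify the reduction condition for the factorization $\C=(\D\D')\C''$. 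I would compare the three rows $\C_{a,\cdot}$, $\C'_{a,\cdot}$, $\C''_{a,\cdot}$ at once: letting $c_1$ and $c_2$ be the leftmost columns where $\C,\C'$ and where $\C',\C''$ respectively differ, Remark \ref{rem: leftmost} gives $\C'_{a,c_1}=0\neq\C_{a,c_1}$ and $\C''_{a,c_2}=0\neq\C'_{a,c_2}$, which forces $c_1\neq c_2$ (were they equal, $\C'_{a,c_1}$ would be both zero and nonzero). A short case split on $c_1<c_2$ versus $c_2<c_1$, together with the degenerate cases where one of the three rows coincides, then shows that $\C$ and $\C''$ first differ at $c:=\min(c_1,c_2)$ and that $\C''_{a,c}=0$---exactly the reduction condition for $\C\succeq\C''$.

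The main obstacle is the transitivity step, and within it the need to reconcile the two forms of the reduction condition. The definition phrases it through the index $b_a=\min\{S(b)\mid b<a,\ \D_{a,b}\neq0\}$ attached to the multiplier $\D$, whereas the clean combinatorial argument above is phrased through the leftmost column at which consecutive matrices differ. The crux is to confirm---via Remark \ref{rem: leftmost}, that the two rows agree strictly to the left of $b_a$ and first disagree exactly there---that these two descriptions of the condition coincide; granting this, the three-row case analysis closes transitivity, and reflexivity and antisymmetry are routine.
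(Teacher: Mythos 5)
Your reflexivity and antisymmetry arguments are fine and the latter is exactly the paper's. The problem is transitivity, and specifically the step you yourself label ``the crux'' and leave unconfirmed: it is not a routine reconciliation of two equivalent phrasings, because the two phrasings are \emph{not} equivalent. The leftmost-differing-column property of Remark \ref{rem: leftmost} is only a necessary consequence of the definition. The definition of $\C\succeq\C''$ via the witness $\mathsf{E}=\D\D'$ demands that the distinguished index $b_a=\min\{S''(b)\mid b<a,\ \mathsf{E}_{a,b}\neq0\}$ --- computed from the support of $\mathsf{E}$ and the shape $S''$ of $\C''$ --- be precisely the column where $\C_{a,\cdot}$ is nonzero and $\C''_{a,\cdot}$ vanishes. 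Since $\C_{a,\cdot}=\C''_{a,\cdot}+\sum_{b<a}\mathsf{E}_{a,b}\C''_{b,\cdot}$, the rows automatically agree to the left of $b_a$, but the contributions could a priori cancel \emph{at} column $b_a$, making the leftmost column where the rows actually differ strictly larger than $b_a$; in that event the condition of the definition fails even though the leftmost-differing-column property holds. So establishing that $\min(c_1,c_2)$ (where your three-row comparison correctly shows $\C''$ vanishes and $\C$ does not) coincides with $b_a$ requires controlling the support of the product $\D\D'$, and that is the entire content of the paper's transitivity argument: a case analysis on whether $(\D'\D)_{a,b}$ receives a contribution $\D'_{a,c}\D_{c,b}\neq0$ with $c<a$ or reduces to $\D_{a,b}$, showing in either case that $S(b)\geq\min(b_0,b_0')$ and hence that the distinguished index of the composite witness is exactly $\min(b_0,b_0')$.

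In short: your ``entries'' half of the condition (the three-row case split producing $\C''_{a,\min(c_1,c_2)}=0\neq\C_{a,\min(c_1,c_2)}$) is sound and parallels the first part of the paper's proof, and your verification that $\D\D'$ is a trivial recurrence matrix via $\adj(\D\D')=\adj(\D')\adj(\D)$ and Theorem \ref{thm: triv-inv} is correct. But the ``index'' half --- that no entry $(\D\D')_{a,b}\neq0$ has $S''(b)$ smaller than $\min(c_1,c_2)$ --- is missing, and without it the proof of transitivity is incomplete.
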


\noindent As a consequence, an iterated sequence of row reductions is a generalized row reduction.

\begin{proof}
(Antisymmetry) Assume $\C\succeq\C'$ and $\C\preceq \C'$. By Remark \ref{rem: leftmost}, both $\C$ and $\C'$ vanish in the leftmost entry in which the $a$th rows of $\C$ and $\C'$ differ. However, two entries cannot both vanish and be different, so the $a$th rows of $\C$ and $\C'$ coincide for all $a$. Thus, $\C=\C'$.

(Transitivity) Let $\C\preceq \D\C \preceq \D'\D\C$, and let $S$ and $S'$ denote the shapes of $\C$ and $\D\C$, respectively. Fix some $a$. If $\{b< a\mid \D_{a,b}\neq 0\}= \varnothing$ or $\{b< a\mid \D'_{a,b}\neq 0\}= \varnothing$, the generalized row reduction condition is easy to check. Assume neither set is empty and let
\begin{align*}
b_0 &:= \min\{S(b) \mid b<a\text{ s.t. }\D_{a,b}\neq0\} \\
b_0' &:= \min\{S'(b) \mid b<a\text{ s.t. }\D'_{a,b}\neq0\} 
\end{align*}
By the definition of generalized row reductions,
\[\C_{a,b_0}=0,\;\;\; (\D\C)_{a,b_0}\neq0,\;\;\; (\D\C)_{a,b_0'}=0 ,\;\;\; (\D'\D\C)_{a,b_0'}\neq 0\]
This ensures that $\C_{a,\min(b_0,b_0')}=0$ and $(\D'\D\C)_{a,\min(b_0,b_0')}\neq0$. Since these entries differ,
\[ \min\{S(b) \mid b<a\text{ s.t. }(\D'\D)_{a,b}\neq0\} \leq \min(b_0,b_0') \]
To show this is equality, consider some $b<a$ such that $(\D'\D)_{a,b}\neq0$. We split into cases.
\begin{itemize}
	\item  Assume $\D'_{a,c}\D_{c,b}\neq0$ for some $c<a$. Since $\D_{c,b}\neq0$ and $\C\preceq \D\C$, $S'(c)\leq S(b)$. Since $\D'_{a,c}\neq 0$ and $\D\C\preceq \D'\D\C$, $S'(c)\geq b_0'$. Therefore, $S(b)\geq b'_0$.
	\item Otherwise, $\D'_{a,c}\D_{c,b}=0$ for all $c<a$, and so $(\D'\D)_{a,b}= \D'_{a,a}\D_{a,b}=\D_{a,b}$. Since $\D_{a,b}\neq0$ and $\C\preceq \D\C$, we know that $S(b)\geq b_0$.
\end{itemize}
Therefore, $\C_{a,\min(b_0,b_0')}=0$ and $(\D'\D\C)_{a,\min(b_0,b_0')}\neq0$ and
\[ \min\{S(b) \mid b<a\text{ s.t. }(\D'\D)_{a,b}\neq0\} = \min(b_0,b_0') \]
Since this holds for all $a$, $\C\preceq \D'\D\C$.
\end{proof}

\subsection{Limits}\label{section: limits}

To define limits of generalized row reductions, we endow the set of recurrence matrices with the topology of \textbf{row-wise stabilization}: a sequence of recurrence matrices converges if each row is constant after finitely many steps.%\footnote{On the larger set of $\ZZ\times \ZZ$-matrices, this is the product topology on the set of rows, with the discr $\k^\ZZ$.}

We next show that sequences of generalized row reductions must stabilize row-wise to another generalized row reduction, via the following more general result.

\begin{lemma}\label{lemma: Zorn}
Let $\mathcal{C}$ be a set of recurrence matrices in which every pair is comparable in the generalized row reduction partial order.\footnote{Sometimes called a `chain' in the literature on partially ordered sets.}
Then the closure of $\mathcal{C}$ in the space of recurrence matrices contains a lower bound of $\mathcal{C}$.
%a \textbf{greatest lower bound}: a maximal recurrence matrix $\overline{\C}$ less than or equal to every element in $\mathcal{C}$.
\end{lemma}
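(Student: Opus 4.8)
The plan is to build the lower bound $\C^*$ one row at a time as a row-wise limit of $\mathcal{C}$, and then to check that it is simultaneously a limit point and a lower bound.

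\textbf{Construction.} Fix a row $a$. As $\C$ descends the chain, Remark \ref{rem: leftmost} shows that each generalized row reduction leaves the entries of the $a$th row to the left of its frontier untouched; in particular the pivot column $S_\C(a)$ either stays fixed or is itself killed and moves strictly right. Thus $S_\C(a)$ is non-decreasing along the chain and bounded above by $a$ (since $\C_{a,a}=1$), so it attains a maximum value $p_a$ at some $\C_0\in\mathcal{C}$, and equals $p_a$ for every $\C\preceq\C_0$. Below $\C_0$ every $a$th row is supported in the finite interval $[p_a,a]$. Sending such a matrix to the support of its $a$th row, ordered by comparing the leftmost column of a symmetric difference, is strictly order-reversing on $\{\C\preceq\C_0\}$: by Remark \ref{rem: leftmost} the leftmost column in which two comparable $a$th rows differ lies in the support of the larger matrix and not the smaller. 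Since the collection of subsets of $[p_a,a]$ is finite, the $a$th row takes only finitely many values below $\C_0$ and stabilizes, so there is $\C^{(a)}\in\mathcal{C}$ whose $a$th row agrees with that of every $\C\preceq\C^{(a)}$. I then define $\C^*$ to have this stable row as its $a$th row.

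\textbf{$\C^*$ is a recurrence matrix in $\overline{\mathcal{C}}$.} Each row of $\C^*$ is literally a row of some recurrence matrix, so $\C^*$ is lower unitriangular and horizontally bounded. For a finite set $F$ of rows let $\C_F:=\min\{\C^{(a)}\mid a\in F\}$ (a minimum exists since $F$ is finite and $\mathcal{C}$ is a chain); then $\C_F\preceq\C^{(a)}$ for each $a\in F$, so $\C_F$ agrees with $\C^*$ on all rows in $F$. Hence every basic neighborhood of $\C^*$ meets $\mathcal{C}$, giving $\C^*\in\overline{\mathcal{C}}$.

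\textbf{$\C^*$ is a lower bound.} Fix $\C\in\mathcal{C}$; I must produce a trivial $\D$ with $\C=\D\C^*$ satisfying the frontier condition. First note that all elements of $\mathcal{C}$ share one kernel $K$: comparable matrices differ by multiplication by a trivial, hence kernel-preserving, matrix, and the chain is totally ordered. Since $\{M\mid M\mathsf{v}=\mathsf{0}\}$ is closed in the row-stabilization topology for each $\mathsf{v}$, passing to the closure gives $K\subseteq\ker(\C^*)$; by Lemma \ref{lemma: containment} this already makes $\mathsf{E}:=\C^*\adj(\C)$ a recurrence matrix with $\C^*=\mathsf{E}\C$. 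Setting $\D:=\C\adj(\C^*)$, associativity among lower unitriangular matrices yields $\D\C^*=\C$ and $\D\mathsf{E}=\mathsf{E}\D=\mathrm{Id}$, so $\D=\adj(\mathsf{E})$; by Theorem \ref{thm: triv-inv} it then suffices to show $\D$ is horizontally bounded. I would obtain this from the approximations $\C'_F:=\min(\{\C\}\cup\{\C^{(a)}\mid a\in F\})\preceq\C$: each relation $\C\succeq\C'_F$ supplies a trivial $\D_F=\C\adj(\C'_F)$ whose frontier condition forces the leftmost column it alters, say $b_a$, to satisfy $\C_{a,b_a}\neq0$, hence to lie in the fixed finite support of the $a$th row of $\C$. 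Because $b_a=\min\{S_{\C'_F}(b)\mid b<a,\ (\D_F)_{a,b}\neq0\}$, every row $b$ contributing to the $a$th row of $\D_F$ obeys $b\ge S_{\C'_F}(b)\ge b_a$, so the $a$th row of $\D_F$ is supported in an interval independent of $F$. As adjugate entries are finite determinants, $\D_F\to\D$ entrywise, these uniform supports pass to the limit, and the frontier condition survives; thus $\C\succeq\C^*$.

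\textbf{Main obstacle.} The delicate step is the last one: showing that the generalized row reductions $\C\succeq\C'_F$ converge to a genuine relation $\C\succeq\C^*$ rather than degenerating in the limit. The essential mechanism is the frontier condition, which pins the leftmost altered column inside the fixed support of each row of $\C$ and thereby prevents the rows of $\D_F$ from spreading leftward without bound; making the uniformity of these supports and the persistence of the frontier condition precise — using that the shapes and supports have already stabilized in the construction — is where I expect the real work to lie.
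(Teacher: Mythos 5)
Your argument is correct and follows essentially the same route as the paper: stabilize each row via a monotone invariant of its support (the paper uses $n_a(\C)=\sum_{\C_{a,b}\neq 0}(a-b)^2$ and a diagonal sequence through the minimizing sets $\mathcal{C}_a$, where you use the support itself ordered by leftmost difference and the net of finite meets $\C_F$), then bound the rows of the connecting matrices uniformly in $[S_\C(a),a]$ via the frontier condition and pass to the row-wise limit. The step you flag as the main obstacle closes exactly as you sketch---for fixed $a$, once $F\supseteq[S_\C(a),a]$ the $a$th row of $\D_F$ and the rows of $\C'_F$ indexed by $[S_\C(a),a]$ have all stabilized to those of $\D$ and $\C^*$, so $b_a$ and the frontier condition for $(\D_F,\C'_F)$ literally become those for $(\D,\C^*)$---which is also how the paper's proof handles it.
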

\noindent Equivalently, there is a descending sequence of recurrence matrices in $\mathcal{C}$ (i.e. generalized row reductions of the initial matrix in the sequence) which converges (i.e. stabilizes row-wise) to a lower bound of $\mathcal{C}$ (i.e. a generalized row reduction of every matrix in $\mathcal{C}$).

\begin{proof}
Given a recurrence matrix $\C$ and an integer $a$, define 
\[ n_a(\C) := \sum_{b \text{ s.t. } \C_{(a,b)}\neq0} (a-b)^2 \]
If $\C\preceq \C'$, then $n_a(\C)\leq n_a(\C')$ and equality implies the $a$th rows of $\C$ and $\C'$ coincide.

For each $a$, let $\mathcal{C}_a:= \{ \C\in \mathcal{C} \mid \forall \C'\in\mathcal{C},\;n_a (\C) \leq n_a(\C')\}$; that is, $\mathcal{C}_a$ is the set of matrices in $\mathcal{C}$ which attain the minimum value of $n_a$. This set is non-empty and the $a$th row of each matrix in $\mathcal{C}_a$ is the same, since $n_a$ has the same value and the matrices are comparable.

Consider $a,a'\in\ZZ$ and assume, for contradiction, that there exist $\C\in \mathcal{C}_a\smallsetminus \mathcal{C}_{a'}$ and $\C' \in\mathcal{C}_{a'}\smallsetminus \mathcal{C}_a$. If $\C'\preceq \C$, then $n_a(\C')\leq n_a(\C)$. By the minimality of $n_a(\C)$, this is an equality and so $\C'\in \mathcal{C}_a$; a contradiction. By a symmetric argument, $\C\preceq \C'$ forces a contradiction. Therefore, $\mathcal{C}_a\cap \mathcal{C}_{a'}$ is either equal to $\mathcal{C}_a$ or equal to $\mathcal{C}_{a'}$.
%Applying this repeatedly, for any finite set $I\subset \ZZ$, there exists an $a'\in I$ such that 
%\[ \bigcap_{a\in I} \mathcal{C}_{a} = \mathcal{C}_{a'} \neq \varnothing \]

Applying this repeatedly, for any $i\in \NN$, there is some $a_i\in [-i,i]$ such that
\[ \bigcap_{a\in [-i,i]} \mathcal{C}_a =\mathcal{C}_{a_i} \neq \varnothing \]
Choose a matrix $\C^i$ in $\mathcal{C}_{a_i}$ for each $i$. The $a$th rows in the sequence $\C^1,\C^2,\C^3,...$, stabilize after the $a$th term, and so this sequence converges to the recurrence matrix $\C$ whose $a$th row coincides with the $a$th row in each matrix in $\mathcal{C}_a$. 
%
%
%Note that $\mathcal{C}_a\prec \mathcal{C} \smallsetminus \mathcal{C}_a$
%
%Let $N_a:= \min(n_a(\C) \mid \C\in \mathcal{C})$.

Let $S$ be the shape of $\C^1$. Define a sequence $\D^1,\D^2,\D^3,...$ of trivial recurrence matrices by $\C^1 = \D^n\C^n$ for all $n$. Since $\C^1\succeq \C^n$, if $\D^n_{a,b}\neq0$, then $S(a)\leq S(b)\leq b$; that is, the $a$th row $\D$ can be non-zero only on the interval $[S(a),a]$.

When $n>|S(a)|$, the $a$th row of the product $\D^n\C^n$ only depends on rows in $\C^n$ that coincide with rows in $\overline{\C}$. Therefore, the $a$th row of $\D^n\overline{\C}$ is equal to $\C^1$. Therefore, the sequence $\D^1,\D^2,\D^3,...$ stabilizes row-wise to a matrix $\overline{\D}$ such that $\overline{\D}\overline{\C}=\C^1$. As $\overline{\D}_{a,b}=\D^n_{a,b}$ for large enough $n$, 
this shows that $\C\preceq \C^1$. Since the sequence $\C^\bullet$ could have started at any matrix in $\mathcal{C}$, this shows $\overline{\C}$ is a lower bound for $\mathcal{C}$.
\end{proof}

%\begin{coro}
%Every descending sequence $\C^1\succeq \C^2\succeq \C^3\succeq \cdots $ of equivalent recurrence matrices converges to a limit $\C$ in the same equivalence class.
%\end{coro}

%\begin{lemma}
%Every descending sequence $\C^1\succeq \C^2\succeq \C^3\succeq \cdots $ of equivalent recurrence matrices converges to a limit $\C'$ in the same equivalence class.
%\end{lemma}
%
%DANGER: I believe i need to prove the stronger fact, that every totally ordered set converges to a limit.

%{\color{blue}
%MISSING: Chains of equivalence matrices have equivalent greatest lower bounds. Status:
%\begin{itemize}
%	\item Equivalence classes are \textbf{not} closed in the row-wise stabilization topology.
%	\item If $\C\preceq \C'$ by a \textbf{sequence of row reductions}, then the shape of $\D=\C'\adj(\C)$ is bounded by the shape of $\C'$. It follows that the same holds for limits of row reductions.
%	\item If $\C\preceq \C'$ and $\ker(\C)=\ker(\C')$, then the shape of $\D=\C'\adj(\C)$ \textbf{need not} be bounded by the shape of $\C'$.
%\end{itemize}
%Strategies I haven't ruled out.
%\begin{itemize}
%	\item Showing that $\D=\C\adj(\C_{glb})$ is horizontally bounded, where $\C_{glb}$ is the greatest lower bound of a chain of equivalent matrices containing $\C$.
%	\item Finding a different relation for which $\C\preceq \C'$ and $\ker(\C)=\ker(\C')$ imply that $\D=\C'\adj(\C)$ is bounded by the shape of $\C'$. 
%\end{itemize}
%}

\begin{thm}\label{thm: reduced1}
Every recurrence matrix is equivalent to a unique reduced recurrence matrix.
\end{thm}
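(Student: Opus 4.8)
The plan is to separate existence from uniqueness. Uniqueness is immediate from the earlier development: if a recurrence matrix $\C$ is equivalent to two reduced recurrence matrices $\C'$ and $\C''$, then $\ker(\C')=\ker(\C)=\ker(\C'')$, so $\C'$ and $\C''$ are equivalent reduced matrices, and Proposition \ref{prop: reducedequal} forces $\C'=\C''$. Thus each equivalence class contains at most one reduced matrix, and all the work goes into existence.

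For existence, I would fix $\C$ and consider the poset $P:=\{\C' \mid \C\succeq \C'\}$ of all generalized row reductions of $\C$, ordered by $\succeq$. Every $\C'\in P$ satisfies $\C=\D\C'$ for a trivial recurrence matrix $\D$, so by Theorem \ref{thm: triv-equiv} each element of $P$ is equivalent to $\C$; hence it suffices to exhibit a reduced matrix in $P$. The idea is to obtain one as a $\succeq$-minimal element of $P$ via Zorn's Lemma in its minimal-element form, which requires that every chain in $P$ admit a lower bound in $P$.

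This chain condition is exactly what Lemma \ref{lemma: Zorn} delivers: given a chain $\mathcal{C}\subseteq P$, the lemma produces a lower bound $\overline{\C}$ of $\mathcal{C}$ lying in the closure of $\mathcal{C}$. To see $\overline{\C}\in P$, I would pick any $\C_0\in\mathcal{C}$ for a nonempty chain, so that $\C\succeq\C_0\succeq\overline{\C}$, and conclude $\C\succeq\overline{\C}$ by transitivity of $\succeq$; the empty chain is bounded below by $\C$ itself. Zorn's Lemma then supplies a $\succeq$-minimal element $\C^*\in P$. Finally I would check that minimality implies reducedness: if $\C^*$ were not reduced, the characterization of reduced matrices as those with no row reductions would give an ordinary row reduction $\C''$ with $\C^*\succeq\C''$ and $\C^*\neq\C''$ (a row reduction zeroes out a nonzero entry), and transitivity would place $\C''\in P$, contradicting minimality. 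Hence $\C^*$ is reduced and equivalent to $\C$.

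The substantive obstacle has in fact already been dispatched by Lemma \ref{lemma: Zorn}, whose real content is that descending chains of generalized row reductions — which need not terminate — stabilize row-wise to a limit that is again a generalized row reduction of the original matrix. Granting that boundedness, what remains is a routine Zorn argument, and the only delicate bookkeeping is the passage between "$\succeq$-minimal" and "reduced," which rests on transitivity of $\succeq$ together with the fact that an unreduced matrix always admits a genuine row reduction strictly below it.
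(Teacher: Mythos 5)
Your proof is correct and follows essentially the same route as the paper: uniqueness via Proposition \ref{prop: reducedequal}, existence via Zorn's Lemma applied with the chain condition supplied by Lemma \ref{lemma: Zorn}, and reducedness of the minimal element from the fact that an unreduced matrix admits a row reduction strictly below it. The only (harmless) difference is that you run Zorn on the down-set $\{\C'\mid \C\succeq\C'\}$ rather than on the full equivalence class of $\C$ as the paper does; both posets work because a row reduction of a minimal element would land in the poset by transitivity.
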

\begin{proof}
Let $\mathcal{C}$ be an equivalence class of recurrence matrices, with the row reduction partial order. Every non-empty chain in $\mathcal{C}$ has a lower bound (by Lemma \ref{lemma: Zorn}). By Zorn's Lemma, $\mathcal{C}$ contains a minimal element $\overline{\C}$.

If $\overline{\C}$ was not reduced, then there would be a row operation which would strictly decrease it in the reduction partial order; contradicting minimality. Therefore, $\overline{\C}$ is reduced. By Proposition \ref{prop: reducedequal}, this reduced recurrence matrix is unique.
\end{proof}

This provides a transfinite, non-deterministic analog of Gauss-Jordan elimination, which we humorously dub \emph{Gauss-Zordan elimination} (both for `{Zorn}' and the integers $\ZZ$). Given a recurrence matrix $\C$, an arbitrary sequence of row reductions will stabilize row-wise to a matrix equivalent to $\C$. While this limit may not be reduced, further arbitrary row reductions generate another convergent sequence. Zorn's Lemma guarantees that some transfinite iteration of this process will eventually converge to the reduced representative of $\C$.

\section{Constructing recurrences from spaces of solutions}
\label{section: rank}

\def\R{\mathsf{R}}

In this section, we consider the inverse problem to the motivating problem of this note: Given a subspace $V\subset \k^\ZZ$, when and how can we construct a linear recurrence $\C\mathsf{x}=\mathsf{0}$ whose solutions are $V$? %Whenever this is possible, we construct an explicit reduced recur
We give a characterization of when this is possible in Theorem \ref{thm: reduced2}.

\begin{nota}
For any $I\subset \mathbb{Z}$, let $\pi_{I}:\k^\mathbb{Z}\rightarrow \k^{I}$ restrict a sequence to the indices in $I$, and let $\iota_I:\k^I\rightarrow \k^\ZZ$ extend a sequence by $0$.
Given a subspace $V\subset \k^\mathbb{Z}$, let $V_{I}\coloneqq \pi_{I}(V)\subset \k^{I}$.
%For any interval $[a,b]\subset \mathbb{Z}$, let $\pi_{[a,b]}:\k^\mathbb{Z}\rightarrow \k^{[a,b]}$ be the function which restricts a sequence to the indices in $[a,b]$. 
\end{nota}

 \subsection{Closed subspaces}
 
%The rank matrix and related constructions can only see the restrictions of a subspace $V\subset \k^\ZZ$ to finite intervals, and so they miss the difference in infinite limits. We can remedy this by focus on those subsace
% 
 
We endow the space of sequences $\k^\ZZ$ with the product topology for the discrete topology on $\k$. 
The two relevant facts for us are the following.
\begin{itemize}
	\item A sequence in $\k^\ZZ$ converges if the restriction to $[a,b]$ stabilizes for all finite intervals $[a,b]$.
%	
%	, for all $a$, the $a$th entries are eventually constant.
	\item The closure of $V\subset \k^\ZZ$ in the product topology on $\k^\ZZ$ is the set of sequences $\mathsf{v}$ such that $\mathsf{v}_{[a,b]} \in V_{[a,b]}$ for all finite intervals $[a,b]$.
\end{itemize}
%For us, the only relevant fact is that the closure of $V\subset \k^\ZZ$ in the product topology on $\k^\ZZ$ is the set of sequences $\mathsf{v}$ such that $\mathsf{v}_{[a,b]} \in V_{[a,b]}$ for all finite intervals $[a,b]$. As a consequence, every finite dimensional subspace of $\k^\ZZ$ is closed. 

\begin{rem}
As an example of a non-closed subspace, consider the subspace $V\subset \k^n$ of periodic sequences. On any finite interval, every sequence coincides with the restriction of some periodic sequence (for sufficiently large period), and so the closure of $V$ is all of $\k^n$.
\end{rem}

%The two relevant facts for us are the following.
%\begin{itemize}
%	\item A sequence in $\k^\ZZ$ converges if, for all $a$, the $a$th entries are eventually constant.
%	\item The closure of $V\subset \k^\ZZ$ in the product topology on $\k^\ZZ$ is the set of sequences $\mathsf{v}$ such that $\mathsf{v}_{[a,b]} \in V_{[a,b]}$ for all finite intervals $[a,b]$.
%\end{itemize}
%
%
% \begin{prop}
%The closure of $V\subset \k^\ZZ$ in the product topology on $\k^\ZZ$ is the set of sequences $\mathsf{v}$ such that $\mathsf{v}_{[a,b]} \in V_{[a,b]}$ for all finite intervals $[a,b]$.
%\end{prop}
%\noindent As a consequence, every finite dimensional subspace of $\k^\ZZ$ is closed. 

\begin{prop}
If $\C$ is a recurrence matrix, then $\mathrm{ker}(\C)$ is closed.
\end{prop}
\begin{proof}
Let $\mathsf{v}$ be in the closure of $\mathrm{ker}(\C)$. For any $a\in \ZZ$, there is a $\mathsf{w}\in \mathrm{ker}(\C)$ such that $\mathsf{w}_{[S(a),a]} = \mathsf{v}_{[S(a),a]}$. Since the $a$th row of $\C$ is zero outside columns $[S(a),a]$,
\[ (\C\mathsf{v})_a = \C_{a,[S(a),a]} \mathsf{v}_{[S(a),a]} =  \C_{a,[S(a),a]} \mathsf{w}_{[S(a),a]} =(\C\mathsf{w})_a =0\]
Since this holds for all $a$, $\C\mathsf{v}=0$ and $\mathsf{v}\in \mathrm{ker}(\C)$.
\end{proof}

\begin{rem}
Intuitively, the closed subspaces are those determined by their restrictions to all finite intervals. This suggests our strategy for the rest of the section: given closed $V$, try to construct $\C$ with $\mathrm{ker}(\C)=V$ by considering the restrictions of $V$ to all finite intervals.
%
%try to construct a $\C$ which kills a closed subspace $V$ 
%
%try to construct the reduced representative of $\C$ from the restrictions of $\mathrm{ker}(\C)$ to all finite intervals.
\end{rem}

%Of the preceding propositions, the first intuitively says that closed subspaces are those which can be described by their restrictions to all finite intervals, and the second says that this applies to kernels of recurrence matrices. We therefore 

\subsection{Rank matrices}

%Given a subspace $V\subset \k^\mathbb{Z}$, we will let $V_{[a,b]}:= \pi_{[a,b]}(V)\subset \k^{[a,b]}$.
The \textbf{rank matrix} of a subspace $V\subset \k^\ZZ$ is the $\mathbb{Z}\times \mathbb{Z}$-matrix with\footnote{The entries below the diagonal are unimportant; we set them to $b-a+1$ to avoid special cases later.}
\[ \R_{a,b}:= \left\{\begin{array}{cc}
\dim_\k(V_{[a,b]}) & \text{if }a\leq b \\
b-a+1 & \text{otherwise}
\end{array} \right\}\]
%By construction, this is an upper triangular matrix.
%Note that $r_{a,b}$ corresponds to the interval $[b,a]$; this awkward change ensures the rank matrix is lower triangular.

\begin{ex}
Let $V$ be the space of sequences such that (a) the $-1$st term is $0$, (b) the $-2$nd and $0$th term are equal, and (c) the $0$th, $1$st, and $2$nd terms sum to $0$. The rank matrix is
\[\begin{tikzpicture}[baseline=(current bounding box.center),
	ampersand replacement=\&,
	]
%	\clip[use as bounding box] (-2.25,.3) rectangle (2.55,-2.4);
	\matrix[matrix of math nodes,
		matrix anchor = M-1-8.center,
%		throw/.style={dark green,draw,circle,inner sep=0.25mm,minimum size=2mm},
%		origin/.style={dark green,draw,circle,inner sep=0.25mm,minimum size=2mm},
		origin/.style={},
		throw/.style={},
		defect/.style={dark red,draw,circle,inner sep=0.25mm,minimum size=2mm},
		pivot/.style={draw,circle,inner sep=0.25mm,minimum size=2mm},		
		nodes in empty cells,
		inner sep=0pt,
		nodes={anchor=center,
		%node font=\scriptsize,
		rotate=45},
		column sep={.5cm,between origins},
		row sep={.5cm,between origins},
	] (M) at (0,0) {
	\cdots \& \& \cdots \& \& \cdots \& \& \cdots \& \& \cdots \& \& \cdots \& \& \cdots \& \& \cdots \&  \\
	 \& 5 \& \& 5 \& \& 4 \& \& 4 \& \& 4 \& \& 5 \& \& 6 \& \& \cdots \\
	\cdots \& \& 4 \& \& 4 \& \& 3 \& \& 3 \& \& 4 \& \& 5 \& \& 6 \&  \\
	 \& 4 \& \& 3 \& \& 3 \& \& 2 \& \& 3 \& \& 4 \& \& 5 \& \& \cdots \\
	\cdots \& \& 3 \& \& 2 \& \& 2 \& \& 2 \& \& 3 \& \& 4 \& \& 4 \&  \\
	 \& 3 \& \& 2 \& \& |[defect]| 1 \& \& 2 \& \& |[defect]| 2 \& \& 3 \& \& 3 \& \& \cdots \\
	\cdots \& \& 2 \& \& 1 \& \& 1 \& \& 2 \& \& 2 \& \& 2 \& \& 2 \&  \\
	 \& |[throw]| 1 \& \& |[throw]| 1 \& \& |[defect]| 0 \& \& |[origin]| 1 \& \&|[throw]| 1 \& \& |[throw]| 1 \& \& |[throw]| 1 \& \& \cdots \\
	};
\end{tikzpicture}
\]
The subdiagonal entries have been omitted. The \textcolor{dark red}{red circles} are the \emph{defects} (see below).
%(a) the sum of any three consecutive terms is $0$, and (b) the $1$st and $3$rd terms are equal.
\end{ex}

\begin{prop}\label{prop: rank}
If $\mathsf{R}$ is the rank matrix of $V$, then the following hold for any $a,b\in \mathbb{Z}$.
\begin{enumerate}
	\item $\R_{a,b}-\R_{a+1,b}$ must be $0$ or $1$.
	\item $\R_{a,b}-\R_{a,b-1}$ must be $0$ or $1$.
	\item $\R_{a,b}-\R_{a+1,b}-\R_{a,b-1}+\R_{a+1,b-1}$ must be $0$ or $-1$.
\end{enumerate}
\end{prop}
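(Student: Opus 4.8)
The plan is to read every entry difference in (1)--(3) as the dimension of a concrete subspace of $V_{[a,b]}$ and to bound it by elementary linear algebra; throughout I would work first in the genuine regime $a\le b$, where $\R_{a,b}=\dim_\k V_{[a,b]}$, writing $\mathsf{e}_c$ for the coordinate vector supported at position $c$. For (1), the restriction map $V_{[a,b]}\to V_{[a+1,b]}$ that forgets the coordinate $a$ is surjective, since $V_{[a+1,b]}=\pi_{[a+1,b]}(V)$, and its kernel is exactly $V_{[a,b]}\cap \k\mathsf{e}_a$, a subspace of the one-dimensional line $\k\mathsf{e}_a$. Rank--nullity then gives $\R_{a,b}-\R_{a+1,b}=\dim_\k\!\big(V_{[a,b]}\cap \k\mathsf{e}_a\big)\in\{0,1\}$. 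Statement (2) is the identical argument with the coordinate $b$ forgotten in place of $a$.

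For (3), I would regroup the alternating sum as
\[ \R_{a,b}-\R_{a+1,b}-\R_{a,b-1}+\R_{a+1,b-1}=\dim_\k A-\dim_\k A', \]
where $A:=V_{[a,b]}\cap \k\mathsf{e}_a$ and $A':=V_{[a,b-1]}\cap \k\mathsf{e}_a$ are the two forget-$a$ kernels from step (1), for columns $b$ and $b-1$ respectively. Both dimensions lie in $\{0,1\}$, so it suffices to exhibit an injection $A\hookrightarrow A'$. The forget-$b$ map $g\colon V_{[a,b]}\to V_{[a,b-1]}$ does the job: any $v\in A$ is supported only at $a$, so $g(v)$ lies in $V_{[a,b-1]}$ and is still supported only at $a$, whence $g(A)\subseteq A'$; and $g$ is injective on $A$ because $\ker g$ is supported at $b$ while $A$ is supported at $a\ne b$. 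Therefore $\dim_\k A\le \dim_\k A'$, and the sum is $0$ or $-1$.

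It remains to absorb the degenerate cases, in which one or more of the four intervals $[a,b],[a+1,b],[a,b-1],[a+1,b-1]$ is empty or sits below the diagonal; here I would substitute the convention values $\R_{a,b}=b-a+1$ and verify each identity by direct computation. Most of these collapse to the genuine argument once $V_\varnothing$ is read as $0$; the one genuinely delicate check is (3) at $a=b$, where the convention forces $\R_{a+1,a-1}=-1$ and the sum becomes $\dim_\k V_{\{a\}}-1\in\{0,-1\}$, confirming that the off-diagonal convention was chosen precisely to make these identities hold without exception. The main obstacle is (3): recognizing the second difference as $\dim_\k A-\dim_\k A'$ and producing the injection $A\hookrightarrow A'$; steps (1), (2) and the boundary bookkeeping are then routine.
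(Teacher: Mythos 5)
Your proof is correct, and for parts (1) and (2) it coincides with the paper's: the one-step restriction $V_{[a,b]}\to V_{[a+1,b]}$ is surjective with kernel $V_{[a,b]}\cap\k\mathsf{e}_a$, which is at most one-dimensional. For part (3) you take a genuinely different route. The paper proves the submodularity inequality by a Mayer--Vietoris-style dimension count, comparing $V_{[a,b]}$ against $V_{[a+1,b]}\oplus V_{[a,b-1]}$ and $V_{[a+1,b-1]}$ via a short exact sequence of restriction and difference maps (as printed, the direction of the ``sum'' map there is easy to misread, and the argument really rests on the fact that exactness in the middle can only fail in the favorable direction). You instead recognize the second difference as $\dim_\k A-\dim_\k A'$ for the two forget-$a$ kernels $A=V_{[a,b]}\cap\k\mathsf{e}_a$ and $A'=V_{[a,b-1]}\cap\k\mathsf{e}_a$, and exhibit an explicit injection $A\hookrightarrow A'$ by forgetting the coordinate $b$; since every element of $A$ is supported at $a\neq b$, injectivity is immediate. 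Your version localizes the whole inequality to an inclusion of two at-most-one-dimensional subspaces, which makes the bound transparent, and it also forces you to treat the degenerate intervals explicitly --- your check that the convention $\mathsf{R}_{a,b}=b-a+1$ below the diagonal makes (3) hold at $a=b$ (where the sum becomes $\dim_\k V_{\{a\}}-1$) is a case the paper passes over in silence. The one hypothesis your injection uses is $a<b$, so that the coordinates $a$ and $b$ are distinct; that is exactly the regime not covered by your separate boundary computation, so nothing is missing.
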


\begin{proof}
The projection $V_{[a,b]}\rightarrow V_{[a+1,b]}$ is surjective with at most 1-dimensional kernel. 
%The projection $\k^{[a,b]}\rightarrow \k^{[a+1,b]}$ is a surjection with 1-dimensional kernel. 
%This restricts to a surjection $V_{[a,b]}\rightarrow V_{[a+1,b]}$ with kernel at most 1-dimensional. 
This proves the first result; the second is proven similarly.

The first result implies that $-1\leq (\R_{a,b}-\R_{a+1,b})-(\R_{a,b-1} - \R_{a+1,b-1}) \leq 1$. The map $V_{[a+1,b]} \oplus V_{[a,b-1]}\rightarrow V_{[a,b]}$ which sends $(\mathsf{v},\mathsf{w})$ to $\mathsf{v+w}$ is a surjection whose kernel is the image of the map $V_{[a+1,b-1]} \rightarrow V_{[a+1,b]} \oplus V_{[a,b-1]}$ which sends $\mathsf{v}$ to $(\mathsf{v},-\mathsf{v})$. Therefore,
\[ \dim(V_{[a,b]}) \leq \dim(V_{[a+1,b]} \oplus V_{[a,b-1]}) - \dim(V_{[a+1,b-1]}) \]
This proves that $\R_{a,b}-\R_{a+1,b}-\R_{a,b-1} + \R_{a+1,b-1} \leq 0$.
\end{proof}

Let us say the pair $(a,b)\in \mathbb{Z}\times\mathbb{Z}$ is a \textbf{defect} of a rank matrix $\mathsf{R}$ if 
\[ \R_{a,b} - \R_{a+1,b}-\R_{a,b-1}+\R_{a+1,b-1} =-1 \]
%Note that this is equivalent to $\R_{a,b}=\R_{a+1,b}=\R_{a,b-1}=\R_{a+1,b-1}+1$.
\begin{prop} \label{prop: defect}
The defects of a rank matrix $\R$ have the following properties.
\begin{enumerate}
%	\item $\R_{a,b}=\R_{a+1,b}+1-\#(\text{defects in the line }\{a\}\times [a,b])$
	\item $\R_{a,b} = (b-a+1) - \#(\text{defects in the box }[a,b]\times [a,b]) $.
	\item Each row and column of a rank matrix can contain at most one defect.
	\item %If $(a',b)$ is not a defect for all $a'\in [a,b]$, 
	If $[a,b]\times \{b\}$ does not contain any defects,
	then $V_{[a,b]}$ contains the vector $(0,0,...,0,1)$.
	\item %If $(a',b)$ is not a defect for all $a'\in [a,b]$, 
	If $\{a\}\times [a,b]$ does not contain any defects,
	then $V_{[a,b]}$ contains the vector $(1,0,...,0,0)$.
\end{enumerate}
\end{prop}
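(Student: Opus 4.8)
The plan is to prove (1) by a telescoping identity and then to deduce (2)--(4) by studying the two families of first differences of $\R$. For fixed $a$ set $g_a(b) := \R_{a,b} - \R_{a+1,b}$, and for fixed $b$ set $h_b(a) := \R_{a,b} - \R_{a,b-1}$. By Proposition \ref{prop: rank} both take values in $\{0,1\}$, and the defect quantity factors as $\R_{a,b} - \R_{a+1,b} - \R_{a,b-1} + \R_{a+1,b-1} = g_a(b) - g_a(b-1) = h_b(a) - h_b(a+1)$. Since Proposition \ref{prop: rank}(3) forces this into $\{0,-1\}$, the function $g_a$ is non-increasing in $b$ and $h_b$ is non-decreasing in $a$; a defect at $(a,b)$ is exactly a place where $g_a$ drops from $1$ to $0$, equivalently where $h_b$ jumps from $0$ to $1$.

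For part (1) I would sum the defect quantity over the box $[a,b]\times[a,b]$. Telescoping in both coordinates collapses the double sum to $\R_{a,b} - \R_{a,a-1} - \R_{b+1,b} + \R_{b+1,a-1}$. Using the convention $\R_{c,d} = d-c+1$ for $c>d$, the three correction terms evaluate to $0$, $0$, and $-(b-a+1)$, so the sum equals $\R_{a,b} - (b-a+1)$. On the other hand each summand is $0$ or $-1$ and equals $-1$ precisely at a defect, so the sum is minus the number of defects in the box; comparing the two evaluations gives the formula. The off-diagonal convention is chosen exactly to make these boundary terms clean, and checking them is the only delicate point here.

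For part (2), because $g_a$ is a non-increasing $\{0,1\}$-valued function of $b$ it can drop from $1$ to $0$ at most once, so row $a$ contains at most one defect; symmetrically $h_b$ is non-decreasing and $\{0,1\}$-valued, so column $b$ contains at most one defect.

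Parts (3) and (4) are mirror images of one argument, which I would run for (3). The hypothesis that column $b$ has no defect in rows $[a,b]$ says $h_b(c) = h_b(c+1)$ for every $c \in [a,b]$, so $h_b$ is constant on $[a,b+1]$. Evaluating at the top via the convention gives $h_b(b+1) = \R_{b+1,b} - \R_{b+1,b-1} = 0 - (-1) = 1$, hence $h_b(a)=1$, i.e. $\dim_\k(V_{[a,b]}) = \dim_\k(V_{[a,b-1]}) + 1$. The coordinate projection $V_{[a,b]} \to V_{[a,b-1]}$ forgetting the last entry is surjective, so by rank--nullity its kernel is one-dimensional; that kernel consists precisely of the vectors in $V_{[a,b]}$ supported on the last coordinate, hence is spanned by $(0,\dots,0,1)$, which therefore lies in $V_{[a,b]}$. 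Part (4) is identical after replacing $h_b$ by $g_a$, using $g_a(a-1) = \R_{a,a-1} - \R_{a+1,a-1} = 0-(-1) = 1$ and the projection forgetting the first coordinate. The main obstacle throughout is the bookkeeping in the convention region $c>d$: every step where a chain of equalities is pinned to the definite value $1$ relies on evaluating a first difference at a boundary entry governed by $\R_{c,d} = d-c+1$, so these boundary computations must be done with care.
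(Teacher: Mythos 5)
Your proof is correct and follows essentially the same route as the paper: both arguments control the first differences $\R_{a,b}-\R_{a+1,b}$ and $\R_{a,b}-\R_{a,b-1}$ using Proposition \ref{prop: rank} together with the sub-diagonal convention $\R_{c,d}=d-c+1$, deduce (2) from the resulting monotonicity of these $\{0,1\}$-valued sequences, and obtain (3) and (4) from the one-dimensional kernel of the coordinate-forgetting projection. The only cosmetic difference is in (1), where you telescope the double sum over the box directly, while the paper first establishes the single-row identity $\R_{a,b}=\R_{a+1,b}+1-\#(\text{defects in }\{a\}\times[a,b])$ and then iterates it over the rows of the box.
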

%These are the pairs for which the inv
%\[ (V_{[a+1,b]} \oplus V_{[a,b-1]}) / V_{[a+1,b-1]} \longrightarrow V_{[a,b]}\]
%is not a surjection; by the pro
\begin{proof}
Fix $a$ and consider the sequence $(\R_{a,b}-\R_{a+1,b})$ for all $b$. This sequence starts at $1$ for sufficiently negative $b$, switches from $1$ to $0$ whenever $(a,b)$ is a defect, and must remain $0$ once it does (by Proposition \ref{prop: rank}.3). Since there are no defects when $a<b$, this implies that
\[ \R_{a,b}=\R_{a+1,b}+1-\#(\text{defects in the line }\{a\}\times [a,b])\]
In particular, there can be at most one defect in each row, and inductively implies that 
\[ \R_{a,b} = (b-a+1) - \#(\text{defects in the box }[a,b]\times [a,b]) \]
If $\{a\}\times [a,b]$ does not contain any defect, then $\R_{a,b}=\R_{a+1,b}+1$ and the map $V_{[a,b]}\rightarrow V_{[a+1,b]}$ has $1$-dimensional kernel. This kernel must be spanned by the vector $(1,0,...,0,0)$.

The remaining results follow by a dual argument on  the sequence $(\R_{a,b}-\R_{a,b-1})$.
%A dual argument implies that 
%$ \R_{a,b}=\R_{a,b-1}+1-\#(\text{defects in the line }[a,b]\times \{b\}) $.
%Hence, there can be at most one defect in each column. If $[a,b]\times \{b\}$ does not contain any defects, then $\R_{a,b}=\R_{a,b-1}+1$ and the restriction map $V_{[a,b]}\rightarrow V_{[a,b-1]}$ has $1$-dimensional kernel. This kernel must be spanned by the vector $(0,0,...,0,1)$.
 \end{proof}

 \subsection{$\R$-schedules}
 
Given a rank matrix $\R$ and an interval\footnote{We explicitly allow our intervals to be unbounded, so $(-\infty,0]$ and $(-\infty,\infty)=\ZZ$ are intervals.} $I\subset \ZZ$, an \textbf{$\R$-schedule for $I$} is a subset $J\subset I$ for which there is a sequence of finite subintervals
\begin{equation}
\label{eq: Rschedule}
[a_0,b_0] \subset [a_1,b_1] \subset [a_2,b_2] \subset \cdots  \subset I
\end{equation}
such that $b_i-a_i=i$, $\bigcup [a_i,b_i] = I$, and $|J\cap [a_i,b_i]| = \R_{a_i,b_i}$. Note that the sequence of intervals determines the $\R$-schedule, and, for all $i$, $J\cap [a_i,b_i]$ is an $\R$-schedule for $[a_i,b_i]$.

\begin{lemma}\label{lemma: Tset}
Given a closed subspace $V\subset \k^\ZZ$ with rank matrix $\R$, and an $\R$-schedule $J$ for an interval $I$, the restriction map $V_{I}\rightarrow \k^J$ is an isomorphism.% $\pi_J:V_{[a,b]}\rightarrow \k^J$.
\end{lemma}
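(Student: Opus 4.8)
The plan is to prove the isomorphism by induction along the nested sequence of intervals $[a_0,b_0]\subset[a_1,b_1]\subset\cdots$ witnessing that $J$ is an $\R$-schedule, building up the restriction map one coordinate at a time. Writing $J_i:=J\cap[a_i,b_i]$, the schedule condition reads $|J_i|=\R_{a_i,b_i}=\dim_\k V_{[a_i,b_i]}$, so the source and target of the restriction map $V_{[a_i,b_i]}\to \k^{J_i}$ have the same finite dimension; it therefore suffices to prove injectivity for every $i$ and then pass to the limit. The base case $i=0$ is a single point $p=a_0=b_0$: here $V_{\{p\}}$ and $\k^{J_0}$ are both $0$ or both equal to $\k$ according to whether $\R_{p,p}$ is $0$ or $1$, and the map is plainly an isomorphism.

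For the inductive step, $[a_{i+1},b_{i+1}]$ is obtained from $[a_i,b_i]$ by adjoining one new endpoint; by symmetry I would treat the right-extension case $a_{i+1}=a_i=:a$, $b_{i+1}=b_i+1$, the left-extension case being identical. The restriction map $\rho:V_{[a,b_i+1]}\to V_{[a,b_i]}$ that forgets the last coordinate is surjective with $\dim_\k\ker\rho=\R_{a,b_i+1}-\R_{a,b_i}\in\{0,1\}$ by Proposition \ref{prop: rank}, and the schedule condition forces the new index $b_i+1$ to lie in $J$ exactly when this difference is $1$. Injectivity of $V_{[a,b_i+1]}\to\k^{J_{i+1}}$ then follows cleanly: if $v$ restricts to $0$ on $J_{i+1}\supseteq J_i$, the inductive hypothesis gives $\rho(v)=v|_{[a_i,b_i]}=0$, so $v$ is supported on the single index $b_i+1$; if $\dim_\k\ker\rho=0$ this already forces $v=0$, while if $\dim_\k\ker\rho=1$ then $b_i+1\in J_{i+1}$ and the vanishing of $v$ on $J_{i+1}$ gives $v_{b_i+1}=0$, again yielding $v=0$. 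Equal dimension together with injectivity between finite-dimensional spaces produces the isomorphism for $[a_{i+1},b_{i+1}]$, completing the induction and hence the lemma whenever $I$ is finite.

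It remains to pass to infinite $I$. Injectivity is immediate: any $v\in V_I$ vanishing on $J$ restricts to $0$ on each $J_i$, hence to $0$ on each $[a_i,b_i]$ by the finite case, and so $v=0$ because $\bigcup[a_i,b_i]=I$. For surjectivity, given $\phi\in\k^J$, uniqueness in the finite isomorphisms produces a coherent system $v^{(i)}\in V_{[a_i,b_i]}$ with $v^{(i+1)}|_{[a_i,b_i]}=v^{(i)}$ and $v^{(i)}|_{J_i}=\phi|_{J_i}$, and these glue to a single $v\in\k^I$ with $v|_J=\phi$. The one genuinely nontrivial point, which I expect to be the main obstacle, is verifying that the glued vector actually lies in $V_I=\pi_I(V)$, i.e.\ that the coherent system lifts to an honest element of $V$. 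The finite induction does not deliver this by itself; it requires a limiting argument, and it is precisely closedness of $V$ in the product topology (automatic for the solution spaces $V=\ker(\C)$ that motivate the lemma, and needed for the statement to hold at all for infinite $J$) that guarantees the lift exists. I would therefore isolate this closedness-and-coherence step as the crux, treating the inductive skeleton above as routine bookkeeping.
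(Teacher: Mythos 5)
Your finite-interval induction is correct, and it is the mirror image of the paper's argument. The paper runs the same induction along the nested intervals but proves \emph{surjectivity} of $\pi_J:V_{[a,b]}\to\k^J$: when the rank jumps, it invokes Proposition \ref{prop: defect}(3)--(4) to produce the explicit kernel vector $(1,0,\dots,0)$ or $(0,\dots,0,1)$ in $V_{[a,b]}$, checks that its image spans the kernel of $\k^J\to\k^{J\cap[a',b']}$, and then concludes by the dimension count $\dim V_{[a,b]}=\R_{a,b}=|J|$. You instead prove \emph{injectivity}, using only Proposition \ref{prop: rank} to see that the one-step restriction $V_{[a_{i+1},b_{i+1}]}\to V_{[a_i,b_i]}$ has kernel of dimension $0$ or $1$ and that the new index enters $J$ exactly when the dimension jumps; the same dimension count then closes the argument. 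Your route is slightly leaner in that it bypasses Proposition \ref{prop: defect}(3)--(4) entirely; the paper's route has the minor advantage of exhibiting explicit preimages. Either is a complete proof for finite $I$.

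On the infinite case your instincts are better than you may realize: the gap you flag is real, and it is present in the paper itself, whose entire treatment of infinite $I$ is the sentence ``the lemma follows since $I=\bigcup[a_i,b_i]$ and the lemma holds on each $[a_i,b_i]$'' --- this yields injectivity but not surjectivity. Indeed the statement as written is false for general $V$ and infinite $I$: take $V\subset\k^\ZZ$ to be the subspace of finitely supported sequences. Then $\R_{a,b}=b-a+1$ for all $a\le b$, the only $\R$-schedule for $I=\ZZ$ is $J=\ZZ$, and the restriction map $V_\ZZ\to\k^\ZZ$ is the (non-surjective) inclusion. So some extra hypothesis of the kind you name is genuinely required. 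Your proposed fix does work when $I=\ZZ$: if $V$ is closed in the product topology, the glued vector $v$ satisfies $\pi_F(v)\in V_F$ for every finite $F$ and hence lies in $\overline V=V$; and closedness is automatic for $V=\ker(\C)$, which together with finite $I$ covers every downstream use of the lemma (the schedules $T_b$ for $\ZZ$ in Theorem \ref{thm: solextend}, and the finite intervals in Theorem \ref{thm: reduced2}). For a half-infinite $I$ one would additionally need that $\pi_I(V)$ is closed, which takes a further argument. To turn your proposal into a complete proof, either add the closedness hypothesis and write out this limit step, or restrict the infinite case to $I=\ZZ$ with $V$ closed; as it stands you have correctly isolated, but not discharged, the one step that actually fails in general.
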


\noindent In particular, if $J$ is an $\R$-schedule for $\ZZ$, then $V\rightarrow \k^J$ is an isomorphism.

\begin{proof}
We prove the case when $I=[a,b]$ by induction on $n:=b-a$. If $n<0$, the lemma holds vacuously. Assume that the lemma holds for all intervals shorter than $n$. Choose a sequence of subintervals as in \eqref{eq: Rschedule}, and set $[a',b']:=[a_{n-1},b_{n-1}]$.

%; we'll only use the last two intervals $[a_{n-1},b_{n-1}]$ and $[a_n,b_n]=[a,b]$.
The restriction maps fit into a commutative diagram.
\begin{equation*}%\label{diag: proj1}
\begin{tikzpicture}[baseline=(current bounding box.center)]
	\node (V2) at (0,0) {$V_{[a,b]}$};
	\node (V3) at (3,0) {$V_{[a',b']}$};
	\node (k2) at (0,-1.5) {$\mathsf{k}^{J}$};
	\node (k3) at (3,-1.5) {$\mathsf{k}^{J\cap [a',b']}$};
	\draw[->>] (V2) to (V3);
	\draw[->] (V2) to node[left] {$\pi_{J}$} (k2);
	\draw[->] (V3) to node[right] {$\pi_{J\cap [a',b']}$} (k3);
	\draw[->>] (k2) to (k3);
\end{tikzpicture}
\end{equation*}
By the inductive hypothesis, $\pi_{J\cap [a',b']}$ is an isomorphism, and so $V_{[a,b]}\rightarrow \k^{T_{[a',b']}}$ is surjective.

Since $b'-a'=n-1$, either $[a',b']=[a+1,b]$ or $[a',b']=[a,b-1]$.
We have three cases.
\begin{itemize}
	\item If $\R_{a',b'}=\R_{a,b}$, then $J\cap [a',b']=J$ and so the bottom arrow is an isomorphism. Therefore, $\pi_J:V_{[a,b]}\rightarrow \k^J$ is surjective.
	\item If $\R_{a',b'}=\R_{a,b}-1$ and $[a',b']=[a+1,b]$, then there are no defects in $\{a\} \times [a,b]$, and so $V_{[a,b]}$ contains $(1,0,...,0,0)$ (by Proposition \ref{prop: defect}.4). Since $|J| =|J\cap [a',b']|+1$, $a\in J$ and so the image of $(1,0,...,0,0)$ under $\pi_{J}$ is non-zero and spans the kernel of $\k^{J}\rightarrow \k^{J\cap[a',b']}$. Thus, $\pi_J:V_{[a,b]}\rightarrow \k^{J}$ is surjective.
	\item If $\R_{a',b'}=\R_{a,b}-1$ and $[a',b']=[a,b-1]$, then there are no defects in $[a,b] \times \{b\}$, and so $V_{[a,b]}$ contains $(0,0,...,0,1)$ (by Proposition \ref{prop: defect}.3). By an analogous argument to the previous case, $\pi_J:V_{[a,b]}\rightarrow \k^{J}$ is surjective.
\end{itemize}
The map $\pi_J:V_{[a,b]}\rightarrow \k^{J}$ is surjective in all cases.
Since
$ \dim(V_{[a,b]}) = \R_{[a,b]}= J = \dim(\k^{J}) $
this map is an isomorphism, completing the induction and proving the lemma for finite $I$.

Now, consider general $I$. Fix a sequence of intervals $[a_i,b_i]$ as in \eqref{eq: Rschedule}. For each $i$, $\pi_{[a,b]}(\mathrm{ker}(\pi_J))\subset \mathrm{ker}(\pi_{J\cap [a,b]})$, which is $\{0\}$ by the preceding induction, and so $\pi_J$ is injective.

For any $\mathsf{w}\in \k^J$ and any $i$, the restriction $\pi_{J\cap [a_i,b_j]}(\mathsf{w})$ lifts to a unique $\mathsf{w}^{(i)}\in V_{[a_i,b_i]}$, which in turn can be lifted to a (non-unique) $\widetilde{\mathsf{w}}^{(i)}\in V$. Uniqueness forces $\pi_{[a_i,b_i]}(\widetilde{\mathsf{w}}^{(j)}) = \mathsf{w}^{(i)}$ for all $j\geq i$, and so the sequence $\widetilde{\mathsf{w}}^{(i)}$ converges to some $\widetilde{\mathsf{w}}\in \k^\ZZ$ with $\pi_J(\widetilde{\mathsf{w}})=\mathsf{w}$. Since each $\widetilde{\mathsf{w}}^{(i)}\in V$ and $V$ is closed, $\widetilde{\mathsf{w}}\in V$, and so $\pi_J$ is surjective.
\end{proof}

\subsection{Recurrence matrices from rank matrices}

We can now characterize when a closed subspace of $\k^\ZZ$ is the kernel of a reduced recurrence matrix. 

\begin{thm}\label{thm: reduced2}
Given a closed subspace $V$ of $\mathsf{k}^\mathbb{Z}$, the following are equivalent.
\begin{enumerate}
	\item $V$ is the space of solutions to a linear recurrence $\C\mathsf{x}=\mathsf{0}$.
	\item The only left-bounded sequence in $V$ is the zero sequence; that is, if $\mathsf{v}\in V$ and $\mathsf{v}_i=0$ for all $i\ll0$, then $\mathsf{v}_i=0$ for all $i$.
%	If $\mathsf{v}\in V$ has the property that $\mathsf{v}_i=$ whenever $i\ll 0$, then $\mathsf{v}=0$.
	\item Every column of the rank matrix $\R$ of $V$ contains a defect.%; that is, there is a function $S:\ZZ\rightarrow \ZZ$ such that for all $b\in \ZZ$,
%%	 for all $b\in \mathbb{Z}$ there is an $S(b)\in \mathbb{Z}$ such that 
%	\[ \dim(V_{[S(b),b]}) - \dim(V_{[S(b)+1,b]})-  \dim(V_{[S(b),b-1]}) + \dim(V_{[S(b)+1,b-1]}) = -1 \]
%%	\item There is a non-increasing inclusion $S:\mathbb{Z}\rightarrow\mathbb{Z}$ such that, for all $b$, 
%%	For each $b\in \mathbb{Z}$, there is an $S(b)\in \mathbb{Z}$ such that $r_{S(b),b}$ is a deficiency in the rank matrix of $V$.
	\item $V$ is the space of solutions to a unique reduced linear recurrence $\overline{\C}\mathsf{x} =\mathsf{0}$.
\end{enumerate}
The shape of $\overline{\C}$ is the function $S:\ZZ\rightarrow\ZZ$ such that $(S(b),b)$ is a defect of $\R$.
\end{thm}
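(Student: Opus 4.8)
The plan is to establish the four conditions equivalent through the cycle $(4)\Rightarrow(1)\Rightarrow(2)\Rightarrow(3)\Rightarrow(4)$, deducing the shape formula in the final step. The implication $(4)\Rightarrow(1)$ is immediate, since a reduced linear recurrence is a linear recurrence. For $(1)\Rightarrow(2)$ I would argue by forward substitution: if $V=\ker(\C)$ with $\C$ lower unitriangular and horizontally bounded, and $\mathsf v\in V$ vanishes for $i\ll 0$, then the $a$th equation reads $\mathsf v_a=-\sum_{b<a}\C_{a,b}\mathsf v_b$, a finite sum, so induction on $a$ upward from the left forces $\mathsf v=\mathsf 0$.

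For $(2)\Rightarrow(3)$ I would prove the contrapositive. Suppose column $b$ contains no defect. Writing $g(a):=\R_{a,b}-\R_{a,b-1}=\dim\ker(V_{[a,b]}\to V_{[a,b-1]})\in\{0,1\}$, Proposition \ref{prop: rank} makes $g$ nondecreasing in $a$, while the subdiagonal convention gives $g(b+1)=1$; a defect in column $b$ is exactly a drop of $g$ from $1$ to $0$ as $a$ decreases. Absence of a defect therefore means $g\equiv 1$ on $(-\infty,b]$, so by Proposition \ref{prop: defect}(3) the indicator $(0,\dots,0,1)$ lies in $V_{[a,b]}$ for every $a\le b$. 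These indicators form a compatible system under restriction, hence define an element of $\varprojlim_a V_{[a,b]}$; by Lemma \ref{lemma: Tset} restriction to a schedule identifies $V_{(-\infty,b]}$ with this inverse limit, producing a genuine $\mathsf v\in V$ with $\mathsf v_i=0$ for $i<b$ and $\mathsf v_b=1$. This is a nonzero left-bounded solution, contradicting $(2)$.

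For $(3)\Rightarrow(4)$ I would build the recurrence matrix from the defects. By Proposition \ref{prop: defect}(2) and hypothesis $(3)$, each column $a$ has a unique defect, at row $S(a)$; since rows also carry at most one defect, $S$ is injective. The defect at $(S(a),a)$ says $(0,\dots,0,1)\notin V_{[S(a),a]}$ although it lies in $V_{[S(a)+1,a]}$, so there is a relation $\mathsf r_a\in(V_{[S(a),a]})^{\perp}$ with rightmost nonzero coordinate $a$ (normalized to $1$); the second fact forces the leftmost nonzero coordinate of any such $\mathsf r_a$ to be exactly $S(a)$, so no matter which relation is chosen the resulting matrix $\C$ with rows $\mathsf r_a$ is lower unitriangular, horizontally bounded, and of shape $S$. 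That $\ker(\C)=V$ follows in two steps: $V\subseteq\ker(\C)$ because each $\mathsf r_a$ annihilates $V$; conversely, on a window $[a,b]$ the relations $\mathsf r_c$ whose support lies inside $[a,b]$ correspond to the defects in the box $[a,b]\times[a,b]$ and are independent (distinct rightmost coordinates), so Proposition \ref{prop: defect}(1) gives $\dim(\ker\C)_{[a,b]}\le (b-a+1)-\#\{\text{defects}\}=\R_{a,b}=\dim V_{[a,b]}$, and with $V_{[a,b]}\subseteq(\ker\C)_{[a,b]}$ this forces equality on every window, whence $\ker(\C)\subseteq\varprojlim_{[a,b]}V_{[a,b]}=V$ by Lemma \ref{lemma: Tset}. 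Choosing the $\mathsf r_a$ so that each pivot is last in its column makes $\C$ reduced, yielding $(4)$ together with the shape claim; uniqueness of the reduced representative is Theorem \ref{thm: reduced1} (via Proposition \ref{prop: reducedequal}).

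The crux in both nontrivial directions is identical, and is where I expect the real work to lie: recovering a global element of $V$ from coherent data on finite windows, i.e. the fact that $V=\varprojlim_{[a,b]}V_{[a,b]}$. This is precisely the content of Lemma \ref{lemma: Tset} (restriction to an $\R$-schedule is an isomorphism), and without it the finite-window facts furnished by Proposition \ref{prop: defect} would not assemble into honest solutions. The only remaining delicate bookkeeping is arranging the rows $\mathsf r_a$ to meet the reduced condition so that the shape reads off as $S$ exactly; this uses merely that each row and column of $\R$ hosts at most one defect. (Alternatively, one may build any recurrence matrix with kernel $V$, pass to its reduced form by Theorem \ref{thm: reduced1}, and pin down the shape using Lemma \ref{lemma: containshape}.)
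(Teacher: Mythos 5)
Your proof follows the paper's cycle $(4)\Rightarrow(1)\Rightarrow(2)\Rightarrow(3)\Rightarrow(4)$ (with $(2)\Rightarrow(3)$ by contraposition) and uses the same ingredients, so in outline it matches the paper; you are in fact more explicit than the paper about the genuinely delicate point that finite-window data must be assembled into an honest element of $V$, which is indeed the content of Lemma \ref{lemma: Tset}. The one place where your primary route is under-justified is the closing claim that the rows $\mathsf{r}_a$ can be \emph{chosen} so that $\C$ is reduced: what is needed is a relation annihilating $V_{[S(b),b]}$ with coefficient $1$ at $b$, nonzero coefficient at $S(b)$, and vanishing at $S(c)$ for every $c<b$, i.e.\ supported on $T_{[S(b),b-1]}\cup\{b\}$, and the existence of such a relation is not a consequence of ``each row and column of $\R$ hosts at most one defect'' alone. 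The paper proves it by a commutative diagram comparing $V_{[S(b),b-1]}$, $V_{[S(b),b]}$, $V_{[S(b)+1,b]}$ with $\k^{T_{[S(b),b-1]}}$, $\k^{T_{[S(b),b-1]}\cup\{b\}}$, $\k^{T_{[S(b)+1,b]}}$, showing that the middle vertical map is a codimension-one embedding whose two outer verticals are isomorphisms; this uses Lemma \ref{lemma: Tset} applied to the two schedules $T_{[S(b),b-1]}$ and $T_{[S(b)+1,b]}$, not merely the uniqueness of defects in rows and columns. Your parenthetical fallback --- build any $\C$ with $\ker(\C)=V$, reduce it by Theorem \ref{thm: reduced1}, and pin down the shape by applying Lemma \ref{lemma: containshape} in both directions (both shapes being injective) --- is valid and closes the gap, but it makes $(3)\Rightarrow(4)$ depend on the Zorn's-lemma proof of Theorem \ref{thm: reduced1}, whereas the paper intends Theorem \ref{thm: reduced2} to furnish an \emph{independent} second proof of that theorem; to preserve that independence you must carry out the diagram argument.
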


\begin{proof}
$(4 \Rightarrow 1)$ is automatic.

$(1 \Rightarrow 2)$ If a sequence $\mathsf{v}$ solves a linear recurrence, then every term in $\mathsf{v}$ is equal to a linear combination of previous terms in the sequence. If every term in $\mathsf{v}$ of sufficiently negative index is $0$, then recursively every term must be $0$.

$(\text{not } 3 \Rightarrow \text{not }2)$ Assume that the $b$th column of the rank matrix of $V$ does not contain a defect. By Proposition \ref{prop: defect}.3, $V_{[a,b]}$ contains the vector $(0,0,...,0,1)$ for all $a\leq b$. It follows that $V_{(-\infty,b]}$ contains the vector $(...,0,0,1)$. This implies that $V$ contains a sequence $\mathsf{v}$ with $\mathsf{v}_b=1$ and $\mathsf{v}_a=0$ whenever $a<b$.

$(3 \Rightarrow 4)$ Assume that there is a function $S:\ZZ\rightarrow \ZZ$ such that $(S(b),b)$ is a defect of $\R$ for each $b$. For each interval $[a,b]$, define the $\R$-schedule $T_{[a,b]} := [a,b] \smallsetminus S([a,b])$.
We note that $T_{[S(b),b-1]} \cup \{b\} = T_{[S(b)+1,b]}\cup \{S(b) \}$
and consider the following commutative diagram.% of restriction maps.
\[ \begin{tikzpicture}
	\node (V1) at (-3,0) {$V_{[S(b),b-1]}$};
	\node (V2) at (0,0) {$V_{[S(b),b]}$};
	\node (V3) at (3,0) {$V_{[S(b)+1,b]}$};
	\node (k1) at (-3,-1.5) {$\mathsf{k}^{T_{[S(b),b-1]}}$};
	\node (k2) at (0,-1.5) {$\mathsf{k}^{T_{[S(b),b-1]}\cup \{b\} }$};
	\node (k3) at (3,-1.5) {$\mathsf{k}^{T_{[S(b)+1,b]}}$};
	\draw[->] (V2) to (V1);
	\draw[->] (V2) to (V3);
	\draw[->] (V1) to (k1);
	\draw[->] (V2) to (k2);
	\draw[->] (V3) to (k3);
	\draw[->] (k2) to (k1);
	\draw[->] (k2) to (k3);
\end{tikzpicture}\]
Since $(S(b),b)$ is a defect, the maps in the top row are isomorphisms. Since $T_{[S(b),b-1]}$ and $T_{[S(b)+1,b]}$ are $\R$-schedules, the maps on the left and right are isomorphisms (by Lemma \ref{lemma: Tset}). 

Therefore, the map $V_{[S(b),b]} \longrightarrow \mathsf{k} ^{J }$ is an embedding of codimension $1$. Its image is defined by a relation (unique up to scaling) of the form
\begin{equation}\label{eq: relation}
 \sum_{a\in T_{[S(b),b-1]}\cup\{b\} } \C_{b,a}x_{a} = 0
\end{equation}
Because the left and right maps are isomorphisms, $\C_{b,b}\neq0$ and $\C_{S(b),b}\neq0$. 
Rescaling the relation as necessary, we assume that $\C_{b,b}=1$. %Similarly, since $V_{[S(b),b]}$ restricts isomorphically to $\mathsf{k}^{T_{[S(b)+1,b]}}$, the coefficient $\C_{b,S(b)}\neq0$. 

Construct a recurrence matrix $\C$ such that, for each $b$, the $b$th row collects the coefficients of the corresponding equation \eqref{eq: relation}. For any pair $b<a$, $S(b)\not \in T_{[S(a)+1,a]}$ and so $\C_{a,S(b)}=0$. Therefore, $\C$ is a reduced linear recurrence of shape $S$, such that $V\subseteq \ker(\C)$.

%%
%For any $a>b$, $S(b)\neq S(a)$ (by Proposition \ref{prop: defect}.1) and $S(b)\not\in T_{[S(a)+1,a]}$.
%
%Consider any pair $a>b$. By Proposition \ref{prop: defect}.1, $S(b)\neq S(a)$ and, by Proposition \ref{prop: Tset}.?, $S(b)\not\in T_{[S(a)+1,a]}$. Therefore, there is no coefficient $\C_{a,S(b)}$ in \eqref{eq: relation} (or equivalently, $\C_{a,S(b)} =0$). Therefore, $\C$ is a reduced linear recurrence.

Consider any interval $[a,b]$. 
For each $b'\in [a,b] \smallsetminus T_{[a,b]}$, the corresponding relation \eqref{eq: relation} only involves terms with index in $[a,b]$. Since these relations are linearly independent, the codimension of $\ker(\C)_{[a,b]}$ in $\mathsf{k}^{[a,b]}$ is at least the cardinality of $[a,b]\smallsetminus T_{[a,b]}$. Therefore,
\[ \dim(\ker(\C)_{[a,b]})\leq |T_{[a,b]}| = \R_{a,b} = \dim(V_{[a,b]}) \]
Since $V_{[a,b]}\subseteq \ker(\C)_{[a,b]}$, $V_{[a,b]}=\ker(\C)_{[a,b]}$. Since both subspaces are closed, $V=\ker(\C)$. 
%The uniqueness of $\C$ follows from Proposition \ref{prop: reducedequal}.
\end{proof}

\subsection{From rank matrices to shapes}

The theorem relates the shape of a reduced recurrence matrix $\C$ to the defects of the rank matrix $\R$ of $\ker(\C)$, as follows.%, and so we may translate several earlier results.

%\[T_{[a,b]} = \{ a'\in [a,b] \mid S(b')\neq a' \text{ for all }b'\in [a,b]\} = [a,b]\smallsetminus S([a,b]) \]

%\begin{itemize}
%	\item The defects in $\R$ are of the form $(S(a),a)$ for all $a$. Equivalently, $(a,b)$ is a pivot of $\C$ if and only if $(b,a)$ is a defect of the rank matrix of $\ker(\C)$.
%	\item The set $T_{[a,b]}$ defined in Equation \eqref{eq: Tset} satisfies
%	\[ T_{[a,b]}= \{ a'\in [a,b] \mid S(b')\neq a' \text{ for all }b'\in [a,b]\} = [a,b] \smallsetminus S([a,b]) \]
%\end{itemize}

\begin{prop}
If $\C$ is a reduced recurrence matrix, then $(a,b)$ is a pivot of $\C$ if and only if $(b,a)$ is a defect of the rank matrix of $\ker(\C)$. 
\end{prop}

\noindent Therefore, $S(a)$ is the largest integer $\leq a$ such that $\dim(\ker(\C)_{[S(a),a]})=\dim(\ker(\C)_{[S(a),a-1]})$.

%We may translate several earlier results into the language of shapes.

\begin{defn}\label{defn: schedule}
Given a non-increasing injection $S$ and an interval\footnote{We remind the reader that we allow `intervals' in $\ZZ$ to be unbounded.} $I\subset \ZZ$, an \textbf{$S$-schedule for $I$} is a subset $J\subset I$ for which there is a subsequence of subintervals
\begin{equation*}
[a_0,b_0] \subset [a_1,b_1] \subset [a_2,b_2] \subset \cdots \subset I
\end{equation*}
such that $b_i-a_i=i$, $\bigcup [a_i,b_i] = I$, and $|J\cap [a_i,b_i]|$  equals the number of $S$-balls in $[a_i,b_i]$.
\end{defn}
If $S$ is the shape of a reduced recurrence matrix $\C$ and $\R$ is the rank matrix of $\mathrm{ker}(\C)$, then $\R$-schedules and $S$-schedules coincide.
%Note that $I$ only admits an $S$-schedule if $I$ consists of consecutive elements.
The following is a direct translation of Lemma \ref{lemma: Tset}.

\begin{prop}\label{prop: Tset}
Let $\C$ be a reduced recurrence matrix with shape $S$, and let $J$ be an $S$-schedule for $I$. Then the restriction map $\mathrm{ker}(\C)_{I}\rightarrow \k^J$ is an isomorphism.% $\pi_J:\mathrm{ker}(\C)_{[a,b]}\rightarrow \k^J$.
%
%Let $\C$ be a reduced recurrence matrix with shape $S$, and define 
%\[T_{[a,b]} := [a,b]\smallsetminus S([a,b])\]
%Then the projection $\k^{[a,b]}\rightarrow \k^{T_{[a,b]}}$ restricts to an isomorphism $\ker(\C)_{[a,b]}\rightarrow \k^{T_{[a,b]}}$.
\end{prop}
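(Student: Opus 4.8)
The plan is to deduce this from Lemma \ref{lemma: Tset} by translating the statement out of the language of shapes and into that of rank matrices. Concretely, I would set $V := \ker(\C)$ and let $\R$ be the rank matrix of $V$, so that by definition $\R_{a,b} = \dim_\k(V_{[a,b]})$ for $a\leq b$. Lemma \ref{lemma: Tset} already asserts that $V_I \to \k^J$ is an isomorphism whenever $J$ is an $\R$-schedule for $I$, and $V_I = \ker(\C)_I$; so the entire content to be supplied is the identification of $S$-schedules with $\R$-schedules, which is exactly the claim recorded just before the proposition.

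Comparing Definition \ref{defn: schedule} with the definition of an $\R$-schedule, the two notions are built from the same data --- a nested exhausting sequence of intervals $[a_0,b_0]\subset[a_1,b_1]\subset\cdots$ with $b_i - a_i = i$ and $\bigcup_i [a_i,b_i] = I$ --- and impose conditions of the same form on $|J\cap[a_i,b_i]|$. An $\R$-schedule demands $|J\cap[a_i,b_i]| = \R_{a_i,b_i}$, while an $S$-schedule demands that $|J\cap[a_i,b_i]|$ equal the number of $S$-balls in $[a_i,b_i]$. Thus the coincidence of the two notions reduces to a single numerical identity: for every interval $[a,b]$,
\[ \#\{ S\text{-balls in } [a,b]\} = \R_{a,b}. \]

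The key step, and the place where the real work sits, is proving this identity. I would start from Proposition \ref{prop: defect}(1), which writes $\R_{a,b} = (b-a+1) - \#\{\text{defects in } [a,b]\times[a,b]\}$, and combine it with the defect--pivot correspondence established just above (a defect of $\R$ at $(S(c),c)$ for each $c$). A defect lies in the box $[a,b]\times[a,b]$ precisely when $a \leq S(c) \leq c \leq b$, so the defect count equals $\#\{c\in[a,b] : S(c)\geq a\}$. Interpreting $S$ as a juggling pattern, I would then model $[a,b]$ as a forest whose vertices are the integers in $[a,b]$ and whose edges join $c$ to $S(c)$ whenever $a\leq S(c) < c$: this graph is acyclic (edges strictly decrease the index) with $(b-a+1)$ vertices and $\#\{c\in[a,b]: a\leq S(c)<c\}$ edges, so its number of connected components is $\R_{a,b}$ augmented by the number of fixed points $S(c)=c$ in $[a,b]$. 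Excluding exactly those fixed-point singletons (which are never $S$-balls) yields the number of $S$-balls in $[a,b]$ and leaves $\R_{a,b}$, as desired.

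Once the identity is in hand, the two schedule conditions agree term-by-term along any nested sequence of intervals, so $J$ is an $S$-schedule for $I$ if and only if it is an $\R$-schedule for $I$; the conclusion then follows immediately from Lemma \ref{lemma: Tset} applied to $V = \ker(\C)$. I expect the main obstacle to be the bookkeeping in this identity rather than anything conceptual --- specifically, correctly accounting for the two degenerate ways an element of $[a,b]$ can fail to be matched inside the window (its partner $S(c)$ lying to the left of $a$, versus the genuine fixed points $S(c)=c$), and confirming that only the latter are excluded from the $S$-ball count.
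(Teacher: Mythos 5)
Your proposal is correct and follows the same route as the paper: the paper obtains Proposition \ref{prop: Tset} as a ``direct translation'' of Lemma \ref{lemma: Tset}, resting on the asserted-but-unproved coincidence of $S$-schedules with $\R$-schedules for $V=\ker(\C)$. Your counting argument --- that the number of $S$-balls in $[a,b]$ equals $\R_{a,b}$, via Proposition \ref{prop: defect}(1) and the defect--pivot correspondence from Theorem \ref{thm: reduced2}, with the fixed points of $S$ cancelling on both sides --- correctly supplies exactly the detail the paper leaves implicit.
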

%\noindent In particular, the dimension of $\ker(\C)_{[a,b]}$ equals the number of $S$-balls in $[a,b]$. %Since $T_{[a,b]}$ contains a unique representative of each $S$-ball intersecting the interval $[a,b]$. 

%We are most interested in $S$-schedules for $\ZZ$, which we will call \textbf{$S$-schedules} for short. An $S$-schedule $J$ has the property that $\mathrm{ker}(\C)\rightarrow \k^J$ is an isomorphism.

%
\noindent In particular, if $J$ is an $S$-schedule for $\ZZ$, then $\mathrm{ker}(\C)\rightarrow \k^J$ is an isomorphism.
%
%We can extend this beyond finite intervals. Define an \textbf{$S$-schedule} to be a subset $J\subset \ZZ$ for which there is an subsequence of subinvervals
%\begin{equation*}
%[a_0,b_0] \subset [a_1,b_1] \subset [a_2,b_2] \subset \cdots \subset \ZZ
%\end{equation*}
%such that $b_i-a_i=i$, $|J\cap [a_i,b_i]|$ equals the number of $S$-balls in $[a_i,b_i]$ (or equivalently, that $J\cap[a_i,b_i]$ is an $S$-schedule for $[a_i,b_i]$), and every $S$-ball has a representative in $J$.
%
%\begin{prop}
%Let $\C$ be a reduced recurrence matrix with shape $S$. For any $S$-schedule $J$, the restriction map $\mathrm{ker}(\C)\rightarrow \k^J$ is an isomorphism.
%\end{prop}

As a special case, for any $b\in \ZZ$, the sequence of intervals $[b,b] \subset [b-1,b] \subset [b-2,b] \subset \cdots $ determines the following $S$-schedule for $\ZZ$: %(first defined in Section \ref{section: balls})
\[ T_b := \bigcup_{a\leq b} T_{[a,b]} = \{ a\leq b \mid \forall c\leq b, S(c) \neq a \} = (-\infty,b] \smallsetminus S\left( ( -\infty,b]\right) \]
%Therefore, the proposition specializes to the following.
\begin{thm}\label{thm: solextend}
%Let $\C$ be a reduced recurrence matrix with shape $S$. 
The restriction map $\pi_{T_b}:\mathrm{ker}(\C)\rightarrow \k^{T_b}$ is an isomorphism.
\end{thm}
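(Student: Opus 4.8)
The plan is to obtain the statement as the special case $I=\ZZ$, $J=T_b$ of Proposition~\ref{prop: Tset} (equivalently of Lemma~\ref{lemma: Tset}), so that the entire content is to certify that $T_b$ is an $S$-schedule for $\ZZ$ in the sense of Definition~\ref{defn: schedule}. First I would record the elementary inequality $S(c)\le c$ for every $c$ (the pivot of row $c$ lies weakly left of the diagonal, since $\C$ is lower unitriangular). This yields the two bookkeeping facts I will lean on: for any $a\le b$ we have $T_b\cap[a,b]=[a,b]\smallsetminus S([a,b])=T_{[a,b]}$, because any $c<a$ satisfies $S(c)\le c<a$ and so cannot land in $[a,b]$; and $T_b$ contains no index exceeding $b$.

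Next I would exhibit the required nested sequence of intervals. The left-extending chain $[b,b]\subset[b-1,b]\subset[b-2,b]\subset\cdots$ realizes $T_b\cap[b-i,b]=T_{[b-i,b]}$ as an $S$-schedule for each finite window $[b-i,b]$, and its union is $(-\infty,b]$; this already identifies $T_b$ as an $S$-schedule for $(-\infty,b]$. To upgrade the index set to all of $\ZZ$ I would interleave right-extensions with the left-extensions, so that the union becomes $\ZZ$ while the length still grows by exactly one at each step. The point to check is that each right step $[a,c]\subset[a,c+1]$ with $c\ge b$ preserves the count condition. Since $T_b$ acquires no new element at such a step, I must show the number of $S$-balls does not increase; equivalently, via the coincidence of $\R$-schedules and $S$-schedules, that $\R_{a,c}=\R_{a,c+1}$. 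By Proposition~\ref{prop: defect} this increase is governed by whether the defect $(S(c+1),c+1)$ lies in the relevant column segment, i.e.\ by whether $S(c+1)<a$. Because the interleaving leaves me free to push the left endpoint $a$ arbitrarily negative before admitting any given column $c+1$, I can always arrange $a\le S(c+1)$ first, forcing the count to be preserved. Thus $T_b$ is an $S$-schedule for $\ZZ$ and Proposition~\ref{prop: Tset} applies.

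I expect this interleaving verification to be the only real obstacle, and it can be sidestepped by a cleaner two-step route, which I would present if a fully self-contained argument is preferred. Apply Proposition~\ref{prop: Tset} directly with $I=(-\infty,b]$ and $J=T_b$ to conclude that the restriction $\ker(\C)_{(-\infty,b]}\to\k^{T_b}$ is an isomorphism. Then observe that $\pi_{(-\infty,b]}$ restricts to an isomorphism $\ker(\C)\to\ker(\C)_{(-\infty,b]}$: surjectivity is the definition of the image, while injectivity follows because lower unitriangularity ($\C_{c,c}=1$) lets one solve $x_c=-\sum_{d<c}\C_{c,d}x_d$ and thereby recover every forward value $x_{b+1},x_{b+2},\dots$ from the values on $(-\infty,b]$; equivalently, a solution vanishing on $(-\infty,b]$ vanishes for $i\ll 0$ and hence everywhere by Theorem~\ref{thm: reduced2}(2). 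Since $T_b\subseteq(-\infty,b]$, the map $\pi_{T_b}$ on $\ker(\C)$ factors as the restriction $\ker(\C)_{(-\infty,b]}\to\k^{T_b}$ composed with $\pi_{(-\infty,b]}$, a composite of two isomorphisms, which proves the theorem.
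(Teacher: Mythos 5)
Your proof is correct and follows the paper's own route: the paper likewise obtains the theorem as the specialization of Proposition \ref{prop: Tset} to the $S$-schedule $T_b$ determined by the chain $[b,b]\subset[b-1,b]\subset\cdots$. If anything you are more careful than the paper, which asserts that this chain exhibits $T_b$ as an $S$-schedule for all of $\ZZ$ even though its union is only $(-\infty,b]$; your interleaving argument (or, alternatively, your two-step route through the isomorphism $\mathrm{ker}(\C)\rightarrow \mathrm{ker}(\C)_{(-\infty,b]}$) supplies exactly the verification the paper elides.
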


\noindent Since $T_b$ contains a unique representative of each $S$-ball, this implies the following.

\begin{thm}\label{thm: balls}
%Let $\C$ be a reduced recurrence matrix with shape $S$. 
Then dimension of $\mathrm{ker}(\C)$ equals the number of $S$-balls.
\end{thm}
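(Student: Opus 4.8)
The plan is to obtain Theorem \ref{thm: balls} directly from Theorem \ref{thm: solextend}, which already supplies an isomorphism $\pi_{T_b}\colon \ker(\C)\to\k^{T_b}$. Granting this, the dimension of $\ker(\C)$ is recorded by the cardinality $|T_b|$, so the whole content reduces to the combinatorial claim that $T_b$ contains exactly one integer from each $S$-ball and no others. First I would recall the description $T_b = (-\infty,b]\smallsetminus S((-\infty,b])$ noted in the text, so that $a\in T_b$ precisely when $a\leq b$ and no $c\leq b$ satisfies $S(c)=a$.

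Next I would pin down the structure of a single $S$-ball $B$. Since $S$ is a non-increasing injection with $S(a)\leq a$, I would first observe that within any $S$-ball every element $a$ satisfies $S(a)<a$ strictly: if $S(a)=a$, then injectivity forces $a$ to have no $S$-preimage other than itself, so $\{a\}$ is a singleton equivalence class and hence not an $S$-ball. Consequently each $a\in B$ has a strictly smaller neighbor $S(a)\in B$ and at most one strictly larger neighbor $c\in B$ with $S(c)=a$ (unique by injectivity). This exhibits $B$ as a chain $\cdots < c_{-1} < c_0 < c_1 < \cdots$ with $S(c_{i+1})=c_i$, which is unbounded below (every element has a smaller neighbor in $B$) and is either unbounded above or capped by a top element lying outside the image of $S$.

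Then I would intersect $B$ with $T_b$. Because the chain is strictly increasing and unbounded below, the set $\{c_i : c_i \leq b\}$ is nonempty and bounded above by $b$, hence has a largest element $c_{i^*}$. I would verify three cases: for $i<i^*$ the neighbor $c_{i+1}\leq b$ witnesses $S(c_{i+1})=c_i$, so $c_i\notin T_b$; for $i>i^*$ we have $c_i>b$, so $c_i\notin T_b$; and $c_{i^*}\in T_b$ since its only possible witness $c_{i^*+1}$ either does not exist or exceeds $b$. Thus $B\cap T_b=\{c_{i^*}\}$ is a singleton. I would also note that a fixed point $a=S(a)\leq b$ is excluded from $T_b$ by the witness $c=a$, and conversely any $a\in T_b$ cannot be a fixed point and so lies in some $S$-ball. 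Together these give a bijection between $S$-balls and $T_b$, whence $|T_b| = \#\{S\text{-balls}\}$, and combining with Theorem \ref{thm: solextend} finishes the proof.

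The main obstacle is the bookkeeping in the second and third paragraphs: correctly excluding fixed points and confirming that each $S$-ball is an infinite descending chain, so that ``the largest representative $\leq b$'' is well defined and unique. Once the chain structure is established, the case analysis identifying $c_{i^*}$ as the sole element of $B\cap T_b$ is routine.
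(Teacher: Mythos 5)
Your proposal is correct and follows the paper's own route: the paper likewise derives Theorem \ref{thm: balls} immediately from Theorem \ref{thm: solextend} together with the observation that $T_b$ contains exactly one representative of each $S$-ball. You simply spell out the details of that one-line combinatorial claim (fixed points form singleton classes and are excluded from $T_b$; each ball is a descending chain whose largest element $\leq b$ is its unique member of $T_b$), and your verification is accurate.
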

%\begin{proof}
%Factor the projection as $\ker(\C)\rightarrow \ker(\C)_{(-\infty,b]} \rightarrow \k^{T_b}$. The first map is an isomorphism by Theorem \ref{thm: reduced2}; specifically, by Condition (2). The second map is a limit of isomorphisms of the form $\ker(\C)_{[a,b]}\rightarrow \k^{T_{[a,b]}}$ as $a\rightarrow -\infty$, and so it is an isomorphism.
%\end{proof}

%\noindent Equivalently, any choice of values of the variables $x_a$ indexed by $a\in T_b$ can be uniquely extended to a solution of the reduced linear recurrence $\C\mathsf{x}=\mathsf{0}$.
%
%This can be related to $S$-balls. The set $T_{[a,b]}$ contains a unique representative of each $S$-ball in the interval $[a,b]$, and so $T_b$ contains a unique representative of each $S$-ball. 
%
%\begin{coro}%\label{thm: soldim}
%Let $\C$ be a reduced recurrence matrix of shape $S$. 
%Then $\dim(\ker(\C)_{[a,b]})$ is the number of $S$-balls in the interval $[a,b]$, and $\dim(\ker(\C))$ is the number of $S$-balls.
%\end{coro}

\begin{rem}
Constructing $S$-schedules is easy and intuitive using the juggling pattern of $S$. Consider any zigzagging path in the juggling pattern which starts on the main diagonal, only travels up (northwest) or right (northeast), and ends above the $(a,b)$th entry.
\[\begin{tikzpicture}[baseline=(current bounding box.center),
	ampersand replacement=\&,
	]
	\clip[use as bounding box] (-9.5,3.3) rectangle (6.3,-0.3);
	\matrix[matrix of math nodes,
		matrix anchor = M-2-24.center,
		nodes in empty cells,
		inner sep=0pt,
		gthrow/.style={dark green,draw,circle,inner sep=0mm,minimum size=5mm},
		rthrow/.style={dark red,draw,circle,inner sep=0mm,minimum size=5mm},
		bthrow/.style={dark blue,draw,circle,inner sep=0mm,minimum size=5mm},
		pthrow/.style={dark purple,draw,circle,inner sep=0mm,minimum size=5mm},
		nodes={anchor=center,node font=\scriptsize,rotate=45},
		column sep={0.4cm,between origins},
		row sep={-0.4cm,between origins},
	] (M) at (0,0) {
 \&  \&  \&  \&  \&  \&  \&  \&  \&  \&  \&  \&  \&  \&  \&  \&  \&  \&  \&  \&  \&  \&  \&  \&  \&  \&  \&  \&  \&  \&  \&  \&  \&  \&  \&  \&  \&  \&  \\
 \& |[gthrow]|  \&  \& |[rthrow]|  \&  \& |[bthrow]|  \&  \& |[bthrow]|  \&  \& |[pthrow]|  \&  \& |[gthrow]|  \&  \& |[rthrow,rotate=-45]| 1 \&  \& |[bthrow,rotate=-45]| 2 \&  
 \& |[gthrow,rotate=-45]| 3 \& |[inner sep=8pt]| \& |[pthrow,rotate=-45]| 4 \&  \& |[bthrow,rotate=-45]| 5 \&  \& |[bthrow,rotate=-45]| 6 \&  \& |[rthrow,rotate=-45]| 7 \&  \& |[gthrow,rotate=-45]| 8 \&  \& |[pthrow]|  \&  \& |[bthrow]|  \& 
  \& |[gthrow]|  \&  \& |[rthrow]|  \&  \& |[bthrow]|  \&  \\
\cdots \&  \&  \&  \&  \&  \& |[bthrow]|  \&  \&  \&  \&  \&  \&  \&  \&  \&  \&  \&  \&  \&  \&  \&  \& |[bthrow]|  \&  \&  \&  \&  \&  \&  \&  \&  \&  \&  \&  \&  \&  \&  \&  \& \cdots \\
 \&  \&  \&  \&  \&  \&  \&  \&  \&  \&  \&  \&  \&  \&  \&  \&  \&  \&  \&  \&  \&  \&  \&  \&  \&  \&  \&  \&  \&  \&  \&  \&  \&  \&  \&  \&  \&  \&  \\
\cdots \&  \& |[bthrow]|  \&  \&  \&  \&  \&  \&  \&  \&  \&  \&  \&  \& |[gthrow]|  \&  \&  \&  \& |[bthrow]|  \&  \&  \&  \&  \&  \&  \&  \&  \&  \&  \&  \& |[gthrow]|  \&  \&  \&  \& |[bthrow]|  \&  \&  \&  \& \cdots \\
 \&  \&  \&  \&  \&  \&  \&  \&  \&  \&  \& |[bthrow]|  \&  \&  \&  \&  \&  \&  \&  \&  \&  \&  \&  \&  \&  \&  \&  \& |[bthrow]|  \&  \&  \&  \&  \&  \&  \&  \&  \&  \&  \&  \\
 \&  \&  \&  \&  \&  \& |[gthrow]|  \&  \& |[rthrow]|  \&  \&  \&  \&  \&  \& |[pthrow]|  \&  \&  \&  \&  \&  \&  \&  \& |[gthrow]|  \&  \& |[pthrow]|  \&  \&  \&  \&  \&  \& |[rthrow]|  \&  \&  \&  \&  \&  \&  \&  \& \cdots \\
 \&  \&  \& |[pthrow]|  \&  \&  \&  \&  \&  \&  \&  \&  \&  \&  \&  \&  \&  \&  \&  \& |[rthrow]|  \&  |[inner sep=8pt]|  \&  \&  \&  \&  \&  \&  \&  \&  \&  \&  \&  \&  \&  \&  \& |[pthrow]|  \&  \&  \&  \\
 \&  \&  \&  \&  \&  \&  \&  \&  \&  \&  \&  \&  \&  \&  \&  \&  \&  \&  \&  \&  \&  \&  \&  \&  \&  \&  \&  \&  \&  \&  \&  \&  \&  \&  \&  \&  \&  \& \\
 \&  \&  \&  \&  \&  \&  \&  \&  \&  \&  \&  \&  \&  \&  \&  \&  \&  \&  \&  \&  \&  \&  \&  \&  \&  \&  \&  \&  \&  \&  \&  \&  \&  \&  \&  \&  \&  \& \\
 \&  \&  \&  \&  \&  \&  \&  \&  \&  \&  \&  \&  \&  \&  \&  \&  \&  \&  \&  \&  \&  \&  \&  \&  \&  \&  \&  \&  \&  \&  \&  \&  \&  \&  \&  \&  \&  \& \\
	};

	\draw[dark green] (M-3-1) to (M-2-2) to (M-7-7) to (M-2-12) to (M-5-15) to (M-2-18) to (M-7-23) to (M-2-28) to (M-5-31) to (M-2-34) to (M-7-39);
	\draw[dark red] (M-5-1) to (M-2-4) to (M-7-9) to (M-2-14) to (M-8-20) to (M-2-26) to (M-7-31) to (M-2-36) to (M-5-39);
	\draw[dark blue] (M-3-1) to (M-5-3) to (M-2-6) to (M-3-7) to (M-2-8) to (M-6-12) to (M-2-16) to (M-5-19) to (M-2-22) to (M-3-23) to (M-2-24) to (M-6-28) to (M-2-32) to (M-5-35) to (M-2-38) to (M-3-39);
	\draw[dark purple] (M-5-1) to (M-8-4) to (M-2-10) to (M-7-15) to (M-2-20) to (M-7-25) to (M-2-30) to (M-8-36) to (M-5-39);
	
	\draw[dashed] (M-2-13.center) to (M-10-21.center) to (M-2-29.center);
	\draw[thick] (M-2-19.center) to (M-3-18.center) to (M-4-19.center) to (M-5-18.center) to (M-9-22.center)to (M-10-21.center);
\end{tikzpicture}\]
Each time the path crosses a colored line, record the row (if it is ascending) or the column (if it is descending).
The resulting subset is an $S$-schedule for the interval $[a,b]$, and every $S$-schedule can be constructed this way.
In the picture above, the path in black determines the $S$-schedule $\{\textcolor{dark green}{3},\textcolor{dark purple}{4},\textcolor{dark blue}{2},\textcolor{dark red}{7}\}$ for the interval $[1,8]$. The set $T_b$ comes from the path which starts to the right of $(b,b)$ and only travels up (northwest).

When the path only travels up/northwest (resp. right/northeast), the corresponding $S$-schedule describes the times when the balls in the air at a given moment were thrown (resp. will land), and so have been called \emph{throwing histories} (resp. \emph{landing schedules}). 
\end{rem}

\section{Properties of the solution matrix}

%The results of 

%We collect several results on the vanishing and rank of minors of a solution matrix $\sol(\C)$.
Fix a reduced recurrence matrix $\C$ of shape $S$ for the rest of the section. 

\subsection{Vanishing}

The unitriangularity of $\adj(\C)$ and $\adj(\C\P)$ mean that the solution matrix
\[ \sol(\C) := \adj(\C) - \P\adj(\C\P)\]
has zeroes between the support of $\adj(\C)$ and $\P\adj(\C\P)$, which we make precise as follows.

%By Corollary \ref{coro: Tset}, every sequence in $\ker(\C)$ is determined by its values on the set $T_a$. The $a$th column of $\sol(\C)$ has particularly simple values on $T_a$.

%The $a$th column of $\sol(\C)$ is characterized by specific values on the set $T_a$.

\begin{prop}\label{prop: colvan}
$\sol(\C)_{a,b}=0$ whenever $a<b< S^{-1}(a)$.%\footnote{Here, $S^{-1}(a)$ denotes the preimage of $a$, and if $S^{-1}(a)=\varnothing$ then $b<S^{-1}(a)$ is vacuously true.}%$a<b$ and there is no $c\leq b$ with $S(c)=a$.
\end{prop}
\noindent Here, $S^{-1}(a)$ is the preimage of $a$, and if $S^{-1}(a)=\varnothing$ then $b<S^{-1}(a)$ is vacuously true.
%\noindent If $S$ is bijective, this can be restated as $\sol(\C)_{a,b}=0$ whenever $a<b<S^{-1}(a)$.

\begin{proof}
By unitriangularity, $\adj(\C)_{a,b}=0$ whenever $a<b$. Dually, $(\P\adj(\C\P))_{a,b}$ is only non-zero if there is a $c$ with $\P_{a,c}\neq0$ and $\adj(\C\P)_{c,b}\neq0$; that is, if $S(c)=a$ and $c\geq b$.
\end{proof}

For fixed $b$, the proposition determines the value of the $b$th column of $\sol(\C)$ on the set $T_b$.
Since this column solves $\C\mathsf{x}=0$, these entries determine the column (Theorem \ref{thm: solextend}).

%As an application of this theorem, we observe that Proposition \ref{prop: colvan} 
%
%By Proposition \ref{prop: colvan}, the $b$th column of $\sol(\C)$ vanishes on $T_b\smallsetminus \{b\}$ and has a fixed value at $b$. By Theorem \ref{thm: solbasis}, this determines the column.

%By Corollary \ref{coro: Tset}, this determines the rest of the column.
\begin{coro}
The $b$th column of $\sol(\C)$ is the unique solution to $\C\mathsf{x}=\mathsf{0}$ for which 
%$x_a=1$, and $x_b=0$ whenever $b<a$ but there is no $c<a$ with $S(c)=b$.
\begin{enumerate}
	\item $x_a=0$ whenever $a<b< S^{-1}(a)$, and
	\item $x_b=1$ unless $S(b)=b$, in which case $x_b=0$.
\end{enumerate}
\end{coro}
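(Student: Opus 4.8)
The plan is to show that the $b$th column of $\sol(\C)$ satisfies the two listed conditions, and then to invoke Theorem \ref{thm: solextend} to conclude that those conditions single out a unique solution. That the $b$th column is a solution to $\C\mathsf{x}=\mathsf{0}$ at all is immediate from Proposition \ref{prop: sol}, and condition (1) is precisely the vanishing established in Proposition \ref{prop: colvan}. So the only genuine verification on the existence side is condition (2), the value of the diagonal entry.

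For that, I would compute $\sol(\C)_{b,b}=\adj(\C)_{b,b}-(\P\adj(\C\P))_{b,b}$ directly. Since $\adj(\C)$ is lower unitriangular, $\adj(\C)_{b,b}=1$. Expanding the second term as $(\P\adj(\C\P))_{b,b}=\sum_{c}\P_{b,c}\adj(\C\P)_{c,b}$, the factor $\P_{b,c}$ is nonzero only when $S(c)=b$, while upper-unitriangularity of $\adj(\C\P)$ forces $c\leq b$. Because any shape satisfies $S(c)\leq c$, the two constraints together give $b=S(c)\leq c\leq b$, hence $c=b$ and $S(b)=b$. Thus the sum is empty (so the term is $0$) unless $S(b)=b$, in which case the single surviving summand is $\P_{b,b}\adj(\C\P)_{b,b}=1\cdot 1=1$. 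Combining, $\sol(\C)_{b,b}=1$ when $S(b)\neq b$ and $\sol(\C)_{b,b}=0$ when $S(b)=b$, which is exactly condition (2).

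For uniqueness, the key point is that conditions (1) and (2) jointly determine the restriction $\pi_{T_b}(\mathsf{x})\in\k^{T_b}$, where $T_b=\{a\leq b\mid \forall c\leq b,\ S(c)\neq a\}$. The same inequality $S(c)\leq c$ shows that $b\in T_b$ if and only if $S(b)\neq b$. Condition (1) assigns the value $0$ to every element of $T_b$ lying strictly below $b$, and condition (2) supplies the value at $a=b$: when $S(b)\neq b$ we have $b\in T_b$ and condition (2) gives $x_b=1$; when $S(b)=b$ the index $b$ lies outside $T_b$, so $\pi_{T_b}(\mathsf{x})$ is already completely determined by (1) (it is the zero vector), and condition (2)'s requirement $x_b=0$ is consistent with the resulting zero solution. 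In either case $\pi_{T_b}(\mathsf{x})$ is fully specified, and since the restriction map $\pi_{T_b}:\ker(\C)\to\k^{T_b}$ is an isomorphism by Theorem \ref{thm: solextend}, there is exactly one solution with this restriction.

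I do not expect a serious obstacle here: the argument collapses to the diagonal computation in condition (2) together with the bookkeeping that matches conditions (1)--(2) against membership in $T_b$. The one place demanding care is the boundary case $S(b)=b$, where $b\notin T_b$; there one must check that condition (2) is not an independent constraint but is already forced by the vanishing in condition (1), so that the characterization correctly identifies the $b$th column as the zero solution rather than producing a contradiction.
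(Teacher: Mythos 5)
Your proposal is correct and follows essentially the same route as the paper: condition (1) from Proposition \ref{prop: colvan}, the diagonal entry from the unitriangularity of $\adj(\C)$ and $\adj(\C\P)$, and uniqueness from the fact that conditions (1)--(2) pin down the restriction to $T_b$, which determines the solution by Theorem \ref{thm: solextend}. Your explicit computation of $\sol(\C)_{b,b}$ and your handling of the boundary case $S(b)=b$ (where $b\notin T_b$ and the column is forced to be the zero solution) make precise what the paper leaves as a one-sentence remark.
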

%We can reformulate this for the solution matrix as a whole.
\begin{thm}\label{thm: solvan}
The solution matrix $\sol(\C)$ is the unique $\ZZ\times \ZZ$ matrix such that 
\begin{enumerate}
	\item $\C\sol(\C)=0$,
	\item $\sol(\C)_{a,a}=1$ whenever $S(a)\neq a$, and
	\item $\sol(\C)_{a,b}=0$ whenever $S(a)=b=a$ or $a<b<S^{-1}(a)$.
\end{enumerate}
\end{thm}

%\noindent If $S$ is bijective, then the first condition becomes that $x_b=0$ whenever $b<a<S^{-1}(b)$.
%
%\begin{proof}
%The unitriangularity of $\adj(\C)$ and $\adj(\C\mathsf{P})$ implies these properties are satisfied. 
%Since this specifies the values on the set $T_a$, there is a unique solution with these values.
%%To show uniqueness, let $\mathsf{v}$ be the difference between the $a$th column of $\sol(\C)$ and any solution to $\C\mathsf{x}=\mathsf{0}$ satisfying the stated properties. 
%%%Then $\mathsf{v}_J=\mathsf{0}$, where $J:=(-\infty,a]\smallsetminus S(-\infty,a)\subset \ZZ$. 
%%Property (2) implies that $v_a=0$. Assume, for induction, that the projection $\mathsf{v}_{[a-i+1,a]}=\mathsf{0}$ for some $i$. We split into cases.
%%\begin{itemize}
%%	\item If $\exists c\in [a-i+1,a]$ such that $S(c)=a-i$, then Corollary \ref{coro: ???} implies that the projections of $\mathrm{ker}(\C)$ to $\k^{[a-i+1,a]}$ and to $\k^{[a-i,a]}$ have the same dimension. The restriction of the projection $\k^{[a-i,a]}\rightarrow \k^{[a-i+1,a]}$ to solutions of $\C$ must be an inclusion, and so $\mathsf{v}_{[a-i,a]}=\mathsf{0}$.
%%	REWRITE THIS WHEN SECTION 7 IS DONE.
%%	
%%	\item Otherwise, Property (1) implies that $v_{a-i}=0$, and so $\mathsf{v}_{[a-i,a]}=\mathsf{0}$.
%%\end{itemize}
%%By induction, $\mathsf{v}_{(-\infty,a]}=0$. Since $\mathsf{v}$ solves a linear recurrence, $\mathsf{v}=0$. Therefore, the $a$th column of $\sol(\C)$ is the unique solution with these properties.
%\end{proof}

Dually, we have a vanishing condition guaranteeing consecutive zeros in each column.

\begin{prop}\label{prop: rowvan}
$\sol(\C)_{a,b}=0$ whenever $S(b) < a < b$.
\end{prop}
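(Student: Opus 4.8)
The plan is to analyze the two terms of $\sol(\C)=\adj(\C)-\P\adj(\C\P)$ separately, just as in the proof of Proposition \ref{prop: colvan}, but replacing the crude unitriangularity bound on the second term by a sharper vanishing statement. Since $S(b)<a<b$ forces $a<b$, lower unitriangularity of $\adj(\C)$ gives $\adj(\C)_{a,b}=0$ at once, so the entire content is to show $(\P\adj(\C\P))_{a,b}=0$. Because $\P_{a,c}\neq 0$ only when $a=S(c)$, and $S$ is injective (as $\C$ is reduced), this product has at most one nonzero term: if $a\notin\mathrm{im}(S)$ we are immediately done, and otherwise $(\P\adj(\C\P))_{a,b}=\P_{a,c_0}\adj(\C\P)_{c_0,b}$ with $c_0:=S^{-1}(a)$.

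First I would record a refined support property of $\C\P$ itself. Expanding the product gives $(\C\P)_{c,d}=\C_{c,S(d)}\,(\C_{d,S(d)})^{-1}$, and since the leftmost nonzero entry of row $c$ of $\C$ lies in column $S(c)$, this vanishes whenever $S(d)<S(c)$. In other words $\C\P$ belongs to the set $\mathcal{M}$ of $\ZZ\times\ZZ$-matrices $M$ satisfying $M_{c,d}\neq 0\Rightarrow S(d)\geq S(c)$.

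The key step, and the main obstacle, is to promote this pattern to the adjugate: I claim $\adj(\C\P)_{c,b}=0$ whenever $S(b)<S(c)$. In contrast with Proposition \ref{prop: colvan}, this cannot be read off from unitriangularity alone, since $\adj(\C\P)_{c,b}$ may well be nonzero for $c\leq b$; one genuinely needs that $\adj(\C\P)$ inherits membership in $\mathcal{M}$. I would prove this by a short finite induction from the identity $\C\P\cdot\adj(\C\P)=\mathrm{Id}$. Writing $U:=\C\P$ and fixing $b$, the $(c,b)$ entry of this identity yields, for $c<b$, the recursion $\adj(U)_{c,b}=-\sum_{c<d\leq b}U_{c,d}\,\adj(U)_{d,b}$ (using $U_{c,c}=1$ and upper unitriangularity). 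Inducting downward on $c$ from the base case $\adj(U)_{b,b}=1$, every surviving summand has $S(b)\geq S(d)$ by the inductive hypothesis (as $d>c$) and $S(d)\geq S(c)$ by membership of $U$ in $\mathcal{M}$, so the hypothesis $S(b)<S(c)$ annihilates the whole sum. (Equivalently, one notes that $\mathcal{M}$ is closed under multiplication and applies the Neumann series $\adj(U)=\sum_{k\geq 0}(\mathrm{Id}-U)^k$, which is entrywise a finite sum by upper unitriangularity.)

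Finally I would assemble the pieces: with $c_0=S^{-1}(a)$ we have $S(c_0)=a>S(b)$, i.e. $S(b)<S(c_0)$, so the claim gives $\adj(\C\P)_{c_0,b}=0$, whence $(\P\adj(\C\P))_{a,b}=0$ and therefore $\sol(\C)_{a,b}=0$. The only bookkeeping to watch is the existence and associativity of the products involved, but every matrix appearing is lower or upper unitriangular, so the relevant products exist and associate by the Warning in Section \ref{section: sol}; and the induction sidesteps all convergence concerns by treating one finite entry at a time.
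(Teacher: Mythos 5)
Your argument is correct, but it reaches the conclusion by a genuinely different route than the paper. The paper works entirely at the level of $\sol(\C)$: it inducts on $b-a$, and in the inductive step reads off the $(c,b)$ entry of $\C\sol(\C)=\mathsf{0}$ for the (unique) $c\leq b$ with $S(c)=a$, falling back on Proposition \ref{prop: colvan} when no such $c$ exists; since row $c$ of $\C$ is supported on $[a,c]$ and the intermediate entries $\sol(\C)_{a+1,b},\dots,\sol(\C)_{c,b}$ vanish by induction, only the pivot term $\C_{c,a}\sol(\C)_{a,b}$ survives, and invertibility of the pivot finishes the step. You never invoke $\C\sol(\C)=\mathsf{0}$; instead you isolate the structural fact that $\adj(\C\P)$ inherits from $U=\C\P$ membership in the class $\mathcal{M}$ of matrices with $M_{c,d}\neq 0\Rightarrow S(d)\geq S(c)$, proved by the parallel downward recursion coming from $(\C\P)\adj(\C\P)=\mathrm{Id}$, and then the proposition drops out of the definition of $\sol(\C)$ one term at a time. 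The two inductions are formally analogous (each peels off one entry of a column using a unitriangular identity), but yours buys a reusable support statement about $\adj(\C\P)$ alone and makes Proposition \ref{prop: rowvan} logically independent of Proposition \ref{prop: colvan}, whereas the paper's version is shorter because it recycles Propositions \ref{prop: colvan} and \ref{prop: sol}. Your bookkeeping on existence and associativity of the products is also right: everything in sight is upper or lower unitriangular, and each entry you manipulate is a finite sum.
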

\begin{proof}
We proceed by induction on $b-a>0$. Assume that $\sol(\C)_{a+1,b}=...=\sol(\C)_{b-1,b}=0$ (which is vacuous for the base case $b-a=1$). We split into two cases.
\begin{itemize}
	\item If there is no $c\leq b$ with $S(c)=a$, then $\sol(\C)_{a,b}=0$ by Proposition \ref{prop: colvan}.
	\item Otherwise, there is a $c\leq b$ with $S(c)=a$. Since $S(b)<a$, we know $c\neq b$ and so $c<b$. The $(c,b)$th entry of the equality $\C\sol(\C)=0$ is
	\[ \C_{c,a}\sol(\C)_{a,b} + \C_{c,a+1}\sol(\C)_{a+1,b} + ... + \C_{c,c}\sol(\C)_{c,b} =0 \]
	By assumption, $\sol(\C)_{a+1,b}=...=\sol(\C)_{c,b}=0$, and so $\C_{c,a}\sol(\C)_{a,b}=0$. Since $\C_{c,a}=\C_{c,S(c)}$ is invertible, $\sol(\C)_{a,b}=0$.
\end{itemize}
This completes the induction.
\end{proof}

%\newpage

\begin{rem}\label{rem: soljugs}
Propositions \ref{prop: colvan} and \ref{prop: rowvan} can be visualized in terms of juggling patterns. As in Remark \ref{rem: balljugs}, the $S$-balls can be visualized by connecting each pivot entry with a line to the diagonal entries in the same row or column. The dashed circle is not in an $S$-ball.
\[ \begin{tikzpicture}[baseline=(current bounding box.center),
	ampersand replacement=\&,
	]
%	\clip[use as bounding box] (-2.25,.3) rectangle (2.55,-.9);
	\matrix[matrix of math nodes,
		matrix anchor = M-4-8.center,
		gthrow/.style={dark green,draw,circle,inner sep=0mm,minimum size=5mm},
		bthrow/.style={dark blue,draw,circle,inner sep=0mm,minimum size=5mm},
		dashedthrow/.style={dashed,draw,circle,inner sep=0mm,minimum size=5mm},
		nodes in empty cells,
		inner sep=0pt,
		nodes={anchor=center,
			node font=\footnotesize,
			rotate=45},
		column sep={.5cm,between origins},
		row sep={.5cm,between origins},
	] (M) at (0,0) {
%	 \& \&  \& \&  \& \&  \& \&  \& \&  \& \&  \& \&  \&  \\
%	 \& \&  \& \&  \& \&  \& \&  \& \&  \& \&  \& \&  \&  \\
%	 \& \&  \& \&  \& \&  \& \&  \& \&  \& \&  \& \&  \&  \\
	 \& |[bthrow]| 1 \& \& |[gthrow]| 1 \& \& |[bthrow]| 1 \& \& |[gthrow]| 1 \& \&|[dashedthrow]| 1 \& \& |[gthrow]| 1 \& \& |[gthrow]| 1 \& \& \cdots \\
	\cdots \& \& -1 \& \& -1 \& \& -1 \& \&  \& \&  \& \& |[gthrow]|  -1 \& \& |[gthrow]| -1 \& \\
	 \& |[gthrow]| -1 \&  \& |[bthrow]| -1 \& \& |[gthrow]| -1 \& \&  \& \& |[gthrow]| -1 \& \& \& \&  \& \& \cdots \\
	 \& \&  \& \&  \& \&  \& \& \&  \&  \& \&  \& \&  \&  \\
%	 \& \&  \& \&  \& \&  \& \&  \& \&  \& \&  \& \&  \&  \\
%	 \& \&  \& \&  \& \&  \& \&  \& \&  \& \&  \& \&  \&  \\
	};
	
	\draw[dark blue,->] (M-2-1) to (M-1-2) to (M-3-4) to (M-1-6) to (M-4-9);
	\draw[dark green] (M-2-1) to (M-3-2) to (M-1-4) to (M-3-6) to (M-1-8) to (M-3-10) to (M-1-12) to (M-2-13) to (M-1-14) to (M-2-15) to (M-1-16);
\end{tikzpicture} \]
If we transpose and superimpose the juggling pattern onto the solution matrix, we get%obtain the following picture.
\[ \begin{tikzpicture}[baseline=(current bounding box.center),
	ampersand replacement=\&,
	]
%	\clip[use as bounding box] (-2.25,2.4) rectangle (2.55,-2.4);
	\matrix[matrix of math nodes,
		matrix anchor = M-8-8.center,
		gthrow/.style={dark green,draw,circle,inner sep=0mm,minimum size=5mm},
		bthrow/.style={dark blue,draw,circle,inner sep=0mm,minimum size=5mm},
		dashedthrow/.style={dashed,draw,circle,inner sep=0mm,minimum size=5mm},
		faded/.style={black!25},
		nodes in empty cells,
		inner sep=0pt,
		nodes={anchor=center,node font=\footnotesize,rotate=45},
		column sep={.5cm,between origins},
		row sep={.5cm,between origins},
	] (M) at (0,0) {
	 \& \& \cdots \& \& \cdots \& \& \cdots \& \& \cdots \& \& \cdots \& \& \cdots \& \& \cdots \&  \\
	 \& 5 \&  \& |[faded]| 0 \& \& 2 \& \& -1 \& \& 1 \& \& |[faded]| 0 \& \& 1 \& \& \cdots \\
	\cdots \& \& -3 \& \& |[faded]| 0 \& \& -1 \& \& 1 \& \& |[faded]| 0 \& \& 1 \& \& |[faded]| 0 \&  \\
	 \& 2 \&  \& 2 \& \& |[faded]| 0 \& \& 1 \& \& |[faded]| 0 \& \& 1 \& \& |[faded]| 0 \& \& \cdots \\
	\cdots \& \& -1 \& \& -1 \& \& |[faded]| 0 \& \& |[faded]| 0 \& \& 1 \& \& |[faded]| 0 \& \& 1 \&  \\
	 \& |[gthrow]| 1 \&  \&  |[bthrow]| 1 \& \& |[gthrow]| 1 \& \& |[faded]| 0 \& \& |[gthrow]| 1 \& \& |[faded]| 0 \& \& 1 \& \& \cdots \\
	\cdots \& \& |[faded]| 0 \& \& |[faded]| 0 \& \& |[faded]| 0 \& \& |[faded]| 0 \& \& |[faded]| 0 \& \& |[gthrow]| 1 \& \& |[gthrow]| 1 \&  \\
	 \& |[bthrow]| 1 \& \& |[gthrow]| 1 \& \& |[bthrow]| 1 \& \& |[gthrow]| 1 \& \& |[dashedthrow,faded]| 0 \& \&|[gthrow]| 1 \& \& |[gthrow]| 1 \& \& \cdots \\
	\cdots \& \& 1 \& \& 1 \& \& 1 \& \& |[faded]| 0 \& \& |[faded]| 0 \& \& 1 \& \& 1 \& \\
	\& 2 \& \& 2 \& \& 2 \& \& |[faded]| 0 \& \& 1 \& \& |[faded]| 0 \& \& 1 \& \& \cdots \\
	\cdots \& \& 3 \& \& 3 \& \& |[faded]| 0 \& \& 1 \& \& 1 \& \& |[faded]| 0 \& \& 1 \&   \\
	  \& 5 \& \& 5 \& \& |[faded]| 0 \& \& 2 \& \& 1 \& \& 1 \& \& |[faded]| 0 \& \&  \cdots \\
	\cdots \& \& 8 \& \& |[faded]| 0 \& \& 3 \& \& 2 \& \& 1 \& \& 1 \& \& |[faded]| 0 \& \\
	  \& 13 \& \& |[faded]| 0 \& \& 5 \& \& 3 \& \& 2 \& \& 1 \& \& 1 \& \&  \cdots \\
	\cdots \& \& \cdots \& \& \cdots \& \& \cdots \& \& \cdots \& \& \cdots \& \& \cdots \& \& \cdots \& \\
	};

	\draw[dark blue,->] (M-7-1) to (M-8-2) to (M-6-4) to (M-8-6) to (M-1-13);
	\draw[dark green] (M-7-1) to (M-6-2) to (M-8-4) to (M-6-6) to (M-8-8) to (M-6-10) to (M-8-12) to (M-7-13) to (M-8-14) to (M-7-15) to (M-8-16);
\end{tikzpicture}\]
Propositions \ref{prop: colvan} and \ref{prop: rowvan} state that the lines do not cross any non-zero entries. The circles must be non-zero, except the dashed circle, whose entire row and column must vanish.\footnote{We note that all of these observations will be simultaneously generalized by Lemma \ref{lemma: boxballs}.}
%
%Furthermore, the row and column containing the dashed circle are entirely zero.
\end{rem}

\subsection{Determinantal formulas}

%The definition of $\sol(\C)$ can be expanded in terms of 

Since the adjugate of $\C$ is defined in terms of certain determinants of $\C$, the solution matrix of $\C$ can be expanded in terms of such determinants.

\begin{prop}\label{prop: soldet}
The entries of the solution matrix can be computed as follows.
\[\sol(\C)_{a,b} = \left\{ \begin{array}{cc}
(-1)^{a+b} \det(\C_{[b+1,a],[b,a-1]}) & \text{if $a\geq b$ and $S(a)\neq a$} \\
(-1)^{S^{-1}(a)+b+1} \det(\C_{I\smallsetminus \{b\},S(I)\smallsetminus \{a\} }) \prod_{i=I} \C_{i,S(i)}^{-1}& \text{if $a<b$ and $S^{-1}(a)\neq \varnothing$} \\\
0 & \text{otherwise}
\end{array}\right\} \]
where $I := \{ i\in [S^{-1}(a),b] \mid S(i) \in [a,S(b)] \}$.
\end{prop}
\noindent Note that, in the second case above, the columns of $\C_{I\smallsetminus \{b\},S(I)\smallsetminus \{a\} }$ are in the increasing order in $I$, not in $S(I)$. That is, the first column is the $S(i_1)$th column of $\C$, where $i_1$ is the smallest element of $I$ which is not $S^{-1}(a)$.

\begin{proof}
%We note that $\adj(\C)_{a,b}=0$ whenever $a<b$, and $\P\adj(\C\P)_{a,b}=0$ whenever $S^{-1}(a)=\varnothing$ or $
%
If $a\geq b$ and $S(a)\neq a$, then $(\P\adj(\C\P))_{a,b}=0$ and so 
\[ \sol(\C)_{a,b}= \adj(\C)_{a,b} :=(-1)^{a+b} \det(\C_{[b+1,a],[b,a-1]})  \]
Next, assume that $a<b$ and $S^{-1}(a)\neq \varnothing$. Since $\adj(\C)_{a,b}=0$,
\begin{align*}
\sol(\C)_{a,b} 
&= - (\P\adj(\C\P))_{a,b} \\
&= -\P_{a,S^{-1}(a)} \adj(\C\P)_{S^{-1}(a),b} \\
%&= (-1)^{1+S^{-1}(a)+b}\C_{S^{-1}(a),a}^{-1} \det\left((\C\P)_{[b+1,S^{-1}(a)],[b,S^{-1}(a)-1]}\right) \\
%&= (-1)^{1+S^{-1}(a)+b}\C_{S^{-1}(a),a}^{-1} \det\left(\C_{[b+1,S^{-1}(a)],S([b,S^{-1}(a)-1])}\right)\prod_{i\in [b,S^{-1}(a)-1]} \C_{i,S(i)}^{-1} \\
%&= (-1)^{1+S^{-1}(a)+b} \det\left(\C_{I\smallsetminus \{b\},S(I)\smallsetminus \{a \} }\right)\prod_{i\in I} \C_{i,S(i)}^{-1} \\
&= (-1)^{1+S^{-1}(a)+b}\C_{S^{-1}(a),a}^{-1} \det\left((\C\P)_{[S^{-1}(a),b-1],[S^{-1}(a)+1,b]}\right) \\
&= (-1)^{1+S^{-1}(a)+b}\C_{S^{-1}(a),a}^{-1} \det\left(\C_{[S^{-1}(a),b-1],S([S^{-1}(a)+1,b])}\right)\prod_{i\in [S^{-1}(a)+1,b]} \C_{i,S(i)}^{-1} \\
&= (-1)^{1+S^{-1}(a)+b} \det\left(\C_{I\smallsetminus \{b\},S(I)\smallsetminus \{a \} }\right)\prod_{i\in I} \C_{i,S(i)}^{-1} 
\end{align*}
The only entries of $\sol(\C)$ in which $\adj(\C)$ and $\P\adj(\C\P)$ can both be non-zero are $\C_{a,a}$ when $S(a)=a$, in which case $\adj(\C)_{a,a} =1$ and $(\P\adj(\C\P))_{a,a}=1$, and so $\sol(\C)_{a,a}=0$. For all remaining choices of $a,b$, $\adj(\C)_{a,b} =(\P\adj(\C\P))_{a,b}=0$ and so $\sol(\C)_{a,b}=0$.
\end{proof}
%MAKE SURE TO CLARIFY WHAT THE ORDER OF COLUMNS IS IN THE SECOND DETERMINANT.

\subsection{Rank conditions}
%The prior vanishing results can be generalized to a formula for ranks of certain rectangular submatrices of $\sol(\C)$. 

%\begin{coro}
%If $a=S(b)$, then $\sol(\C)\C_{\ZZ,a}=0$, which defines a relation among the columns of $\sol(\C)$ indexed by $[a,b]$.
%\end{coro}
%
%\begin{coro}
%If the shape $S$ of $\C$ is bijective, then $\sol(\C)\C=0$.
%\end{coro}
%
%\begin{rem}
%Counterexample to $\sol(\C)\C=0$ for a column not in image of $S$.
%\end{rem}

%Given integers $a\leq b$ and $c\leq d$, the
A box $[a,b]\times [c,d]\subset \ZZ\times \ZZ$ indexes a rectangular submatrix of $\sol(\C)$. 
An \textbf{$S$-ball in the box $[a,b]\times [c,d]$} is an equivalence class in the set 
\[ \left(\{ (i,i) \mid S(i) \neq i \} \cup \{ (S(j),j) \mid S(j)\neq j\} \right) \subset [a,b] \times [c,d] \]
under the equivalence relation generated by $(i,i) \sim (S(i),i)\sim (S(i),S(i))$. %Notice that, if the box is sufficiently large, the $S$-balls in the box are in bijection with `$S$-balls' as defined in Section \ref{section: balls}.

%Define the \textbf{number of $S$-balls in the box $[a,b]\times [c,d]$} to be 
%
%
%\[ ??? \]

\begin{ex}\label{ex: boxballs}
In terms of the juggling pattern, this counts the \emph{number of different colors of circles} inside the rectangular submatrix indexed by $[a,b]\times [c,d]$.
\[ \begin{tikzpicture}[baseline=(current bounding box.center),
	ampersand replacement=\&,
	]
%	\clip[use as bounding box] (-2.25,2.4) rectangle (2.55,-2.4);
	\matrix[matrix of math nodes,
		matrix anchor = M-8-8.center,
		gthrow/.style={dark green,draw,circle,inner sep=0mm,minimum size=5mm},
		bthrow/.style={dark blue,draw,circle,inner sep=0mm,minimum size=5mm},
		dashedthrow/.style={dashed,draw,circle,inner sep=0mm,minimum size=5mm},
		faded/.style={black!25},
		nodes in empty cells,
		inner sep=0pt,
		nodes={anchor=center,node font=\footnotesize,rotate=45},
		column sep={.5cm,between origins},
		row sep={.5cm,between origins},
	] (M) at (0,0) {
	 \& \& \cdots \& \& \cdots \& \& \cdots \& \& \cdots \& \& \cdots \& \& \cdots \& \& \cdots \&  \\
	 \& 5 \&  \& |[faded]| 0 \& \& 2 \& \& -1 \& \& 1 \& \& |[faded]| 0 \& \& 1 \& \& \cdots \\
	\cdots \& \& -3 \& \& |[faded]| 0 \& \& -1 \& \& 1 \& \& |[faded]| 0 \& \& 1 \& \& |[faded]| 0 \&  \\
	 \& 2 \&  \& 2 \& \& |[faded]| 0 \& \& 1 \& \& |[faded]| 0 \& \& 1 \& \& |[faded]| 0 \& \& \cdots \\
	\cdots \& \& -1 \& \& -1 \& \& |[faded]| 0 \& \& |[faded]| 0 \& \& 1 \& \& |[faded]| 0 \& \& 1 \&  \\
	 \& |[gthrow]| 1 \&  \&  |[bthrow]| 1 \& \& |[gthrow]| 1 \& \& |[faded]| 0 \& \& |[gthrow]| 1 \& \& |[faded]| 0 \& \& 1 \& \& \cdots \\
	\cdots \& \& |[faded]| 0 \& \& |[faded]| 0 \& \& |[faded]| 0 \& \& |[faded]| 0 \& \& |[faded]| 0 \& \& |[gthrow]| 1 \& \& |[gthrow]| 1 \&  \\
	 \& |[bthrow]| 1 \& \& |[gthrow]| 1 \& \& |[bthrow]| 1 \& \& |[gthrow]| 1 \& \& |[dashedthrow,faded]| 0 \& \&|[gthrow]| 1 \& \& |[gthrow]| 1 \& \& \cdots \\
	\cdots \& \& 1 \& \& 1 \& \& 1 \& \& |[faded]| 0 \& \& |[faded]| 0 \& \& 1 \& \& 1 \& \\
	\& 2 \& \& 2 \& \& 2 \& \& |[faded]| 0 \& \& 1 \& \& |[faded]| 0 \& \& 1 \& \& \cdots \\
	\cdots \& \& 3 \& \& 3 \& \& |[faded]| 0 \& \& 1 \& \& 1 \& \& |[faded]| 0 \& \& 1 \&   \\
	  \& 5 \& \& 5 \& \& |[faded]| 0 \& \& 2 \& \& 1 \& \& 1 \& \& |[faded]| 0 \& \&  \cdots \\
	\cdots \& \& 8 \& \& |[faded]| 0 \& \& 3 \& \& 2 \& \& 1 \& \& 1 \& \& |[faded]| 0 \& \\
	  \& 13 \& \& |[faded]| 0 \& \& 5 \& \& 3 \& \& 2 \& \& 1 \& \& 1 \& \&  \cdots \\
	\cdots \& \& \cdots \& \& \cdots \& \& \cdots \& \& \cdots \& \& \cdots \& \& \cdots \& \& \cdots \& \\
	};

	\draw[dark blue,->] (M-7-1) to (M-8-2) to (M-6-4) to (M-8-6) to (M-1-13);
	\draw[dark green] (M-7-1) to (M-6-2) to (M-8-4) to (M-6-6) to (M-8-8) to (M-6-10) to (M-8-12) to (M-7-13) to (M-8-14) to (M-7-15) to (M-8-16);
	
%	\draw[dark red, fill=dark red!50,opacity=.25,rounded corners] (M-3-4.center) -- (M-2-5.center) -- (M-7-10.center) -- (M-8-9.center) -- cycle;
	\draw[dark red, fill=dark red!50,opacity=.25,rounded corners] (M-3-4.center) -- (M-2-5.center) -- (M-10-13.center) -- (M-11-12.center) -- cycle;
	\draw[dark blue, fill=dark blue!50,opacity=.25,rounded corners] (M-6-9.center) -- (M-3-12.center) -- (M-7-16.center) -- (M-10-13.center) -- cycle;
	\draw[dark purple, fill=dark purple!50,opacity=.25,rounded corners] (M-7-2.center) -- (M-4-5.center) -- (M-7-8.center) -- (M-10-5.center) -- cycle;

\end{tikzpicture}\]
The \textcolor{dark purple}{purple}, \textcolor{dark red}{red}, and \textcolor{dark blue}{blue} boxes above contain $\textcolor{dark purple}{2}$, $\textcolor{dark red}{0}$, and $\textcolor{dark blue}{1}$ $S$-balls, respectively.
%In the example above, the purple boxes contain $1$ ball, the blue boxes contain $2$ balls, and the red boxes contain no balls.
\end{ex}

\begin{rem}\label{rem: balls}
This is a 2-dimensional analog of $S$-balls as defined in Section \ref{section: balls}. The relation is that $S$-balls in an interval $[a,b]$ are in bijection with $S$-balls in the box $[a,b]\times [a,b]$.%, and also with the $S$-balls in $[a,b]\times [c,d]$ and in $[c,d]\times [a,b]$ for any $[c,d]\supset [a,b]$.
\end{rem}

%Let us call $(i,j)\in \ZZ\times\ZZ$ a \textbf{throw} if $i=j$ and $i\neq S(j)$, and $(i,j)$ is an \textbf{apex} if $i\neq j$ and $i=S(j)$. The terminology is motivated by the juggling metaphor from Section \ref{section: ???}.
%
%The entry of $\sol(\C)$ corresponding to each throw and apex is non-zero. If one draws a line downward from each apex to the throw below it, and a line rightward from each throw to the apex right of it (or off to infinity if there is no such apex), then these lines are precisely the entries in $\sol(\C)$ forced to be zero by Propositions \ref{prop: ???} and \ref{prop: ???}.
%
%These lines can be extended to an equivalence relation on the set of throws and apexes; the resulting equivalence classes are naturally equivalent to the `balls' of the shape $S$ of $\C$, defined in Section \ref{section: ???}. 
%
%This 2-dimensional incarnation of balls is useful, since it can be used to describe the rank of many submatrices of $\sol(\C)$.

The following lemma relates the rank of a submatrix to the number of $S$-balls in the box.

\begin{lemma}\label{lemma: boxballs}
Let $\C$ be a reduced recurrence matrix of shape $S$. For any integers $a\leq b$ and $c\leq d$, the rank of $\sol(\C)_{[a,b]\times [c,d]}$ is at least the number of $S$-balls in the box $[a,b]\times [c,d]$, with equality when 
\begin{equation}
\label{eq: boxcond}
b-c\geq -1 \text{ and }\min(a-c,b-d)\leq 0 
\end{equation}
%
%If $a\geq b$ and $c\geq d$ are integers with
%\begin{equation}
%\label{eq: boxcond}
%b-c\geq -1 \text{ and }\min(a-c,b-d)\leq 0 
%\end{equation}
%then the rank of $\sol(\C)_{[a,b], [c,d]}$ is equal to the number of $S$-balls in the box $[a,b]\times [c,d]$.
\end{lemma}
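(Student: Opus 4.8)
The plan is to prove the two halves separately: first the lower bound $\mathrm{rank}\geq(\text{number of }S\text{-balls})$, which holds for every box, and then equality under \eqref{eq: boxcond}. Throughout I use the wedge decomposition forced by unitriangularity: strictly below the diagonal $\sol(\C)_{a,b}=\adj(\C)_{a,b}$ (for $a>b$), strictly above it $\sol(\C)_{a,b}=-(\P\adj(\C\P))_{a,b}$ (for $a<b$), and Propositions \ref{prop: colvan} and \ref{prop: rowvan} locate the zeros of the upper wedge. The nonzero ``circles'' of Remark \ref{rem: soljugs} are exactly the diagonal entries $\sol(\C)_{i,i}=1$ with $S(i)\neq i$ and the entries $\sol(\C)_{S(j),j}=-(\C_{j,S(j)})^{-1}\neq 0$ with $S(j)\neq j$; a direct computation with $\P\adj(\C\P)$ confirms both values, and these circles split into one connected family per $S$-ball.

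For the lower bound I would exhibit, for each $S$-ball $\beta$ meeting the box, a single circle $(r_\beta,\gamma_\beta)\in[a,b]\times[c,d]$ such that the square submatrix of $\sol(\C)$ on rows $\{r_\beta\}$ and columns $\{\gamma_\beta\}$ is triangular with nonzero diagonal. Concretely, I would order the balls by their chosen column index $\gamma_\beta$ and choose $\gamma_\beta$ to realize the ball's extreme column inside the box; since a pivot circle sits strictly above the diagonal and a diagonal circle on it, Propositions \ref{prop: colvan} and \ref{prop: rowvan} force the entries of column $\gamma_\beta$ in the rows $r_{\beta'}$ lying on the appropriate side to vanish, yielding (anti)triangularity. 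The diagonal entries of this selection are the circles, which are nonzero, so the selected submatrix is invertible and $\mathrm{rank}\geq(\text{number of }S\text{-balls})$. The only care needed here is choosing the circles coherently so that the triangular vanishing is automatic.

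For equality under \eqref{eq: boxcond} I would induct on $(b-a)+(d-c)$, peeling one boundary row or column. Viewing both $\mathrm{rank}$ and the number of $S$-balls as functions of the box, adjoining a boundary line changes each by $0$ or $1$ (Remark \ref{rem: balls} governs the ball count, the lower bound governs the rank), and the two inequalities $b\geq c-1$ and $\min(a-c,b-d)\leq 0$ are precisely what guarantee there is always a side from which the peel keeps the box inside the regime \eqref{eq: boxcond} and moves rank and ball count in lockstep. The base cases are boxes lying in a single wedge together with at most a diagonal segment: there $\sol(\C)_{[a,b]\times[c,d]}$ agrees with a submatrix of $\adj(\C)$ or of $\P\adj(\C\P)$, whose minors I convert to complementary minors of $\C$ or $\C\P$ by Proposition \ref{prop: adjprops}(4), and reducedness then forces every minor of size exceeding the ball count to vanish.

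The main obstacle is this inductive bookkeeping: verifying that under \eqref{eq: boxcond} the chosen peel can never raise the rank without also introducing a new $S$-ball (a new circle color). Both parts of the hypothesis are essential here, since dropping either permits a boundary row or column that increases the rank while the box gains no new circle, making the inequality strict, as the colored boxes of Example \ref{ex: boxballs} indicate. Managing these cases uniformly, rather than through a proliferation of subcases, is where the real work lies.
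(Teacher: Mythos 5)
Your lower bound argument is essentially the paper's: select one circle per $S$-ball in the box (the paper takes the \emph{final} circle of each ball, i.e.\ either a diagonal circle $(i,i)$ with no $j\in[c,d]$ satisfying $S(j)=i$, or a pivot circle $(S(j),j)$ with $(j,j)$ outside the box) and check, using Propositions \ref{prop: colvan} and \ref{prop: rowvan}, that the resulting square submatrix is triangular with invertible diagonal. That half is sound, and your computation of the circle values is correct.

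The equality half has a genuine gap. You reduce everything to the claim that, under \eqref{eq: boxcond}, peeling a boundary row or column moves the rank and the ball count in lockstep, and then you explicitly defer that claim as ``where the real work lies.'' But that claim \emph{is} the lemma: the entire content of the upper bound is that a row (or column) of the box contributing no new final circle is linearly dependent on the remaining ones, and your proposal supplies no mechanism for proving this. The paper does not induct; it exhibits the dependencies explicitly. For the case $b-d\le 0$ it builds a matrix $\mathsf{M}$ whose rows are either $\C_{S^{-1}(k),[a,b]}$ for $k\in K$ (these annihilate $\sol(\C)_{[a,b],[c,d]}$ because $\C\sol(\C)=0$ and the relevant row of $\C$ is supported inside $[a,b]$) or standard vectors $e_\ell$ for $\ell\in L$ (these annihilate it because the corresponding rows vanish by Proposition \ref{prop: rowvan}); since $\mathsf{M}$ is in echelon form of rank $|K\cup L|$, the rank of the box is bounded by the number of rows carrying a final circle. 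The case $a-c\le 0$ requires the dual identity $\sol(\C)(\C\P)=0$, which the paper must state and prove separately (Proposition \ref{prop: oppsol}); nothing in your proposal provides this dual relation, so the column-peeling steps of your induction have no way to force the rank to stay put. Your proposed base case is also not secured: the assertion that, for a box inside a single wedge, Proposition \ref{prop: adjprops}(4) plus reducedness forces all minors of size exceeding the ball count to vanish is itself an unproved determinantal claim of roughly the same difficulty as the lemma. The peeling strategy is not unreasonable, but as written the proposal postpones precisely the step that carries the mathematical content.
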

Conceptually, Condition \eqref{eq: boxcond} implies the box has at least one corner on or below the first superdiagonal, and at least two corners on or above the main diagonal.

\begin{ex}
Condition \eqref{eq: boxcond} holds for the three boxes in Example \ref{ex: boxballs}, and one may check that their ranks coincide with the number of $S$-balls they contain.
\end{ex}
\begin{proof}
First, we bound the rank below by finding a full-rank submatrix of the appropriate size.
Fix $[a,b]\times [c,d]$. %Index the set $I= 
Index $I:=[c,d]\cap [a,b]\smallsetminus S([c,d])$ by $i_1<i_2<\cdots < i_k$, and index 
\[ J:= \{j\in [c,d] \mid j\not\in [a,b] \text{ and }S(j) \in [a,b] \} \]
by $j_1<j_2<\cdots < j_\ell$. Note that $i_k<j_1$ and $S(j_h)<i_1$ for all $h$.

%Note that $S(j)\in [a,b]$ implies that $j\in [a,\infty)$, and so the elements of $J$ are greater than $b$. In particular, $i_k<j_1$.
%Note that $I$ indexes the columns in the box containing a throw but not an apex, and $J$ indexes the columns containing an apex but not a 

Each $S$-ball in the box $[a,b]\times [c,d]$ contains a unique \emph{final circle}, by which we mean one of the two types of pair:
\begin{itemize}
	\item $(i,i)\in [a,b]\times [c,d]$ such that there is no $j\in [c,d]$ with $S(j)=i$.
	\item $(S(i),i) \in [a,b]\times [c,d]$ such that $(i,i)\not\in[a,b]\times[c,d]$.
\end{itemize}
Furthermore, each column in the box can contain at most one such circle. The columns containing the first kind of final circle are indexed by $I$, and the columns containing the second kind are indexed by $J$. Therefore, the number of $S$-balls in the box is $|I|+|J|$.

Consider the submatrix\footnote{We are stretching the definition of `submatrix' to allow for rearranging the order of the rows and columns.} of $\sol(\C)$ with row set
$ \{i_k,i_{k-1},...,i_1,S(j_1),S(j_2),...,S(J_\ell)\} $
and column set
$ \{i_k,i_{k-1},...,i_1,j_1,j_2,...,j_\ell \} $. This matrix is upper triangular with non-zero entries on the diagonal, and so it has rank $|I|+|J|$. Consequently, 
\begin{equation}
\label{eq: lowerbound}
\mathrm{rank}(\sol(\C)_{[a,b],[c,d]})\geq  \text{the number of $S$-balls in the box $[a,b]\times [c,d]$}
\end{equation}
Note that this inequality did not assume Conditions \eqref{eq: boxcond}.

Next, assume that $b-c\geq -1$ and that $b-d\leq 0$, and define disjoint sets
\begin{align*}
K&:=\{k \in [a,b] \text{ such that there is an $m\in [a,b]$ with $S(m)=k$} \} \\
L &:= \{ \ell\in [a,c-1]  \text{ such that there is no $m\leq d$ with $S(m)=\ell$} \} 
\end{align*}
Construct a $(K \cup L)\times [a,b]$-matrix $\mathsf{M}$, such that for each $k\in K$, the corresponding row of $\mathsf{M}$ is $\C_{S^{-1}(k),[a,b]}$ and for each $\ell\in L$, the corresponding row of $\mathsf{M}$ is $e_\ell$ (the row vector with a $1$ in the $\ell$th place). 

Since $\C\sol(\C)=0$ (Proposition \ref{prop: sol}), $\mathsf{M}_{k,[a,b]}\sol(\C)_{[a,b],[c,d]} =0$ for all $k\in K$. Since each row of $\sol(\C)_{[a,b],[c,d]}$ indexed by $\ell\in L$ vanishes (Proposition \ref{prop: rowvan})\footnote{
Explicitly: Since $\ell\in [a,b]$ but not in $[c,d]$, and $b-d\leq 0$, $\ell <c$. Since $\ell \not\in S( (-\infty,d])$, either $S^{-1}(\ell)>d$ or $S^{-1}(\ell)$ is empty. In either case, $\sol(\C)_{\ell,[c,d]}$ consists of zeroes by Proposition \ref{prop: rowvan}.
}, $\mathsf{M}_{\ell, [a,b]}\sol(\C)_{[a,b],[c,d]} =0$ for all $\ell\in L$. Therefore,  the matrix product vanishes:
%
%Then $\mathsf{M}\sol(\C)_{[a,b],[c,d]} =0$; for the rows indexed by $K$ this follows from Proposition \ref{prop: sol}, and for the rows indexed by $L$ this follows from Proposition \ref{prop: rowvan}.
%
%Then $\mathsf{M}$ has rank $|K|+|L|$, and also 
\[ \mathsf{M}\sol(\C)_{[a,b],[c,d]} =0\]
Since the first non-zero entry in the $k$th row of $\mathsf{M}$ is in the $k$th column and the first non-zero entry in the $\ell$th row of $\mathsf{M}$ is in the $\ell$th column, $\mathsf{M}$ is in row-echelon form and has rank $|K \cup L|$.
Therefore,
\[ \mathrm{rank}(\sol(\C)_{[a,b],[c,d]})\leq \dim(\mathrm{ker}(\mathsf{M})) = |[a,b]\smallsetminus (K\cup L) |\]
Next, we note that $K\cup L$ indexes the rows of the box $[a,b]\times [c,d]$ which do not contain the final circle of an $S$-ball in the box. %\footnote{This uses the condition that $b-c\leq -1$.}
Combined with the lower bound \eqref{eq: lowerbound}, 
\[ \mathrm{rank}(\sol(\C)_{[a,b],[c,d]})= \text{the number of $S$-balls in the box $[a,b]\times [c,d]$}\]
This establishes the theorem in the case when $b-d\leq 0$.

For the final case, we need a source of relations among the columns of $\sol(\C)$.

%First, we will need a source of relations among the columns of $\sol(\C)$.

\begin{prop}\label{prop: oppsol}
$\sol(\C)(\C\P)=0$.
\end{prop}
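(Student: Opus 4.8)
The plan is to prove $\sol(\C)(\C\P)=0$ by expanding the definition $\sol(\C)=\adj(\C)-\P\adj(\C\P)$ and computing the two resulting products separately. First I would write
\[
\sol(\C)(\C\P) = \adj(\C)(\C\P) - \P\adj(\C\P)(\C\P).
\]
The key algebraic facts I want to invoke are Proposition \ref{prop: adjprops}.1, namely $\adj(\C)\C=\mathrm{Id}$ and $\adj(\C\P)(\C\P)=\mathrm{Id}$, which collapse each product. Formally I expect to get $\adj(\C)(\C\P)=(\adj(\C)\C)\P=\P$ and $\P\adj(\C\P)(\C\P)=\P(\adj(\C\P)(\C\P))=\P$, whence $\sol(\C)(\C\P)=\P-\P=0$.

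The genuine obstacle is associativity, not the cancellation itself: as the Warning emphasizes, regrouping infinite products $(\mathsf{AB})\mathsf{C}=\mathsf{A}(\mathsf{BC})$ is only licensed under the two safe regimes (all three factors lower unitriangular, or the first two horizontally bounded). So the real work is checking that each reassociation $(\adj(\C)\C)\P$ and $\P(\adj(\C\P)(\C\P))$ falls into a permitted case. For the term $\adj(\C)(\C\P)$, the factors $\adj(\C)$ and $\C$ are both lower unitriangular, so $\adj(\C)(\C\P)=(\adj(\C)\C)\P$ is justified by case (b) of the Warning with $\mathsf{A}=\adj(\C)$, $\mathsf{B}=\C$ horizontally bounded and $\mathsf{C}=\P$ arbitrary. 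For the term $\P\adj(\C\P)(\C\P)$, I would instead regroup as $\P\bigl(\adj(\C\P)(\C\P)\bigr)$: here $\adj(\C\P)$ and $\C\P$ are both upper unitriangular (since $\C$ is reduced, $\C\P$ is upper unitriangular, and its adjugate is too), so the inner product $\adj(\C\P)(\C\P)=\mathrm{Id}$ exists and the associativity holds within the upper-unitriangular group.

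The one point demanding care is that I must reassociate $\P\adj(\C\P)(\C\P)$ as $\P\bigl(\adj(\C\P)(\C\P)\bigr)$ rather than $\bigl(\P\adj(\C\P)\bigr)(\C\P)$, because $\P$ is merely a generalized permutation matrix and grouping it with $\adj(\C\P)$ first need not be legal; grouping the two unitriangular factors first is the safe move and directly yields $\P\cdot\mathrm{Id}=\P$. I would state this explicitly, citing the relevant clauses of the Warning at each regrouping, so the reader sees that both cancellations occur inside a safe associativity regime. With both terms equal to $\P$, the difference vanishes and the proposition follows. I expect the entire argument to be short once the associativity bookkeeping is laid out carefully.
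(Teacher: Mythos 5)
Your computation is the same as the paper's: expand $\sol(\C)=\adj(\C)-\P\adj(\C\P)$, reassociate each term, and cancel via $\adj(\C)\C=\mathrm{Id}$ and $\adj(\C\P)(\C\P)=\mathrm{Id}$ to get $\P-\P=\mathsf{0}$. You are also right that the only real content is justifying the two reassociations. But your justification of the first one is wrong: you invoke case (b) of the Warning with $\mathsf{A}=\adj(\C)$ and $\mathsf{B}=\C$ ``horizontally bounded,'' yet $\adj(\C)$ is horizontally bounded only when $\C$ is trivial (Theorem \ref{thm: triv-inv}); for any interesting reduced $\C$ (e.g.\ the Fibonacci matrix, whose adjugate has the Fibonacci numbers running down every column) it is not. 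Case (a) does not apply either, since $\P$ is not lower unitriangular. The reassociations are nevertheless valid, for the reason the paper actually gives: $\P$ is a generalized permutation matrix, so each entry of a product with $\P$ on the outside is a single term of the inner sum rescaled, and no interchange of infinite sums is ever needed in $(\adj(\C)\C)\P=\adj(\C)(\C\P)$ or $(\P\adj(\C\P))(\C\P)=\P(\adj(\C\P)(\C\P))$. With that one correction --- replacing your appeal to the Warning's case (b) by the observation that reassociation around a generalized permutation matrix is always safe --- your argument coincides with the paper's proof.
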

\begin{proof}
We compute directly.
\begin{align*}
\sol(\C)(\C\P)
&= \adj(\C)(\C\P) - (\P\adj(\C\P) (\C\P) \\
&\stackrel{*}{=} (\adj(\C)\C)\P - \P(\adj(\C\P)(\C\P)) 
= \P - \P = \mathsf{0} 
\end{align*}
Equality ($*$) holds because $\P$ is a generalized permutation matrix.
\end{proof}

Finally, assume that $b-c\geq -1$ and that $a-c\leq 0$, and define disjoint sets
\begin{align*}
K&:=\{k \in [c,d] \text{ such that $S(k)\in [c,d]$} \} \\
L &:= \{ \ell\in [b+1,d]  \text{ such that $S(\ell)<a$} \} 
\end{align*}
Construct a $[c,d]\times (K \cup L)$-matrix $\mathsf{M}$, such that for each $k\in K$, the corresponding column of $\mathsf{M}$ is $\C_{[a,b],k}$ and for each $\ell\in L$, the corresponding column of $\mathsf{M}$ is $e_\ell$ (the column vector with a $1$ in the $\ell$th place). 

Since $\sol(\C)\C\P=0$ (Proposition \ref{prop: oppsol}), $\sol(\C)_{[a,b],[c,d]} \mathsf{M}_{[c,d],k}=0$ for all $k\in K$. Since each column of $\sol(\C)_{[a,b],[c,d]}$ indexed by $\ell\in L$ vanishes (Proposition \ref{prop: colvan}), $\sol(\C)_{[a,b],[c,d]}\mathsf{M}_{[c,d],\ell} =0$ for all $\ell\in L$. Therefore,  the matrix product vanishes:
%
%Then $\mathsf{M}\sol(\C)_{[a,b],[c,d]} =0$; for the rows indexed by $K$ this follows from Proposition \ref{prop: sol}, and for the rows indexed by $L$ this follows from Proposition \ref{prop: rowvan}.
%
%Then $\mathsf{M}$ has rank $|K|+|L|$, and also 
\[ \sol(\C)_{[a,b],[c,d]}\mathsf{M}=0\]
The transpose $\mathsf{M}^\top$ is in row echelon form and has rank $|K \cup L|$.
Therefore, 
\[ \mathrm{rank}(\sol(\C)_{[a,b],[c,d]})\leq \dim(\mathrm{ker}(\mathsf{M}^\top)) = |[c,d]\smallsetminus (K\cup L) |\]
Next, we note that $K\cup L$ indexes the columns of the box $[a,b]\times [c,d]$ which do not contain the `initial circle' of an $S$-ball in the box.
Combined with the lower bound \eqref{eq: lowerbound},
\[ \mathrm{rank}(\sol(\C)_{[a,b],[c,d]})= \text{the number of $S$-balls in the box $[a,b]\times [c,d]$} \qedhere \]
%This establishes the theorem in the case when $a-c\leq 0$.
\end{proof}

Lemma \ref{lemma: boxballs} extends to improper intervals such as $(-\infty, \infty) $ and $ (-\infty, d]$ by considering appropriate limits, as the number of $S$-balls in a box is monotonic in nested intervals. For example, the rank of the entire matrix $\sol(\C)$ equals the number of $S$-balls in the unbounded `box' $(-\infty,\infty)\times ( -\infty,\infty)$; that is, the number of $S$-balls.%, which reproves Corollary \ref{coro: balls}.

The lemma allows us to prove the following fundamental result.

\begin{thm}\label{thm: sol}
If $\C$ is reduced, the kernel of $\C$ equals the image of $\sol(\C)$.
\end{thm}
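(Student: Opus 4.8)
The plan is to prove the two inclusions $\mathrm{image}(\sol(\C))\subseteq\ker(\C)$ and $\ker(\C)\subseteq\mathrm{image}(\sol(\C))$ separately. The first is the routine direction. Suppose $\mathsf{v}=\sol(\C)\mathsf{w}$ for some $\mathsf{w}$ for which the product exists, and compute $\C\mathsf{v}$ one row at a time. Because $\C$ is horizontally bounded, the $a$th entry $(\C\mathsf{v})_a=\sum_b\C_{a,b}\mathsf{v}_b$ is a finite sum, and since each $\mathsf{v}_b=\sum_c\sol(\C)_{b,c}\mathsf{w}_c$ has only finitely many nonzero terms, the double sum over $(b,c)$ has finitely many nonzero terms and may be reordered. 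This justifies $\C(\sol(\C)\mathsf{w})=(\C\sol(\C))\mathsf{w}$ in this row, and Proposition \ref{prop: sol} ($\C\sol(\C)=0$) gives $(\C\mathsf{v})_a=0$. The one subtlety is that $\sol(\C)$ is \emph{not} horizontally bounded, so associativity is not automatic and must be extracted from the finiteness of support just described.

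For the reverse inclusion I would first reduce to a matching statement. Fix any $b\in\ZZ$. By Theorem \ref{thm: solextend} the restriction $\pi_{T_b}\colon\ker(\C)\to\k^{T_b}$ is an isomorphism, and by the first inclusion the image lies inside $\ker(\C)$; hence it suffices to show that every $\mathsf{v}\in\ker(\C)$ admits a $\mathsf{w}$ with $\sol(\C)\mathsf{w}$ existing and $\pi_{T_b}(\sol(\C)\mathsf{w})=\pi_{T_b}(\mathsf{v})$, for then injectivity of $\pi_{T_b}$ forces $\sol(\C)\mathsf{w}=\mathsf{v}$. The natural candidates are the columns of $\sol(\C)$ indexed by $T_b$: Proposition \ref{prop: colvan} shows $\sol(\C)_{a,c}=0$ for $a,c\in T_b$ with $a<c$, and its corollary gives $\sol(\C)_{c,c}=1$ for $c\in T_b$ (fixed points of $S$ are excluded from $T_b$), so $\sol(\C)_{T_b,T_b}$ is lower unitriangular. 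Thus on $T_b$ the projected columns form a lower‑unitriangular family, exactly what one wants in order to reproduce $\pi_{T_b}(\mathsf{v})$.

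The hard part will be producing a single $\mathsf{w}\in\k^{T_b}$ that solves $\sol(\C)_{T_b,T_b}\mathsf{w}=\pi_{T_b}(\mathsf{v})$ \emph{and} for which the full product $\sol(\C)_{\ZZ,T_b}\mathsf{w}$ exists (i.e.\ no row with infinitely many nonzero terms). Naive triangular back‑substitution fails, since $T_b$ has no least element and therefore no base case; moreover, nonzero solutions are never left‑bounded (Theorem \ref{thm: reduced2}), so the support of $\mathsf{w}$ is forced to be unbounded below. The essential input is Lemma \ref{lemma: boxballs}: applied to a square box $[c,d]\times[c,d]$ (which satisfies Condition \eqref{eq: boxcond}) it identifies $\mathrm{rank}(\sol(\C)_{[c,d],[c,d]})$ with the number of $S$-balls in $[c,d]$, which is $\dim\ker(\C)_{[c,d]}$ by Proposition \ref{prop: Tset}. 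Since the columns of $\sol(\C)$ are global solutions, their restrictions lie in $\ker(\C)_{[c,d]}$, and the rank count shows that the columns indexed by $[c,d]$ already span $\ker(\C)_{[c,d]}$; hence $\pi_{[c,d]}(\mathsf{v})$ can be matched using only finitely many columns. I would then assemble these finite‑window matchings into a global $\mathsf{w}$ by a limiting argument over nested windows.

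The crux — and the step I expect to be the genuine obstacle — is to verify that the assembled $\mathsf{w}$ gives an honest, non‑divergent combination, equivalently that every fixed row of $\sol(\C)_{\ZZ,T_b}$ has finite support. For columns $c\in T_b$ with $c>a$ this is immediate ($c$ is trapped in $(a,b]$), but for $c\le a$ neither Proposition \ref{prop: colvan} nor Proposition \ref{prop: rowvan} vanishes the individual entries, so the finiteness is a global phenomenon. This is precisely where the ball‑counting of Lemma \ref{lemma: boxballs} (together with the dual relation $\sol(\C)(\C\P)=0$ of Proposition \ref{prop: oppsol}) must do the work: by bounding the rank, hence the spread of nonzero entries, of each finite rectangular window of $\sol(\C)$, it confines the nonzero row entries to finitely many positions and so guarantees the limiting product exists. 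Once this finiteness is in hand, the lower‑unitriangular structure of $\sol(\C)_{T_b,T_b}$ reproduces $\pi_{T_b}(\mathsf{v})$ on the limit, and the reduction above closes the argument, yielding $\ker(\C)=\mathrm{image}(\sol(\C))$.
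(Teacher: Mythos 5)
Your overall strategy is the paper's: the easy inclusion comes from $\C\sol(\C)=\mathsf{0}$ (your care with associativity here is correct and welcome), and the hard inclusion comes from matching $\dim\ker(\C)_{[a,b]}$, computed via Proposition \ref{prop: Tset}, against the rank of a finite window of $\sol(\C)$, computed via Lemma \ref{lemma: boxballs}. The difference is in how the window-wise information is assembled. The paper never constructs a global coefficient vector: it applies Lemma \ref{lemma: boxballs} to the strip $[a,b]\times\ZZ$ to get $\mathrm{rank}(\sol(\C)_{[a,b],\ZZ})=\#\{S\text{-balls in }[a,b]\}=\dim\ker(\C)_{[a,b]}$, concludes $\pi_{[a,b]}(\mathrm{im}(\sol(\C)))=\ker(\C)_{[a,b]}$ for every finite interval, and stops there. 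You instead reduce to producing a single $\mathsf{w}$ with $\sol(\C)\mathsf{w}$ existing and $\pi_{T_b}(\sol(\C)\mathsf{w})=\pi_{T_b}(\mathsf{v})$. That reduction via Theorem \ref{thm: solextend} is sound, but it raises the bar: it asks for essentially the content of Theorem \ref{thm: solbasis}, which the paper proves \emph{after} Theorem \ref{thm: sol} and \emph{using} it.

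The gap is the one you flag yourself, and your proposed repair does not close it. You assert that Lemma \ref{lemma: boxballs}, ``by bounding the rank, hence the spread of nonzero entries,'' confines each row of $\sol(\C)_{\ZZ,T_b}$ to finite support. Rank does not control support --- a rank-one matrix can have every entry nonzero --- and for $a\geq c$ neither Proposition \ref{prop: colvan} nor Proposition \ref{prop: rowvan} kills $\sol(\C)_{a,c}$, so when $T_b$ is infinite the row-finiteness of $\sol(\C)_{\ZZ,T_b}$ (equivalently, the existence of $\sol(\C)_{\ZZ,T_b}\mathsf{w}$ for arbitrary $\mathsf{w}$) is exactly the hard point, and nothing in your outline establishes it. The ``limiting argument over nested windows'' is also unsupplied: the finite-window coefficient vectors produced by the square-box rank count are not canonical, so you would still have to show they can be chosen compatibly. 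The fix is to drop the global $\mathsf{w}$ and argue window-wise as the paper does; equality of $\ker(\C)$ and $\mathrm{im}(\sol(\C))$ is then deduced from agreement of all finite-window projections. (That last inference is itself stated tersely in the paper when $\ker(\C)$ is infinite-dimensional, but it does not require the row-finiteness your route demands.)
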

\begin{proof}
By Proposition \ref{prop: sol}, $\mathrm{im}(\sol(\C)) \subseteq \mathrm{ker}(\C)$. For any interval $[a,b]$ and any $S$-schedule $J$ of $[a,b]$, Proposition \ref{prop: Tset} implies that  
$ \dim(\mathrm{ker}(\C)_{[a,b]})=|J|$,
and Lemma \ref{lemma: boxballs} implies that
\[\mathrm{rank}(\sol(\C)_{[a,b], \ZZ}) = \text{\# of $S$-balls in the box $[a,b]\times \ZZ$} \]
The number of $S$-balls in $[a,b]\times \ZZ$ equals the number of $S$-balls in $[a,b]$ (Remark \ref{rem: balls}), which in turn equals $|J|$. Thus, $\mathrm{ker}(\C)_{[a,b]}=\mathrm{im}(\sol(\C))_{[a,b]}$ for all intervals $[a,b]$, and so $\mathrm{ker}(\C)=\mathrm{im}(\sol(\C))$.
\end{proof}

%\begin{proof}
%By Proposition \ref{prop: sol}, $\mathrm{im}(\sol(\C)) \subseteq \mathrm{ker}(\C)$. For any interval $[a,b]\subset \ZZ$, Corollary \ref{coro: Tset} implies that 
%$ \dim(\mathrm{ker}(\C)_{[a,b]})=|T_{[a,b]}|$
%and Lemma \ref{lemma: boxballs} implies that
%\[ \dim(\mathrm{im}(\sol(\C))_{[a,b]}) = \mathrm{rank}(\sol(\C)_{[a,b], \ZZ} = \text{\# of $S$-balls in the box $[a,b]\times \ZZ$} \]
%We observe that $T_{[a,b]}$ is the set of rows containing the initial circle of $S$-balls in $[a,b]\times\ZZ$, and so the two dimensions coincide. Thus, $\mathrm{ker}(\C)_{[a,b]}=\mathrm{im}(\sol(\C))_{[a,b]}$ for all intervals $[a,b]$, and so $\mathrm{ker}(\C)=\mathrm{im}(\sol(\C))$.
%\end{proof}

\subsection{Bases for solutions}

We can minimally parametrize the kernel using $S$-schedules.\footnote{See Definition \ref{defn: schedule}; note that $I$ only admits an $S$-schedule if $I$ is a (possibly unbounded) interval.}

%\begin{lemma}
%Let $\C$ be a reduced recurrence matrix with shape $S$, and let $J$ be an $S$-schedule\footnote{See Definition \ref{defn: schedule}.} for $[a,b]$. Then the columns of $\sol(\C)_{[a,b],[a,b]}$ indexed by $J$ are a maximally linearly independent subset of the set of columns.
%\end{lemma}

\begin{prop}\label{prop: scheduleisom}
Let $\C$ be a reduced recurrence matrix with shape $S$, and let $J$ be an $S$-schedule for $I$. Then multiplication by $\sol(\C)_{I,J}$ gives an isomorphism $\k^J\rightarrow \mathrm{ker}(\C)_I$.
%
%Then, for any $a$, the map $\k^{T_a}\rightarrow \k^\ZZ$ given by $\mathsf{v}\mapsto \sol(\C) \iota_{T_a}(\mathsf{v})$ restricts to an isomorphism $\k^{T_a}\rightarrow \ker(\C)$.
\end{prop}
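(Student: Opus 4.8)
The plan is to factor the map through the restriction isomorphism of Proposition~\ref{prop: Tset}. Write $w_b:=\sol(\C)_{\ZZ,b}$ for the $b$th column of the solution matrix; by Proposition~\ref{prop: sol} each $w_b$ lies in $\ker(\C)$, and by Theorem~\ref{thm: sol} the assignment $\mathsf{v}\mapsto\sum_{b\in J}v_b w_b=\sol(\C)_{\ZZ,J}\mathsf{v}$ has image in $\ker(\C)$ whenever it is defined. Restricting to the rows $I$ gives the map $\sol(\C)_{I,J}$ in the statement, and restricting further to the columns $J$ (legitimate since $J\subseteq I$) gives the square matrix $\sol(\C)_{J,J}$. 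Thus, writing $\pi_J\colon\ker(\C)_I\to\k^J$ for the restriction map, we have $\pi_J\circ\sol(\C)_{I,J}=\sol(\C)_{J,J}$. Since $\pi_J$ is an isomorphism by Proposition~\ref{prop: Tset}, the proposition reduces to two claims: that the products $\sol(\C)_{I,J}\mathsf{v}$ all exist, and that $\sol(\C)_{J,J}$ is invertible.

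First I would check invertibility of $\sol(\C)_{J,J}$. Its diagonal entries are $1$: for $b\in J$ the element $b$ represents an $S$-ball, so $S(b)\neq b$, and then the definition of $\sol(\C)$ together with Proposition~\ref{prop: colvan} forces $\sol(\C)_{b,b}=1$. For the off-diagonal entries I use Proposition~\ref{prop: colvan}: if $b'<b$ both lie in $J$ and there is no $c\leq b$ with $S(c)=b'$, then $\sol(\C)_{b',b}=0$. For the schedules $T_b$ this vanishing is immediate from the very definition $T_b=(-\infty,b]\smallsetminus S\big((-\infty,b]\big)$, so $\sol(\C)_{T_b,T_b}$ is lower unitriangular and hence invertible (the lower unitriangular matrices form a group under $\adj$). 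For a general $S$-schedule the same triangularity — now with respect to the order in which elements are adjoined along the nested intervals of Definition~\ref{defn: schedule} — should follow from the inductive bookkeeping in the proof of Lemma~\ref{lemma: Tset}, so that $\sol(\C)_{J,J}$ is again unitriangular and invertible.

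The main obstacle is well-definedness in the infinite case: I must show that for each fixed row $a\in I$ only finitely many $b\in J$ have $\sol(\C)_{a,b}\neq0$, so that every product $\sol(\C)_{I,J}\mathsf{v}$ exists (this is the ``no infinite non-zero sums'' phenomenon asserted for the infinite case). The columns $b>a$ are tame: Propositions~\ref{prop: colvan} and~\ref{prop: rowvan} force $a\in S\big((-\infty,b]\big)$ and $S(b)\geq a$ simultaneously, confining such $b$ to a finite set. The delicate contributions are the columns $b\leq a$, where $\sol(\C)_{a,b}=\adj(\C)_{a,b}$ and the adjugate entries need not decay; here I would argue through the ball combinatorics of the schedule, bounding the number of $S$-balls whose representative in $J$ can produce a non-zero entry in row $a$. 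Once finite support is established, $\sol(\C)_{I,J}$ is the row-wise limit of the finite matrices $\sol(\C)_{[a_i,b_i],\,J\cap[a_i,b_i]}$ along the nested intervals of the schedule; each of these is an isomorphism onto $\ker(\C)_{[a_i,b_i]}$ by the finite case, in which Lemma~\ref{lemma: boxballs} computes the rank (its hypothesis~\eqref{eq: boxcond} holds for the square box $[a_i,b_i]\times[a_i,b_i]$, where $b_i-a_i\geq0$ and $\min(a_i-a_i,b_i-b_i)=0$) and Proposition~\ref{prop: Tset} supplies the matching dimension. Combined with the reduction and the invertibility of $\sol(\C)_{J,J}$, this yields the isomorphism $\k^J\to\ker(\C)_I$.
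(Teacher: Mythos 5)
Your reduction to the invertibility of the square matrix $\sol(\C)_{J,J}$ is a legitimate strategy and genuinely different from the paper's argument, which never forms $\sol(\C)_{J,J}$: the paper instead proves $\mathrm{rank}(\sol(\C)_{[c,d],J})=|J|$ for every containing interval $[c,d]$ by induction along the nested intervals of Definition \ref{defn: schedule}, using Lemma \ref{lemma: boxballs} to get the rank of the full column set $I$, and then matches dimensions against $\ker(\C)_I$ via Proposition \ref{prop: Tset}. However, your version has a genuine gap at its central step: the claim that $\sol(\C)_{J,J}$ is unitriangular in the adjoining order is not proved, and as stated it is false for a general $S$-schedule. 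When $[a_{n-1},b_{n-1}]$ grows to the right, the new index $j=b_n$ satisfies $S(j)<a_n\leq j'<j$ for every earlier $j'\in J$, so Proposition \ref{prop: rowvan} kills the \emph{column} of $j$ above the earlier rows; but when it grows to the left, the new index $j=a_n$ satisfies $j<j'$ and $j\notin S((-\infty,b_n])$, so Proposition \ref{prop: colvan} kills the \emph{row} of $j$ against the earlier columns, while the entries $\sol(\C)_{j',j}=\adj(\C)_{j',j}$ for earlier $j'>j$ are genuinely nonzero in general. A schedule that alternates sides (such as $\{3,4,2,7\}$ for $[1,8]$ in the paper's juggling picture) therefore yields a matrix that is neither upper nor lower triangular in the adjoining order. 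What is true, and what you can extract from exactly these two vanishing facts, is that each newly adjoined index has either its column or its row (restricted to the earlier indices) equal to zero, so a Laplace expansion in reverse adjoining order gives $\det(\sol(\C)_{J,J})=\prod_{j\in J}\sol(\C)_{j,j}=1$. That computation is where the content of your proof lives, and ``should follow from the inductive bookkeeping'' does not supply it.

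The second gap is the well-definedness step you flag as the main obstacle. Your finiteness claim for the columns $b>a$ is incorrect: for the Fibonacci matrix every $b\geq a+2$ satisfies both $S(b)\geq a$ and $a\in S((-\infty,b])$, so Propositions \ref{prop: colvan} and \ref{prop: rowvan} do \emph{not} confine such $b$ to a finite set -- what is finite in that example is $J\cap(a,\infty)$ itself, not the set cut out by the two vanishing conditions. For the columns $b\leq a$, where $\sol(\C)_{a,b}=\adj(\C)_{a,b}$ and none of the stated vanishing results applies, you concede you have no argument; since $\adj(\C)$ is horizontally bounded only when $\C$ is trivial (Theorem \ref{thm: triv-inv}), rows of $\adj(\C)$ with infinite support genuinely occur, so this is a real hole whenever $J$ is infinite. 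The paper avoids confronting this head-on by establishing the finite case $I=[a_i,b_i]$ first and then passing to the union of the nested intervals; if you want to keep your global factorization through $\pi_J$, you must either prove the finite-row-support claim or likewise retreat to the finite intervals and take a limit, and in either case the argument still needs to be written.
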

\noindent If $J$ is an $S$-schedule for $\ZZ$, then multiplication by $\sol(\C)_{\ZZ,J}$ is an isomorphism $\k^J\rightarrow \mathrm{ker}(\C)$.
\begin{proof}
We first consider the the case when $I$ is finite.
Consider any interval $[c,d]$ such that $I\subset [c,d]$. By Remark \ref{rem: balls}, the number of $S$-balls in $[c,d]\times I$ equals the number of $S$-balls in $I$, which is equal to $J$ by Definition \ref{defn: schedule}. By Lemma \ref{lemma: boxballs}, this implies that 
$ \mathrm{rank}(\sol(\C)_{[c,d],I}) = |J| $.

Since $J\subset I$, this implies that $ \mathrm{rank}(\sol(\C)_{[c,d],J}) \leq  |J| $. We will prove that 
\begin{equation}
\label{eq: bullshit}
\mathrm{rank}(\sol(\C)_{[c,d],J}) =  |J| 
\end{equation}
by induction on the number of elements in $I$. If $I=\varnothing$, the claim holds vacuously. Assume that the claim holds for all $S$-schedules of intervals shorter than $I$, and choose a sequence of subintervals as in Definition \ref{defn: schedule}. Since $I$ is finite, the sequence terminates at $[a_n,b_n]=I$.

Since $J' := J\cap [a_{n-1},b_{n-1}]$ is an $S$-schedule for $[a_{n-1},b_{n-1}]$, by the inductive hypothesis, 
$ \mathrm{rank}(\sol(\C)_{[c,d],J'}) = |J'| $.
If $J'=J$, this immediately implies \eqref{eq: bullshit}. If $J'\neq J$, then $|J| = |J'|+1$, and so
\[\mathrm{rank}(\sol(\C)_{[c,d],[a_{n},b_{n}]} ) =  \mathrm{rank}(\sol(\C)_{[c,d],[a_{n-1},b_{n-1}]} ) +1 \]
Hence, the column indexed by the unique element of $[a_n,b_n]\smallsetminus [a_{n-1},b_{n-1}]$ is linearly independent from the other columns. Since this is also the unique element in $J\smallsetminus J'$,
\[\mathrm{rank}(\sol(\C)_{[c,d],J} ) =  \mathrm{rank}(\sol(\C)_{[c,d],J'} ) +1 \]
This proves \eqref{eq: bullshit} and completes the induction.

Since $\mathrm{im}(\sol(\C))= \mathrm{ker}(\C)$, we know that $\mathrm{im}(\sol(\C)_{I,J}) \subseteq \mathrm{ker}(\C)_I$. Equation \ref{eq: bullshit} for $I=[c,d]$ and Proposition \ref{prop: Tset} imply these both have dimension $|J|$, so $\mathrm{im}(\sol(\C)_{I,J}) = \mathrm{ker}(\C)_I$.

When $I$ is infinite, Definition \ref{defn: schedule} and the preceding argument guarantee it is a union of finite intervals on which the proposition holds, so $\mathrm{im}(\sol(\C)_{I,J}) = \mathrm{ker}(\C)_I$.
\end{proof}

The special case of the $S$-schedule $T_b$ for $\ZZ$ yields the desired isomorphism.

\begin{thm}\label{thm: solbasis}
Multiplication by $\sol(\C)_{\ZZ\times T_b}$ gives an isomorphism $\k^{T_b}\rightarrow \mathrm{ker}(\C)$.
\end{thm}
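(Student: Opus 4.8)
The plan is to obtain the theorem as the specialization $I=\ZZ$, $J=T_b$ of the preceding proposition, so the only thing I really have to supply is the identification of $T_b$ as a bona fide $S$-schedule for $\ZZ$ and a check of the one feature that is invisible in the finite case. First I would exhibit the tower of nested intervals $[b,b]\subset[b-1,b]\subset[b-2,b]\subset\cdots$, which exhausts $\ZZ$ and satisfies $T_b\cap[b-i,b]=T_{[b-i,b]}$, whose cardinality is the number of $S$-balls in $[b-i,b]$; this is exactly the requirement of Definition \ref{defn: schedule}, so $T_b$ is the $S$-schedule determined by this tower. Granting that, bijectivity of $\sol(\C)_{\ZZ,T_b}\colon\k^{T_b}\to\ker(\C)$ is the infinite-$I$ case of the preceding proposition, and the equality of its image with $\ker(\C)$ is already Theorem \ref{thm: sol}.

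The step I expect to be the genuine obstacle is not bijectivity but \emph{well-definedness}: multiplication by $\sol(\C)_{\ZZ,T_b}$ must be defined on all of $\k^{T_b}$, i.e. for every $\mathsf{w}\in\k^{T_b}$ and every row $a$ the sum $\sum_{j\in T_b}\sol(\C)_{a,j}w_j$ must have only finitely many nonzero terms. This is delicate precisely when $\dim\ker(\C)=\infty$, for then $T_b$ is infinite and hence unbounded below, while on the lower-triangular region one has $\sol(\C)_{a,j}=\adj(\C)_{a,j}$ for $j\le a$, and these entries do not vanish term by term. Thus I must rule out infinite nonzero sums directly.

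To do this I would prove that for each fixed $a$ the set $D_a:=\{j\in T_b:\sol(\C)_{a,j}\neq0\}$ is finite, by a two-directional induction anchored on $T_b$. The base input is that the $j$th column of $\sol(\C)$ restricts on $T_b$ to the standard basis vector, i.e. $\sol(\C)_{l,j}=\delta_{l,j}$ for $l,j\in T_b$ (Theorem \ref{thm: solextend}, together with the vanishing of Proposition \ref{prop: colvan}); hence $D_a=\{a\}$ whenever $a\in T_b$. For $a\notin T_b$ with $a\le b$ I would invoke $\C\sol(\C)=0$ (Proposition \ref{prop: sol}): since $a=S(c)$ for a unique $c=S^{-1}(a)\le b$, the pivot in the $c$th row of $\C\sol(\C)=0$ solves for $\sol(\C)_{a,j}$ as a finite combination of entries $\sol(\C)_{l,j}$ with $a<l\le c$, so $D_a\subseteq\bigcup_{a<l\le c}D_l$; a downward induction on $b-a$ then forces each $D_a$ with $a\le b$ to be finite. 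Symmetrically, for $a>b$ the $a$th row of $\C\sol(\C)=0$ expresses $\sol(\C)_{a,j}$ through entries $\sol(\C)_{l,j}$ with $S(a)\le l<a$, so an upward induction propagates finiteness to all rows. With finiteness of every $D_a$ established, the product is everywhere defined, and the preceding proposition (equivalently Theorem \ref{thm: sol} and Proposition \ref{prop: Tset}) delivers the isomorphism.
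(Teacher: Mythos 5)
Your first paragraph is, modulo wording, the paper's entire proof: the tower $[b,b]\subset[b-1,b]\subset\cdots$ is exactly how the paper exhibits $T_b$ as an $S$-schedule for $\ZZ$ just before Theorem \ref{thm: solextend}, and the theorem is then read off from the preceding proposition with $I=\ZZ$, $J=T_b$. You are also right to single out well-definedness of $\mathsf{w}\mapsto\sol(\C)_{\ZZ,T_b}\mathsf{w}$ on all of $\k^{T_b}$ as the point this specialization does not obviously deliver when $T_b$ is infinite: the paper's treatment of the infinite-$I$ case is a single sentence and never verifies that each row of $\sol(\C)_{\ZZ,T_b}$ has finite support.

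The difficulty is that your finiteness argument collapses at its base case, and the base case is where all of the content sits. The identity $\sol(\C)_{l,j}=\delta_{l,j}$ for $l,j\in T_b$ is false: Proposition \ref{prop: colvan} together with $l\in T_b$ gives $\sol(\C)_{l,j}=0$ only for $l<j$, and Theorem \ref{thm: solextend} says nothing about which solutions the columns of $\sol(\C)$ are. So $\sol(\C)_{T_b,T_b}$ is merely lower unitriangular (already for Fibonacci, $T_b=\{b-1,b\}$ and $\sol(\C)_{b,b-1}=F_2=1$), and $D_a$ for $a\in T_b$ is only known to lie in $T_b\cap(-\infty,a]$, precisely the possibly infinite set you set out to control; your two inductions faithfully reduce every other $D_a$ to finitely many of these base-case sets, so nothing is gained. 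Worse, the missing base case is not merely unproved but false. Take $\k=\QQ$ and the reduced recurrence
\[ x_m=x_{m-1}+x_{2m-2}\ \ (m\le0),\qquad x_m=0\ \ (m\ge1), \]
whose shape $S(m)=2m-2$ for $m\le0$ and $S(m)=m$ for $m\ge1$ is injective, and whose pivots are easily checked to be last in their columns. Here $T_0=\{0,-1,-3,-5,\dots\}$ is infinite. Since $\P\adj(\C\P)$ is supported where the row index is at most the column index, $\sol(\C)_{0,j}=\adj(\C)_{0,j}$ for $j<0$, and $\adj(\C)_{\ZZ,j}$ is the sequence $u$ with $u_i=0$ for $i<j$, $u_j=1$, and $u_i=u_{i-1}+u_{2i-2}$ for $j<i\le0$; one computes $u_0=1,2,3,5,7,\dots$ for $j=-1,-3,-5,-7,-9,\dots$, and an easy induction gives $u_0\ge u_j=1$ in general. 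So the $0$th row of $\sol(\C)_{\ZZ,T_0}$ has infinitely many nonzero entries: the product $\sol(\C)_{\ZZ,T_0}\mathsf{w}$ does not exist for $\mathsf{w}$ the all-ones vector, and the solution equal to $1$ at every index of $T_0$ (which exists by Theorem \ref{thm: solextend}) is not in the image at all. The obstruction you identified is therefore real, but it refutes the row-finiteness claim, and with it surjectivity onto $\ker(\C)$, when $T_b$ is infinite; it is not something your induction, or the paper's one-line specialization, gets around, and some additional hypothesis such as $\dim\ker(\C)<\infty$ (or a restriction of the domain of the map) appears to be needed.
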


\section{Finite-dimensional kernels }\label{section: twist}

In this section, we focus on linear recurrences with finite dimensional solution spaces. Such spaces can be realized as the row span of a $d\times \mathbb{Z}$-matrix. We show that the corresponding solution matrix factors into the product of this $d \times \mathbb{Z}$-matrix and its \textbf{left twist}, 
%We define a \textbf{left twist} operation on certain matrices, which will provides an alternative realization of the solution matrix. This is 
an infinite matrix generalization of the twist defined by Marsh-Scott \cite{MS16} for generic $k\times n$ matrices and Muller-Speyer \cite{MS17} for general $k\times n$ matrices.\footnote{We note that the two twists differ by a column rescaling; we use the Muller-Speyer normalization.}

%Let $\mathsf{A}$ be a $k\times \ZZ$ matrix whose rows span the kernel

First, we characterize when the rows of a $d\times \mathbb{Z}$-matrix form a basis of the kernel of some recurrence matrix.

\begin{lemma}
Let $\mathsf{A}$ be a $d\times \ZZ$-matrix of rank $d$. The following are equivalent.
\begin{enumerate}
%	\item The rows of $\mathsf{A}$ are the basis of the solution set of a linear recurrence.
	\item The rows of $\mathsf{A}$ form a basis for the kernel of a recurrence matrix.
	\item The only left-bounded sequence in the row span of $\mathsf{A}$ is the zero sequence.
	\item For each $a\in \ZZ$, the submatrix of $\mathsf{A}$ with columns in $(-\infty,a]$ has rank $d$.
%	\item For each $a\in \ZZ$, there is a subset $I\subset (-\infty , a]$ such that the columns of $\mathsf{A}$ are a basis for $\k^d$.
\end{enumerate}
\end{lemma}

\begin{proof}
(1) $\Leftrightarrow$ (2) is Theorem \ref{thm: reduced2} applied to the row span. 
To show (2) $\Leftrightarrow$ (3), consider the projection $\k^\ZZ\rightarrow \k^{(-\infty,a]}$. If it kills a non-zero element in the row span of $\mathsf{A}$, that element is left-bounded. If it does not, the submatrix of $\mathsf{A}$ with columns in $(-\infty,a]$ has rank $d$.
\end{proof}

Let us call a such a matrix \textbf{left-twistable}, because it allows the following definition.

%Given such a $d\times \ZZ$-matrix $\mathsf{A}$, define the \textbf{left twist} of $\mathsf{A}$ to be the $d\times \ZZ$-matrix $\lt (\mathsf{A})$ def
%\[ \lt(\mathsf{A})_a \cdot \mathsf{A}_b = \]

\begin{prop}\label{prop: twist}
Given a left-twistable matrix $\mathsf{A}$, there is a unique matrix $\lt(\mathsf{A})$, called the \textbf{left twist} of $\mathsf{A}$, such that, for all $a\leq b$ with $\mathrm{rank}(\mathsf{A}_{[d],[a,b]}) > \mathrm{rank}(\mathsf{A}_{[d],[a+1,b]})$, 
\[ \mathsf{A}_{[d],a} \cdot \lt(\mathsf{A})_{[d],b} = \left\{\begin{array}{cc}
1 & \text{if } a=b \\
0 & \text{if } a<b
\end{array}\right\} \]
\end{prop}
Intuitively, whenever the $a$th column of $\mathsf{A}$ is not in the span of the columns indexed by $[a+1,b]$, that column has a specific dot product with the $b$th column of $\lt(\mathsf{A})$. %Concretely, when $S(b) \neq b$, $\lt(\mathsf{A}_{[d],b})$ is the unique solution to $\mathsf{A}_{[d],T_b}^\top \mathsf{x} = \mathsf{e}_b$; i.e.~$\lt(\mathsf{A}_{[d],b}) = \mathsf{A}_{[d],T_b}^{-1} \mathsf{e}_b$.

\begin{proof}
Consider the set\footnote{See the proof of Theorem \ref{thm: twistfactor} for the relation to the \emph{throwing history} at $b$ defined in Section \ref{section: balls}.} $T_b:=\{ a \leq b \mid \mathrm{rank}(\mathsf{A}_{[d],[a,b]}) > \mathrm{rank}(\mathsf{A}_{[d],[a+1,b]})\}$. Since the rank of $\mathsf{A}_{[d],(-\infty,b]}$ is $d$, the columns of $\mathsf{A}$ indexed by $T_b$ form a basis for $\k^d$, and so there exists a unique vector with given dot products with these vectors.
%
%the vector $\lt(\mathsf{A})_b$ is uniquely determined by its dot product with these vectors. 
%
%If $b\not\in T_b$, then this forces $\lt(\mathsf{A})_b=0$. 
\end{proof}

\begin{rem}
The left twist of a $d\times n$ matrix of rank $d$ is defined identically in \cite{MS17} except the columns are indexed cyclically mod $n$, so that the $(a+n)$th column is the $a$th column.
\end{rem}

\begin{ex}
Let $\mathsf{A}$ be the $2\times \ZZ$-matrix whose rows are offset Fibonacci sequences.
\[ 
%\mathsf{A} = 
\begin{tikzpicture}[baseline=(current bounding box.center),
	ampersand replacement=\&,
	]
%	\clip[use as bounding box] (-2.25,2.4) rectangle (2.55,-2.4);
	\matrix[matrix of math nodes,
		matrix anchor = M-8-8.center,
		gthrow/.style={dark green,draw,circle,inner sep=0mm,minimum size=5mm},
		bthrow/.style={dark blue,draw,circle,inner sep=0mm,minimum size=5mm},
		dashedthrow/.style={dashed,draw,circle,inner sep=0mm,minimum size=5mm},
		faded/.style={black!25},
		nodes in empty cells,
		inner sep=0pt,
		nodes={anchor=center},
		column sep={.75cm,between origins},
		row sep={.6cm,between origins},
		left delimiter={[},
		right delimiter={]},
	] (M) at (0,0) {
	\cdots \& 5 \& -3 \& 2 \& -1 \& 1 \& 0 \& 1 \& 1 \& 2 \& 3 \& 5 \& 8 \& \cdots \\
	\cdots \& -8 \& 5 \& -3 \& 2 \& -1 \& 1 \& 0 \& 1 \& 1 \& 2 \& 3 \& 5 \& \cdots \\
	};
\end{tikzpicture}
\]
Since $\mathrm{rank}(\mathsf{A}_{[2],[a,b]}) = \min(b-a,2)$, the $a$th column of $\lt(\mathsf{A})$ has dot product $1$ with the $a$th column of $\mathsf{A}$ and dot product $0$ with the $(a-1)$st column of $\mathsf{A}$. The left twist is then
\[
% \lt(\mathsf{A}) =  
\begin{tikzpicture}[baseline=(current bounding box.south),
	ampersand replacement=\&,
	]
%	\clip[use as bounding box] (-2.25,2.4) rectangle (2.55,-2.4);
	\matrix[matrix of math nodes,
		matrix anchor = M-8-8.center,
		gthrow/.style={dark green,draw,circle,inner sep=0mm,minimum size=5mm},
		bthrow/.style={dark blue,draw,circle,inner sep=0mm,minimum size=5mm},
		dashedthrow/.style={dashed,draw,circle,inner sep=0mm,minimum size=5mm},
		faded/.style={black!25},
		nodes in empty cells,
		inner sep=0pt,
		nodes={anchor=center},
		column sep={.75cm,between origins},
		row sep={.6cm,between origins},
		left delimiter={[},
		right delimiter={]},
	] (M) at (0,0) {
	\cdots \& 13 \& 8 \& 5 \& 3 \& 2 \& 1 \& 1 \& 0 \& 1 \& -1 \& 2 \& -3 \& \cdots \\
	\cdots \& 8 \& 5 \& 3 \& 2 \& 1 \& 1 \& 0 \& 1 \& -1 \& 2 \& -3 \& 5 \& \cdots \\
	};
\end{tikzpicture}
\qedhere
\]
%
%\[ \left[\begin{array}{cccccccccccccc}
%\cdots & 13 & 8 & 5 & 3 & 2 & 1 & 1 & 0 & 1 & -1 & 2 & -3 & \cdots \\
%\cdots & 8 & 5 & 3 & 2 & 1 & 1 & 0 & 1 & -1 & 2 & -3 & 5 & \cdots \\
%\end{array}
%\right]
%\]
%The left twist is 
%\[ \left[\begin{array}{cccccccccccccc}
%\cdots & 13 & 8 & 5 & 3 & 2 & 1 & 1 & 0 & 1 & -1 & 2 & -3 & \cdots \\
%\cdots & 8 & 5 & 3 & 2 & 1 & 1 & 0 & 1 & -1 & 2 & -3 & 5 & \cdots \\
%\end{array}
%\right]
%\]
\end{ex}

The twist plays well with symmetries. If $\mathsf{G}$ is an invertible $d\times d$ matrix, then
\[ \lt(\mathsf{GA}) = (\mathsf{G}^{-1})^\top \lt(\mathsf{A}) \]
As a consequence, the row span of $\lt(\mathsf{A})$ only depends on the row span of $\mathsf{A}$, and the twist descends to a well-defined operation on finite dimensional subspaces of $\k^\ZZ$ that don't contain a left-bounded sequence, or equivalently on the set of reduced recurrence matrices with finite dimensional kernel.

\begin{rem}
The action on subspaces was the primary role of the twist in \cite{MS17}, which used the induced map on \emph{positroid cells} to relate two combinatorially-defined torus embeddings.
\end{rem}

Another consequence of this symmetry is that the product $\mathsf{A}^\top\lt(\mathsf{A})$ only depends on the rowspan of $\mathsf{A}$. This was not mentioned in \cite{MS17}, as it was regarded as a curiosity at the time. However, in the context of this paper we recognize it as a familiar face.

\begin{thm}\label{thm: twistfactor}
Let $\C$ be a reduced recurrence matrix, and let $\mathsf{A}$ be a $d\times \ZZ$-matrix whose rows form a basis for $\ker(\C)$. Then 
\[ \sol(\C) = \mathsf{A} ^\top  \lt(\mathsf{A}) \]
\end{thm}
\begin{proof}
We check that 
\[ \C(\mathsf{A}^\top \lt(\mathsf{A})) = (\C\mathsf{A}^\top) \lt( \mathsf{A}) = 0\lt(\mathsf{A}) = 0 \]
The first equality holds because the product $\mathsf{A}^\top\lt(\mathsf{A})$ only involves finitely-many indices, and the second holds because the rows of $\mathsf{A}$ lie in $\ker(\C)$.

Let $T_b$ be as in the proof of Proposition \ref{prop: twist}. We show this coincides with the throwing history at $b$ of $S$ defined in Section \ref{section: balls} by way of the rank matrix $\mathsf{R}$ of $\mathrm{ker}(\C)$.
\begin{align*}
T_b &:= \{ a \leq b \mid \mathrm{rank}(\mathsf{A}_{[d],[a,b]}) > \mathrm{rank}(\mathsf{A}_{[d],[a+1,b]})\} \\
&= \{ a \leq b \mid \mathsf{R}_{a,b} > \mathsf{R}_{a+1,b} \} \\
&= \{ a \leq b \mid \text{there are no defects of the form $\mathsf{R}_{a,c}$ with $c\leq b$} \}  \\
&= \{ a \leq b \mid \text{there are no $c\leq b$ with $S(c)=a$} \}
\end{align*}
Combining with the definition of $\lt(\mathsf{A})$, for any $a\in T_b$,
\[ (\mathsf{A}^\top \lt(\mathsf{A}))_{a,b} = \mathsf{A}_{[d],a} \cdot \lt( \mathsf{A})_{[d],b} =  \left\{\begin{array}{cc}
1 & \text{if } a=b \\
0 & \text{if } a<b
\end{array}\right\}\]
By Theorem \ref{thm: solvan}, these are precisely the conditions that imply that $\mathsf{A}^\top \lt(\mathsf{A})=\sol(\C)$.
\end{proof}

The theorem may be regarded as an alternate definition of $\lt(\mathsf{A})$. After all, if the rows of $\mathsf{A}$ form a basis for $\ker(\C)$, then each column of $\sol(\C)$ is a unique linear combination of the columns of $\mathsf{A}^\top$; and so there is a unique $d\times \ZZ$-matrix $\lt(\mathsf{A})$ such that $\mathsf{A}^\top\lt(\mathsf{A})= \sol(\mathsf{A})$.

\begin{coro}
Let $\C$ be a reduced recurrence matrix, and let $\mathsf{A}$ be a $d\times \ZZ$-matrix whose rows form a basis for $\ker(\C)$. For all $a,b$ with $S(b)\neq b$,
\[\sol(\C)_{a,b} = (-1)^{|T_{[a+1,b]}|} \frac{\mathrm{det}(\mathsf{A}_{T_b \smallsetminus \{b\} \cup \{a\} })} {\mathrm{det}(\mathsf{A}_{T_b})} \]
\end{coro}
\noindent Note that $T_{[a+1,b]} := T_b \cap [a+1,\infty) $, and so the sign is the cost of moving the $a$th column to the rightmost column in $\mathrm{det}(\mathsf{A}_{T_b \smallsetminus \{b\} \cup \{a\} })$.

%is an $S$-schedule for $[a+1,b]$, which equals $\mathrm{rank}(\mathsf{A}_{[d],[a+1,b]})$ by Proposition \ref{prop: scheduleisom}. Therefore, the sign in the above formula is the cost of moving the $a$th column to the rightmost column in $\mathrm{det}(\mathsf{A}_{T_b \smallsetminus \{b\} \cup \{a\} })$.

%\noindent In $\mathsf{A}_{T_b \smallsetminus \{b\} \cup \{a\} }$, 
\begin{proof}
%$ \sol(\C)_{a,b} = \mathsf{A}_{[d],a}\cdot \lt(\mathsf{A}_{[d],b}) = \mathsf{A}_{[d],a}\cdot (\mathsf{A}_{[d],T_b}^{-1} e_b)  
%= (\mathsf{A}_{[d],T_b}^{\top})^{-1} \mathsf{A}_{[d],a} \cdot e_b $. 
%
%This is equal to the $b$th coordinate of the solution to $\mathsf{A}_{[d],T_b}\mathsf{x} = \mathsf{A}_{[d],a}$. By Cramer's Rule, this is equal to the stated ratio of determinants.
%
%By the theorem, $\sol(\C)_{a,b}$ is the dot product of columns $\mathsf{A}_{[d],a} \cdot \lt(\mathsf{A})_{[d],b}$. 
By definition, the $b$th column of $\lt(\mathsf{A})$ is the unique solution to $(\mathsf{A}_{[d],T_b})^\top \mathsf{x} = \mathsf{e}_b$. Then
\[ \sol(\C)_{a,b} = \mathsf{A}_{[d],a}\cdot \lt(\mathsf{A})_{[d],b} = \mathsf{A}_{[d],a}\cdot (\mathsf{A}_{[d],T_b}^\top)^{-1} \mathsf{e}_b  
= \mathsf{A}_{[d],T_b}^{-1} \mathsf{A}_{[d],a} \cdot \mathsf{e}_b \]
Therefore, $\sol(\C)_{a,b} $ is the $b$th coordinate of the solution to $\mathsf{A}_{[d],T_b} \mathsf{x} = \mathsf{A}_{[d],a}$. By Cramer's Rule, this is equal to the stated ratio of determinants.%, with the sign required to sort the columns.
%%
%%, and so it is equal to $\mathsf{A}_{[d],T_b} ^{-1} e_b$. The dot product is 
%\[ \mathsf{A}_{[d],a}\cdot (\mathsf{A}_{[d],T_b}^{-1} e_b)  
%= (\mathsf{A}_{[d],T_b}^{-1})^\top \mathsf{A}_{[d],a} \cdot e_b
%= (\mathsf{A}_{[d],T_b}^{\top})^{-1} \mathsf{A}_{[d],a} \cdot e_b
%\]
\end{proof}

%\begin{warn}{\marginnote{\dbend}[.05cm]}
%The left twist for $d\times n$ matrices defined in \cite{MS17} is a bijection with inverse given by a \emph{right twist}. \bad{This is false for general left-twistable $d\times \ZZ$ matrices!} For example,
%\[ \begin{bmatrix}
%\cdots & 1 & 1 & 1 & 0 & 0 & \cdots 
%\end{bmatrix}\]
%is left-twistable and its own left twist, but it is not right-twistable, as its row span contains a right-bounded sequence. This can be remedied by restricting to \emph{bijective shapes}.
%%
%%Given a \emph{right twistable} $d\times \ZZ$-matrix $\mathsf{A}$ (i.e.~for each $a\in \ZZ$ the submatrix which columns in $[a,\infty)$ has rank $d$), one may define an analogous \emph{right twist}.
%%
%\end{warn}

%This formula can be combined with the relation that $\C_{a,S(a)} = -\sol(\C)_{S(a),a}^{-1}$ (Proposition \ref{prop: soldet}) to give a formula for the pivots of $\C$.
%
%\begin{coro}
%Let $\C$ be a reduced recurrence matrix, and let $\mathsf{A}$ be a $d\times \ZZ$-matrix whose rows form a basis for $\ker(\C)$. For all $a$ with $S(a)\neq a$,
%\[ \C_{a,S(a)} = (-1)^{|T_a \cap [S(a),\infty)| +1 } \frac{\mathrm{det}(\mathsf{A}_{T_a })} {\mathrm{det}(\mathsf{A}_{T_{a-1} })} \]
%\end{coro}
%\begin{proof}
%The only r
%\end{proof}

This formula is pleasantly similar to an analogous formula for the entries of $\C$.%, which we include for completeness.

\begin{prop}
Let $\C$ be a reduced recurrence matrix, and let $\mathsf{A}$ be a $d\times \ZZ$-matrix whose rows form a basis for $\ker(\C)$. For all $a\in \mathbb{Z}$ and all $b\in T_{a-1}$,
\[ \C_{a,b} = (-1)^{|T_{[b+1,a-1]}| +1} \frac{\mathrm{det}(\mathsf{A}_{T_{a-1}\cup\{a\} \smallsetminus\{b\} \}} )} {\mathrm{det}(\mathsf{A}_{T_{a-1} })} \]
\end{prop}

\noindent Similar to the previous case, $T_{[b+1,a-1]} := T_{a-1} \cap [b+1,\infty)$, and so the sign is the cost of moving the rightmost column to position abandoned by the $b$th column in $\mathrm{det}(\mathsf{A}_{T_{a-1} \smallsetminus \{b\} \cup \{a\} })$.

When $b\not\in T_{a-1}\cup\{a\}$, then either $b>a$ or $b=S(c)$ for $c\leq a$. In either case, $\C_{a,b}=0$, and so the proposition covers all entries of $\C$ that are not forced to be $0$ or $1$ by reducedness.

\begin{proof}
%Since the rows of $\mathsf{A}$ form a basis for 
By assumption, $\C\mathsf{A}^\top =0$ and so $\mathsf{A}\C^\top=0$. Restricting to the $a$th column gives 
%Restricting to the $a$th row of $\C$ gives the equation
\[ \sum_{b\in \mathbb{Z}}  \mathsf{A}_{b} \C_{a,b}= 0 \]
Since the $a$th row of $\C$ vanishes outside of columns $T_{a-1}\cup\{a\}$, this may be rewritten as
\[ \sum_{b\in T_{a-1}}  \mathsf{A}_b\C_{a,b} = - \mathsf{A}_a \]
%\[ \sum_{b\in T_{a-1}} \C_{a,b} \mathsf{A}_b =0 \]
The matrix $\mathsf{A}_{T_{a-1}}$ is non-singular by Theorem \ref{thm: solbasis}, and so $\C_{a,b}$ is the $b$th coordinate of the unique solution to the equation $\mathsf{A}_{T_{a-1}}\mathsf{x} = -\mathsf{A}_a$. By Cramer's Rule, this is equal to the state ratio of determinants.
\end{proof}

%\newpage

\section{Reduced recurrence matrices with bijective shapes}

Most of the recurrence matrices of practical significance have bijective shape. For example, this is true for any periodic recurrence matrix.\footnote{That is, when there is an $n$ such that $\C_{a+n,b+n}=\C_{a,b}$ for all $a,b$.} 
%In this case, we have an extra symmetry we may exploit.
When this occurs, we can sharpen some of the prior results and introduce new ones.

\begin{lemma}\label{lemma: leftright}
A closed subspace of $\k^\ZZ$ is the kernel of a reduced recurrence matrix with bijective shape if and only if $0$ is only left-bounded element and the only right-bounded element.
\end{lemma}
\begin{proof}
By Theorem \ref{thm: reduced2}, both conditions are equivalent to the rank matrix containing a defect in every row and every column.
\end{proof}

Let $\C$ be a reduced recurrence matrix with bijective shape $S$. The generalized permutation matrix $\P$ has a two-sided inverse, and the product $\P\C$ is an upper unitriangular matrix.
\begin{lemma}\label{lemma: adjshift}
If $\C$ is a reduced recurrence matrix with bijective shape, then 
$ \adj(\P\C)\P = \P\adj(\C\P) $
and so 
$ \sol(\C) = \adj(\C) - \adj(\P\C)\P$.
\end{lemma}

\noindent Note that $ \adj(\P\C)\P$ does not exist (as we have defined it) when the shape is not bijective, as the product $\P\C$ has a zero row outside the image of $S$.%\footnote{It may be possible to define adjugates in this more general case, in which case the corollary might still hold.}

\begin{proof}
The second equality follows from the first.
To show the first equality, consider
\begin{align*}
\C(\P\adj(\C\P) - \adj(\P\C)\P) 
&=(\C\P)\adj(\C\P) - (\P^{-1}\P) (\C (\adj(\P\C) \P)) \\
&=\mathsf{Id} - \P^{-1}((\P\C) \adj(\P\C)) \P 
=\mathsf{Id} - \P^{-1} \P 
= \mathsf{Id} - \mathsf{Id} =0
\end{align*}
Hence, every column of $\P\adj(\C\P) - \adj(\P\C)\P$ is in the kernel of $\C$. However, each column of $\P\adj(\C\P) - \adj(\P\C)\P$ is right-bounded, since the minuend and subtrahend are upper triangular. By Lemma \ref{lemma: leftright}, each column of the difference must be zero. 
\end{proof}

\subsection{Reversing previous results}

The bijective case admits an additional symmetry we can exploit.
Given a $\ZZ\times \ZZ$-matrix $\mathsf{M}$, define the \textbf{reversal} of $\mathsf{M}$ to be the $\ZZ\times \ZZ$-matrix $\reversal{\mathsf{M}}$ defined by 
\[\reversal{\mathsf{M}}_{a,b} := \mathsf{M}_{-b,-a}\]
This is an involution on the space of $\ZZ\times \ZZ$-matrices and $\reversal{\mathsf{AB}} = \reversal{\mathsf{B}}\,\reversal{\mathsf{A}}$ whenever either product exists. The reversal of $\mathsf{M}$ is nearly the same as the transpose $\mathsf{M}^\top$; the two differ by reindexing the rows and columns by $a\mapsto -a$, or rotating the matrix by $180^\circ$. The advantage of reversal is that it preserves the lower unitriangularity, and so it is easier to reuse earlier results.

\begin{ex}
In the rotated notation, $\reversal{\mathsf{M}}$ is the left-right reflection of $\mathsf{M}$ across the axis where $a+b=0$ (the main anti-diagonal). The reversal of the matrix from the introduction:
\[\begin{tikzpicture}[baseline=(current bounding box.center),
	ampersand replacement=\&,
	]
	\clip[use as bounding box] (-6.3,-2.7) rectangle (9.5,0.3);
	\matrix[matrix of math nodes,
		matrix anchor = M-2-24.center,
		nodes in empty cells,
		inner sep=0pt,
		nodes={anchor=center,node font=\scriptsize,rotate=45},
		column sep={-0.4cm,between origins},
		row sep={0.4cm,between origins},
	] (M) at (0,0) {
 \&  \&  \&  \&  \&  \&  \&  \&  \&  \&  \&  \&  \&  \&  \&  \&  \&  \&  \&  \&  \&  \&  \&  \&  \&  \&  \&  \&  \&  \&  \&  \&  \&  \&  \&  \&  \&  \&  \\
 \& 1 \&  \& 1 \&  \& 1 \&  \& 1 \&  \& 1 \&  \& 1 \&  \& 1 \&  \& 1 \&  \& 1 \&  \& 1 \&  \& 1 \&  \& 1 \&  \& 1 \&  \& 1 \&  \& 1 \&  \& 1 \&  \& 1 \&  \& 1 \&  \& 1 \&  \\
\cdots \&  \& 5 \&  \& 1 \&  \& 1 \&  \& 3 \&  \& 2 \&  \& 1 \&  \& 5 \&  \& 1 \&  \& 5 \&  \& 1 \&  \& 1 \&  \& 3 \&  \& 2 \&  \& 1 \&  \& 5 \&  \& 1 \&  \& 5 \&  \& 1 \&  \& \cdots \\
 \& 3 \&  \& 2 \&  \&  \&  \&  \&  \& 5 \&  \& 1 \&  \& 3 \&  \& 2 \&  \& 3 \&  \& 2 \&  \&  \&  \&  \&  \& 5 \&  \& 1 \&  \& 3 \&  \& 2 \&  \& 3 \&  \& 2 \&  \&  \&  \\
\cdots \&  \& 1 \&  \&  \&  \& -2 \&  \&  \&  \& 2 \&  \& 1 \&  \& 1 \&  \& 1 \&  \& 1 \&  \&  \&  \& -2 \&  \&  \&  \& 2 \&  \& 1 \&  \& 1 \&  \& 1 \&  \& 1 \&  \&  \&  \& \cdots \\
 \&  \&  \&  \&  \& -1 \&  \& -3 \&  \&  \&  \& 1 \&  \&  \&  \&  \&  \&  \&  \&  \&  \& -1 \&  \& -3 \&  \&  \&  \& 1 \&  \&  \&  \&  \&  \&  \&  \&  \&  \& -1 \&  \\
 \&  \&  \&  \&  \&  \& -1 \&  \& -1 \&  \&  \&  \&  \&  \& -1 \&  \&  \&  \&  \&  \&  \&  \& -1 \&  \& -1 \&  \&  \&  \&  \&  \& -1 \&  \&  \&  \&  \&  \&  \&  \& \cdots \\
 \&  \&  \& 1 \&  \&  \&  \&  \&  \&  \&  \&  \&  \&  \&  \&  \&  \&  \&  \& 1 \&  \&  \&  \&  \&  \&  \&  \&  \&  \&  \&  \&  \&  \&  \&  \& 1 \&  \&  \&  \\
 \&  \&  \&  \&  \&  \&  \&  \&  \&  \&  \&  \&  \&  \&  \&  \&  \&  \&  \&  \&  \&  \&  \&  \&  \&  \&  \&  \&  \&  \&  \&  \&  \&  \&  \&  \&  \&  \& \\
	};

\end{tikzpicture}\\
\qedhere \]
\end{ex}

\begin{prop}
If $\C$ is a reduced recurrence matrix with bijective shape $S$, then $\reversal{\C}$ is a reduced recurrence matrix with shape $\reversal{S}$ (where $\reversal{S}(a) := -S^{-1}(-a)$) and $\sol(\reversal{\C}) = \reversal{\sol(\C)}$.
\end{prop}
\begin{proof}
%(1) 
Since $\C$ is reduced of shape $S$, $\C_{a,b}=0$ whenever $b<S(a)$ or $a>S^{-1}(b)$. Then $\reversal{\C}_{a,b}=0$ whenever $-a<S(-b)$ or $-b>S^{-1}(-a)$; that is, $a>\reversal{S}^{-1}(b)$ or $b<\reversal{S}(a)$. Thus, $\reversal{\C}$ is reduced of shape $\reversal{S}$.

Note that $\adj(\reversal{\C})=\reversal{\adj(\C)}$; this follows from the formulas or because they are both inverse to $\reversal{\C}$ in the group of lower unitriangular matrices. Applying Lemma \ref{lemma: adjshift} to $\reversal{\C}$,
\[ \sol(\reversal{\C}) = \adj(\reversal{\C}) - \adj(\reversal{\P}\reversal{\C})\reversal{\P} 
= \reversal{\adj(\C)} - \reversal{\P\adj(\C\P)} = \reversal{\sol(\C)} \qedhere\]
\end{proof}

%Applying reversal to existing 

The proposition implies that $\sol(\C) = \reversal{\sol(\reversal{\C})}$; however, many of our properties of solution matrices take different forms on either side of this equality. For example, Propositions \ref{prop: colvan} and \ref{prop: rowvan} are exchanged, and Theorem \ref{thm: solvan} becomes the following.

\begin{thm}\label{thm: solvan2}
If $\C$ is a reduced recurrence matrix with bijective shape, then $\sol(\C)\C=0$, and the solution matrix $\sol(\C)$ is the unique $\ZZ\times \ZZ$ matrix such that 
\begin{enumerate}
	\item $\sol(\C)\C=0$,
	\item $\sol(\C)_{a,a}=1$ whenever $S(a)\neq a$, and
	\item $\sol(\C)_{a,b}=0$ whenever $S(b)=a=b$ or $S(b)<a<b$.
\end{enumerate}
\end{thm}

\begin{rem}
One cannot mix the conditions in Theorems \ref{thm: solvan} and \ref{thm: solvan2}; e.g.~Conditions (1) and (2) in \ref{thm: solvan} and Condition (3) in \ref{thm: solvan2} do not collectively characterize $\sol(\C)$.
\end{rem}

\begin{warn}{\marginnote{\dbend}[.05cm]}
When $S$ is not bijective, \bad{$\sol(\C)\C$ may not be not zero}. The third example in Figure \ref{fig: adjsol} provides a counterexample (which may be easier to see in Remark \ref{rem: soljugs}).
\end{warn}

%Since reversal is just a reindexing of the transpose, we have the following.
%
%\begin{prop}
%If $\C$ is reduced with bijective shape, then $\mathrm{ker}(\C^\top)={\mathrm{im}}(\sol(\C)^\top)$.
%\end{prop}
%\begin{proof}
%Since $\reversal{\C}$ is reduced, $\mathrm{ker}(\reversal{\C})= \mathrm{im}(\sol(\reversal{\C}))= \mathrm{im}(\reversal{\sol(\C)})$. Reindexing $\ZZ$ by $a\mapsto -a$ translates this into 
%$ \mathrm{ker}(\C^\top) = \mathrm{im}(\sol(\C)^\top) $.
%\end{proof}

We can translate this result to relate the image of $\C$ and the kernel of $\sol(\C)$, but we need to restrict both to bounded sequences. Given a $\ZZ\times \ZZ$-matrix $\mathsf{A}$, define 
\begin{align*}
\ker_b(\mathsf{A}) &:= \{ \mathsf{v} \in \k_b^\ZZ \text{ such that } \mathsf{Av} =0\} \\
\mathrm{im}_b(\mathsf{A}) &:= \{ \mathsf{w}\in \k^\ZZ \text{ for which there is a } \mathsf{v}\in \k_b^\ZZ \text{ such that }\mathsf{w}=\mathsf{Av} \} 
\end{align*}
These are related to the transpose in a familiar way. If $\mathsf{A}$ is a $\ZZ\times \ZZ$-matrix, then $\mathrm{im}(\mathsf{A}^\top) = \mathrm{ker}_b(\mathsf{A})^\perp$, and if $\mathsf{A}$ is vertically bounded, then $\mathrm{ker}(\mathsf{A}^\top) = \mathrm{im}_b(\mathsf{A})^\perp$.\footnote{Here, we are using the dot product $\k_b^\ZZ\times \k^\ZZ\rightarrow \k$ to define \emph{complementary subspaces}: given a subspace $V$ of $\k_b^\ZZ$ or $\k^\ZZ$, its complement $V^\perp$ is the set of vectors in $\k^\ZZ$ or $\k_b^\ZZ$ whose dot product with all of $V$ is zero. This defines a bijection between subspaces of $\k_b^\ZZ$ and closed subspaces of $\k^\ZZ$.}

%These are related to the transpose in a familiar way.  The dot product $\k_b^\ZZ\times \k^\ZZ\rightarrow \k$ allows us to define \emph{complementary subspaces}: given a subspace $V$ of $\k_b^\ZZ$ or $\k^\ZZ$, its complement $V^\perp$ is the set of vectors in $\k^\ZZ$ or $\k_b^\ZZ$ with $0$ dot product with all of $V$. This defines a bijection between subspaces of $\k_b^\ZZ$ and closed subspaces of $\k^\ZZ$.

%Each subspace $V\subset \k_b^\ZZ$ has an \emph{orthogonal complement} $V^\perp\subset 

%\begin{prop}
%If $\mat
% Then we have  $\mathrm{im}(\mathsf{A})^\perp = \mathrm{ker}_b(\mathsf{A}^\top)$. 

%Given $V\subset \k_b^\ZZ$, the orthogonal complement $V^\perp\in \k^\ZZ$ is closed subspace. Given a closed subspace $W\subset \k^\ZZ$, the bounded orthogonal complement $W^\perp:=W^\perp\cap \k_b^\ZZ
%
%, and $(V^\perp)^\perp \cap \k_b^\ZZ=V$. This defines a bijection between subspaces of $\k_b^\ZZ$ and closed subspaces of $\k^\ZZ$.

%\begin{prop}
%If $\mathsf{A}$ is a $\ZZ\times \ZZ$-matrix, then $\mathrm{im}(\mathsf{A}^\top) = \mathrm{ker}_b(\mathsf{A})^\perp$. If $\mathsf{A}$ is vertically bounded, then $\mathrm{ker}(\mathsf{A}^\top) = \mathrm{im}_b(\mathsf{A})^\perp$.
%%
%%The complement of a subspace of $\k_b^\ZZ$ is a closed subspace of $\k^\ZZ$, 
%%
%%closed subspace of $\k^\ZZ$ is 
%\end{prop}

\begin{prop}
If $\C$ is reduced with bijective shape, then $\mathrm{ker}(\C^\top)={\mathrm{im}}(\sol(\C)^\top)$ and 
$\mathrm{im}_b(\C)=\mathrm{ker}_b(\sol(\C))$.
%
%$\sol(\C)^\top
\end{prop}
\begin{proof}
Since $\reversal{\C}$ is reduced, $\mathrm{ker}(\reversal{\C})= \mathrm{im}(\sol(\reversal{\C}))= \mathrm{im}(\reversal{\sol(\C)})$. Reindexing $\ZZ$ by $a\mapsto -a$ translates this into 
$ \mathrm{ker}(\C^\top) = \mathrm{im}(\sol(\C)^\top) $ which in turn implies $\mathrm{im}_b(\C)^\perp=\mathrm{ker}_b(\sol(\C))^\perp$ and $\mathrm{im}_b(\C)=\mathrm{ker}_b(\sol(\C))$.
\end{proof}

%\begin{rem}
%We use a bit of infinite linear algebra in the proof here. The dot product $\k_b^\ZZ\times \k^\ZZ\rightarrow \k$ allows us to define \emph{complementary subspaces}: given a subspace $V$ of $\k_b^\ZZ$ or $\k^\ZZ$, its complement $V^\perp$ is the set of vectors in $\k^\ZZ$ or $\k_b^\ZZ$ with $0$ dot product with all of $V$. This defines a bijection between subspaces of $\k_b^\ZZ$ and closed subspaces of $\k^\ZZ$.
%
%We have analogs of familiar dualities: If $\mathsf{A}$ is a $\ZZ\times \ZZ$-matrix, then $\mathrm{im}(\mathsf{A}^\top) = \mathrm{ker}_b(\mathsf{A})^\perp$. If $\mathsf{A}$ is vertically bounded, then $\mathrm{ker}(\mathsf{A}^\top) = \mathrm{im}_b(\mathsf{A})^\perp$.
%\end{rem}

\subsection{Relation to the twist}

Bijective shape also has consequences for the twist. We use $\lt$ to denote the left twist on $d\times \ZZ$-matrices, as well as the induced action on row spans; that is, $\lt(V)$ is the row span of $\lt(\mathsf{A})$ where $\mathsf{A}$ is any left-twistable matrix with row span $V$.

 %\mathrm{im}(\lt(\mathsf{A})^\top)$, where 
%
%In addition to the operation on matrices, we will use 

% defined in Section \ref{section: twist}. %While the left twist was defined on the level of matrices, we can extend it to subspaces of $\k^\ZZ$ by the rule that $\lt(\mathrm{rowspan}(\mathsf{A})) = \mathrm{rowspan}(\lt(\mathsf{A}))$.

\begin{prop}\label{prop: kerneltwist}
%If $\C$ is reduced with bijective shape and the rows of $\mathsf{A}$ form a basis for $\ker(\C)$, then the rows of $\lt(\mathsf{A})$ form a basis for $\mathrm{ker}(\C^\top) = \mathrm{im}_b(\C)^\perp$.
If $\C$ is reduced with bijective shape and finite dimensional kernel, then $\mathrm{ker}(\C^\top)  = \lt(\ker(\C))$.
\end{prop}

%\noindent Here, $\lt(\mathrm{ker}(\C))$ denotes the row span of $\lt(\mathsf{A})$, where $\mathsf{A}$ is an arbitrary matrix whose rows form a basis for $\ker(\C)$.

\begin{proof}
Containment follows from a string of prior results.
\[ \mathrm{ker}(\C^\top) = \mathrm{im}(\sol(\C^\top)) = \mathrm{im}( ( \mathsf{A}^\top \lt(\mathsf{A})^\top )^\top) 
= \mathrm{im}( \lt(\mathsf{A})^\top \mathsf{A})  \subseteq \mathrm{im}(\lt(\mathsf{A})^\top)
=: \lt(\mathrm{ker}(\C))
\]
By construction, the height of $\mathsf{A}$ and $\lt(\mathsf{A})$ are both $\dim(\ker(\C))$.
The kernel of $\C^\top$ is a reindexing of the kernel of $\reversal{\C}$, and the shape of the latter has the same number of balls as the shape of $\C$. Therefore,  so $\dim(\ker(\C^\top))=\dim(\ker(\C)) $, and so $\mathrm{ker}(\C^\top) = \lt(\mathrm{ker}(\C))$.
\end{proof}

\begin{warn}{\marginnote{\dbend}[.05cm]}
In general, \bad{$\ker((\C^{\top})^{\top})\neq \lt(\lt(\ker(\C)))$} and $\lt$ does not act via an involution.
This does not contradict the proposition, as $\C^\top$ is not a reduced recurrence matrix. 
%, as $(\C^\top)^\top=\C$, but $\lt$ does not act via an involution.
%
%The equality $\ker(\C^\top)=\lt(\ker(\C))$ can be deceptive, in that $\C\rightarrow \C^\top$ is an involution, but $\lt$ does not generally define an involution on left-twistable subspaces. The subtlety is hidden because the proposition is only 
\end{warn}

We can use reversal to reflect the left twist and define its sibling. Given a $d\times \ZZ$-matrix $\mathsf{A}$, let $\reversal{\mathsf{A}}$ be the $d\times \ZZ$-matrix defined by $\reversal{\mathsf{A}}_{a,b} := \mathsf{A}_{a,-b}$.\footnote{In contrast with the definition for $\ZZ\times \ZZ$-matrices, the first index of $\reversal{\mathsf{A}}$ is unmodified.}
A $d\times \ZZ$-matrix $\mathsf{A}$ is \textbf{right-twistable} if $\reversal{\mathsf{A}}$ is left-twistable, and its \textbf{right twist} is the $d\times \ZZ$-matrix $\rt(\mathsf{A})$ defined by
\[ \reversal{\rt(\mathsf{A})} := \lt(\reversal{\mathsf{A}}) \]
%\[ \rt(\mathsf{A}) := \reversal{\lt(\reversal{\mathsf{A}})} \]
More useful definitions are given by appropriately modifying the statements in Section \ref{section: twist}; e.g.~a $d\times \ZZ$-matrix of rank $d$ is right-twistable iff the only right-bounded element is $0$. 
%
%if the submatrix with columns in $[a,\infty)$ has rank $d$ for all $a$.
%
By Lemma \ref{lemma: leftright}, the a reduced recurrence matrix has bijective shape iff its kernel is both left-twistable and right-twistable.

Analogous to Theorem \ref{thm: twistfactor}, the solution matrix has a factorization using the right twist.

%\begin{prop}
%$\sol(\C^\top) = \sol(\C)^\top$.
%%and $\mathsf{A}^\top \lt(\mathsf{A}) = \rt(\mathsf{A})^\top \mathsf{A}$.
%\end{prop}

\begin{lemma}\label{lemma: righttwistfactor}
Let $\C$ be a reduced with bijective shape and let $\mathsf{A}$ be a $d\times \ZZ$ matrix of rank $d$ such that $\ker(\mathsf{A})=\mathrm{im}_b(\C)$, or equivalently, the rows of $\mathsf{A}$ form a basis for $\mathrm{ker}(\C^\top)$. Then
\[ \sol(\C) = \rt(\mathsf{A})^\top \mathsf{A} \]
\end{lemma}

\begin{proof}
Since the rows of $\mathsf{A}$ form a basis for $\ker(\C^\top)$, the rows of $\reversal{\mathsf{A}}$ form a basis for $\ker(\reversal{\C})$. By Theorem \ref{thm: twistfactor}, 
$ \sol(\reversal{\C}) = \reversal{\mathsf{A}}^\top  \lt(\reversal{\mathsf{A}}) $. Applying reversal to both sides gives
$\sol(\C) = \rt(\mathsf{A})^\top \mathsf{A} $.
%\[ \sol(\C) =
%\reversal{\sol(\reversal{\C})} 
%= \reversal{\lt(\reversal{\mathsf{A}})}^\top {\mathsf{A}}
%= \rt({\mathsf{A}})^\top {\mathsf{A}} \qedhere
%%= \rt(\reversal{\mathsf{A}})^\top \lt\rt(\reversal{\mathsf{A}})
%\]
\end{proof}

%\begin{prop}
%If $\C$ is reduced with bijective kernel a
%\end{prop}
%
%\begin{prop}
%If the rows of $A$ form a basis for the kernel of $\C^\top$, then 
%\[\sol(\C^\top) = \mathsf{A}^\top\lt(\mathsf{A}) \]
%\end{prop}

As a windfall of this factorization, we deduce that the two twists are inverses to each other.

\begin{thm}
If $\mathsf{A}$ is a $d \times \ZZ$ matrix which is both left-twistable and right-twistable, then $\lt(\rt(\mathsf{A}))=\rt(\lt(\mathsf{A}))=\mathsf{A}$.
\end{thm}

\begin{proof}
Let $\C$ be the reduced recurrence matrix whose kernel is the row span of $\mathsf{A}$.
By Proposition \ref{prop: kerneltwist}, the rows of $\lt(\mathsf{A})$ form a basis for $\mathrm{ker}(\C^\top)$, and so by Lemma \ref{lemma: righttwistfactor} 
\[ \sol(\C) = \rt(\lt(\mathsf{A}))^\top \lt(\mathsf{A}) = \mathsf{A}^\top \lt(\mathsf{A}) \]
Since $\lt(\mathsf{A})$ is a $d\times \ZZ$-matrix of rank $d$, it can be cancelled from the second equality above, proving $\rt(\lt(\mathsf{A}))=\mathsf{A}$. The other equality follows from a symmetric argument.
\end{proof}

%\begin{rem}
%The proof of the analogous result in \cite{MS17} was a rather elaborate computation involving non-trivial results on positroids. The argument given here is more intuitive but still fundamentally the same; the complexity has been hidden in our results on $\sol(\C)$.
%\end{rem}

%\begin{warn}{\marginnote{\dbend}[.05cm]}
%The left twist for $d\times n$ matrices defined in \cite{MS17} is a bijection with inverse given by a \emph{right twist}. \bad{This is false for general left-twistable $d\times \ZZ$ matrices!} For example,
%\[ \begin{bmatrix}
%\cdots & 1 & 1 & 1 & 0 & 0 & \cdots 
%\end{bmatrix}\]
%\[ 
%\begin{bmatrix}
%\cdots & 1 & 1 & 1 & 1 & 1 & \cdots \\
%\cdots & 3 & 2 & 1 & 0 & 0 & \cdots 
%\end{bmatrix}
%\mapsto 
%\begin{bmatrix}
%\cdots & 4 & 3 & 2 & 1 & 1 & \cdots \\
%\cdots & -1 & -1 & -1 & -1 & -1 & \cdots 
%\end{bmatrix}
%\]
%is left-twistable and its own left twist, but it is not right-twistable, as its row span contains a right-bounded sequence. This can be remedied by restricting to \emph{bijective shapes}.
%%
%%Given a \emph{right twistable} $d\times \ZZ$-matrix $\mathsf{A}$ (i.e.~for each $a\in \ZZ$ the submatrix which columns in $[a,\infty)$ has rank $d$), one may define an analogous \emph{right twist}.
%%
%\end{warn}

The theorem implies that the two twists act by mutually inverse bijections on the each of the following sets, which have been canonically identified.
\begin{itemize}
	\item The set of $d\times \ZZ$-matrices that are both left- and right-twistable. 
	\item The set of $d$-dimensional subspaces of $\k^\ZZ$ that are both left- and right-twistable.
	\item The set of reduced recurrence matrices with bijective shape and $d$-dimensional kernel.
\end{itemize}
The action on the first two sets is explicit, but the action on reduced recurrence matrices is less clear. The following proposition helps characterize this action.

\begin{prop}
If $\C$ is reduced of bijective shape and fin.~dim.~kernel, then the reduced recurrence matrix with kernel $\lt(\ker(\C))$ is equivalent to $(\C\P)^\top$ and has the same shape as $\C$.
\end{prop}
\begin{proof}
Since $\C$ is reduced, $\C\P$ is upper unitriangular and so $(\C\P)^\top$ is lower unitriangular. Let $S$ be the shape of $\C$. Then, for any $b < S(a)$, 
\[ ((\C\P)^\top)_{a,b} = (\C\P)_{b,a} = \C_{b,S(a)} \C_{S(a),a}^{-1} = 0 \]
Therefore, $(\C\P)^\top$ is a recurrence matrix with the same shape as $\C$.

Let $\C'$ be the reduced recurrence matrix with kernel $\lt(\ker(\C))$.
By Proposition \ref{prop: kerneltwist}, $\lt(\ker(\C)) = \mathrm{ker}(\C^\top)$. Since $\P$ is a permutation matrix, 
\[  \ker(\C') = \mathrm{ker}(\C^\top) =  \mathrm{ker}(\P^\top \C^\top) =  \mathrm{ker}((\C\P)^\top) \]
Therefore, $\C'$ and $(\C\P)^\top$ are equivalent recurrence matrices. Since the shapes of $\C'$ and $(\C\P)^\top$ are injective, by Lemma \ref{lemma: containshape}, they have the same shape.
\end{proof}

%\begin{prop}
%If $\C$ is reduced of bijective shape $\pi$, then $\lt(\C)$ is reduced of bijective shape $\pi$.
%%
%%Let $\C$ be a reduced recurrence matrix of bijective shape $\pi$, and let $\C'$ be the reduced recurrence matrix whose kernel is $\lt(\ker(\C))$. Then the shape of $\C'$ is also $\pi$.
%\end{prop}

%set of matrices which are both left- and right-twistable. 
%
%If the rows of $\mathsf{A}$ form a basis for $\ker(\C)$, then the rows of $\lt(\mathsf{A})$ form a basis for $\ker(\C^\top)$. We need
%%\[ \sol(\C)^\top = 
%
%$\sol(\C) = \mathsf{A}^\top \lt(\mathsf{A})$ and so 
%\[\reversal{\sol(\C)} 
%= \reversal{\lt(\mathsf{A})}^\top \reversal{\mathsf{A}}
%= \rt(\reversal{\mathsf{A}})^\top \reversal{\mathsf{A}}
%%= \rt(\reversal{\mathsf{A}})^\top \lt\rt(\reversal{\mathsf{A}})
%\]
%It follows that the rows of $\rt(\reversal{\mathsf{A}})$ form a basis for the kernel of $\reversal{\C}$. Then 
%%\[ \reversal
%
%Because $\reversal{\sol(\C)} = \sol(\reversal{\C})$ is a solution matrix
%
%
%so $\sol(\C)^\top = \lt(\mathsf{A})^\top \mathsf{A}$. Since the rows of $\lt(A)$ form a basis for the kernel of $\sol(\C^\top)$, we know that $\sol(\C^\top) = \lt(\mathsf{A})^\top \rt\lt(\mathsf{A})$.

%\begin{prop}
%If $V\subset \k^\ZZ$ is finite dimensional and does not contain left-bounded or right bounded sequences, then the same is true for $\lt(V)$ and $\rt(V)$, and $\lt\rt(V)=\rt\lt(V)=V$.
%\end{prop}

%As a consequence, the reduced recurrence matrix with kernel $\lt(\ker(\C))$ is equivalent to $(\C\P)^\top$. 

\begin{warn}{\marginnote{\dbend}[.05cm]}
In general, \bad{$(\C\P)^\top$ is not reduced}. Examples can be misleading here, as $(\C\P)^\top$ is reduced when the shape of $\C$ is \emph{uniform} (i.e.~$S(a)=a-k$ for some fixed $k$).%, and so the left twist does act by $\C\mapsto (\C\P)^\top$ in these cases.
%
%%The left twist induces an operation $\C\mapsto \C'$ on reduced recurrence matrices with finite-dimensional kernels, via $\ker(\C')=\lt(\ker(\C))$. 
%%
%%The left twist gives a well-defined map on finite-dimensional spaces that are the kernel of a recurrence matrix, and so it gives a well-defined operation on reduced recurrence matrices with finite dimensional kernel.
%%
%If $\C$ is reduced matrix with bijective shape, then $(\C\P)^\top = \P^\top\C^\top$ is a recurrence matrix whose kernel is $\ker(\C^\top)=\lt(\ker(\C))$, but \bad{$(\C\P)^\top$ is not generally reduced}. 
%
%This is a source of confusion, as the left twist induces an operation $\C\mapsto \C'$ on reduced recurrence matrices via $\ker(\C')=\lt(\ker(\C))$. By the above observation, this sends $\C$ to the reduced recurrence matrix equivalent to $(\C\P)^\top$, but not always to $(\C\P)^\top$ itself.
%
%This is often misleading in examples, as $(\C\P)^\top$ is reduced whenever the shape is \emph{uniform} (i.e.~$S(a)=a-k$ for some fixed $k$), and so the left twist does act by $\C\mapsto (\C\P)^\top$.
\end{warn}

\appendix
	
%\section{Examples}	
%
%`Lazy' linear recurrences.

\bibliographystyle{alpha}
\bibliography{GlobalBib}

\end{document}